\newcounter{idx}
\newcommand{\ud}[1]{%
  \StrLen{#1}[\slen]
  \forloop[-1]{idx}{\slen}{\value{idx}>0}{%
    \StrChar{#1}{\value{idx}}[\crtLetter]%
    \IfSubStr{tlQWERTZUIOPLKJHGFDSAYXCVBNM}{\crtLetter}
      {\raisebox{\depth}{\rotatebox{180}{\crtLetter}}}
      {\raisebox{1ex}{\rotatebox{180}{\crtLetter}}}}%
}
\newtheorem{theorem}{Theorem}
\newtheorem{definition}{Definition}
\newtheorem{lemma}{Lemma}
\newtheorem{corollary}{Corollary}
\newtheorem{proposition}{Proposition}
\newtheorem{observation}{Observation}
\DeclareRobustCommand{\VAN}[3]{#2} 
\title{Heading in the right direction?\\ {\Large Using head moves to traverse phylogenetic network space}}
\author{Remie Janssen\footnote{Delft Institute of Applied Mathematics, Delft University of Technology, Van Mourik Broekmanweg 6,
2628 XE Delft,
The Netherlands. R.Janssen-2@tudelft.nl. Research funded by the Netherlands Organization for Scientific Research (NWO), Vidi grant 639.072.602 of dr. Leo van Iersel.} \\ \href{mailto:remiejanssen@gmail.com}{remiejanssen@gmail.com}  }
\begin{document}
\maketitle

\begin{abstract}
Head moves are a type of rearrangement moves for phylogenetic networks. They have mostly been studied as part of more encompassing types of moves, such as rSPR moves. Here, we study head moves as a type of moves on themselves. We show that the tiers ($k>0$) of phylogenetic network space are connected by \emph{local} head moves. Then we show tail moves and head moves are closely related: sequences of tail moves can be converted to sequences of head moves and vice versa, changing the length by at most a constant factor. Because the tiers of network space are connected by rSPR moves, this gives a second proof of the connectivity of these tiers. Furthermore, we show that these tiers have small diameter by reproving the connectivity a third time. As the head move neighbourhood is in general small, this makes head moves a good candidate for local search heuristics. Finally we prove that finding the shortest sequence of head moves between two networks is NP-hard. 
\end{abstract}

\section{Introduction}
For biologists it is vital to know the evolutionary history of the species they study. Evolutionary histories are among other things needed to find the reservoir/initial infection for some disease \citep[e.g.,][]{gao1999origin,lessler2016assessing}, or to learn about the evolution of genes, giving us insight in how they work \cite[e.g.,][]{shindyalov1994can,yu2015rasp,joy2016ancestral,guyeux2017ability,atas2018phylogenetic}.\\

\begin{figure}[h!]
\begin{center}
\includegraphics[scale=0.7]{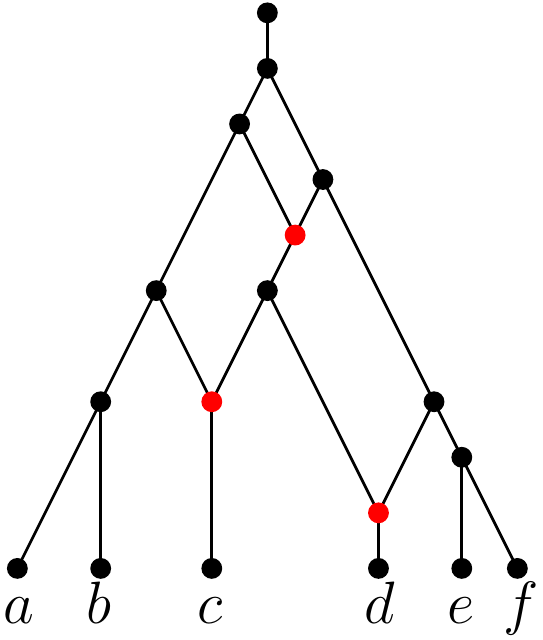}
\end{center}
\caption{A (tier-3) phylogenetic network with six leaves (i.e. taxa) at the bottom, and the root (ancestral taxon) at the top. Arcs are directed downwards, showing the passing of time. The red nodes are the three reticulations (i.e. reticulate evolutionary events).}
\label{fig:NetworkExample}
\end{figure}

These histories are traditionally represented as phylogenetic trees. This focus on trees has recently started shifting to phylogenetic networks, in which more biological processes can be represented. These biological processes, such as hybridization and horizontal gene transfer, are collectively known as reticulate evolutionary events, because they cause a reticulate (latin:~reticulatus~=~net-like) structure in the representation of the history.\\
\\

The extra structure of phylogenetic networks makes them harder to reconstruct than their tree counterparts. For certain models, it is still possible to quickly construct networks from data \cite[e.g.,][]{van2018polynomial}. 
Reconstruction is harder in most other cases, depending on the kind of data and the model of reconstruction. For some models the reconstruction of phylogenetic trees might even be hard already.
For such models, some kind of local search is often employed \citep{felsenstein2004inferring,lakner2008efficiency,nguyen2014iq,lessler2016assessing}. This is a process where the goal is to find a (close to) optimal tree by exploring the space of trees making only small changes. These small changes are called rearrangement moves.

Several rearrangement moves have long been studied for phylogenetic trees, the most prominent ones are Nearest Neighbour Interchange (NNI), Subtree Prune and Regraft (SPR), and Tree Bisection and Reconnection (TBR) \citep{felsenstein2004inferring,semple2003phylogenetics}.
Based on these moves for trees, the last decade has seen a surge in research on rearrangement moves for phylogenetic networks. There are several ways of generalizing the moves to networks, which means there is a relatively large number of moves for networks, including rSPR moves \citep{gambette2017rearrangement}, rNNI moves \citep{gambette2017rearrangement}, SNPR moves \citep{bordewich2017lost}, tail moves \citep{janssen2017exploring}, and head moves.

\begin{figure}[h!]
\begin{center}
\includegraphics[scale=0.7]{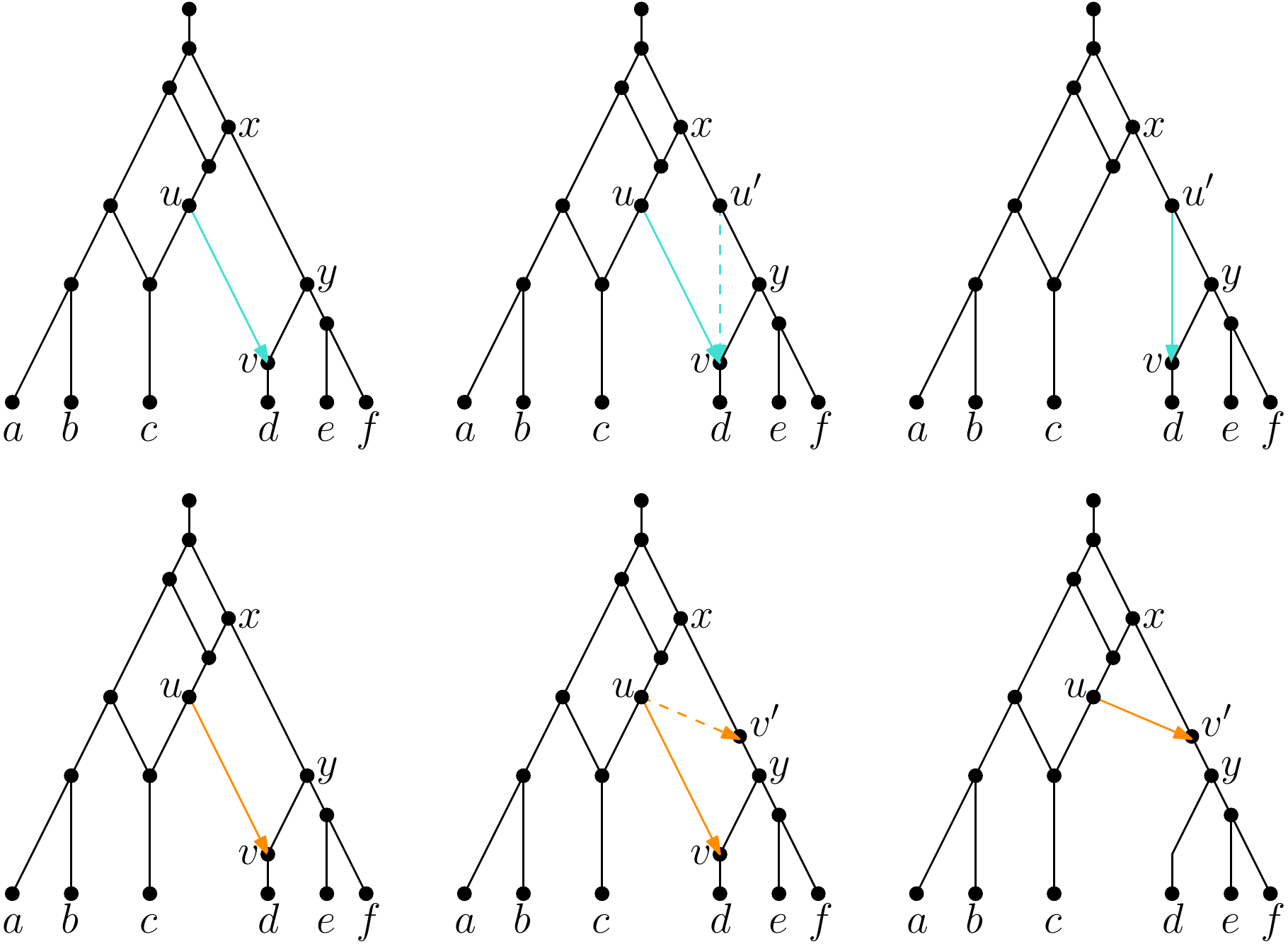}
\end{center}
\caption{Top: the tail move $(u,v)$ to $(x,y)$; Bottom: the head move $(u,v)$ to $(x,y)$. On the left, the starting networks in which the moving edges are coloured. The right networks are the resulting networks after the moves, with the moved edge coloured differently. The middle graph is a combination of the left and the right network, with the moving edge coloured differently. The solid coloured edge is the moving edge of the network before the move, the dashed coloured edge is the moving edge of the network after the move. We distinguish the moves with edge colours: blue is a tail move, orange is a head move.}
\label{fig:MoveExample}
\end{figure}

All these moves are quite similar in that they only change the location of one edge in the network. This means a lot of properties of the search spaces defined by different moves can be related. In this paper we study relations of such properties for the spaces corresponding to tail and head moves. The results we obtain can also be used in the study of rSPR moves, because rSPR moves consist of tail moves and head moves; and it can be used for the study of SNPR moves for the same reason.\\
\\
We start by proving that each tier of phylogenetic network space is connected by head moves (Section~\ref{sec:connectivity}). We push this result a bit further in Section~\ref{sec:LocalConnectivity}, where we prove that the result still holds for distance-2 head moves, but not for distance-1 head moves. After this, we take a slightly different angle and we prove results about distances: in Section~\ref{sec:HeadToTailAndBack} we prove that each head move can be replaced by at most $16$ tail moves, and each tail move can be replaced by at most $15$ head moves. This not only reproves connectivity of tiers of head move space, but also gives relations for distances between two networks measured by different rearrangement moves (rSPR, head moves, and tail moves). In Section~\ref{sec:HeadDistance}, we prove the upper bound $6n+6k-1$ for the diameter of tier-$k$ of network space with $n$ taxa. Lastly, in Section~\ref{sec:NP-Hard}, we prove that computing the head moves distance between two networks is NP-hard. It remains open whether this computation is hard in a fixed tier.

\section{Preliminaries}\label{sec:Prelim}
\subsection{Phylogenetic networks}

\begin{definition}\label{def:networkTree}
A \emph{binary phylogenetic network} with leaves $X$ is a directed acyclic graph (DAG) $N$ with:
\begin{itemize}
\item one \emph{root} (indegree-0, outdegree-1 node) 
\item $|X|$ \emph{leaves} (indegree-1, outdegree-0 nodes) bijectively labelled with the elements of $X$
\item all other nodes are either \emph{split nodes} (or splits; indegree-1, outegree-2 nodes), or \emph{reticulations} (indegree-2, outdegree-1 nodes).
\end{itemize}
Incoming edges of reticulation nodes are called \emph{reticulation edges}, and incoming edges of split nodes are called \emph{tree edges}. The \emph{reticulation number} of $N$ is the number of reticulation nodes. The set of all networks with reticulation number $k$ is called the $k$-th tier of phylogenetic network space.
\end{definition}

For simplicity, we will often refer to binary phylogenetic networks as phylogenetic networks or as networks. Phylogenetic networks are a generalization of phylogenetic trees. These trees can be defined as phylogenetic networks without any reticulation nodes. Many phylogenetic problems start with a set of phylogenetic trees and ask for a `good' network for this set of trees. This often means that the trees must be contained in this network in the following sense.

\begin{definition}
A tree $T$ can be \emph{embedded} in a network $N$ if there exists an $X$ labelled subgraph of $N$ that is a subdivision of $T$. We say $T$ is an embedded tree of $N$.
\end{definition}

Because a network is a DAG, there are unambiguous ancestry relations between the nodes. We imagine a network as having the root at the top, and the leaves at the bottom. This induces the following terminology.

\begin{definition}
Let $u,v$ be nodes in a network $N$. Then we say:
\begin{itemize}
\item $u$ is \emph{above} $v$, and $v$ is \emph{below} $u$, if there is a directed path from $u$ to $v$ in $N$.
\item $u$ is \emph{directly above} $v$, and $v$ \emph{directly below} $u$, if there is an edge $(u,v)$ in $N$.
\end{itemize}
Similarly, we say an edge $(u,v)$ is above a node $w$ or above an edge $(w,z)$ if $v$ is above $w$. An edge $(u,v)$ is below a node $z$ or an edge $(w,z)$ if $u$ is below $z$.

We also use terminology inspired by biology: if $u$ is above $v$, we say that $u$ is an \emph{ancestor} of $v$, and if $u$ is directly above $v$, we say that $u$ is a \emph{parent} of $v$ and $v$ a \emph{child} of $u$. 

A \emph{Lowest Common Ancestor (LCA)} of two nodes $u$ and $v$ is a node $z$ which is above both $u$ and $v$, such that there are no other such nodes below $z$.
\end{definition}

An important substructure in a network is the triangle. This structure has a big impact on the rearrangements we can do in a network.

\begin{definition}
Let $N$ be a network and $t,s,r$ nodes of $N$. If there are edges $(t,s)$, $(t,r)$ and $(s,r)$ in $N$, then we say $t$, $s$ and $r$ form a \emph{triangle} in $N$. We call $t$ the \emph{top} of the triangle, $s$ the \emph{side} of the triangle, and the reticulation $r$ the \emph{bottom} of the triangle. 
\end{definition}

The following lemma describes one property of triangles that we will use very often.

\begin{lemma}
Let $N$ be a network with a triangle $t,s,r$, where $s$ is a split node. Then reversing the direction of the edge $(s,r)$ gives a new network $N'$. We say the direction of the triangle is reversed.
\end{lemma}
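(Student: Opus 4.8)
The plan is to define $N'$ explicitly as the graph obtained from $N$ by deleting the arc $(s,r)$ and inserting the arc $(r,s)$, and then to verify the four defining properties of a binary phylogenetic network from Definition~\ref{def:networkTree}: that $N'$ is a DAG, has a single root, has the same leaf set $X$, and that every internal node is a split node or a reticulation.

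First I would dispatch the degree conditions, which are entirely local. Since $s$ is a split node of $N$, its incident edges are the incoming edge $(t,s)$ and the outgoing edges $(s,r)$ and $(s,x)$ for some child $x$; after the reversal $s$ has indegree $2$ (via $(t,s)$ and $(r,s)$) and outdegree $1$ (via $(s,x)$), so $s$ becomes a reticulation. Symmetrically, $r$ is a reticulation of $N$ with incoming edges $(t,r)$ and $(s,r)$ and a single outgoing edge $(r,y)$; after the reversal $r$ has indegree $1$ and outdegree $2$, so $r$ becomes a split node. No other node changes degree, so the root of $N$ (which is neither $r$ nor $s$, as those have the wrong degrees) remains the unique root, the leaves are untouched, and all internal nodes are splits or reticulations. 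I would also remark in passing that $(r,s)\notin E(N)$ and that $y\neq s$ and $t\neq r$, since any of these would force a directed $2$-cycle in $N$; this ensures $N'$ has no parallel edges.

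The main obstacle is acyclicity of $N'$. I would argue by contradiction: suppose $N'$ contains a directed cycle $C$. Since $N'$ and $N$ differ only at the reversed arc, $N'\setminus\{(r,s)\}$ is a subgraph of the DAG $N$ and hence acyclic, so $C$ must use the new arc $(r,s)$. Removing $(r,s)$ from $C$ leaves a directed path $P$ from $s$ to $r$ whose edges all lie in $N\setminus\{(s,r)\}$. The only arc into $r$ originating at $s$ is $(s,r)$, which $P$ avoids, so $P$ enters $r$ through its other parent $t$; in particular $P$ contains a directed path from $s$ to $t$. Concatenating this with the arc $(t,s)\in E(N)$ produces a directed cycle in $N$ (of length at least $2$, since $s\neq t$), contradicting that $N$ is a DAG. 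Hence $N'$ is acyclic, and combined with the degree check this shows $N'$ is a binary phylogenetic network with leaf set $X$.
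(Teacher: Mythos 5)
Your proof is correct and follows the same direct-verification route as the paper: check that no parallel edges arise (since $(r,s)\notin E(N)$ by acyclicity of $N$) and that only $s$ and $r$ change degree, swapping their roles as split and reticulation. You additionally spell out the acyclicity of $N'$ (any new cycle would have to enter $r$ via $t$ and hence yield a cycle through $(t,s)$ in $N$), a step the paper leaves implicit; this is a welcome bit of extra rigour but not a different approach.
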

\begin{proof}
Note that there is no edge $(r,s)$ in $N$, as this would induce a cycle. Hence removing $(s,r)$ and adding $(r,s)$ produces a new directed graph (with no multi-edges). The only nodes of which the degree changes are $s$ and $r$: $s$ changes from a split node to a reticulation, and $r$ changes from a reticulation to a split node. Hence, we still have the right types and numbers of nodes (one root, $|X|$ leaves bijectively labelled with $X$). Therefore, $N'$ is a phylogenetic network, with $t,r,s$ the reversed triangle. 
\end{proof}

\subsection{Rearrangement moves}
The main topics of this paper are head and tail moves, which are types of rearrangement moves on phylogenetic networks. Several types of moves have been defined for rooted phylogenetic networks. The most notable ones are tail moves \citep{janssen2017exploring}, rooted Subtree Prune and Regraft (rSPR) and rooted Nearest Neighbour Interchange (rNNI) moves \citep{gambette2017rearrangement} and SubNet Prune and Regraft (SNPR) moves \citep{bordewich2017lost}.
These moves typically change the location of one or both ends of an edge, or they remove or add an edge.

Although head and tail moves are defined in detail in \cite{janssen2017exploring}, we repeat the definitions here for the convenience of the reader.

\begin{definition}[Head move]\label{def:HeadMove}
Let $e=(u,v)$ and $f$ be edges of a network. A head move of $e$ to $f$ consists of the following steps (Figure~\ref{fig:HeadMove}):
\begin{enumerate}
\item delete  $e$;
\item \emph{subdivide} $f$ with a new node $v'$;
\item \emph{suppress} the indegree-1 outdegree-1 node $v$;
\item add the edge $(u,v')$.
\end{enumerate}
\emph{Subdividing} an edge $(u,v)$ consists of deleting it and adding a node $x$ and edges $(u,x)$ and $(x,v)$. \emph{Suppressing} an indegree-1, outdegree-1 node $x$ with parent $u$ and child $v$ consists of removing  edges $(u,x)$ and $(x,v)$ and node $x$, and then adding an edge $(u,v)$.

Head moves are only allowed if the resulting digraph is still a network (Definition~\ref{def:networkTree}). 
We say that a head move is a \emph{distance-$d$ head move} if, after step 2, a shortest path from $v$ to $v'$ in the underlying undirected graph has length at most $d+1$ (number of edges in the path).
\end{definition}

\begin{figure}[h!]
\begin{center}
\includegraphics[scale=0.6]{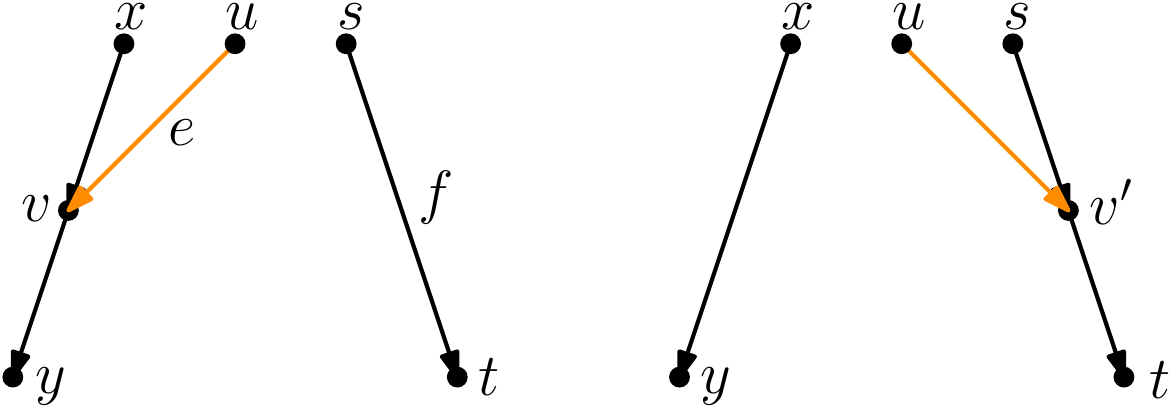}
\end{center}
\caption{The head move $e=(u,v)$ from $(x,y)$ to $f=(s,t)$ described in Definition~\ref{def:HeadMove}.}
\label{fig:HeadMove}
\end{figure}

\begin{definition}[Tail move]\label{def:TailMove}
Let $e=(u,v)$ and $f$ be edges of a network. A tail move of $e$ to $f$ consists of the following steps (Figure~\ref{fig:TailMove}):
\begin{enumerate}
\item delete $e$;
\item subdivide $f$ with a new node $u'$;
\item suppress the indegree-1 outdegree-1 node $u$;
\item add the edge $(u',v)$.
\end{enumerate}
Tail moves are only allowed if the resulting digraph is still a network (Definition~\ref{def:networkTree}).
We say that a tail move is a \emph{distance-$d$ tail move} if, after step 2, a shortest path from $u$ to $u'$ in the underlying undirected graph has length at most $d+1$.
\end{definition}

\begin{figure}[h!]
\begin{center}
\includegraphics[scale=0.6]{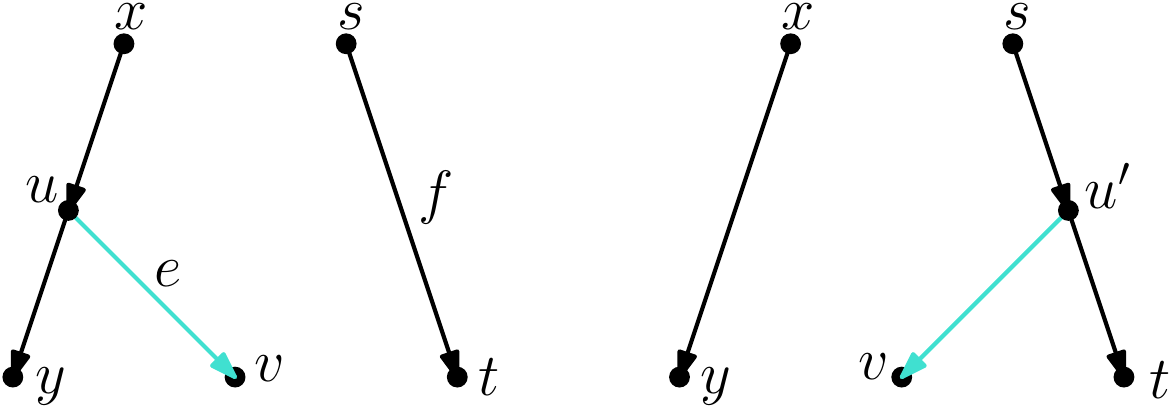}
\end{center}
\caption{The tail move $e=(u,v)$ from $(x,y)$ to $f=(s,t)$ described in Definition~\ref{def:TailMove}.}
\label{fig:TailMove}
\end{figure}


The previously mentioned rearrangement moves can almost all be described by combinations of head and tail moves. An rSPR move is either a head or a tail move; an rNNI move is a distance-1 head or tail move. Only SNPR moves cannot be fully described this way. 

An SNPR move is either a tail move, or a move that changes the reticulation number by adding (SNPR+) or deleting (SNPR-) a reticulation edge. These moves are called \emph{vertical} moves because adding or deleting a reticulation edge makes you go up or down a tier in phylogenetic network space. Moves such as tail and head moves that stay within a tier (not changing the reticulation number) are called \emph{horizontal} moves.

\subsubsection{Validity of moves}
As we want to use rearrangement moves to traverse network space, we are only interested in moves that result in a phylogenetic network. The definitions in the previous subsection ensure this always happens for tail and head moves. In this paper we often propose a sequence of moves by stating: move the tail of edge $e$ to edge $f$, then move the head of edge $e'$ to $f'$ and so forth. We then check whether these moves are \emph{valid} (also called \emph{allowed}), that is, whether applying the steps in the definitions of the previous subsection  produces a phylogenetic network.

We first introduce the concept of movable edges. A necessary condition for a rearrangement move to be valid, is that the moving edge is movable. This concept is useful because it is a property of only the moving edge, and can hence be easily checked. For different types of moves, we have different definitions of movability.

\begin{definition}
Let $(u,v)$ be an edge in a network $N$, then $(u,v)$ is tail movable if $u$ is a split node with parent $p$ and other child $c$, and there is no edge $(p,c)$ in $N$.
\end{definition}

This is equivalent to saying that an edge with tail $u$ is tail movable if $u$ is a split node and $u$ is not the side of a triangle. There is a similar definition for head moves.

\begin{definition}
Let $(u,v)$ be an edge in a network $N$, then $(u,v)$ is head movable if $v$ is a reticulation node with other parent $p$ and child $c$, and there is no edge $(p,c)$ in $N$.
\end{definition}

When the type of move is clear from context, we will use the term \emph{movable}. Using the concept of movability, we can now quickly give sufficient conditions for a move to be valid. Movability essentially ensures that `detaching' the edge does not create parallel edges. We need some extra conditions to make sure that reattaching the edge does not create parallel edges, and that the resulting network has no cycles. These are exactly the second and third conditions from the following lemma.

\begin{lemma}
A tail move $(u,v)$ to $(s,t)$ is valid if all of the following hold:
\begin{itemize}
\item $(u,v)$ is tail movable;
\item $v\neq t$.
\item $v$ is not above $s$;
\end{itemize}
\end{lemma}
\begin{proof}
Because $(u,v)$ is tail movable, the removal of $(u,v)$ and subsequent suppression of $u$ does not create parallel edges. Because $v\neq t$, subdividing $(s,t)$ with a node $u'$ and adding the edge $(u',v)$ does not create parallel edges either. This means that the result $N'$ of the head move is a directed graph. 

Now suppose this graph has a cycle. As each path that does not use $(u',v)$ corresponds to a path in $N$, the cycle must use $(u',v)$. This means that there is a path from $v$ to $u'$ in $N'$. Because $u'$ is a split node with parent $s$, there must also be a path from $v$ to $s$ in $N'$. This implies there was also a path from $v$ to $s$ in $N$, but this contradicts the third condition: $v$ is not above $s$. We conclude that $N'$ is a DAG.

Because all labelled nodes are not changed by the tail move, $N'$ is a phylogenetic network, and the tail move is valid.
\end{proof}

The proof of the corresponding lemma for head moves is completely analogous.

\begin{lemma}\label{lem:validHead}
A head move $(u,v)$ to $(s,t)$ is valid if all of the following hold:
\begin{itemize}
\item $(u,v)$ is head movable;
\item $u\neq s$.
\item $t$ is not above $u$;
\end{itemize}
\end{lemma}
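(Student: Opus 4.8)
The plan is to mirror, step by step, the proof already given for the corresponding tail-move lemma, since the head move is the ``reverse'' of the tail move in the sense that it relocates the head of a reticulation edge rather than the tail of a tree edge, and the obstructions (parallel edges, directed cycles) have the same shape. Concretely, I would verify in turn that each of the three steps that can fail — detaching $(u,v)$, reattaching it on $(s,t)$, and preserving acyclicity — is ruled out by one of the three hypotheses, and then conclude that no labelled node is affected so the result is a phylogenetic network.

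First I would use head movability of $(u,v)$ to argue that deleting $e=(u,v)$ and then suppressing the resulting indegree-1 outdegree-1 node $v$ does not create parallel edges: by definition of head movability, $v$ is a reticulation with other parent $p$ and child $c$, and there is no edge $(p,c)$ in $N$; after deletion and suppression the only new edge is $(p,c)$, which by hypothesis was not already present, so the intermediate digraph has no multi-edges. Second, I would use $u\neq s$ to handle the reattachment: subdividing $(s,t)$ with a new node $v'$ and adding $(u,v')$ creates the edge $(u,v')$; since $v'$ is brand new, the only way this could duplicate an existing edge is if $u$ already had an edge to $v'$, which is impossible, or — more to the point, matching the tail-move argument — the condition $u \neq s$ is what prevents $(u,v')$ from coinciding with the edge $(s,v')$ produced by the subdivision. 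Hence the resulting digraph $N'$ has no parallel edges.

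Third, for acyclicity I would argue exactly as in the tail-move proof: any directed path in $N'$ that avoids the new edge $(u,v')$ corresponds to a path already in $N$, so a putative cycle in $N'$ must traverse $(u,v')$, giving a directed path from $v'$ to $u$ in $N'$. Since $v'$ has $t$ as its (only) child, there is then a directed path from $t$ to $u$ in $N'$, and since that path does not use $(u,v')$ it is also a path in $N$ — contradicting the hypothesis that $t$ is not above $u$. Therefore $N'$ is a DAG. Finally, because the head move changes only the local structure around $u$, $v$, $s$, $t$ and leaves every labelled leaf (and the root) untouched with its degree intact, $N'$ is a binary phylogenetic network, so the move is valid.

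I do not expect a serious obstacle here; the one point requiring a little care is the bookkeeping in the reattachment step, namely making sure that after suppressing $v$ the vertex $v'$ is genuinely new and that the only candidate for a duplicated edge is the one excluded by $u \neq s$ — but this is routine and parallels the tail-move lemma verbatim, which is presumably why the paper states the proof is ``completely analogous.''
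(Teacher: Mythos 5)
Your proposal is correct and follows exactly the route the paper intends: it transposes the tail-move validity proof (head movability rules out parallel edges on detachment, $u\neq s$ rules them out on reattachment, and a putative cycle through the new edge $(u,v')$ yields a $t$-to-$u$ path in $N$ contradicting the third hypothesis), which is precisely why the paper dismisses the proof as ``completely analogous.'' No gaps.
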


We will very frequently use the following corollary of this lemma, which makes it very easy to check whether some moves are valid.

\begin{corollary}\label{cor:MoveUpDown}
Let $(u,v)$ be a tail movable edge, then moving the tail of $(u,v)$ to an edge above $u$ is allowed. We also say that moving the tail of $(u,v)$ up is allowed. Similarly, moving the head of a head movable edge down is allowed.
\end{corollary}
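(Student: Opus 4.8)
The plan is to read off the corollary directly from the two validity lemmas — the tail-move validity lemma for the first statement and Lemma~\ref{lem:validHead} for the second — and to discharge their side conditions using only the fact that $N$ is acyclic.

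First I would do the tail case. Let $(u,v)$ be tail movable and let $(s,t)$ be an edge above $u$, so that $t$ is above $u$, and hence (through the edge $(s,t)$) $s$ is above $u$ as well. The first hypothesis of the tail-move validity lemma, tail movability of $(u,v)$, holds by assumption. For the second hypothesis, $v\neq t$: the node $v$ is a child of $u$ and so lies strictly below $u$, whereas $t$ lies above $u$; since $N$ is acyclic these cannot coincide. For the third hypothesis, $v$ is not above $s$: if it were, concatenating a directed path from $v$ to $s$, the edge $(s,t)$, a directed path from $t$ to $u$, and the edge $(u,v)$ would give a directed cycle in $N$, a contradiction. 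So all three hypotheses hold and the tail move is valid.

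The head case is completely symmetric. Given a head movable edge $(u,v)$ and a target edge $(s,t)$ below $v$ (so $s$, and through the edge $(s,t)$ also $t$, lies below $v$), I would verify the hypotheses of Lemma~\ref{lem:validHead}: head movability is assumed; $u\neq s$ because $u$ lies strictly above its child $v$ while $s$ lies below $v$; and $t$ is not above $u$, since otherwise the edge $(u,v)$, a directed path from $v$ to $s$, the edge $(s,t)$, and a directed path from $t$ to $u$ would form a directed cycle, again contradicting acyclicity.

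I do not expect any real obstacle here: the statement is essentially bookkeeping on top of the validity lemmas and the DAG property. The only things to be a little careful about are that ``an edge above $u$'' is precisely the notion that puts $t$ (hence $s$) above $u$ — exactly what the lemmas require — and, if one wants to be exhaustive, the borderline targets where the moving edge's new position is inserted into an edge incident with $u$ (tail case) or with $v$ (head case); these merely reproduce $N$ up to renaming the subdivision vertex and are trivially valid.
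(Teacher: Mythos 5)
Your proposal is correct and matches the paper's intent: the paper states this as an immediate corollary of the two validity lemmas without further argument, and your verification of the three hypotheses (movability by assumption, the inequality of endpoints, and the non-ancestry condition, each discharged via acyclicity) is exactly the reasoning the paper leaves implicit. No gaps.
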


\begin{lemma}\label{lem:HeadMoveEmbeddedTrees}
Let $N$ be a network and $\mathcal{T}$ its set of embedded trees, and let $N'$ with embedded trees $\mathcal{T}'$ be the result of one head move in $N$. Then there is a tree $T\in\mathcal{T}$ which is embedded in $N'$; furthermore, for each $T'\in\mathcal{T}'$ there is a tree $T\in\mathcal{T}$ at most one tail move away from $T'$.
\end{lemma}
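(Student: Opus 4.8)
The plan is to analyze a single head move $(u,v)$ to $f=(s,t)$ in $N$ at the level of embedded trees, tracking what happens to the set of directed paths that realize a given embedded tree. Recall that an embedded tree $T$ corresponds to a choice, at each reticulation, of exactly one of its two incoming edges (its ``kept'' reticulation edge), together with the subdivision of $N$ obtained by deleting all the non-kept reticulation edges and suppressing degree-2 nodes. The head move deletes the reticulation edge $e=(u,v)$ at reticulation $v$, suppresses $v$, subdivides $f$ with $v'$, and adds $(u,v')$, so $v'$ becomes a new reticulation in $N'$ whose incoming edges are $(u,v')$ and the ``lower half'' of $f$.

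For the first claim, I would exhibit a specific $T \in \mathcal{T}$ that survives. Take any embedded tree $T$ of $N$ in which the reticulation $v$ does \emph{not} keep the edge $e=(u,v)$ — such a tree exists because $v$ has another parent $p$ (movability is not even needed here, just that reticulations have indegree $2$). In the subdivision realizing $T$, the edge $e$ is already deleted, so deleting $e$ again and suppressing $v$ changes nothing about the embedding; and since $T$ does not use the portion of $N$ touched by the regraft in any essential way, we can realize $T$ inside $N'$ by simply not keeping the edge $(u,v')$ at the new reticulation $v'$ (instead keeping the lower half of $f$, which was already part of $N$). The only care needed is the bookkeeping of suppressions of degree-2 nodes, which is routine.

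For the second claim, fix $T' \in \mathcal{T}'$, determined by its reticulation-edge choices in $N'$. Every reticulation of $N'$ other than $v'$ is also a reticulation of $N$ with the same pair of incoming edges (up to the local subdivision near $f$ and near the old position of $e$), so $T'$ almost describes an embedded tree of $N$ as well; the only discrepancy is at $v'$. If $T'$ keeps the ``lower half of $f$'' edge at $v'$, then $T'$ uses neither $e$ nor $(u,v')$, and the same choices directly give an embedded tree $T \in \mathcal{T}$ equal to $T'$ (so zero tail moves suffice). If instead $T'$ keeps $(u,v')$ at $v'$, then in $N'$ the tree $T'$ routes through $u \to v' \to t$; the corresponding choice in $N$ would be to keep $e=(u,v)$ at $v$, giving a candidate embedded tree $T$ of $N$ that instead routes through $u \to v \to (\text{child of }v)$. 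These two subdivisions of $N$ and $N'$ differ only in the position of one tail: in $T$ the arc leaving $u$ goes to the old child of $v$, while in $T'$ it goes to $t$. That is exactly a tail move in the tree $T$ (moving the tail sitting at $u$'s position from one edge to another), so $T'$ is at most one tail move from $T$.

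The main obstacle I anticipate is not the high-level argument but making the degree-2-suppression bookkeeping fully rigorous: after deleting the unused reticulation edges to form an embedded subdivision, several of the nodes involved ($u$, $v$, $v'$, $s$, $t$, $p$) may become degree-$2$ and get suppressed, and one must check in each of the (few) cases that the resulting suppressed subdivision of $N'$ really is a subdivision of the claimed tree $T$ or of a tree one tail move from it — in particular that no unintended parallel edges or lost labels arise, and that the ``tail move'' identified in the second case is a genuine tree SPR move and not something degenerate. I would handle this by a short case split on whether $v'$'s kept edge in $T'$ is $(u,v')$ or the lower half of $f$, and within each case simply verify the local picture around $\{u,v,v',s,t\}$; Figure~\ref{fig:HeadMove} gives exactly the configuration to check against.
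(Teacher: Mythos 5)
The first half of your argument is fine and coincides with the paper's: any embedded tree that uses the other incoming edge of $v$ avoids $(u,v)$ entirely and survives the move, and the "zero tail moves" subcase of the second half (where $T'$ keeps the lower half of $f$ at $v'$) is also correct.

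The gap is in your main subcase of the second claim, where $T'$ keeps $(u,v')$. Your candidate $T$ ("same reticulation choices, but keep $e=(u,v)$ at $v$") is in general \emph{two} tail moves from $T'$, not one, because two subtrees change their attachment point simultaneously: the subtree below $t$ moves from hanging under $s$ (in $T$, via the surviving edge $f=(s,t)$) to hanging under $u$ (in $T'$, via $(u,v')$), \emph{and} the subtree below the old child $c$ of $v$ moves from hanging under $u$ (in $T$, via $u\to v\to c$) to hanging under $p$, the other parent of $v$ (in $T'$, via the suppression edge $(p,c)$). Your sentence ``in $T$ the arc leaving $u$ goes to the old child of $v$, while in $T'$ it goes to $t$'' describes the arc at $u$ pointing at two \emph{different} subtrees, which is not a tail move; a tail move keeps the pruned subtree fixed and changes only where it attaches. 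Concretely, take $N$ with root edge into $r_0$, arcs $r_0\to u$, $r_0\to p$, $u\to a$, $u\to v$, $p\to v$, $p\to s$, $v\to c$, $s\to t$, $s\to d$ with leaves $a,c,t,d$, and the head move $(u,v)$ to $(s,t)$. The tree $T'$ of $N'$ using $(u,v')$ is $((a,t),(c,d))$, while your candidate $T$ is $((a,c),(t,d))$; these are at SPR distance $2$. The lemma still holds here because the \emph{other} embedded tree of $N$, namely $(a,(c,(t,d)))$, is one SPR move from $T'$ --- and that is exactly what the paper constructs: keep the embedding of $T'$ unchanged, delete $(u,v')$, and re-attach the (unchanged) subtree below $v'$ by a path running up from the other incoming edge of $v'$ to the first node $w$ already in the embedding. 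Only one subtree is relocated, so the result is one tail move from $T'$. Your proof needs to be repaired by adopting this rerouting construction (or something equivalent) rather than flipping the switch at $v$.
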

\begin{proof}
Let $(u,v)$ be the edge that is moved in the head move from $N$ to $N'$. Then $v$ is a reticulation and it has another incoming edge $(w,v)$. There is an embedded tree $T$ of $N$ that uses this edge, and therefore does not use $(u,v)$. This means that changing the location of $(u,v)$ does not change the fact that $T$ is embedded in the network.

For the second part: first suppose the embedding of $T'$ in $N'$ does not use the new edge $(u,v')$. Then clearly $T'$ can be embedded in $N$ without the edge $(u,v)$. This means it can also be embedded in $N$. 

Now suppose the embedding of $T'$ in $N'$ uses the new edge $(u,v')$. Now consider the tree obtained by taking the embedding of $T'$, removing $(u,v')$ and adding a path of reticulation edges leading from a node $w$ in the embedding of $T'$ to $v'$ via the other incoming edge of $v'$. This tree only uses edges that are also in $N$, hence this tree is embedded in $N$. It is easy to see that this tree is at most one tail move away from $T'$: the one that moves the subtree below $v'$ to $w$.
\end{proof}

\subsection{Phylogenetic network spaces}
Considering all phylogenetic networks with the same leaf set and the same number of reticulations as a set can be interesting. For example, we might want to know the size of such a set for a particular number of leaves and reticulations \citep[e.g.][]{francis2017bounds}.

Things become even more interesting when we also take into account which networks are similar to each other, that is, if we look at phylogenetic network spaces. One way to define such spaces is as graphs, where each network is represented by a node, and there is an edge between two networks $N,N'$ if there exists a rearrangement move changing $N$ into $N'$. These spaces are important as search spaces for local search heuristics when looking at complicated phylogenetic networks problems. Hence, several properties of these phylogenetic network spaces have been studied.\\
\\
The most basic of these properties is connectivity. The introduction of each network rearrangement move was followed by a proof of connectivity of the corresponding spaces \citep[rSPR and rNNI moves:][]{gambette2017rearrangement}
\citep[NNI, SPR and TBR moves:][]{huber2016transforming}\citep[SNPR moves:][]{bordewich2017lost}\citep[tail moves:][]{janssen2017exploring}. 

Note that the spaces that take the shape of a graph come with a metric: the distance between two nodes (networks in our case). So when it is known that any network can be reached from any other network, a natural follow up is to ask about the distance between a pair of networks. 

\begin{definition}
Let $N$ and $N'$ be phylogenetic networks with the same leaf set in the same tier. We denote by $d_{M}(N,N')$ the \emph{distance} between phylogenetic networks $N$ and $N'$ using rearrangement moves of type $M$. That is, $d_{M}(N,N')$ is the minimum number of $M$-moves needed to change $N$ into $N'$.
\end{definition}

For phylogenetic trees, the distance between two trees is nicely characterized by a concept known as agreement forests. For phylogenetic networks, however, not much is known about such distances for a given pair of networks. Recently, \citet{klawitter2018distance} introduced an agreement forest analogue for networks, which bounds such distances but does not give the exact distance. 

The only other known bounds relate to the diameters, that is the maximal distance between any pair of points (i.e., networks) in a space (tier of phylogenetic network space). 

\begin{definition}
Let $k\in\mathbb{Z}_{\geq 0}$ be the number of reticulations, $n\in\mathbb{Z}_{\geq 2}$ be the number of leaves and $M$ a type of rearrangement move. We denote with $\Delta_{k}^{M}(n)$ the \emph{diameter} of tier-$k$ of phylogenetic network space with $n$ leaves using moves of type $M$:
\[\Delta_{k}^{M}(n)=\max_{N,N'}d_{M}(N,N'),\]
where $N,N'$ are tier-$k$ networks with $n$ leaves.
\end{definition}

Compared to pairwise distances, much more is known about the diameters of phylogenetic networks spaces. For all the mentioned types of moves some bounds on the diameters are known \cite{huber2016transforming,janssen2017exploring}. SNPR moves are an exception here, as these include vertical moves: each vertical move changes the tier by at most one. Hence, the diameter of SNPR move space is unbounded \citep{bordewich2017lost}. Of course for SNPR moves, the question can be rephrased to make it more informative, for example: allowing for movement through all tiers, what is the maximal distance between two networks in the same tier?

The last property we discuss is the neighbourhood size of a phylogenetic network, i.e. the number of networks that can be reached using one rearrangement move. The size of the neighbourhood is important for local search heuristics, as it gives the number of networks that need to be considered at each step. For networks, the only rearrangement move neighbourhood that has been studied is that of the SNPR move \citep{klawitter2017snpr}.

\section{connectivity}\label{sec:connectivity}
\subsection{Tier-1}
In this subsection we show that any two tier-1 networks with the same leaf set are connected by a sequence of head moves. This will form the basis for a more general proof of connectivity of any tier using local (distance-2) head moves in Section~\ref{sec:LocalConnectivity}. Our approach uses the fact that a tier-1 network is highly tree-like. Informally, we will look for a sequence of moves that changes the underlying tree-structure of the one network into the tree structure of the other network. Then we will place the reticulation edge where it needs to be.  

The following lemmas make this precise: Lemma~\ref{lem:MoveTriangle} proves that we can place the reticulation anywhere without changing the tree structure, Lemma~\ref{lem:ChangeBaseTree} proves that we can change the tree structure.

\begin{lemma}\label{lem:TierOneTriangleEmbeddings}
Let $M$ and $M'$ be tier-1 networks with a triangle and suppose there is a head move in $M$ resulting in $M'$, then both $M$ and $M'$ have the same embedded tree.
\end{lemma}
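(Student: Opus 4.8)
The plan is to show that a head move between two tier-1 networks that both contain a triangle cannot alter the embedded tree. Since a tier-1 network has exactly one reticulation, it has exactly one embedded tree (obtained by deleting one of the two reticulation edges and suppressing degree-2 nodes; but in a tier-1 network, because there is a unique reticulation, the two choices of which reticulation edge to delete must give the same tree — this is itself a small fact worth stating, but it follows since the network minus either reticulation edge is a subdivision of the same underlying tree when there is only one reticulation). So the statement reduces to: the single embedded tree of $M$ equals the single embedded tree of $M'$.

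First I would use the hypothesis that $M$ has a triangle, say with top $t$, side $s$, bottom $r$. By Lemma~\ref{lem:HeadMoveEmbeddedTrees}, there is a tree $T$ embedded in both $M$ and $M'$; since each is tier-1 and hence has a unique embedded tree, $T$ must be \emph{the} embedded tree of $M$ and also an embedded tree of $M'$. But $M'$ is tier-1, so it too has a unique embedded tree, which therefore must be $T$. Hence $M$ and $M'$ have the same embedded tree.

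The only gap to fill carefully is the claim that a tier-1 network has a \emph{unique} embedded tree — i.e., that deleting either of the two reticulation edges (and suppressing the resulting degree-2 vertices) yields the same tree. I would argue this as follows: let the unique reticulation be $r$ with parents $p_1, p_2$ and child $c$. Deleting $(p_1,r)$ and suppressing gives a tree $T_1$; deleting $(p_2,r)$ gives $T_2$. In a tree there is a well-defined path from the root to any leaf; I would check that the cluster (set of leaves below a node) structure of $T_1$ and $T_2$ coincide — the only vertices whose "below-sets" could differ are ancestors of $r$ that are not ancestors of the other parent, but in a binary network with a single reticulation the ancestor sets of $p_1$ and $p_2$ above their lowest common ancestor force the two trees to have identical clusters, hence $T_1 = T_2$. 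Actually a cleaner route: the triangle hypothesis pins things down even more tightly, since with a triangle the reticulation $r$ has the triangle top $t$ as one parent and $t$ is an ancestor of the other parent $s$, so both ways of deleting a reticulation edge and suppressing clearly leave the same subdivision of $T$. I would use this triangle-specific observation to sidestep the general uniqueness lemma entirely.

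The main obstacle is thus mostly bookkeeping: making the "unique embedded tree" claim airtight in the presence (or general absence) of a triangle, and verifying that the tree handed to us by Lemma~\ref{lem:HeadMoveEmbeddedTrees} really is the canonical embedded tree rather than merely \emph{an} embedded tree. Once that is pinned down, the rest is a one-line deduction from Lemma~\ref{lem:HeadMoveEmbeddedTrees} plus the counting fact that tier-1 networks have a single embedded tree. I expect no genuine difficulty, only care with the degree-2 suppression and the definition of "embedded" as a subdivision.
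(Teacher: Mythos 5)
Your overall strategy is the paper's: invoke Lemma~\ref{lem:HeadMoveEmbeddedTrees} to get a tree embedded in both $M$ and $M'$, and then use uniqueness of the embedded tree to conclude. But the claim you lead with and lean on in your main deduction --- that \emph{every} tier-1 network has a unique embedded tree because it has a unique reticulation, so that ``each is tier-1 and hence has a unique embedded tree'' --- is false. If the reticulation $r$ has parents $p_1,p_2$ that do not form a triangle with $r$, then deleting $(p_1,r)$ versus $(p_2,r)$ reattaches the subtree below $r$ at two different positions and generally yields two distinct trees. Concretely: take a root whose child $v_0$ has children $p_1$ and $p_2$, where $p_1$ has children $a$ and $r$, $p_2$ has children $b$ and $r$, and $r$ has child $c$; the two embedded trees are $(a,(b,c))$ and $(b,(a,c))$. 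The paper is careful on exactly this point: a tier-1 network has \emph{at most two} embedded trees, and exactly one precisely when it contains a triangle.

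Your ``cleaner route'' at the end is the correct repair and is in fact what the paper does: when $r$ sits at the bottom of a triangle $t,s,r$, deleting $(t,r)$ or $(s,r)$ and suppressing gives the same tree, so a tier-1 network \emph{with a triangle} has a unique embedded tree. Since the lemma's hypothesis gives a triangle in both $M$ and $M'$, applying this to each network and then running your Lemma~\ref{lem:HeadMoveEmbeddedTrees} deduction yields exactly the paper's proof. So the proof is salvageable essentially as you sketched, but only once the general uniqueness claim is deleted and replaced throughout by the triangle-specific one; as written, the sentence justifying uniqueness by ``tier-1'' alone is a genuine error.
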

\begin{proof}
Each tier-1 network has at most two embedded trees, and if a tier-1 network has a triangle, there is exactly one tree that can be embedded in it. Let $T$ be the embedded tree of $M$. For each head move one of the embedded trees stays the same (Lemma~\ref{lem:HeadMoveEmbeddedTrees}). This means the head move from $M$ to $M'$ preserves the embedded tree $T$, and $T$ is an embedded tree of $M'$. Because $M'$ has a triangle, $M'$ has exactly one embedded tree. Therefore $T$ is the only embedded tree of both $M$ and $M'$.
\end{proof}

\begin{lemma}\label{lem:CreateTriangle}
Let $N$ be tier-1 networks with an embedded tree $T$. Then, there exists a network $M$ such that $M$ has a triangle, $M$ has embedded tree $T$, and $M$ can be obtained from $N$ with one head move.
\end{lemma}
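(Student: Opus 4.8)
The plan is to construct the triangle directly by a single head move that grabs the unique reticulation edge of $N$ (the one that does \emph{not} lie in the embedding of $T$) and reattaches its head so as to form a triangle, while being careful not to disturb $T$. Since $N$ is tier-1, it has exactly one reticulation $v$, with two incoming edges; let $(w,v)$ be the reticulation edge used in some embedding of $T$ and $(u,v)$ the other one. By Lemma~\ref{lem:HeadMoveEmbeddedTrees}, moving the head $v$ of $(u,v)$ always leaves $T$ embedded (the embedding uses $(w,v)$, not $(u,v)$), so whatever valid head move of $(u,v)$ we perform, the resulting $M$ still has $T$ as an embedded tree. So the whole problem reduces to: find a valid head move of $(u,v)$ whose target edge $f$ makes $u$ the top of a triangle, or makes some other node the top of a triangle.

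The natural choice is to move the head of $(u,v)$ to one of the two edges leaving $u$ — i.e. to the edge $f = (u', c)$ where, after suppressing $u$ in step 3 and relabelling, $u$ is the split node whose two outgoing edges are the old $(u,v')$-continuation and the subdivided copy of $f$. Concretely: after deleting $(u,v)$, node $u$ has one remaining child, say $c$; we subdivide the edge $(u,c)$ with a new node $v'$ and add $(u,v')$. Then $u$ is the top, $v'$ the reticulation/bottom, and $c$ — wait, we need the side to be a split node with an edge down to the bottom; so more precisely we want to subdivide an edge \emph{below} $u$ and reattach there. The cleanest version: let $c$ be a child of $u$ after the deletion of $(u,v)$; subdivide an outgoing edge of $c$ with $v'$ and add $(u,v')$; if $c$ becomes (or already is) a split node with $v'$ as one child, then $u,c,v'$ is a triangle with top $u$, side $c$, bottom $v'$. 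One must check this is a valid head move: by Lemma~\ref{lem:validHead} we need $(u,v)$ head movable, $u \neq s$ (here $s=c \neq u$, fine), and $t$ not above $u$ (the head moves strictly downward from $u$, so this holds by Corollary~\ref{cor:MoveUpDown}).

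The main obstacle is head-movability of $(u,v)$ together with the degree bookkeeping in the degenerate cases. The edge $(u,v)$ is head movable iff $v$ is a reticulation (it is) with other parent $w$ and child $c_v$ such that there is no edge $(w, c_v)$ already — i.e. $v$ is not the bottom of a triangle with side $w$. If $N$ already contains such a triangle, we are done immediately (take $M = N$, zero moves — but the statement demands exactly one move, so in that case we would instead do one head move that destroys and recreates a, possibly different, triangle, or argue the statement tolerates this; this boundary case needs care). The other delicate point is ensuring the node playing the role of the side is genuinely a split node after the move, which forces the right choice of which edge below $u$ to subdivide — there are only two candidates (the two edges out of $u$'s surviving child structure), and a short case analysis on whether $u$'s other child is a leaf, a split, or a reticulation settles it. I would organize the proof as: (1) identify $u, v, w$; (2) handle the pre-existing-triangle case; (3) in the generic case, exhibit the target edge $f$ explicitly, verify the three conditions of Lemma~\ref{lem:validHead}, and check by the degree count in Definition~\ref{def:HeadMove} that $u$ becomes the top of a triangle in the resulting network $M$; (4) invoke Lemma~\ref{lem:HeadMoveEmbeddedTrees} to conclude $T$ is embedded in $M$.
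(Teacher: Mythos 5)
Your overall strategy is the paper's: take the reticulation edge $(u,v)$ not used by the embedding of $T$, note that moving its head preserves $T$ as an embedded tree, and reattach the head just below $u$ so that $u$ becomes the top of a triangle. Your generic case (target an outgoing edge of the other child $c(u)$ of $u$, check validity via Lemma~\ref{lem:validHead}) is exactly the paper's first case, and your observation that one cannot target $(u,c(u))$ itself because of the $u\neq s$ condition is correct.

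However, there is a genuine gap: you never actually handle the case where $c(u)$ is a \emph{leaf}. In that case $c(u)$ has no outgoing edges, so your construction has no candidate target edge at all --- you say ``there are only two candidates (the two edges out of $u$'s surviving child structure)'' and defer to ``a short case analysis,'' but when $c(u)$ is a leaf that set of candidates is empty and the required move is structurally different: one must go \emph{up} and move the head of $(u,v)$ to the other child edge of the parent $p(u)$ of $u$, producing a triangle with top $p(u)$ and side $u$. This in turn requires arguing that $p(u)$ is a split node (it cannot be the reticulation since $v$ is the unique reticulation and cannot lie above $u$, and it cannot be the root, since otherwise the other parent of $v$ would be unreachable from the root). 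None of this appears in your proposal, and it is the only nontrivial case analysis in the proof. Two smaller points: your appeal to Corollary~\ref{cor:MoveUpDown} is imprecise, since the target edge lies below $u$ but not below the head $v$; the correct justification is directly via Lemma~\ref{lem:validHead} (children of $c(u)$ are not above $u$ by acyclicity). And your worry about the pre-existing-triangle case is unnecessary: in a tier-1 network every reticulation edge is head movable (an edge from the other parent of $v$ to the child of $v$ would create a second reticulation), so the generic construction applies uniformly and no zero-move escape hatch is needed.
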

\begin{proof}
Let $(u,v)$ be the reticulation edge not used by the embedding of $T$ in $N$. If the other child $c(u)$ of $u$ (i.e. not $v$) is a split node, do a head move of $(u,v)$ to any of the outgoing edges of $c(u)$, creating a triangle. Such a move is valid because any reticulation edge in a tier-1 network is movable; a node is never equal to its child node, so in particular $u\neq c(u)$; $c(u)$ is not above $u$, so neither are the children of $c(u)$.

If $c(u)$ is not a split node, it must be a leaf because the children $v$ and $c(u)$ of $u$ are distinct, there is only one reticulation in $N$, which is $v$. Furthermore, the parent $p(u)$ of $u$ must be a split node: $p(u)$ cannot be a reticulation, as $v$ is the only reticulation node and $v$ cannot be above $u$; $p(u)$ cannot be the root because then there would be no directed path from the root to the other parent of $v$ (i.e., not $u$). Therefore, in this case we can move the head of $(u,v)$ to the other child edge of $p(u)$ to create a network $M$ with a triangle. 

In both of these cases, a triangle is created using one head move in $N$. Furthermore, the head move uses the edge that is not used by the embedding of $T$ in $N$. Hence, the embedded tree of the resulting network is $T$. 
\end{proof}

\begin{lemma}\label{lem:MoveTriangle}
Let $N$ and $N'$ be tier-1 networks with a common embedded tree $T$, then there is a sequence of head moves from $N$ to $N'$.
\end{lemma}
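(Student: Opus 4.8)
The plan is to reduce the general statement to the two special facts already established: Lemma~\ref{lem:CreateTriangle}, which lets us reach a network with a triangle using one head move while preserving the embedded tree, and Lemma~\ref{lem:TierOneTriangleEmbeddings}, which says head moves between triangle-networks preserve the embedded tree. First I would apply Lemma~\ref{lem:CreateTriangle} to $N$ to obtain a network $M$ with a triangle and embedded tree $T$, one head move from $N$; and similarly obtain $M'$ from $N'$ with a triangle and embedded tree $T$, one head move from $N'$ (equivalently, $N'$ is one head move from $M'$ — we will need to check the head move is reversible, but since head moves are symmetric in the sense that undoing a head move is again a head move, this is fine). It then suffices to produce a sequence of head moves from $M$ to $M'$.

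The core of the argument is therefore: given two tier-1 networks $M$ and $M'$, each containing a triangle and both having the same single embedded tree $T$, connect them by head moves. Here is where I would use the "highly tree-like" structure mentioned in the text. A tier-1 network with a triangle is essentially the tree $T$ with one extra edge forming the triangle — contracting the triangle (or rather, viewing $M$ as $T$ with a reticulation edge attached in a triangle configuration) identifies $M$ up to the choice of where in $T$ the triangle sits. So the task splits into two sub-tasks, matching the two lemmas advertised in the preamble before Lemma~\ref{lem:TierOneTriangleEmbeddings}: (i) slide the triangle around inside the network, i.e. move it from its location in $M$ to its location in $M'$, using head moves, all the while keeping a triangle present and hence (by Lemma~\ref{lem:TierOneTriangleEmbeddings}) keeping the embedded tree equal to $T$; and (ii) observe that once the triangle is in the right place and the embedded tree is $T$ on both sides, the networks are actually equal. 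The key primitive for (i) is the "triangle walk": given a triangle with top $t$, side $s$, bottom $r$, if the edge below $r$ leads to an edge $(a,b)$, one can move the head of a suitable triangle edge across to relocate the triangle one step down the tree, and similarly one step up or sideways; each such elementary move is a valid (indeed short) head move by Lemma~\ref{lem:validHead} / Corollary~\ref{cor:MoveUpDown}, because moving a reticulation head up or down is always allowed. Since the underlying tree $T$ is connected, finitely many such elementary relocations bring the triangle from any position to any other.

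The step I expect to be the main obstacle is the bookkeeping in (i): making precise what "the triangle is at position $x$ of $T$" means, checking that an elementary relocation of the triangle is genuinely a single head move producing a valid network (in particular that no parallel edges or cycles are created, and that the moved edge is head movable — which requires checking that the triangle's side node is not simultaneously the side of another triangle, impossible in tier 1), and confirming that after the relocation the new network still has a triangle so that Lemma~\ref{lem:TierOneTriangleEmbeddings} applies at every step. A secondary subtlety is the endpoints: after $N \to M$ and $M' \to N'$, the networks $M$ and $M'$ may have their triangles oriented differently, so I would also include the triangle-reversal move (available as a head move on the side edge of the triangle by the earlier unnamed lemma on reversing triangle directions — note reversing a triangle is exactly a distance-1 head move of the side-to-bottom edge) to normalize orientation before running the walk. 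Assembling: $N \to M \to (\text{walk} + \text{reversals}) \to M' \to N'$ is the desired sequence of head moves, completing the proof.
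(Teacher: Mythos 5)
Your proposal is correct and follows essentially the same route as the paper: create triangle-bearing networks $M$ and $M'$ one head move from $N$ and $N'$ via Lemma~\ref{lem:CreateTriangle}, then walk the triangle (moving it up/down split nodes and reversing its direction, each a single head move) from its position in $M$ to its position in $M'$, with Lemma~\ref{lem:TierOneTriangleEmbeddings} guaranteeing the embedded tree is preserved throughout. The paper delegates the elementary triangle-relocation and triangle-reversal moves to figures rather than spelling out the validity checks you flag, but the structure of the argument is identical.
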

\begin{proof}
By Lemma~\ref{lem:TierOneTriangleEmbeddings}, moving a triangle or reversing the direction of a triangle using one head move does not change the embedded trees. As shown in Figure~\ref{fig:TriangleMove} we can move a triangle up and down a split node using one head move. We can also change the direction of a triangle using one head move (Figure~\ref{fig:TriangleDirection}). This means that a triangle can be moved around a tier-1 network freely using head moves. 

Note that there exist networks $M$ and $M'$, which both have a triangle and embedded tree $T$, and which are one head move away from $N$ and $N'$ respectively (Lemma~\ref{lem:CreateTriangle}). Hence, there is a sequence of head moves from $M$ to $M'$, which simply moves the triangle into the right position. Therefore, there also exists a sequence of head moves from $N$ to $N'$ (via $M$ and $M'$).
\end{proof}

\begin{figure}[h!]
\begin{center}
\includegraphics[scale=0.5]{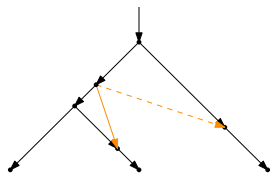}
\end{center}
\caption{The distance-3 head move used to move a triangle up or down one split node.}
\label{fig:TriangleMove}
\end{figure}

\begin{figure}[h!]
\begin{center}
\includegraphics[scale=0.5]{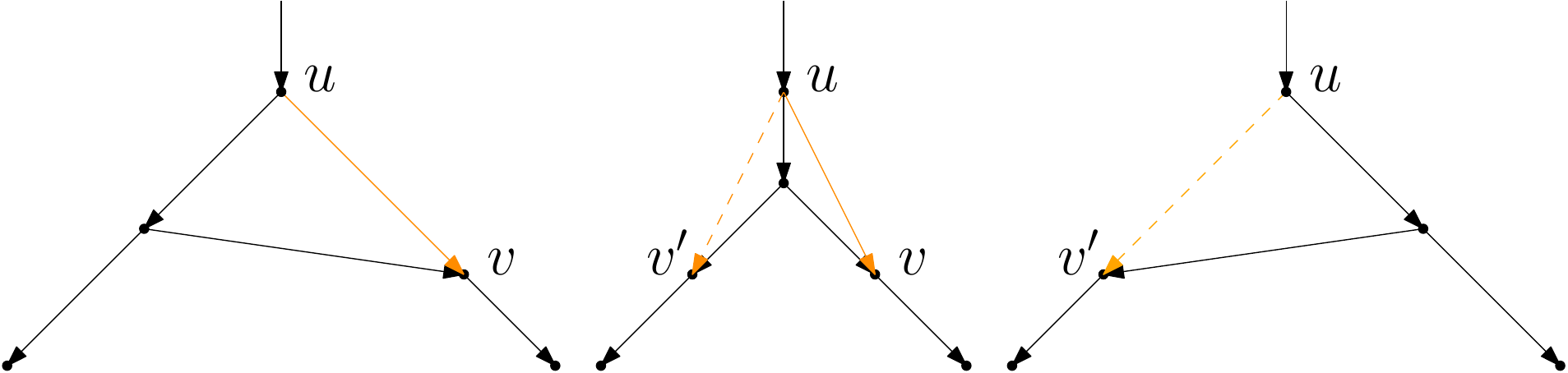}
\end{center}
\caption{The distance-1 head move used to change the direction of a triangle.}
\label{fig:TriangleDirection}
\end{figure}

\begin{lemma}\label{lem:ChangeBaseTree}
Let $N$ be a tier-1 network with embedded tree $T$, and let $T'$ be any tree on the same leaf set. Then there exists a sequence of head moves from $N$ to a network $N'$ with $T'$ as a embedded tree. 
\end{lemma}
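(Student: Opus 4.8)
The plan is to lift the classical fact that any two trees on $X$ are joined by a sequence of rooted $\mathrm{SPR}$ moves to tier-1 networks, one $\mathrm{SPR}$ move at a time. So it suffices to prove the special case: if $T'$ is obtained from $T$ by a single $\mathrm{SPR}$ move, then from a tier-1 network $N$ with embedded tree $T$ one can reach, by head moves, a tier-1 network having $T'$ as an embedded tree. Chaining this along an $\mathrm{SPR}$-path $T=T_0,T_1,\dots,T_m=T'$ — re-applying Lemma~\ref{lem:CreateTriangle} at each intermediate network with respect to the embedded tree it currently has — then gives the lemma (and if $T=T'$ there is nothing to prove).

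For the single-move step I would first apply Lemma~\ref{lem:CreateTriangle} to pass from $N$ to a network $M$ that has a triangle and still has embedded tree $T$. The point is that a tier-1 network with a triangle is just the tree $T$ with a triangle ``blown up'' at one internal node $q$: the two children of $q$ in $T$ are exactly the two subtrees dangling below the triangle, and the only edges of $T$ that become length-two paths in $M$ (instead of single edges) are the at most three edges of $T$ incident to $q$. By Lemma~\ref{lem:MoveTriangle} — sliding the triangle up and down split nodes and reversing its direction, neither of which changes the embedded tree — I may assume $q$ is any internal node of $T$ I like, with either of its two child subtrees placed below the side of the triangle.

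Now write the prescribed $\mathrm{SPR}$ move on $T$ as: prune the subtree below an edge $e=(x,y)$ of $T$ and regraft it onto an edge $g$ of $T$. The key computation is that, for a triangle with top $t$, side $s$ and reticulation $r$, moving the head of the top reticulation edge $(t,r)$ onto $e$ turns $M$ into a network whose two embedded trees are $T$ and the tree obtained from $T$ by regrafting the pruned subtree onto the edge of $T$ entering $q$; and moving the head of the side reticulation edge $(s,r)$ onto $e$ does the same but regrafts onto the edge of $T$ leaving $q$ towards the side subtree. Choosing $q$ so that $g$ is incident to $q$, reversing the triangle if needed so that the right child subtree is below the side, and using the appropriate one of the two reticulation edges, a single head move therefore yields a network with $T'$ as an embedded tree (the other embedded tree being $T$, in agreement with Lemma~\ref{lem:HeadMoveEmbeddedTrees}). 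Validity follows from Lemma~\ref{lem:validHead}: a reticulation edge of a triangle is always head movable (in tier-1 the ``forbidden'' edge of the movability condition cannot exist); its tail is not the tail of $e$, because $e$ is an honest edge of $T$ and hence not an edge of the triangle; and the head $y$ of $e$ is not above $t$ (nor above $s$), since otherwise $q$ would lie in the pruned subtree and the regraft edge $g$, being incident to $q$, would lie inside it.

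The main obstacle I expect is the bookkeeping in this last step. One must check that a \emph{nontrivial} $\mathrm{SPR}$ move really forces the cut edge $e$ to avoid $q$, so that $e$ is a genuine single edge of $M$ on which the head move can act: if $e$ were incident to $q$, then — since the only edges of $T$ touching $q$ are the $\le 3$ edges incident to $q$ — the move would either regraft a subtree onto itself or reduce to the identity. One must also verify, by directly reading off the two embedded trees of the network produced by each of the two head moves and in each placement of $g$ relative to $q$, that the non-preserved embedded tree is exactly the desired $T'$, including the degenerate placements where a child subtree of $q$ is a single leaf or where $q$ sits just below the root.
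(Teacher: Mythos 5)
Your proposal is correct and follows essentially the same route as the paper: reduce to a single SPR move via connectivity of tree space, use Lemma~\ref{lem:CreateTriangle} and Lemma~\ref{lem:MoveTriangle} to park the triangle adjacent to the regraft edge, and then perform one head move of a reticulation edge of the triangle onto the cut edge, with the same validity checks (movability of reticulation edges in tier-1, distinct tails, and acyclicity inherited from validity of the SPR move). The paper phrases the target configuration as ``tail of the reticulation edge on the image of $(x,y)$, head on an adjacent edge,'' which is exactly your ``triangle at a node $q$ incident to $g$, using the top or side reticulation edge,'' so the two descriptions coincide.
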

\begin{proof}
It suffices to prove this for any $T'$ that is one SPR move removed from $T$, because the space of phylogenetic trees with the same leaf set is connected by SPR moves. Hence, let $(u,v)$ to $(x,y)$ be the SPR move that transforms $T$ into $T'$. 

By Lemma~\ref{lem:MoveTriangle}, there is a sequence of head moves transforming $N$ into a network $M$ with the following properties: 
the tree $T$ can be embedded in $M$; $M$ has a reticulation edge $(a,b)$ where $a$ lies on the image of $(x,y)$ in $M$, and the head $b$ lies on the image of the other outgoing edge $(x,z)$ of $x$ if $x$ is not the root and on the image of one of the child edges $(y,z')$ of $y$ otherwise. 

This creates a situation where there are edges $(x,a)$, $(a,b)$, $(p,b)$ $(a,y)$ and $(b,\zeta)$ with $p=x$ and $\zeta=z$ or $p=y$ and $\zeta=z'$ . The case $p=x$ is depicted in Figure~\ref{fig:TierOneHeadTail}.

\begin{figure}[h!]
\begin{center}
\includegraphics[scale=0.5]{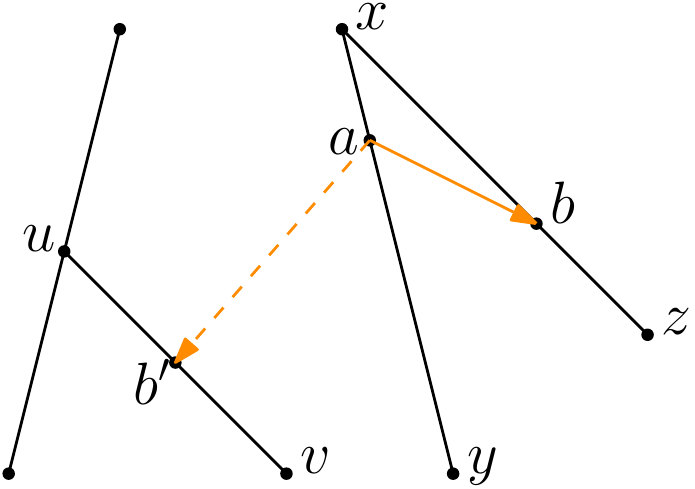}
\end{center}
\caption{The head move used to simulate the SPR move $(u,v)$ to $(x,y)$ on the embedded tree $T$ to $T'$. The triangle is already in the position described in the proof of Lemma~\ref{lem:ChangeBaseTree}. The starting network consists of the solid arrows, by doing the head move $(a,b)$ to $(u,v)$ (orange) we get the network consisting of the solid black and the dashed orange arcs. The embedded tree using the new reticulation arc corresponds to $T'$.}
\label{fig:TierOneHeadTail}
\end{figure}

Now do a head move of $(a,b)$ to the image of the $T$-edge $(u,v)$, this is allowed because any reticulation edge in a tier-1 network is movable; $b$ is not equal to the image of $u$ as $b$ is a reticulation node and the image of $u$ a split node; and the image of $v$ is not above $a$, as otherwise the tail move $(u,v)$ to $(x,y)$ could not be valid. Let $N'$ be the resulting network, and note that the embedded tree using the new reticulation edge is $T'$.
\end{proof}

\subsection{Hiding other reticulations}
Using the results about tier-one networks in the previous section, we will now prove connectivity of any tier. To do this, we will `hide' the other reticulations at the top of the network. This makes the network very treelike, except near the root. This means we concentrate all the complications in one place in the network, and we can handle all of it simultaneously.

\begin{definition}\label{def:ReticsAtTop}
A network has $k$ reticulations at the top if it has the following structure:
\begin{enumerate}
\item[1)] the node $c$: the child of the root;
\item[2)] nodes $a_i$ and $b_i$ and an edge $(a_i,b_i)$ for each $i\in\{1,\ldots,k\}$;
\item[3)] the edges $(c,a_1)$ and $(c,b_1)$;
\item[4)] for each $i\in\{1,\ldots,k-1\}$ there are edges $(a_i,a_{i+1})$ and $(b_i,b_{i+1})$ or edges $(a_i,b_{i+1})$ and $(b_i,a_{i+1})$.
\end{enumerate}
We say the there are $k$ reticulations neatly at the top if they are all directed to the same edge, i.e. we replace point $4)$ with
\begin{enumerate}
\item[4')] for each $i\in\{1,\ldots,k-1\}$ there are edges $(a_i,a_{i+1})$ and $(b_i,b_{i+1})$.
\end{enumerate}
Examples are shown in Figure~\ref{fig:ReticsTop}.
\end{definition}

\begin{figure}[h!]
\begin{center}
\includegraphics[scale=0.9]{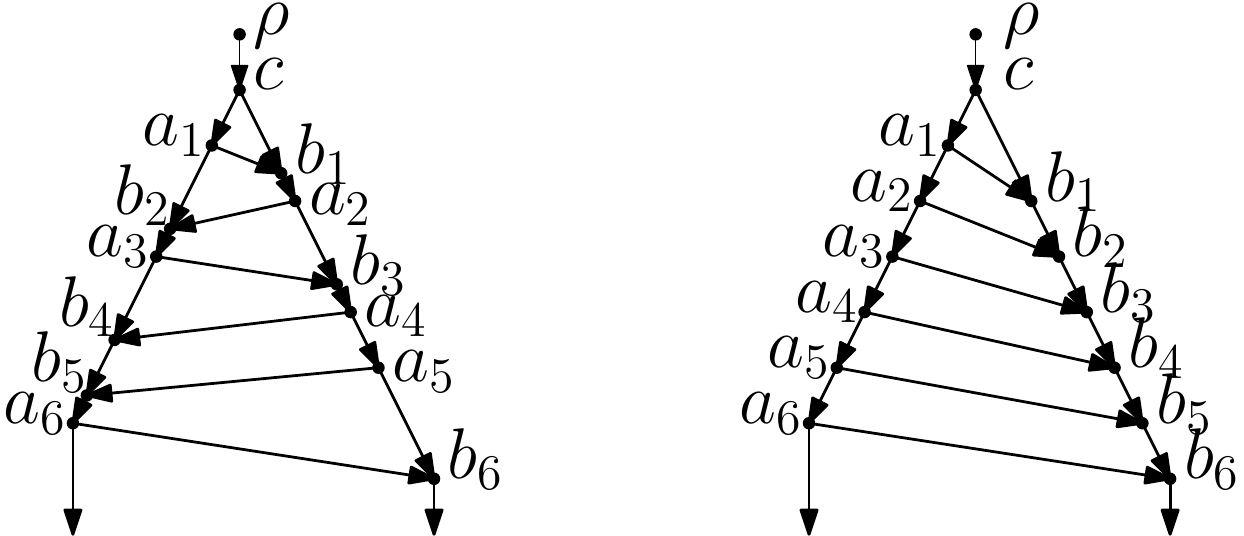}
\end{center}
\caption{Two networks with 6 reticulations at the top. In the right network, the reticulations are neatly at the top.}
\label{fig:ReticsTop}
\end{figure}

\begin{lemma}\label{lem:ReticsToTop}
Let $N$ be a tier-$k$ network. Then there is a sequence of head moves turning $N$ into a network with $k$ reticulations at the top.
\end{lemma}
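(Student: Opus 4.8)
The plan is to argue by induction on the number of reticulations that are not yet ``at the top'', using head moves to drag the heads of reticulation edges upward toward the root. Fix a tier-$k$ network $N$. Pick a reticulation $r$ with incoming edges $(p_1,r)$ and $(p_2,r)$. Because $r$ is a reticulation, at least one of its incoming edges, say $e=(p_1,r)$, is head movable: the obstruction to head-movability is precisely that $r$ is the bottom of a triangle through $p_1$, and a reticulation can be the side/bottom of at most one triangle, so at least one of $e_1=(p_1,r)$, $e_2=(p_2,r)$ is movable. By Corollary~\ref{cor:MoveUpDown}, moving the head of a head-movable edge \emph{down} is always allowed; but here I want to move heads \emph{up}, so instead I will invoke Lemma~\ref{lem:validHead} directly: a head move of $(u,v)$ to $(s,t)$ is valid provided $u\neq s$ and $t$ is not above $u$. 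Moving the head of a movable reticulation edge $e=(p_1,r)$ onto an edge $f$ that lies on a path from the root down to $p_1$ (so $f$ is ``above'' $p_1$ in the ancestry order but the tail $u=p_1$ stays fixed) keeps the tail below $f$, hence $t$ is not above $u$, so the move is valid as long as $f\neq$ the edge out of $p_1$ and $u\neq s$.

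The key steps, in order, are: (1) Choose a reticulation $r$ not yet part of the ``reticulations at the top'' block and a head-movable incoming edge $e=(p_i,r)$; (2) Move the head of $e$ up along a directed path toward the root, one edge at a time, until its new tail sits just below the child $c$ of the root (or just below the block of already-processed reticulations); each such step is a valid head move by Lemma~\ref{lem:validHead} because the tail only ever lies strictly below the target edge; (3) Then move the head of the \emph{other} incoming edge of $r$ up similarly, so that $r$'s two parents become the nodes $a_i,b_i$ attached beneath $c$ and beneath the previously-placed reticulations, realizing condition~4) of Definition~\ref{def:ReticsAtTop}; (4) Observe that these moves do not disturb the reticulations already placed at the top, since their incoming edges already emanate from the top block and are left untouched; (5) Repeat for each of the $k$ reticulations; after $k$ rounds every reticulation sits in the desired top configuration, giving a network with $k$ reticulations at the top.

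The main obstacle I anticipate is making step~(3) precise: after moving the head of the first incoming edge of $r$ to the top, I must ensure the second incoming edge is \emph{still} head movable (or becomes movable) so that it too can be raised, and that raising it does not re-create a triangle or a directed cycle lower down. This requires a careful case analysis of whether $r$ became the bottom of a newly-formed triangle during step~(2), and possibly first reversing a triangle via the lemma on reversing triangle direction so that the offending edge becomes movable. A secondary subtlety is bookkeeping: I should process reticulations in an order consistent with the ancestry partial order — dealing with ``higher'' reticulations first — so that when I push a head up past a region, I am not forced to push it through another reticulation whose edges I have not yet fixed; alternatively, one can push heads up along tree edges only, detouring around reticulations, and argue such a detour path always exists because above any node there is always a path to the root using at least one tree edge per step. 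Once the routing is pinned down, each individual move is justified by Lemma~\ref{lem:validHead} and Corollary~\ref{cor:MoveUpDown}, and the count of moves is clearly finite (polynomial in $n$ and $k$), which is all the statement requires.
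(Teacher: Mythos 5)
The central mechanism of your proof---step~(2), dragging the head of $(p_1,r)$ upward onto edges lying on a root-to-$p_1$ path---is invalid, and the error is in your reading of Lemma~\ref{lem:validHead}. If the target edge $f=(s,t)$ lies on a directed path from the root down to $p_1$, then $t$ is above $u=p_1$ (or equal to it), which is precisely what the third condition of Lemma~\ref{lem:validHead} forbids: after the move the new reticulation $v'$ subdivides $f$, so the added edge $(p_1,v')$ together with the path $v'\to t\to\cdots\to p_1$ closes a directed cycle. Your parenthetical ``keeps the tail below $f$, hence $t$ is not above $u$'' asserts the opposite of what it hypothesizes. More fundamentally, no sequence of such single-edge hops can succeed, because every intermediate network would have to contain a reticulation edge whose head is above its own tail, which is impossible in a DAG. (A smaller slip: a head move suppresses $r$ and creates a new reticulation on the target edge whose parents are $s$ and $p_1$, so ``first move one parent to the top, then the other'' does not describe what head moves do to the incident edges; and in Definition~\ref{def:ReticsAtTop} the reticulation at level $i$ \emph{is} one of the nodes $a_i,b_i$, it does not have them as its two parents.)

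The paper's proof circumvents exactly this obstruction. Taking $r$ to be a \emph{highest} reticulation not yet at the top (so that everything between its movable parent $s$ and the top block consists of split nodes), it performs one head move of $(s,r)$ onto the sibling edge of $s$, packaging the reticulation into a triangle whose top is the parent of $s$. It then transports this triangle upward one split node at a time using the triangle-moving head move of Lemma~\ref{lem:MoveTriangle} (Figure~\ref{fig:TriangleMove}); in each such move the moving edge's tail is the \emph{top} of the triangle and the target is that node's sibling edge, so the target is never above the tail and no cycle can arise---both parents of the reticulation advance together in a single move. This triangle gadget (or some equivalent device that carries both parents simultaneously) is the missing idea in your argument; your ordering observation, processing reticulations from highest to lowest, is correct and is exactly what the paper uses to guarantee that the route to the top contains only split nodes.
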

\begin{proof}
Note that the network induces a partial order on the reticulation nodes. Suppose $N$ has $l<k$ reticulations at the top. Let $r$ be a highest reticulation node that is not yet at the top. One of the two corresponding reticulation edges is head movable. Let this be the edge $(s,r)$. 

If $s$ is a child of $a_l$ or $b_l$ (as in Definition~\ref{def:ReticsAtTop}, i.e. $s$ is directly below the top reticulations), then one head move suffices to get this reticulation to the top. Otherwise there is at least one node between $s$ and the top, let $t$ be the lowest such node, that means that $t$ is the parent of $s$. Because $r$ is a highest reticulation that is not at the top, $t$ is a split node and there are edges $(t,s)$ and $(t,q)$. Moving the head of $(s,r)$ to $(t,q)$ is a valid move that creates a triangle. 

Now we move this triangle to the top with head moves as in Lemma~\ref{lem:MoveTriangle}. This way we get the reticulation to the top. Doing this for all reticulations produces a network with all reticulations at the top.
\end{proof}

The following lemma ensures that the top reticulations can be directed neatly using head moves. The moves used in sequences to achieve this are much like the ones used in Lemma~\ref{lem:MoveTriangle} to change the direction of a triangle. Like for a triangle, we should define `changing the direction'.

\begin{definition}
Let $N$ be a network with $k$ reticulations at the top. 
\emph{Changing the direction} of an edge $(a_i,b_i)$ (as in Definition~\ref{def:ReticsAtTop}) consists of changing $N$ into a network $N'$ that is isomorphic to $N$ when $(a_i,b_i)$ is replaced by $(b_i,a_i)$. Note that labels $a_j$ and $b_j$ do not coincide between $N$ and $N'$. Changing the direction of a set of such edges at the same time is defined analogously.
\end{definition}

\begin{lemma}\label{lem:NeatlyOnTop}
Let $N$ be a network with $k$ reticulations at the top. Then the reticulations can be redirected so that they are neatly on top (directed to either edge) with at most $k$ head moves. The network below $a_k$ and $b_k$ (notation as in Definition~\ref{def:ReticsAtTop}) is not altered in this process.
\end{lemma}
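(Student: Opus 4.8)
The plan is to reduce the statement to a trivial piece of bookkeeping about a binary string. First I would record that, with the labelling fixed in Definition~\ref{def:ReticsAtTop}, every $a_i$ is necessarily a split node and every $b_i$ a reticulation: $b_i$ receives the edge $(a_i,b_i)$ together with an edge coming from level $i-1$ (or from $c$, when $i=1$), so it has indegree $2$; and $a_i$ carries the edge $(a_i,b_i)$ together with an outgoing edge to the next level (or into the rest of the network, when $i=k$), so it has outdegree $2$. Hence for each $i\in\{1,\dots,k-1\}$ exactly one of the two alternatives in item~4) of Definition~\ref{def:ReticsAtTop} occurs; call link~$i$ \emph{aligned} if it is the pair $(a_i,a_{i+1}),(b_i,b_{i+1})$ and \emph{crossed} otherwise. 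Then ``$k$ reticulations neatly on top'' is exactly the configuration in which every link is aligned --- with the understanding that interchanging the names of the two rails yields the other admissible ``neat'' form, which is what ``directed to either edge'' allows for. So it suffices to turn an arbitrary aligned/crossed pattern into the all-aligned one by head moves, each flipping a single link and touching nothing below $a_k$ and $b_k$.

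Next I would exhibit such a per-link move. Fix $i$, let $x$ be the parent of $b_i$ on level $i-1$ (set $x=c$ when $i=1$), and let $q$ be the endpoint of the outgoing edge of $a_i$ other than the rung edge $(a_i,b_i)$. I claim the head move of $(x,b_i)$ to $(a_i,q)$ flips link~$i$ and changes nothing else in the top structure. It is valid by Lemma~\ref{lem:validHead}: $(x,b_i)$ is head movable since $b_i$ is a reticulation whose other parent is $a_i$ and whose child, lying on level $i+1$, is distinct from $q$ (so $a_i$ has no edge to it); $x\neq a_i$ as they sit on different levels; and $q$, lying on level $i+1$ (or below), is not above $x$, which sits on level $i-1$. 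Running through the four steps of Definition~\ref{def:HeadMove}: after deleting $(x,b_i)$ and suppressing $b_i$, the edge $(a_i,q)$ has been replaced by an edge from $a_i$ to the old down-target of $b_i$, while the node $b'_i$ that subdivided $(a_i,q)$ has become the reticulation of rung~$i$, with incoming edges from $a_i$ and $x$ and outgoing edge to $q$. Relabelling $b_i:=b'_i$, one checks directly that items 1)--4) of Definition~\ref{def:ReticsAtTop} still hold, that link~$i-1$ and the links $i+1,\dots,k-1$ are unaffected, and --- crucially --- that $a_k$ and $b_k$ together with their outgoing edges, and therefore the entire network below them, are never touched; the sole effect is that the two down-targets of $a_i$ and $b_i$ have been swapped, i.e.\ link~$i$ has flipped between aligned and crossed. (This is in fact a distance-$1$ head move, just like the triangle-direction flip in the proof of Lemma~\ref{lem:MoveTriangle}.)

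With these moves in hand the lemma is immediate: apply the move for link~$i$ to each $i$ for which link~$i$ is crossed. Each application flips exactly its own link and preserves all the others, so the order is irrelevant; there are at most $k-1\le k$ of them; and the result is a network with all links aligned, i.e.\ with the $k$ reticulations neatly on top, and with the network below $a_k$ and $b_k$ unchanged.

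The main obstacle is the second paragraph: choosing one head move that flips a single link, checking its validity, and --- the fiddly part --- verifying that it restores the full ``$k$ reticulations at the top'' structure and in particular leaves $a_k$, $b_k$ and everything below them alone. Once this local move is established, the global argument is pure counting.
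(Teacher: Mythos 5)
Your proof is correct and uses exactly the move the paper uses (the head move of $(u_{i-1},b_i)$ to $(a_i,v_{i+1})$, one per level, each a valid distance-$1$ move), so it is essentially the paper's own argument. The only difference is bookkeeping: you record the effect of each move as flipping the single link $i$ while fixing all others, whereas the paper records the isomorphic description ``reverse the direction of the rung at level $i$ and of all rungs above it''; these two descriptions agree, and both yield at most $k-1\le k$ moves that leave everything below $a_k$ and $b_k$ untouched.
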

\begin{proof}
We redirect the top reticulations starting with the lowest one. The move $(u_{i-1},b_i)$ to $(a_i,v_{i+1})$ with $u_{i-1}$ the parent of $b_i$ that is not $a_i$ and $v_{i+1}$ the child of $a_i$ that is not $b_i$ (Figure~\ref{fig:topDirectionChange}) changes the direction of the chosen edge $(a_i,b_i)$ and all the reticulation edges above; it leaves all other edges fixed as they were.
\end{proof}

\begin{figure}[h!]
\begin{center}
\includegraphics[scale=1.0]{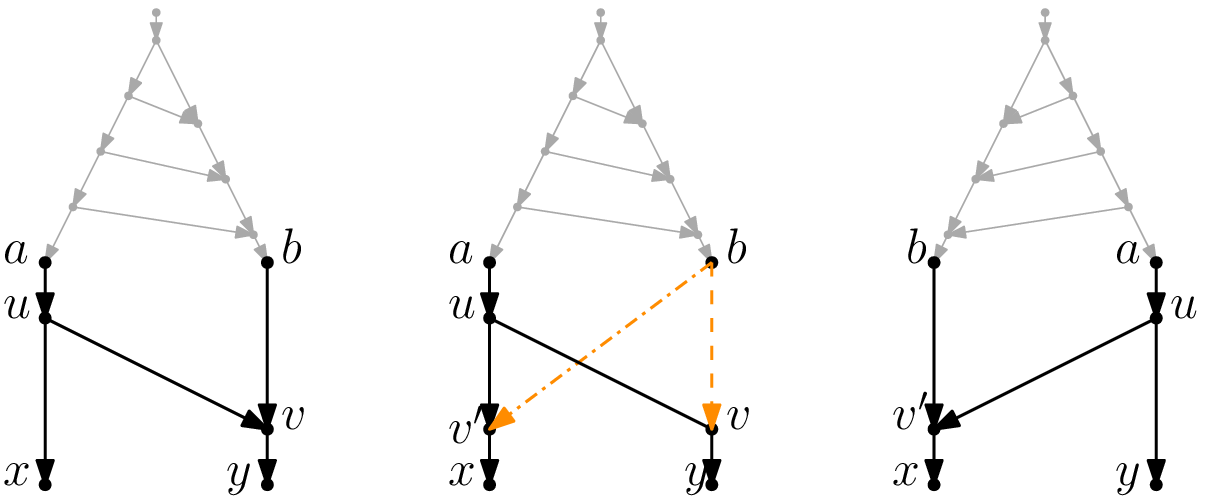}
\end{center}
\caption{The move used in Lemma~\ref{lem:NeatlyOnTop} to redirect the $i$ highest reticulations at the top: head move $(u_{i-1},b_i)$ to edge $(a_i,v_{i+1})$. This move changes the direction of all reticulations at the top that are higher than the moved edge. The part of the network below $a_i$ and $b_i$ does not change.}
\label{fig:topDirectionChange}
\end{figure}

Note that networks with all reticulations at the top are highly tree like. Like tier-1 networks with a triangle, they have only one embedded tree. Now, as in the case of tier-1 networks, we want to use one reticulation to change this embedded tree. To do this, we use the lowest reticulation arc $(a_k,b_k)$ to create a triangle that can move around the lower part of the network. 

\begin{definition}
Let $N$ be a network with $k$ reticulations at the top (notation as in Definition~\ref{def:ReticsAtTop}) and a split node $x$ directly below $a_k$. \emph{Moving a triangle from the top} consists of creating a triangle at $x$ by a head of $(a_k,b_k)$ to one of the outgoing edges $(x,c(x))$ of $x$. \emph{Moving a triangle to the top} is the reverse of this operation.
\end{definition}

\begin{observation}
A tier-$k$ network with $k-1$ reticulations at the top and the $k$th reticulation at the bottom of a triangle has exactly one embedded tree.
\end{observation}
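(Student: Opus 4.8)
The plan is to use that an embedded tree of a binary phylogenetic network is obtained by keeping, at each reticulation, exactly one of its two incoming edges (deleting the other and suppressing the resulting degree-two vertices), so that the embedded tree is determined by these choices; since every network has at least one embedded tree, it suffices to show that $N$ has at most one. I would decompose $N$ into three pieces. Let $G$ be the part of $N$ spanned by the vertices $c, a_1,\dots,a_{k-1}, b_1,\dots,b_{k-1}$ realising the $k-1$ reticulations at the top (Definition~\ref{def:ReticsAtTop} with $k$ replaced by $k-1$). Exactly two edges of $N$ leave this vertex set — the second out-edge of $a_{k-1}$ and the out-edge of $b_{k-1}$; write $p_1, p_2$ for their heads — because every other out-edge of a top vertex lands on another top vertex, by the chain structure of Definition~\ref{def:ReticsAtTop} (in both the ``neat'' and the ``swapped'' variant of point~4). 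Let $H_1, H_2$ be the sub-networks of $N$ rooted at $p_1, p_2$, labelled so that the triangle sits inside $H_1$ (it lies in one of them, being below the top); then $V(G)$, $V(H_1)$, $V(H_2)$ and the root partition $V(N)$, with the edges splitting accordingly, and $H_1, H_2$ are vertex-disjoint because $N$ is acyclic and the only reticulation they contain, the bottom of the triangle, has both its parents in $H_1$.

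Each piece has a unique embedded tree. Turning $G$ into a binary network $\hat G$ on two leaves by attaching the two exit edges to new leaves and giving $c$ a new root, $\hat G$ has a unique embedded tree because there is only one phylogenetic tree on two leaves. The piece $H_2$ carries no reticulation, so it is a tree and is its own unique embedded tree. The piece $H_1$ carries exactly one reticulation, the bottom of the triangle, so (after attaching a root) it is a tier-1 network with a triangle and hence has a unique embedded tree — the fact already invoked in the proof of Lemma~\ref{lem:TierOneTriangleEmbeddings}.

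To finish, any embedded tree $T$ of $N$ restricts to an embedded tree of each of $\hat G$, $H_1$, $H_2$ (otherwise some region below an exit of $G$, hence some leaves of $N$, would be unreachable from the root), and conversely $T$ is recovered by substituting the embedded trees of $H_1$ and $H_2$ for the two leaves of the embedded tree of $\hat G$. Each ingredient being unique, $T$ is uniquely determined, so $N$ has exactly one embedded tree.

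I expect the main obstacle to be purely organisational: checking that exactly two edges leave the top gadget and that the three pieces genuinely partition $N$ together with its embedded trees, handling the swapped variants of Definition~\ref{def:ReticsAtTop} uniformly; and — the one genuinely fiddly point — the degenerate configurations in which the triangle shares a vertex with the top structure (for instance $k=2$ with triangle $(a_1,b_1,r)$, so that the two exits of $G$ coincide or one $H_i$ is empty). There $G$ should be enlarged to absorb the triangle, after which the gadget is a network on at most two leaves and the same argument applies; alternatively one checks directly that the embedding is then forced. None of this is deep — the structural content lies entirely in the two facts that a network on two leaves and a tier-1 network with a triangle each have exactly one embedded tree.
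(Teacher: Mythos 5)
The paper offers no proof of this statement at all: it is labelled an \emph{Observation} and left as self-evident, the implicit reasoning being that every reticulation of such a network sits either in the top chain (where any choice of incoming edge collapses to the same cherry over the two exits) or at the bottom of the triangle (where the two choices collapse by the tier-1 fact already used in Lemma~\ref{lem:TierOneTriangleEmbeddings}). Your decomposition into $\hat G$, $H_1$, $H_2$ is a correct formalization of exactly that reasoning, and each of the three reductions you invoke (a two-leaf gadget has only the cherry as embedded tree; a reticulation-free piece is its own embedded tree; a tier-1 network with a triangle has one embedded tree) is sound, so the argument goes through. Two small points deserve to be made explicit rather than asserted: first, that the restriction of an embedded tree of $N$ to $\hat G$ really contains \emph{both} exit edges requires observing that each $H_i$ contains a leaf of $N$ and that the only arc entering $H_i$ is its exit edge; second, your one-line justification of the disjointness of $H_1$ and $H_2$ (``the bottom of the triangle has both its parents in $H_1$'') silently uses that the triangle's top $t$ cannot lie in $G$ outside the degenerate configurations --- the relevant fact is that $t\to s\to r$ forces $t$, $s$, $r$ into the same piece once $t\notin G$, and that $t\in G$ forces $t=a_{k-1}$, $s=b_{k-1}$, $r=p_1=p_2$ (or $t=c$ when $k=1$), which are precisely the degenerate cases you flag. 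Your proposed fix there --- absorbing the triangle into the gadget, which then has at most two exits and still a unique embedded tree --- does work, so the proof is complete modulo writing that case out.
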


\begin{lemma}\label{lem:GeneralMovingTriangle}
Let $N$ and $N'$ be tier-$k$ networks on the same leaf set with $k-1$ reticulations at the top and the $k$-th reticulation at the bottom of a triangle. Suppose $N$ and $N'$ have the same embedded trees, then there exists a sequence of head moves from $N$ to $N'$.
\end{lemma}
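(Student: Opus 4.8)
The plan is to reduce both $N$ and $N'$ to a canonical form: a network with $k$ reticulations \emph{neatly} at the top (Definition~\ref{def:ReticsAtTop}), which is determined up to isomorphism by the two subtrees hanging below the top gadget, and then to observe that equal embedded trees force these two canonical networks to coincide up to a possible swap of those two subtrees.

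First I would pass from $N$ (with $k-1$ reticulations at the top and the $k$-th reticulation at the bottom of a triangle) to a network with $k$ reticulations at the top. The part of $N$ strictly below $a_{k-1},b_{k-1}$ is treelike apart from the triangle, so every node between the triangle and $a_{k-1}$ (resp. $b_{k-1}$) is a split node. Exactly as in the proof of Lemma~\ref{lem:MoveTriangle}, one can walk the triangle up and down these split nodes and reverse its direction with single head moves; these moves are valid by Corollary~\ref{cor:MoveUpDown}, and — since such a network has a \emph{unique} embedded tree (the Observation preceding this lemma) — Lemma~\ref{lem:HeadMoveEmbeddedTrees} guarantees that this embedded tree, call it $T$, is preserved by each of them. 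Walking the triangle until its top sits directly below $a_{k-1}$, I can then ``move the triangle to the top'', obtaining a network with $k$ reticulations at the top, and finally apply Lemma~\ref{lem:NeatlyOnTop} to redirect them neatly. Call the resulting network $\hat N$: it has $k$ reticulations neatly at the top, is reachable from $N$ by head moves, and has embedded tree $T$. Running the same procedure on $N'$ yields $\widehat{N'}$ with $k$ reticulations neatly at the top and, since $N'$ has the same embedded tree as $N$ by hypothesis and this is preserved throughout, also embedded tree $T$.

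To finish, note that a network with $k$ reticulations neatly at the top is, below the top gadget, the disjoint union of two trees $S$ and $S'$ — the subtree hanging from the free child of $a_k$ and the subtree hanging from the child of $b_k$ — and it has exactly one embedded tree: the chain of split nodes $a_1,\dots,a_k$ and the chain of reticulations $b_1,\dots,b_k$ both collapse under suppression, leaving the tree whose root has a single child, which in turn has two children rooting $S$ and $S'$. Hence $\hat N$ is determined up to isomorphism by the unordered pair $\{S,S'\}$, and this pair is recovered from the embedded tree $T$ as the two subtrees of $T$ hanging below the (unique) child of the root's child. Since $\hat N$ and $\widehat{N'}$ share the embedded tree $T$, they share the pair $\{S,S'\}$, so either $\hat N\cong\widehat{N'}$, or they differ only in which of $S,S'$ hangs on the $a$-side; in the latter case one further head move — redirecting the top $k$ reticulations via Lemma~\ref{lem:NeatlyOnTop} with $i=k$ — swaps the two subtrees and turns $\hat N$ into $\widehat{N'}$. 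Concatenating the head moves $N\rightsquigarrow\hat N$, the optional swap move, and the reverse of the head moves $N'\rightsquigarrow\widehat{N'}$ (head moves are reversible) yields the required sequence from $N$ to $N'$.

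The conceptual skeleton above is short, but the work is concentrated in the first step: one must verify that the triangle can always be walked into a position from which ``moving it to the top'' is legal, that every intermediate move is valid, and that the network stays in the class ``$k-1$ reticulations at the top plus one triangle'' throughout, so that the Observation — and hence the identification of the preserved embedded tree via Lemma~\ref{lem:HeadMoveEmbeddedTrees} — keeps applying. This is where the degenerate configurations must be dispatched: the triangle already adjacent to the top gadget, one of $S,S'$ being a single leaf, or the triangle's top coinciding with $a_{k-1}$, handled much as the analogous cases in Lemmas~\ref{lem:CreateTriangle} and~\ref{lem:ReticsToTop}.
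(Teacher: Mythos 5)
Your proof is correct, but it takes a genuinely different route from the paper's. The paper transforms $N$ \emph{directly} into $N'$: it observes that the network is the top gadget plus two pendant subtrees (one containing the triangle), that the triangle can be moved anywhere — within a pendant subtree by the moves of Lemma~\ref{lem:MoveTriangle}, and across to the other pendant subtree by moving it to the top, redirecting the top reticulations, and bringing it down on the other side — and that every intermediate network has a unique embedded tree, so by Lemma~\ref{lem:HeadMoveEmbeddedTrees} that tree is preserved; one then simply parks the triangle where it sits in $N'$ and redirects the top reticulations as needed. You instead normalize both $N$ and $N'$ to the canonical ``$k$ reticulations neatly at the top'' form and close the loop with a uniqueness argument: that canonical network is determined by its unique embedded tree up to swapping the two pendant subtrees, and the swap costs one further head move (the Lemma~\ref{lem:NeatlyOnTop} move with $i=k$), after which reversibility of head moves finishes the job. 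What your route buys is that the triangle only ever has to be pushed \emph{up}, never steered down to a prescribed location or across to the other pendant subtree; the price is the extra (correct) argument that the embedded tree determines the unordered pair $\{S,S'\}$ and hence the canonical form. Two small repairs: if the triangle lives in the pendant subtree hanging from $b_{k-1}$, you must first redirect the top $k-1$ reticulations so that it sits on the $a$-side before ``moving the triangle to the top'' is the operation the paper defines (the paper's proof makes the analogous redirection explicit); and ``the two subtrees of $T$ hanging below the (unique) child of the root's child'' should read ``below the root's child'' — your earlier description of $T$ is the right one. Neither issue affects validity, and your level of rigor matches the paper's own.
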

\begin{proof}
Note that the network consists of $k-1$ reticulations at the top, and two pendant subtrees (the same as the pendant subtrees as of the top split node in the embedded tree), one of which contains a triangle. Moving a triangle through one of these subtrees is the same as moving a triangle though a tier-1 network, so this is possible by Lemma~\ref{lem:MoveTriangle}. To be able to move the triangle anywhere, we need to be able to move it from the one pendant subtree to the other one. This can be done by moving the triangle to the top, and then moving it down on the other side after redirecting all the top reticulations. Note that none of these moves change the embedded tree: each of the intermediate networks has exactly one embedded tree, and doing a head move keeps at least one embedded tree. Hence, moving the triangle to the right place and then redirecting the top reticulations as needed gives a sequence from $N$ to $N'$.
\end{proof}

\begin{lemma}\label{lem:GeneralChangeBaseTree}
Let $N$ and $N'$ be tier $k>0$ networks on the same leaf set with all reticulations neatly at the top. Then there exists a sequence of head moves turning $N$ into $N'$. 
\end{lemma}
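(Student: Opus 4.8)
The plan is to mimic the tier-one argument of Lemma~\ref{lem:ChangeBaseTree}, substituting Lemma~\ref{lem:GeneralMovingTriangle} for Lemma~\ref{lem:MoveTriangle} and using the machinery for moving reticulations to the top to clean up afterwards. Since $N$ and $N'$ have all reticulations neatly at the top, each has a unique embedded tree; call them $T$ and $T'$. I would induct on the SPR distance $d_{\mathrm{SPR}}(T,T')$ (finite, since phylogenetic tree space is connected by SPR moves). For the base case $d_{\mathrm{SPR}}(T,T')=0$, i.e. $T=T'$: pop a triangle off the top of $N$ and off the top of $N'$ by ``moving a triangle from the top'' (if the required split node below $a_k$ is absent, first redirect with Lemma~\ref{lem:NeatlyOnTop} so the non-leaf pendant subtree sits there; the fully degenerate $n=2$ case is immediate from Lemma~\ref{lem:NeatlyOnTop}). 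The two resulting networks each have $k-1$ reticulations at the top, the $k$-th at the bottom of a triangle, and embedded tree $T$, so by Lemma~\ref{lem:GeneralMovingTriangle} they are connected; pushing the triangle back to the top (the reverse operation, again a head move) then connects $N$ to $N'$.

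For the inductive step $d_{\mathrm{SPR}}(T,T')=d\ge 1$, choose a tree $T_1$ with $d_{\mathrm{SPR}}(T,T_1)=1$ and $d_{\mathrm{SPR}}(T_1,T')=d-1$; it suffices to transform $N$ by head moves into \emph{some} network $N_1$ with all reticulations neatly at the top and embedded tree $T_1$, and then invoke the induction hypothesis on $N_1$ and $N'$. Let $(u,v)$ to $(x,y)$ be the SPR move taking $T$ to $T_1$. Pop a triangle off the top of $N$, and then, by Lemma~\ref{lem:GeneralMovingTriangle}, move the triangle through the treelike part of the network (which is just $T$ with a triangle inserted) into the configuration of Figure~\ref{fig:TierOneHeadTail}: a reticulation edge $(a,b)$ whose tail $a$ subdivides the image of $(x,y)$ and whose head $b$ subdivides the image of the other child edge $(x,z)$ of $x$, or — if $x$ is the root of $T$ — the image of a child edge $(y,z')$ of $y$. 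All intermediate networks here have $k-1$ reticulations at the top and the $k$-th at the bottom of a triangle, hence a single embedded tree, namely $T$.

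Now carry out the head move of $(a,b)$ to the image of $(u,v)$. This is valid by Lemma~\ref{lem:validHead}, exactly as argued in Lemma~\ref{lem:ChangeBaseTree}: $(a,b)$, being the side-to-bottom edge of a triangle, is head movable; $a$ is a split node, so it differs from the image of $u$; and the image of $v$ is not above $a$, since $v$ is not above $x$ (validity of the SPR move) and $a$ lies below $x$. Call the result $M^{*}$: its embedded tree obtained via the new reticulation edge $(a,b')$ is $T_1$, while the embedding using the other incoming edge of the new reticulation $b'$ — the one emanating from the image of $u$ — still realizes $T$. It remains to lift $b'$ back to the top without losing $T_1$. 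The key point is to \emph{always move the incoming edge of the migrating reticulation that descends from $u$}: by Lemma~\ref{lem:HeadMoveEmbeddedTrees} such a head move preserves the embedded tree that uses the reticulation's \emph{other} incoming edge, which is $T_1$, and this $u$-side edge is always head movable because its regraft endpoint is a freshly subdivided edge. Running the procedure of Lemma~\ref{lem:ReticsToTop} with this choice — one head move if $u$ is already just below the top reticulations, otherwise a head move creating a triangle followed by carrying that triangle to the top via Lemma~\ref{lem:MoveTriangle}-type moves — yields a network with all $k$ reticulations at the top; as such networks have a unique embedded tree and we have preserved $T_1$, that tree is $T_1$. A final application of Lemma~\ref{lem:NeatlyOnTop} makes the reticulations neatly on top, giving the desired $N_1$.

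The main obstacle I anticipate is the cleanup step: verifying that the migrating reticulation can always be lifted while keeping $T_1$ embedded. This rests on two observations made earlier — that a network with all reticulations at the top (neat or not) has exactly one embedded tree, so once a triangle has been formed every subsequent head move is automatically tree-preserving, and that the $u$-side incoming edge of the migrating reticulation stays head movable throughout — so the difficulty is really bookkeeping rather than a new idea. The secondary chore is the usual list of boundary cases ($x$ the root of $T$, $u$ adjacent to the root, a pendant subtree consisting of a single leaf so that no split node is available to receive the popped triangle, and $n=2$); each is dealt with exactly as in Lemmas~\ref{lem:ChangeBaseTree} and~\ref{lem:ReticsToTop}, by first redirecting the top reticulations with Lemma~\ref{lem:NeatlyOnTop} and/or regrafting onto a child edge instead.
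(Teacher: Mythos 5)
Your overall strategy---reduce to a single SPR move between the unique embedded trees, park the moving triangle in the configuration of Figure~\ref{fig:TierOneHeadTail}, perform one head move of $(a,b)$ onto the image of $(u,v)$, and then lift the reticulation back to the top---is the same as the paper's, and your care about which incoming edge of the migrating reticulation to move during the cleanup (so that $T_1$ rather than $T$ is the embedded tree that survives) is a reasonable elaboration of a point the paper treats tersely.

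The gap is the case you dismiss as a routine boundary case: an SPR move $(u,v)$ to the \emph{root edge} $(\rho,c)$ of $T$. You claim it is handled ``exactly as in Lemma~\ref{lem:ChangeBaseTree}, by regrafting onto a child edge instead,'' but the tier-1 fix only changes where the \emph{head} $b$ of the triangle edge sits (a child edge of $y$ instead of the sibling edge of $y$); the \emph{tail} $a$ must still subdivide the image of the target edge $(x,y)=(\rho,c)$. In a tier-$k$ network with $k-1$ reticulations at the top, the image of the root edge of $T$ lies inside the top reticulation gadget (between the network root and the chain of nodes $a_i$, $b_i$), whereas Lemma~\ref{lem:GeneralMovingTriangle} only lets you place the moving triangle \emph{below} the top gadget. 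So there is no admissible edge on which to park $a$, and the head move you need cannot be set up. This is precisely the case the paper singles out as ``the only case that takes special attention'' and resolves differently: it replaces the single root-edge SPR move by two SPR moves onto edges directly below the top gadget (first regrafting $(u,v)$ below the top on one side, then shuffling the pendant subtrees to the other side), each of which is of the generic type and hence realizable by the Figure~\ref{fig:TierOneHeadTail} technique, followed by a redirection of the top reticulations via Lemma~\ref{lem:NeatlyOnTop}. Without this workaround---or an argument that root-edge SPR moves can always be avoided along some SPR path from $T$ to $T'$, which you do not give---your inductive step is incomplete.
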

\begin{proof}
This works exactly as for tier-1 networks (Lemma~\ref{lem:ChangeBaseTree}): $N$ and $N'$ again both have exactly one embedded tree, and we aim to change this embedded tree. Like in Lemma~\ref{lem:ChangeBaseTree}, we assume that these embedded trees are one SPR move apart. In this case, we can move triangles anywhere below the $k-1$ reticulations at the top by Lemma~\ref{lem:GeneralMovingTriangle}. We use the lowest one of the top reticulation edges $(a_k,b_k)$ to create the moving triangle. The only case that takes special attention is where we do an SPR move to the root edge of the tree. The case where the SPR move places an edge at any other location is proved entirely analogous to Lemma~\ref{lem:ChangeBaseTree}.

Let $(u,v)$ to $(\rho,c)$ be such an SPR move to the root edge of the tree, and let $x$ and $y$ be the nodes directly below the top: $x$ on the side of the reticulations $b_i$, and $y$ on the side of the split nodes $a_i$. Do the SPR move of $(u,v)$ to $(a_k,x)$, that is, to an edge directly below the top. This produces the network with $k-1$ reticulations at the top, and (in Newick notation) embedded tree $((T\downarrow x,T\downarrow v),T\downarrow y)$, where $T\downarrow z$ denotes the part of tree $T$ below $z$. Then do the SPR move $(u',x)$ to $(b_k,y)$, the other side of the top, producing a network with $k-1$ reticulations at the top and embedded tree $((T\downarrow x,T\downarrow y),T\downarrow v)$. 

This creates the desired network with $v$ below one side of the top, and $x$ and $y$ on the other side. Both these SPR moves are performed using the technique of Lemma~\ref{lem:ChangeBaseTree}. After the SPR moves, we move the triangle back to the top without changing the embedded tree, and redirect the top reticulations as needed to produce $N'$.
\end{proof}

\begin{theorem}\label{the:HeadConnectTiers}
tier-$k$ of phylogenetic networks is connected by head moves, for all $k>0$.
\end{theorem}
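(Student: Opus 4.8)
The plan is to assemble the lemmas of this section into a single chain of head moves, routing any two networks of the tier through a common "hub" network whose reticulations are neatly at the top. Before doing so I would record the elementary fact that head moves are \emph{reversible}: if one head move turns $N$ into $M$, then moving the head of the relocated edge back to its original position is again a valid head move, since the resulting digraph is $N$, which is a network by hypothesis. Hence "being joined by a sequence of head moves" is a symmetric and transitive relation on tier-$k$, so it suffices to show that every tier-$k$ network can be joined by head moves to \emph{some} network of the tier that has its reticulations neatly at the top.

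To set up the hub, fix $k>0$ and let $N$ be any tier-$k$ network with leaf set $X$. By Lemma~\ref{lem:ReticsToTop} there is a sequence of head moves from $N$ to a network $N_1$ with $k$ reticulations at the top in the sense of Definition~\ref{def:ReticsAtTop}. Applying Lemma~\ref{lem:NeatlyOnTop} to $N_1$ gives a further sequence of head moves ending at a network $N_2$ with the $k$ reticulations neatly at the top. Now, given two tier-$k$ networks $N$ and $N'$ on the same leaf set, carry out this procedure for each to obtain head-move sequences $N\rightsquigarrow N_2$ and $N'\rightsquigarrow N'_2$ with $N_2$ and $N'_2$ having all reticulations neatly at the top. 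By Lemma~\ref{lem:GeneralChangeBaseTree} there is a sequence of head moves from $N_2$ to $N'_2$. Concatenating $N\rightsquigarrow N_2\rightsquigarrow N'_2$ with the reversal of the sequence $N'\rightsquigarrow N'_2$ (legitimate by reversibility) yields a sequence of head moves from $N$ to $N'$, which is exactly the claim.

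I do not expect a genuine obstacle here: the substantive work has already been done in pushing the reticulations up (Lemma~\ref{lem:ReticsToTop}), straightening them (Lemma~\ref{lem:NeatlyOnTop}), and reshuffling the underlying tree while the reticulations sit at the top (Lemmas~\ref{lem:GeneralMovingTriangle} and~\ref{lem:GeneralChangeBaseTree}). The only points worth a sentence of care are that the composition of Lemma~\ref{lem:ReticsToTop} with Lemma~\ref{lem:NeatlyOnTop} literally produces a network satisfying the hypothesis "all reticulations neatly at the top" demanded by Lemma~\ref{lem:GeneralChangeBaseTree}, and the use of $k>0$, which is what guarantees there is at least one reticulation available to serve as the moving triangle in the earlier lemmas (for $k=0$ no head-movable edges exist at all).
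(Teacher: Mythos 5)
Your proposal is correct and follows the paper's own proof exactly: reduce both networks to the "reticulations neatly at the top" form via Lemmas~\ref{lem:ReticsToTop} and~\ref{lem:NeatlyOnTop}, then connect the two resulting networks with Lemma~\ref{lem:GeneralChangeBaseTree}. Your explicit remark on the reversibility of head moves is a small but welcome addition that the paper leaves implicit.
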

\begin{proof}
Let $N$ and $N'$ be two networks in the same tier with the same leaf set. By Lemmas~\ref{lem:ReticsToTop} and~\ref{lem:NeatlyOnTop} we can turn both networks into networks $N_n$ and $N_n'$ with all reticulations neatly at the top using only head moves.
Now Lemma~\ref{lem:GeneralChangeBaseTree} tells us that we can turn $N_n$ into $N_n'$ using only head moves.
\end{proof}


\section{Local head moves}\label{sec:LocalConnectivity}
\subsection{Distance-1 is not enough}
One might hope that distance-1 head moves are enough to go from some network to any other network in the same tier. For tail moves such a result has been proved \citep{janssen2017exploring}, so it seems reasonable to expect a similar result for head moves. However, a simple example (Figure~\ref{fig:NoDist1HeadMoveExample}) shows that distance-1 head moves are not enough to connect the tiers of phylogenetic network space. This example is of a tier-1 network with 3 leaves, but it can easily be generalized to higher tiers and more leaves.

\begin{figure}[h!]
\begin{center}
\includegraphics[scale=0.8]{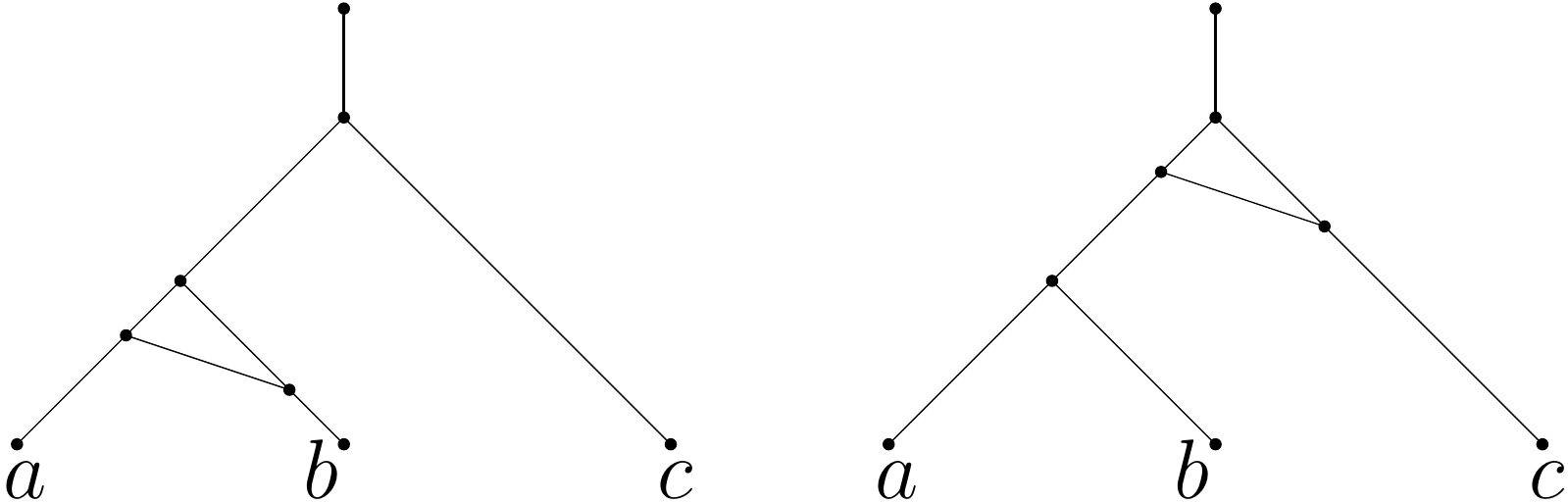}
\end{center}
\caption{There is no sequence of distance-1 head moves between these two networks.}
\label{fig:NoDist1HeadMoveExample}
\end{figure}

\subsection{Distance-2 suffices}
To prove the connectivity of tiers of network space using distance-2 head moves, we repeat the steps for general networks given in Section~\ref{sec:connectivity}. In particular, we prove that any step in the general proof can be taken using distance-2 head moves.

\begin{lemma}\label{lem:TriangleD2}
Triangles can be moved up and down split nodes and to/from the top using distance-2 head moves.
\end{lemma}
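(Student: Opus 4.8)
The plan is to go through the triangle operations used in the connectivity proof of Section~\ref{sec:connectivity} — sliding a triangle up or down one split node, and taking a triangle to or from the top — and show each can be carried out by a short sequence of head moves of distance at most $2$, rather than the single distance-$3$ (or longer) head move used there. Changing the direction of a triangle is already a distance-$1$ head move (Figure~\ref{fig:TriangleDirection}), so it is available as an auxiliary step.

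For sliding a triangle past a split node, recall that the distance-$3$ head move of Figure~\ref{fig:TriangleMove} moves the head of one reticulation edge of the triangle along an undirected path of length $4$ that passes through the top $t$, the side $s$, the reticulation $r$, the split node $z$ being passed, and a child of $z$. I would split this into two steps: a first head move relocates the reticulation head onto an edge incident to $z$ — since the prefix $t$--$s$--$r$--$z$ has length $3$, this is a distance-$2$ head move — yielding an intermediate network in which $z$ is the top of a ``half-slid'' triangle; a second distance-$\le 2$ head move then moves the triangle to its final position, with a direction change of the triangle inserted before and/or after when a single distance-$2$ move does not suffice. Each intermediate move either moves a reticulation head downwards or onto a nearby edge that is not above its tail, so validity follows from Corollary~\ref{cor:MoveUpDown} and Lemma~\ref{lem:validHead}: movability rules out parallel edges and the ``not above'' condition rules out cycles.

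For taking a triangle to or from the top, the direct move — moving the head of $(a_k,b_k)$ onto an edge just below $a_k$ — has large distance precisely because the chains $a_1,\dots,a_k$ and $b_1,\dots,b_k$ of Definition~\ref{def:ReticsAtTop} run in parallel, forcing the reticulation head onto a long detour. Here my plan is to realise the operation by a constant-length sequence of distance-$\le 2$ head moves, exploiting the fact that it only alters a bounded-size neighbourhood of the bottom of the two chains: a first distance-$2$ head move on a reticulation edge near the bottom of the chain creates a triangle sitting directly below the remaining $k-1$ top reticulations, and then a direction change together with a bounded number of further distance-$\le 2$ head moves brings the configuration into exactly the form demanded by the definition of ``moving a triangle from the top''. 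Running this sequence in reverse takes a triangle back to the top; in both directions the reticulations meant to stay at the top and the part of the network below the relevant split node are untouched.

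I expect the main obstacle to be the bookkeeping in this last part: pinning down the explicit short sequence and checking, in each sub-case (depending on the directions of the top reticulations and on whether the relevant split node lies below the side or below the reticulation of the triangle), that every intermediate digraph is a genuine phylogenetic network — in particular acyclic, since moving a head ``up'' the chains is exactly what would create a cycle. I anticipate a finite case analysis, each case routine via Lemma~\ref{lem:validHead} and Corollary~\ref{cor:MoveUpDown}, with pictures in the style of Figures~\ref{fig:TriangleMove} and~\ref{fig:TriangleDirection} carrying most of the weight.
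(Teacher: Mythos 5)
Your overall strategy---replace each long-distance triangle operation by a short sequence of distance-$\le 2$ head moves and check validity of each step via Lemma~\ref{lem:validHead} and Corollary~\ref{cor:MoveUpDown}---is exactly the paper's. The difficulty is that for this lemma the explicit sequence \emph{is} the proof: the paper's argument consists of exhibiting a concrete sequence of six distance-2 head moves for the to/from-the-top case (Figure~\ref{fig:TriangleMove2Head}), obtaining the up/down-a-split-node case by dropping two of those steps (so four moves), and then checking absence of parallel edges and acyclicity for every intermediate digraph. You explicitly defer ``pinning down the explicit short sequence'' and the accompanying case analysis, so what you have is a plan for a proof rather than a proof.

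Moreover, the one concrete decomposition you do commit to does not survive inspection. For the slide of Figure~\ref{fig:TriangleMove}, with triangle $t,s,r$ and adjacent split node $z$, there is no valid intermediate position ``on an edge incident to $z$'' of the kind you describe: when sliding up, moving either reticulation head onto $(z,t)$ or onto the incoming edge of $z$ creates a directed cycle through $z$; moving a head onto an edge whose tail is also the tail of the moving edge creates parallel edges; and the remaining incident edge is already the full distance-3 target. The genuinely reachable distance-2 intermediate (moving the head of the \emph{other} reticulation edge to a child edge of $z$) leaves a four-cycle, not a ``half-slid triangle with $z$ at the top,'' and completing the slide from there amounts to relocating a \emph{tail}, which no single distance-$\le 2$ head move (nor a direction change) accomplishes---this is precisely why the paper needs four moves for this case and six for the to/from-the-top case. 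Likewise, ``a direction change together with a bounded number of further distance-$\le 2$ head moves'' for the top case asserts the conclusion rather than proving it. The lemma needs the move-by-move exhibition and verification that Figure~\ref{fig:TriangleMove2Head} provides.
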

\begin{proof}

The hardest case we need to treat is the one where we move a triangle to/from the top. The other, easier, case is moving triangles up/down split nodes. This can be done using a simplified version of the sequence of head moves for the harder case.

A sequence of six distance-2 head moves for the harder case is shown in Figure~\ref{fig:TriangleMove2Head}. All intermediate DAGs are networks because there are no reticulations in the network that can cause parallel edges (except if $a=b$, in that case reverse the roles of $b$ and $c$), and acyclicity can easily be checked in the figure.

Note that moving triangles which are not at the top in a network where all other reticulations are at the top is easier: the given sequence of moves works where we skip steps 4 and 5 (these are only necessary because of the reticulations at the top).
\end{proof}

\begin{figure}[h!]
\begin{center}
\includegraphics[scale=0.8]{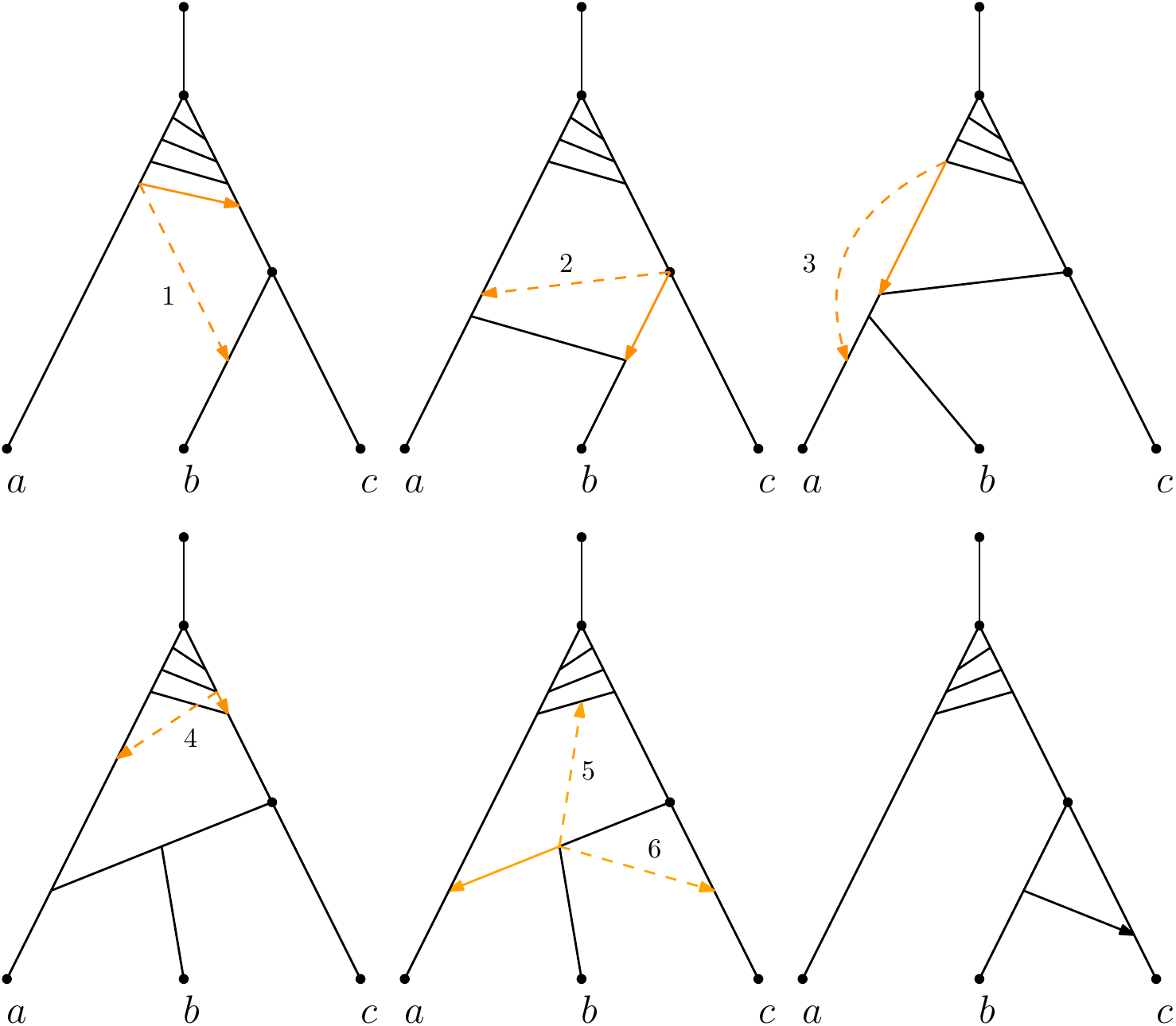}
\end{center}
\caption{Sequence of moves used to move triangles with distance 2 head moves. The numbers indicate the order of the distance 2 head moves. If there are no reticulations at the top, we may skip step 4 and do steps 5 and 6 with one move.}
\label{fig:TriangleMove2Head}
\end{figure}

\begin{lemma}\label{lem:RedirectTopReticD2}
Let $N$ be a network with $k$ reticulations at the top. Then the reticulations can be redirected so that they are neatly on top (directed to either edge) with at most $k-1$ distance-1 head moves.
\end{lemma}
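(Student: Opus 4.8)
The plan is to recycle the one family of moves from the proof of Lemma~\ref{lem:NeatlyOnTop}, observe that each of them is in fact a distance-1 head move, and then count slightly more carefully so that $k-1$ of them suffice instead of $k$.

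First I would fix some bookkeeping. With the notation of Definition~\ref{def:ReticsAtTop}, call level $i\in\{1,\dots,k-1\}$ \emph{crossed} if the edges leaving $a_i,b_i$ are $(a_i,b_{i+1})$ and $(b_i,a_{i+1})$, and \emph{straight} if they are $(a_i,a_{i+1})$ and $(b_i,b_{i+1})$. The reticulations are neatly on top exactly when all $k-1$ levels are straight, so it is enough to straighten every level using at most $k-1$ distance-1 head moves.

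Next I would analyse the move of Lemma~\ref{lem:NeatlyOnTop}, the head move of $(u_{i-1},b_i)$ to $(a_i,v_{i+1})$ with $u_{i-1}$ the parent of $b_i$ other than $a_i$ and $v_{i+1}$ the child of $a_i$ other than $b_i$, and record three points. It is \emph{valid} by Lemma~\ref{lem:validHead}: $(u_{i-1},b_i)$ is head movable because $a_i$ has no edge to the remaining child of $b_i$; $u_{i-1}\neq a_i$ by choice; and $v_{i+1}$ lies below $u_{i-1}$, hence is not above $u_{i-1}$. It is \emph{distance-1}: after deleting $(u_{i-1},b_i)$ and subdividing $(a_i,v_{i+1})$ by a node $v'$, the node $b_i$ is adjacent only to $a_i$ and to its remaining child, while $v'$ is adjacent to $a_i$, so $b_i,a_i,v'$ is a path of length $2$. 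And, as already observed in the proof of Lemma~\ref{lem:NeatlyOnTop}, it reverses exactly the reticulation edges $(a_1,b_1),\dots,(a_i,b_i)$ and fixes all other edges; after relabelling, this interchanges the two chains down to level $i$ but not below, so it \emph{toggles whether level $i$ is crossed and changes nothing about the status of any other level}.

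To finish, I would process the levels in order $i=1,\dots,k-1$, applying the move at level $i$ whenever level $i$ is currently crossed. The invariant that levels $1,\dots,i-1$ are straight just before step $i$ is immediate from the third point above (a move at level $i$ relabels but does not un-straighten the levels above it), so after at most $k-1$ moves all levels are straight and the reticulations are neatly on top; there is simply no level $k$ to straighten, which is exactly where the bound improves from $k$ to $k-1$. (Incidentally, a move at a level $i\le k-1$ only rearranges the structure above $a_k,b_k$, so the network below $a_k,b_k$ is again left untouched.) The one place that needs genuine care is the third point: one should verify, in the boundary sub-cases $i=1$ (where $u_0=c$) and $i=k-1$ (where $v_k\in\{a_k,b_k\}$), and for crossed as well as straight levels, that the move indeed produces again a ``$k$ reticulations at the top'' network and flips precisely that one level — this is the routine degree-counting-and-relabelling check illustrated by Figure~\ref{fig:topDirectionChange}.
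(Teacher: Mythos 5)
Your proof is correct and takes essentially the same route as the paper: it reuses the head move $(u_{i-1},b_i)$ to $(a_i,v_{i+1})$ from Lemma~\ref{lem:NeatlyOnTop}, observes that it is a distance-1 move, and toggles each crossed level once. You are in fact more explicit than the paper's one-line proof, since you also justify why the count drops from $k$ to $k-1$ (only the $k-1$ levels between consecutive reticulations need straightening, and each move flips exactly one of them).
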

\begin{proof}
This follows directly from the proof of Lemma~\ref{lem:NeatlyOnTop}, because the head move used in that proof is a distance-1 move.
\end{proof}

\begin{lemma}\label{lem:HeadByDistance2}
Let $N$ be a network and $v$ a highest reticulation below the top reticulations. Suppose $(u,v)$ to $(x,y)$ is a valid head move resulting in a network $N'$. Then there is a sequence of distance-2 head moves from $N$ to $N'$.
\end{lemma}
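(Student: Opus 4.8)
The plan is to decompose an arbitrary valid head move $(u,v)\to(x,y)$ into a bounded sequence of ``elementary'' distance-$2$ steps, using the structural hypothesis that $v$ is a highest reticulation below the top reticulations. The guiding idea is the same one already used for triangles in Lemma~\ref{lem:TriangleD2}: a long-range head move can be realized by repeatedly hopping the head of the moving edge one or two edges at a time, and the obstruction in Figure~\ref{fig:NoDist1HeadMoveExample} for distance-$1$ moves disappears once we are allowed distance-$2$ moves. So first I would reduce to two separate tasks: (a) moving the head of $(u,v)$ \emph{down} to any desired target edge via short moves, and (b) moving the head \emph{up} past a single split node via short moves. Task (a) is easy: by Corollary~\ref{cor:MoveUpDown} moving a head down is always allowed, and walking down one edge at a time along a directed path gives a sequence of distance-$1$ (hence distance-$2$) head moves; the only subtlety is the forbidden case $u=s$ from Lemma~\ref{lem:validHead}, which can be sidestepped by first routing the head slightly away.

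The heart of the argument is task (b): showing that the head of a head-movable edge $(u,v)$ can be moved from an outgoing edge of a node $w$ to an incoming edge of $w$ (i.e.\ ``past'' $w$) using only distance-$2$ head moves. Here is where the hypothesis that $v$ is a highest reticulation below the top reticulations is used: it guarantees that the relevant local neighbourhood of $v$ contains no other reticulation, so none of the intermediate DAGs constructed can fail to be a network because of parallel edges created by a stray reticulation — exactly the kind of case analysis (``except if $a=b$'') that appeared in Lemma~\ref{lem:TriangleD2}. I would exhibit an explicit short sequence: create an auxiliary triangle (or use the triangle-moving primitive of Lemma~\ref{lem:TriangleD2}) so that the head of $(u,v)$ can be slid from below $w$ to above $w$ while the other endpoint of the reticulation is temporarily parked on an adjacent edge, then dismantle the auxiliary structure. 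Each individual slide is a distance-$1$ or distance-$2$ head move, and validity of each is checked with Lemma~\ref{lem:validHead} using the ``no nearby reticulation'' property plus acyclicity, which is transparent because we only ever move heads downward or one step around a split node.

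Concatenating: given the valid head move $(u,v)\to(x,y)$, first move the head of $(u,v)$ up, one split node at a time via task~(b), until it sits on the unique directed path from the current head-location up to the least common ancestor region containing $x$; then move it down, one edge at a time via task~(a), until it lands on $(x,y)$. The number of elementary moves is linear in the length of the original undirected path from $v$ to $v'$, so this is a finite sequence of distance-$2$ head moves from $N$ to $N'$. The main obstacle I anticipate is the bookkeeping in task~(b): one must verify that the auxiliary triangle can always be created (there must be a split node available in the right place, which is where I'd invoke the ``highest reticulation below the top'' hypothesis and possibly the structure of reticulations at the top), and that every intermediate DAG satisfies all three conditions of Lemma~\ref{lem:validHead}; this is a finite but slightly fiddly case check, best done by a figure in the style of Figure~\ref{fig:TriangleMove2Head} rather than by formula.
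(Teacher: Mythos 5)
Your overall strategy is the same as the paper's: choose an up--down path from $v$ to $(x,y)$ avoiding $(u,v)$, hop the head along (the side branches of) this path in distance-$2$ steps, use the hypothesis that $v$ is a highest reticulation below the top to guarantee that the ascending part of the path consists only of split nodes, and invoke the triangle-moving primitive of Lemma~\ref{lem:TriangleD2} to get the head past the tail $u$. So the skeleton is right. However, two configurations that the paper treats explicitly are missing from your argument, and both can actually occur.

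First, the peak of the up--down path need not be a split node: if $v$ and $(x,y)$ lie on opposite sides of the reticulations at the top (Definition~\ref{def:ReticsAtTop}), the head has to be parked on the lowest top reticulation edge before it can descend on the other side, and this is only a valid head move if that edge is directed away from the side containing $u$. If it is not, one must first redirect it (Lemma~\ref{lem:RedirectTopReticD2}) and redirect it back afterwards; your ``least common ancestor region'' phrasing silently assumes the peak is an ordinary split node, so this case falls outside your argument. Second, your appeal to the hypothesis as giving ``no nearby reticulation'' around $v$ is too strong: the hypothesis only controls nodes \emph{above} $v$ (equivalently, the part of the path above $u$). In particular the other child $c(u)$ of $u$ may be a reticulation, in which case the auxiliary-triangle trick for stepping past $u$ has no split node to anchor to. The paper's fix is to observe that in this situation one can reroute the up--down path through the other incoming edge of $c(u)$ so that it avoids $u$ entirely; some such rerouting step is needed in your proof as well. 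With these two cases added, your decomposition into ``walk the head down'' and ``step the head past a split node'' coincides with the paper's branch-hopping argument (Figure~\ref{fig:BranchHopping}).
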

\begin{proof}
Pick an up-down path from $v$ to $(x,y)$ not via $(u,v)$. Note that if there is a part of this path above $u$, it is also above $v$ and therefore only contains split nodes. Sequentially move the head of $(u,v)$ to the pendant branches of this path as in Figure~\ref{fig:BranchHopping}. It is clear this works except at the point where $u$ is on the up-down tree path (the obvious move is a distance-3 move), and at the top.

Note that at the top, we need to move the head to the lowest reticulation edge at the top. This is of course only possible if this reticulation edge is directed away from $u$. If it is not, we redirect it using one distance-2 head move (Lemma~\ref{lem:RedirectTopReticD2}), and redirect it back after we move the moving head down to the other branch of the up-down tree path.

If $u$ is on the up-down path, we use Lemma~\ref{lem:TriangleD2} to pass this point: Let $c(u)$ be the other child of $u$ (not $v$) and $p(u)$ the parent node of $u$; moving the head from a child edge of $c(u)$ to the other child edge $(p(u),w)$ of $p(u)$ is equivalent to moving the triangle at $c(u)$ to a triangle at $p(u)$, which we can do by Lemma~\ref{lem:TriangleD2}.

We have to be careful, because if the child of $u$ is not a split node, this sequence of moves does not work. However, if $c(u)$ is a reticulation node, there exists a different up-down path from $v$ to $(x,y)$ not through $u$: such a path may use the other incoming edge of $c(u)$.

At all other parts of the up-down path, the head may be simply moved to the edges on the path. Using these steps, we can move the head of $(u,v)$ to $(x,y)$ with only distance-2 head moves.
\end{proof}

\begin{figure}[h!]
\begin{center}
\includegraphics[scale=0.8]{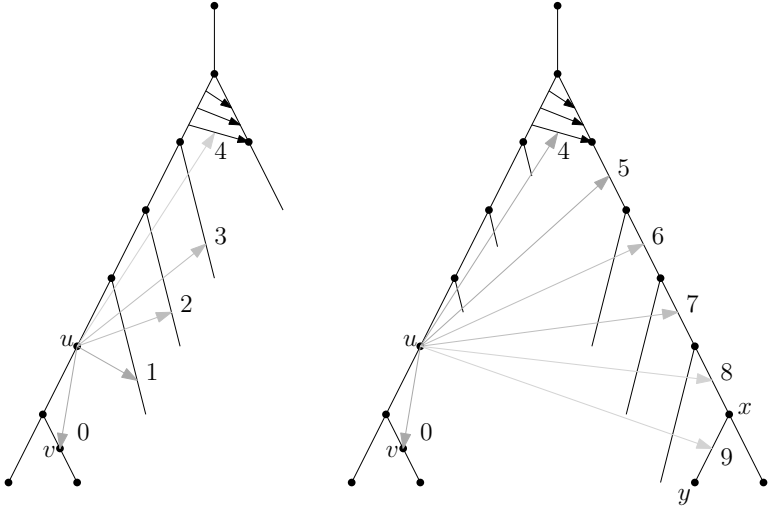}
\end{center}
\caption{An example of a sequence as used in Lemma~\ref{lem:HeadByDistance2}. Note that on the side of the tree containing the tail of the moving edge, we use the side branches to avoid cycles. The numbers represent the order of the distance-2 head moves. Note that the move from position 0 to position 1 is not a distance-2 head move, in this case we use the sequence of moves described in Lemma~\ref{lem:TriangleD2}. Also note that position 4 is only allowed when the lowest reticulation at the top is directed away from the tail of the moving edge.}
\label{fig:BranchHopping}
\end{figure}

Using these lemmas, it is easy to prove the distance-2 equivalent of Lemma~\ref{lem:ReticsToTop} for moving reticulations to the top.

\begin{lemma}\label{lem:ReticToTopD2}
Let $N$ be a tier-$k$ network, then there is a sequence of distance-2 head moves turning $N$ into a network with all reticulations at the top. 
\end{lemma}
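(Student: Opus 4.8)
The plan is to mimic the proof of Lemma~\ref{lem:ReticsToTop} exactly, but replace each head move used there with a sequence of distance-2 head moves justified by the lemmas just established. Recall that in Lemma~\ref{lem:ReticsToTop} one processes the reticulations in decreasing order with respect to the partial order induced by the network: if $l<k$ reticulations are already at the top, pick a highest reticulation $r$ not yet at the top, take a head-movable reticulation edge $(s,r)$, and either (a) $s$ is directly below the top, in which case one head move brings $r$ to the top, or (b) $s$ has a parent $t$ which must be a split node, and one moves the head of $(s,r)$ to $(t,q)$ (the other child edge of $t$) to create a triangle, which is then moved up to the top via Lemma~\ref{lem:MoveTriangle}.

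First I would observe that at the moment we process $r$, it is a highest reticulation below the top reticulations in the current network, so Lemma~\ref{lem:HeadByDistance2} applies: the single head move of $(s,r)$ to $(t,q)$ (case (b)), or to an edge directly below the top (case (a)), can be realised by a sequence of distance-2 head moves. Then I would note that once a triangle has been created below the top, moving that triangle up to the top is handled by Lemma~\ref{lem:TriangleD2}, which says triangles can be moved up/down split nodes and to/from the top using distance-2 head moves. Concatenating these sequences for all reticulations, processed from highest to lowest, turns $N$ into a network with $k$ reticulations at the top using only distance-2 head moves.

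The main point requiring care — and the place I expect the only real friction — is making sure that when we apply Lemma~\ref{lem:HeadByDistance2} in case (b), the hypotheses are genuinely met: $r$ is indeed a \emph{highest} reticulation strictly below the already-placed top reticulations in the current network (this holds because we process in decreasing partial-order and the previously placed reticulations sit at the very top), and the head move $(s,r)$ to $(t,q)$ is valid (already checked in Lemma~\ref{lem:ReticsToTop}: $t$ is a split node, $t\neq r$, and $q$ is not above $s$ hence not above $r$). One should also note that intermediate networks during the triangle-to-top phase retain the reticulations already placed at the top, so the induction on the number of reticulations-at-the-top goes through cleanly; this is exactly what the final sentence of Lemma~\ref{lem:TriangleD2}'s proof guarantees. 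With these observations the statement follows, and I would simply write: ``This follows by replacing each head move in the proof of Lemma~\ref{lem:ReticsToTop} with a sequence of distance-2 head moves, using Lemma~\ref{lem:HeadByDistance2} for the move creating the triangle (or directly placing the reticulation) and Lemma~\ref{lem:TriangleD2} for moving the triangle to the top.''
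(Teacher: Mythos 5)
Your proposal is correct and follows essentially the same route as the paper's own proof: pick a highest not-yet-placed reticulation with a head-movable incoming edge, realise the single (possibly long-distance) head move from Lemma~\ref{lem:ReticsToTop} via Lemma~\ref{lem:HeadByDistance2}, and move the resulting triangle to the top via Lemma~\ref{lem:TriangleD2}, iterating over all reticulations. The extra care you take in checking the hypotheses of Lemma~\ref{lem:HeadByDistance2} is a welcome addition but does not change the argument.
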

\begin{proof}
Note that there exists a highest reticulation node $r$ not yet at the top of which one of the incoming edges $(s,r)$ is head movable. Like in Lemma~\ref{lem:ReticsToTop}, there is one (arbitrary distance) head move creating an extra reticulation at the top (if $s$ is directly below the top); or there is a head move creating a triangle below the parent of $s$. By Lemma~\ref{lem:HeadByDistance2} this head move can be simulated by distance-2 head moves.

In the case that we had to create a triangle, we can move the triangle up to the top using a sequence of distance-2 head moves (Lemma~\ref{lem:TriangleD2}). Repeating this for all reticulations, we arrive at a network with $k$ reticulations at the top.
\end{proof}

\begin{lemma}\label{lem:SPRMovesD2}
Let $N$ and $N'$ be tier $k>0$ networks with all reticulations neatly at the top. Then there exists a sequence of distance 2 head moves turning $N$ into $N'$. 
\end{lemma}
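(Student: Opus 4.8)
The statement to prove is the distance-2 analogue of Lemma~\ref{lem:GeneralChangeBaseTree}: given two tier-$k$ networks $N$ and $N'$ with all reticulations neatly at the top, produce a sequence of distance-2 head moves from $N$ to $N'$. The plan is to mirror exactly the structure of the proof of Lemma~\ref{lem:GeneralChangeBaseTree}, but to replace every head move invoked there by a sequence of distance-2 head moves. As in that proof, both $N$ and $N'$ have a unique embedded tree, and it suffices to handle the case where these two trees $T, T'$ differ by a single SPR move $(u,v)$ to $(x,y)$, since the tree space is connected by SPR moves. So the real content is: simulate one SPR move on the embedded tree, using only distance-2 head moves, while keeping all reticulations neatly at the top at the end.

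The building blocks are already in place. First, by Lemma~\ref{lem:ReticToTopD2} and Lemma~\ref{lem:RedirectTopReticD2} we may assume at the start (and re-establish at the end) that the reticulations are neatly at the top using only distance-2 (in fact distance-1) head moves. Second, by Lemma~\ref{lem:TriangleD2} we can create a triangle from the lowest top reticulation edge $(a_k,b_k)$, move that triangle up and down split nodes and to/from the top, all with distance-2 head moves; combined with Lemma~\ref{lem:RedirectTopReticD2} to flip the top reticulations, this lets us place the moving triangle anywhere in the lower part of the network, exactly as Lemma~\ref{lem:GeneralMovingTriangle} did for arbitrary head moves. Third — and this is the crucial replacement — the single head move that ``simulates'' the SPR move in Lemma~\ref{lem:ChangeBaseTree} (the move of the triangle's reticulation edge $(a,b)$ onto the image of the $T$-edge $(u,v)$) is a head move whose head $b$ sits just below the top reticulations once the triangle has been parked appropriately, so Lemma~\ref{lem:HeadByDistance2} applies and converts it into a sequence of distance-2 head moves. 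The same applies to the two SPR moves used in the special ``root edge'' case of Lemma~\ref{lem:GeneralChangeBaseTree}.

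So the steps, in order, are: (1) use Lemma~\ref{lem:ReticToTopD2}/\ref{lem:RedirectTopReticD2} to normalise, and reduce to a single SPR move $(u,v)\to(x,y)$ on the embedded tree; (2) using the triangle built from $(a_k,b_k)$ and the distance-2 triangle-movement of Lemma~\ref{lem:TriangleD2} (plus top-redirections when the triangle must cross from one pendant subtree of the top split node to the other, à la Lemma~\ref{lem:GeneralMovingTriangle}), manoeuvre the triangle into the configuration with edges $(x,a),(a,b),(p,b),(a,y),(b,\zeta)$ as in the proof of Lemma~\ref{lem:ChangeBaseTree} — for the root-edge subcase, instead perform the two auxiliary SPR moves of Lemma~\ref{lem:GeneralChangeBaseTree} ($ (u,v)$ to $(a_k,x)$, then $(u',x)$ to $(b_k,y)$), each itself realised by this same procedure; (3) apply the head move of $(a,b)$ onto the image of $(u,v)$, but realised via Lemma~\ref{lem:HeadByDistance2} as a sequence of distance-2 head moves, so that the new embedded tree is $T'$; (4) move the triangle back to the top and re-establish ``neatly at the top'' by Lemma~\ref{lem:RedirectTopReticD2}. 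Concatenating over the SPR path from $T$ to $T'$ yields the desired distance-2 head move sequence.

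The main obstacle is verifying that Lemma~\ref{lem:HeadByDistance2} genuinely applies at step (3): that lemma requires the moving head $v$ (in its notation) to be a highest reticulation below the top reticulations, and one has to check that after parking the triangle, the reticulation $b$ of the edge $(a,b)$ indeed occupies such a position — equivalently, that the triangle has been placed ``just under'' the top so that no other reticulation lies strictly above $b$ except the top ones. This is exactly the situation engineered in the proofs of Lemmas~\ref{lem:ChangeBaseTree} and~\ref{lem:GeneralChangeBaseTree}, so the check is routine but must be stated. A secondary point of care, inherited from Lemma~\ref{lem:HeadByDistance2}, is the degenerate configurations (the child of the tail being a reticulation rather than a split node, or the lowest top reticulation edge pointing the wrong way), which are handled there by choosing an alternative up-down path or by a single distance-2 redirection; since all of those fixes are already distance-2, they cause no trouble here. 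I would therefore keep the write-up short, essentially: ``Repeat the proof of Lemma~\ref{lem:GeneralChangeBaseTree}, replacing each head move by a sequence of distance-2 head moves using Lemmas~\ref{lem:TriangleD2}, \ref{lem:RedirectTopReticD2} and~\ref{lem:HeadByDistance2}; the only move needing attention is the SPR-simulating head move, which is covered by Lemma~\ref{lem:HeadByDistance2} since its head is a highest reticulation below the top.''
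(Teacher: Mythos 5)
Your proposal takes exactly the same route as the paper: the paper's own proof is a single sentence stating that one repeats the proof of Lemma~\ref{lem:GeneralChangeBaseTree}, substituting triangle moves via Lemma~\ref{lem:TriangleD2} and all other long-distance head moves via Lemma~\ref{lem:HeadByDistance2}. Your version is correct and simply spells out the details (including the useful check that the moved head is a highest reticulation below the top, so Lemma~\ref{lem:HeadByDistance2} indeed applies) that the paper leaves implicit.
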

\begin{proof}
Exactly as the proof of Lemma~\ref{lem:GeneralChangeBaseTree}, all triangle moves are substituted as in Lemma~\ref{lem:TriangleD2}, and all other long distance head moves are simulated with distance-2 head moves as in Lemma~\ref{lem:HeadByDistance2}.
\end{proof}

\begin{theorem}
tier-$k$ of phylogenetic networks is connected by distance 2 head moves, for all $k>0$.
\end{theorem}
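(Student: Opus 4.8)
The plan is to mirror the proof of Theorem~\ref{the:HeadConnectTiers} verbatim, substituting each ingredient by its distance-2 counterpart, all of which have already been proved in this section. Concretely, fix two tier-$k$ networks $N$ and $N'$ on the same leaf set with $k>0$. First I would apply Lemma~\ref{lem:ReticToTopD2} to each of $N$ and $N'$ to obtain, using only distance-2 head moves, networks in which all $k$ reticulations sit at the top. Then I would invoke Lemma~\ref{lem:RedirectTopReticD2} to redirect the top reticulations of each of these so that they are \emph{neatly} at the top; the moves in that lemma are distance-1 head moves, hence in particular distance-2 head moves, so the class of moves used is unchanged. This produces networks $N_n$ and $N_n'$, both with all reticulations neatly at the top, each reachable from $N$ (respectively $N'$) by distance-2 head moves.

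Finally, Lemma~\ref{lem:SPRMovesD2} supplies a sequence of distance-2 head moves from $N_n$ to $N_n'$. Concatenating the sequence $N\to N_n$, then $N_n\to N_n'$, then the reversal of the sequence $N'\to N_n'$, yields a sequence of distance-2 head moves from $N$ to $N'$. This uses that the inverse of a head move is again a head move, and that the inverse of a distance-$d$ head move is a distance-$d$ head move: undoing a head move of $(u,v)$ to $f$ is the head move of the new edge back to the edge created by suppressing $v$, and the relevant shortest path in the underlying undirected graph (Definition~\ref{def:HeadMove}) is the same path traversed in the other order, so its length is unchanged.

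Since all of Lemmas~\ref{lem:ReticToTopD2}, \ref{lem:RedirectTopReticD2} and \ref{lem:SPRMovesD2} are already established, there is no genuinely new obstacle here; the theorem is pure assembly, exactly as Theorem~\ref{the:HeadConnectTiers} assembles Lemmas~\ref{lem:ReticsToTop}, \ref{lem:NeatlyOnTop} and \ref{lem:GeneralChangeBaseTree}. The only points requiring a moment's care are the two bookkeeping observations just mentioned: that a distance-1 head move qualifies as a distance-2 head move (immediate from the definition), and that reversing a distance-$d$ head move stays within distance $d$. All the real difficulty was already absorbed into the earlier lemmas of this section — in particular Lemma~\ref{lem:TriangleD2} (moving triangles with distance-2 moves) and Lemma~\ref{lem:HeadByDistance2} (simulating an arbitrary-distance head move of a highest sub-top reticulation by distance-2 head moves) — so I expect the write-up of this theorem to be only a few lines.
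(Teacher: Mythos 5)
Your proposal is correct and follows exactly the same route as the paper: apply Lemma~\ref{lem:ReticToTopD2} and Lemma~\ref{lem:RedirectTopReticD2} to bring both networks to the ``neatly at the top'' form and then connect them via Lemma~\ref{lem:SPRMovesD2}. Your two bookkeeping remarks (distance-1 moves are distance-2 moves, and reversing a distance-$d$ head move preserves $d$) are left implicit in the paper but are accurate and harmless additions.
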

\begin{proof}
Exactly as the proof of Theorem~\ref{the:HeadConnectTiers}. 
Let $N$ and $N'$ be two arbitrary networks in the same tier with the same leaf set. Use Lemma~\ref{lem:ReticToTopD2} and Lemma~\ref{lem:RedirectTopReticD2} to change $N$ and $N'$ into networks $N_n$ and $N'_n$ with all reticulations neatly at the top using only distance-2 head moves. Now Lemma~\ref{lem:SPRMovesD2} tells us that there is a sequence of distance-2 head moves from $N_n$ to $N'_n$. Hence, tier-$k$ of phylogenetic network space is connected by distance-2 head moves.
\end{proof}

\section{Relation to tail moves}\label{sec:HeadToTailAndBack}
In this section, we show that each tail move can be replaced by a sequence of at most 15 head moves, and each head move can be replaced by a sequence of at most tail moves. 

\subsection{Tail move replaced by head moves}
In this section each move is a head move unless stated otherwise.

\begin{lemma}\label{lem:HorizontalTailSplitToHead}
Let $(u,v)$ from $(x,a_L)$ to $(x,a_R)$ be a valid tail move in a tier $k>0$ network $N$ resulting in a network $N'$. Then there exists a sequence of head moves from $N$ to $N'$ of length at most 6.
\end{lemma}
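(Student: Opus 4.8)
The plan is to analyze the special tail move in question—one where the edge $(u,v)$ has its tail moved from one outgoing edge of a node $x$ to the other outgoing edge of the same node $x$—and to simulate it by first building a triangle-like gadget using a head move, sliding it around, and then tearing it down. Since the tier is $k>0$, there is at least one reticulation $r$ somewhere in $N$ with a head-movable incoming edge; I would first move (the head of) one of the reticulation edges of $r$ so that $r$ sits at the bottom of a triangle, and then use Lemma~\ref{lem:MoveTriangle} to bring this triangle next to the node $x$ where the tail move acts. This costs a bounded number of head moves to set up (one to create the triangle, a constant number to bring it into position—though I would want to check that "into position near $x$'' really is $O(1)$; if not, I will need to be more careful and instead argue that I can perform the simulation wherever the triangle already is and transport the result, but that seems to risk a non-constant count, so more likely the statement really needs the triangle to be created locally at or near $x$).

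The heart of the argument is the local simulation. Once I have a triangle (or at least a spare reticulation edge) available adjacent to $x$, I want to realize the effect on the embedded trees that the tail move $(u,v)$ from $(x,a_L)$ to $(x,a_R)$ produces. That tail move detaches the subtree hanging below $v$ and reattaches it on the other child-branch of $x$. The key observation—exactly parallel to the one used in Lemma~\ref{lem:ChangeBaseTree} and Lemma~\ref{lem:GeneralChangeBaseTree}—is that an SPR-type move on a tree can be simulated by head-moving a reticulation edge into the right position and then head-moving it onto the target edge. So I would: (i) with head moves, route a reticulation edge $(a,b)$ so that $a$ lies on the image of $(x,a_R)$ and $b$ lies on the image of the edge leading to $v$; (ii) perform the decisive head move of $(a,b)$ onto the $(u,v)$-branch, which by Corollary~\ref{cor:MoveUpDown} or Lemma~\ref{lem:validHead} is valid because the relevant node is not above $a$ (this uses that the original tail move was valid, so $v$ is not above $s$); (iii) clean up by moving the now-redundant reticulation structure back up or back into a triangle, and finally suppress it back to where it started, so that the reticulation number is unchanged and the network is exactly $N'$.

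The main obstacle I expect is the bookkeeping of the count: getting the whole simulation down to $6$ head moves. Each of the three phases (create the gadget, reposition the reticulation edge onto the $(x,a_R)$-side, perform the key head move and undo the gadget) naively costs two or three moves, and the triangle-transport moves of Lemma~\ref{lem:MoveTriangle} are distance-$3$ head moves that each only slide the triangle past one node—so I must make sure that in this horizontal-same-node case the triangle never needs to travel more than a bounded distance, i.e. that $a_L$, $a_R$ and the branch down to $v$ are all within $O(1)$ of $x$ in the relevant sense, or else that the starting reticulation edge used to build the gadget can be chosen adjacent to $x$. A secondary subtlety is the degenerate configurations (e.g. when the child of the relevant node is a leaf or already a reticulation, or when $a=b$ would create parallel edges), which should be handled exactly as in the proof of Lemma~\ref{lem:HeadByDistance2}—by reversing the roles of the two sides of the triangle or picking the other incoming reticulation edge. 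Once these cases are dispatched, verifying validity of each of the six head moves is routine via Lemma~\ref{lem:validHead}, since every move either moves a head strictly downward (Corollary~\ref{cor:MoveUpDown}) or is one of the explicitly-checked triangle moves.
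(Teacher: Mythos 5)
There is a genuine gap, and it is exactly the one you flag but do not resolve: the cost of bringing a reticulation ``into position near $x$.'' Your plan creates a triangle at whatever reticulation the hypothesis $k>0$ provides and then transports it to $x$ via Lemma~\ref{lem:MoveTriangle}; but that lemma slides a triangle past \emph{one} split node per head move, so the transport costs a number of moves proportional to the distance from the reticulation to $x$, which is unbounded. Nothing in the hypotheses places a reticulation adjacent to $x$, so the ``more likely the statement really needs the triangle to be created locally at or near $x$'' escape hatch is not available. As written, the proposal does not yield a constant bound, let alone $6$.

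The missing idea is that head moves in this lemma are \emph{not} required to be local (locality is only imposed in Section~\ref{sec:LocalConnectivity}): a single head move can relocate the head of any movable reticulation edge $(t,r)$ to any valid target edge, however far away. The paper's proof picks a movable reticulation edge $(t,r)$ with $t$ not below both $a_L$ and $a_R$ (such an edge always exists), teleports its head in one move onto $(u,a_L)$, and then performs a fixed rotation --- $(u,r')$ to $(x,a_R)$, then $(x,r'')$ to $(t,a_L)$, then $(t,r''')$ back to the original position of $r$ --- which leaves the reticulation where it started and realizes exactly the tail move. That is $4$ moves in the generic case; the degenerate configurations ($t=u$, $t=x$, $(t,a_L)\in N$, $t$ below $a_R$ or $v$, etc.) are dispatched by separate short sequences, the worst of which uses $6$ moves. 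Your instinct to reuse the Lemma~\ref{lem:ChangeBaseTree} mechanism (route a reticulation edge into position, make one decisive head move, clean up) is the right general shape, but without replacing the step-by-step triangle transport with a single long-distance head move, the move count cannot be bounded. Also note that the cleanup must return the borrowed reticulation edge to its \emph{exact} original position by a head move (not ``suppress it back,'' which would change the tier); that final move is valid precisely because it produces the network $N'$.
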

\begin{proof}
To prove this, we have to find a reticulation somewhere in the network that we can use, as the described part of the network might not contain any reticulations. 

Note that there exists a head movable reticulation edge $(t,r)$ in $N$ with $t$ not below both $a_L$ and $a_R$: Find a highest reticulation  node below $a_L$ and $a_R$; if it exists, one edge is movable, this edge cannot be below both; if there is no such reticulation, then  there is a reticulation $r$ that is not below both $a_L$ and $a_R$ and so the same holds for its movable edge.

First assume we find a head movable edge $(t,r)$ with $t=x$. Note that $r$ cannot be the same node as $u$, as $u$ is a split node and $r$ is a reticulation. This means that $r=a_R$, and $(x,a_R)$ is movable. Move $(x,a_R)$ to $(u,a_L)$, which is allowed because $x\neq u$, $a_L$ not above $x$ and $(t,r)=(x,a_R)$ is movable. Now moving $(u,r')$ to $(s,z)$, the arc created by suppressing $a_R$ after the previous move, we get network $N'$.

Now assume we find a head movable edge $(t,r)$ with $x\neq t$. Suppose w.l.o.g. $(t,r)$ is not below $a_L$, 
then we can use the following sequence of $4$ moves except in the cases we mention in bold below the steps. For this lemma, we call this sequence the `normal' sequence (Figure~\ref{fig:Lemma11NormalSequence}).

\begin{itemize}
\item Move $(t,r)$ to $(u,a_L)$, keeping $(x,a_R)$ except if $\bm{x=t}$. This can be done if (Lemma~\ref{lem:validHead}):\\
$(t,r)$ is movable (it is by choice of $(t,r)$),\\ no parallel edges by reattaching $(t,r')$, i.e. $t\neq u$ (note that $\bm{t=u}$ may occur),\\ no cycles at reattachment because $t$ not below $a_L$ (by choice of $(t,r)$).
\item Move $(u,r')$ to $(x,a_R)$, creating edge $(t,a_L)$ as $(x,a_R)\neq (t,r')$ and $(x,a_R)\neq(r',a_L)$;\\ 
Only not movable if $\bm{(t,a_L)\in N}$,\\ reattachment causes no cycles because $a_R$ is not above $x$,\\ reattachment causes no parallel edges because $u\neq x$.
\item Move $(x,r'')$ to $(t,a_L)$;\\ Movable because $v\neq a_R$ (otherwise the tail move is not valid),\\ no cycles because $a_L$ not above $x$,\\ no parallel edges because $t\neq x$.
\item Move $(t,r''')$ back to its original position.\\ Allowed because this produces $N'$.
\end{itemize}

\begin{figure}[h!]
\begin{center}
\includegraphics[scale=0.8]{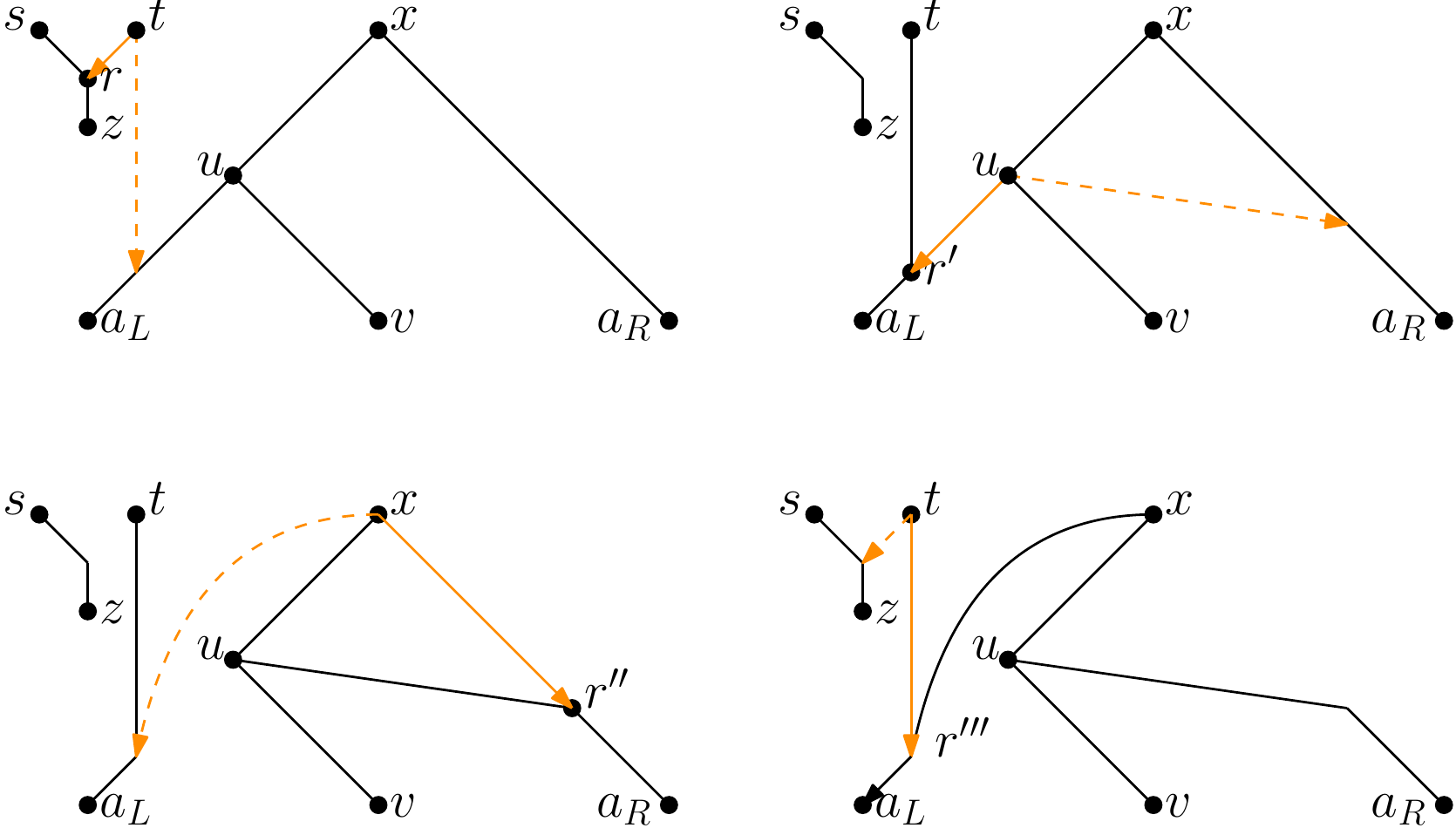}
\end{center}
\caption{The `normal' sequence of head moves simulating a tail move in Lemma~\ref{lem:HorizontalTailSplitToHead}.}
\label{fig:Lemma11NormalSequence}
\end{figure}

We now look at the situations where $t=x$, $t=u$ or $(t,a_L)\in N$ separately. We will split up in cases to keep the proof clear. Note that $(t,r)\in N$ is still a movable edge.

\begin{enumerate}
\item{$\bm{t=u}$.}
 \begin{enumerate}
  \item{$\bm{r=a_L}$.}\label{case:UaLequalsTR}
   Move $(t,r)$ to $(x,a_R)$, then move $(x,r')$ back to the original position of $r$, creating $N'$. This is a sequence of $2$ head moves.
   \item{$\bm{r=v}$.}\label{case:UVequalsTR}
   Move $(t,r)$ to $(x,a_R)$, creating a triangle at $x$. Now reverse the triangle by moving $(x,r')$ to $(t,a_L)$. Now create $N'$ by moving $(t,r'')$ back to the original position of $r$. This is a sequence of $3$ head moves.
 \end{enumerate}
\item{$\bm{t\neq u}$.}
 Note that we can assume that \textbf{$\bm{(u,a_L)}$ is not movable}, as otherwise we are in the previous case. Because $t\neq u$, we may also assume $\bm{(t,a_L)\in N}$, otherwise this is no special case and we can use the sequence of moves from the start of this proof. Hence, $a_L$ is a reticulation node on the side of a triangle formed by $t$, $a_L$, and the child $c(a_L)$ of $a_L$.
 \begin{enumerate}
  \item{\textbf{$\bm{t}$ is below $\bm{a_R}$.}}
    \begin{enumerate}
     \item{\textbf{$\bm{t}$ is below $\bm{v}$.}}
      Since $t$ is below both $a_R$ and $v$, there is a highest reticulation $s$ strictly above $t$ and below both $a_R$ and $v$. Since $s$ is strictly above $t$, it is strictly above $a_L$. Therefore we are either in the `normal' case, or in Case~\ref{case:UVequalsTR} of this analysis with movable edge $(p(s),s)$. This means this situation can be solved using at most $4$ head moves.
     \item{\textbf{$\bm{t}$ is not below $\bm{v}$.}}
      As $(t,a_L)$ is a reticulation edge in the triangle, it is movable, and because $t$ is not below $v$, the head move $(t,a_L=r)$ to $(u,v)$ is allowed. Now the \emph{tail} move $(u,r')$ to $(x,a_R)$ is still allowed, because $v$ is not above $x$. As $(u,c(a_L))$ is movable in this new network, we can simulate this tail move like in Case~\ref{case:UaLequalsTR}. Afterwards, we can put the triangle back in its place with one head move, which is allowed because it produces $N'$. All this takes $6$ head moves.
   \end{enumerate}
  \item{\textbf{$\bm{t}$ is not below $\bm{a_R}$.}}
   Because $t\neq x$, we know that $(t,a_R)\not\in N$. Therefore we can do the `normal' sequence of moves from the start of this proof in reverse order, effectively switching the roles of $a_L$ and $a_R$. Because we use the `normal' sequence of moves, this case takes at most $4$ head moves.
 \end{enumerate}
\end{enumerate}
\end{proof}

Now, to prove the case of a more general tail move, we need to treat another simple case first. 

\begin{lemma}\label{lem:HorizontalShortTailMoveSimulated}
Let $(u,v)$ from $(x_L,r)$ to $(x_R,r)$ be a valid tail move in a network $N$ turning it into $N'$, then there is a sequence of head moves from $N$ to $N'$ of length at most 4.
\end{lemma}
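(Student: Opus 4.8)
We want to simulate the tail move $(u,v)$ from $(x_L,r)$ to $(x_R,r)$, where the target and source arcs share the \emph{head} $r$, which must be a reticulation (otherwise $r$ cannot have two incoming arcs from $x_L$ and $x_R$). So $r$ is a reticulation with parents $x_L$ and $x_R$, and the move relocates the tail $u$ of the split arc $(u,v)$ from sitting on arc $(x_L,r)$ to sitting on arc $(x_R,r)$. Since $r$ itself is a reticulation, we have a reticulation available right where the action is, and the key idea is to use a reticulation edge of $r$ as the moving head.

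First I would identify which of the two reticulation edges $(x_L,r)$, $(x_R,r)$ is head movable. After the (hypothetical) tail move, the tail $u$ sits between $x_L$ and $r$ on the $x_L$-side, or rather: in $N$ we have arcs $(x_L,u)$, $(u,v)$, $(u,r)$, $(x_R,r)$, and in $N'$ we have $(x_L,r)$, $(x_R,u')$, $(u',v)$, $(u',r)$. The natural head move is to move the head of the reticulation edge incident to $u$ — that is, move the head $r$ of arc $(u,r)$ — onto arc $(x_R, r)$ via its subdivision. One needs $(u,r)$ head movable: its other parent is $x_L$, its child is $c(r)$, and one must check $(x_L,c(r))\notin N$; if that fails we are in a triangle and handle it as a special subcase (reverse the triangle first, costing one extra move). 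Granting movability, the single head move of $(u,r)$ to $(x_R,r)$ produces exactly the arc configuration where $u$ now has children $v$ and a suppressed stub, and then suppressing and re-adding is precisely what turns $N$ into $N'$; in fact I expect the honest count to be one or two head moves in the generic case, with a couple of extra moves absorbed by triangle or coincidence cases ($u = x_L$ or $x_R$, $v = r$, etc.), keeping the total at most $4$.

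The key validity checks at each head move are the three conditions of Lemma~\ref{lem:validHead}: the moving edge is head movable, the tail endpoints differ, and the target arc's head is not above the moving edge's tail. Because $r$ is a reticulation sitting low (it is the common head of both arcs), "not above" conditions are easy: nothing on the $x_R$-side that we move onto is above $u$, essentially because the original tail move $(u,v)$ to $(x_R,r)$ was assumed valid, which already encodes that $v$ (hence $u$) is not below $x_R$. So I would mirror the bookkeeping style of Lemma~\ref{lem:HorizontalTailSplitToHead}: state the generic short sequence, then peel off the degenerate cases ($u$ equal to $x_L$ or $x_R$; $v$ equal to $c(r)$; a triangle on $t=x_L$ or $x_R$) and in each give an explicit short sequence, tracking the move count so the worst case is $\le 4$.

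\textbf{Main obstacle.} The delicate point, as in the previous lemma, is the triangle/coincidence cases: when $u$ coincides with $x_L$ or $x_R$, or when the arc we would like to use as the moving head is not movable because it is the side of a triangle. In those situations the "obvious" head move is a distance-issue-free but \emph{validity}-blocked move, and one must insert an auxiliary head move (typically a triangle reversal, cf. Figure~\ref{fig:TriangleDirection}) before and after, which is exactly where the extra budget up to $4$ gets spent. Getting a clean case split that provably covers all configurations — and checking that in the messiest case one never needs a fifth move — is the real work; the rest is routine edge-bookkeeping of the kind already displayed in the proof of Lemma~\ref{lem:HorizontalTailSplitToHead}.
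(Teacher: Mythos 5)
There is a genuine gap at the heart of your construction. The ``natural head move'' you propose --- moving the head $r$ of the arc $(u,r)$ onto the arc $(x_R,r)$ --- does not produce $N'$; it produces a network isomorphic to $N$. Trace the definition: delete $(u,r)$, subdivide $(x_R,r)$ with a new node $r''$, suppress $r$ (creating $(r'',z)$ where $z$ is the child of $r$), and add $(u,r'')$. The result has a reticulation $r''$ with parents $u$ and $x_R$ and child $z$ --- exactly the configuration $r$ had in $N$, so the move is a no-op up to isomorphism. A cleaner way to see that no move of this shape can work: in $N$ the split node $u$ has parent $x_L$, while in $N'$ the corresponding split node has parent $x_R$, and a head move whose moving edge has tail $u$ never changes the parent of $u$. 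So the generic case cannot be finished by ``one head move of $(u,r)$ plus bookkeeping''; the missing idea is that the reticulation must travel \emph{up over} $u$ and come back down on the other side. The paper's sequence is: move the head of the \emph{other} reticulation edge $(x_R,r)$ up to $(x_L,u)$ (this suppresses $r$, merges $u\to r\to z$ into $u\to z$, and creates a reticulation $r'$ above $u$ with parents $x_L$ and $x_R$), then move the head of $(x_L,r')$ down to $(u,z)$; suppressing $r'$ in this second step is precisely what hands $u$ over to the parent $x_R$. That is two moves when $z\neq v$.

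Your handling of the degenerate cases is also off target. The one nontrivial coincidence is $z=v$ (the child of $r$ equals $v$), where the first move above is blocked because suppressing $r$ would create a parallel edge $(u,v)$ --- note $(u,v)$ is the moving edge of the tail move itself, so this is not a triangle that can be ``reversed'' to restore movability. The paper instead parks the head of $(u,v)$ on the incoming edge of a suitable leaf (one move), runs the two-move simulation above, and moves the head back (one move), for the total of $4$. The cases you list such as $u=x_L$ or $u=x_R$ cannot occur ($x_L$ is the parent of $u$; $u=x_R$ would give parallel edges into $r$), and $x_L=x_R$ makes the tail move trivial. So while your validity bookkeeping via Lemma~\ref{lem:validHead} and your overall budget are in the right spirit, the central two-move mechanism and the correct resolution of the $z=v$ case are both missing, and without them the proof does not go through.
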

\begin{proof}
Let $z$ be the child of $r$, and note that not all nodes described must necessarily be unique. All possible identifications are $x_L=x_R$ and $v=z$, other identifications create cycles. First note that in the situation $x_L=x_R$, the networks $N$ and $N'$ before and after the tail move are isomorphic. Hence we can restrict our attention to the case that $x_L\neq x_R$. To prove the result, we distinguish two cases.
\begin{enumerate}
\item{$\bm{z\neq v}$.}\label{case:SimpleLocalTailSimulation} This case can be solved with two head moves: $(x_R,r)$ to $(x_L,u)$ creating new reticulation node $r'$ above $u$ followed by $(r',u)$ to $(u,z)$. The first head move is allowed because $v\neq z$, so $(x_R,r)$ is head movable; $x_R\neq x_L$; and $u$ is not above $x_R$ because both its children aren't: $z$ is below $a_R$, and if $v$ is above $x_R$, the tail move $N\to N'$ is not allowed. The second head move is allowed because it produces the valid network $N'$. Hence the tail move can be simulated by at most $2$ head moves (Figure~\ref{fig:LocalTailToHead1}).




\begin{figure}[h!]
\begin{center}
\includegraphics[scale=0.7]{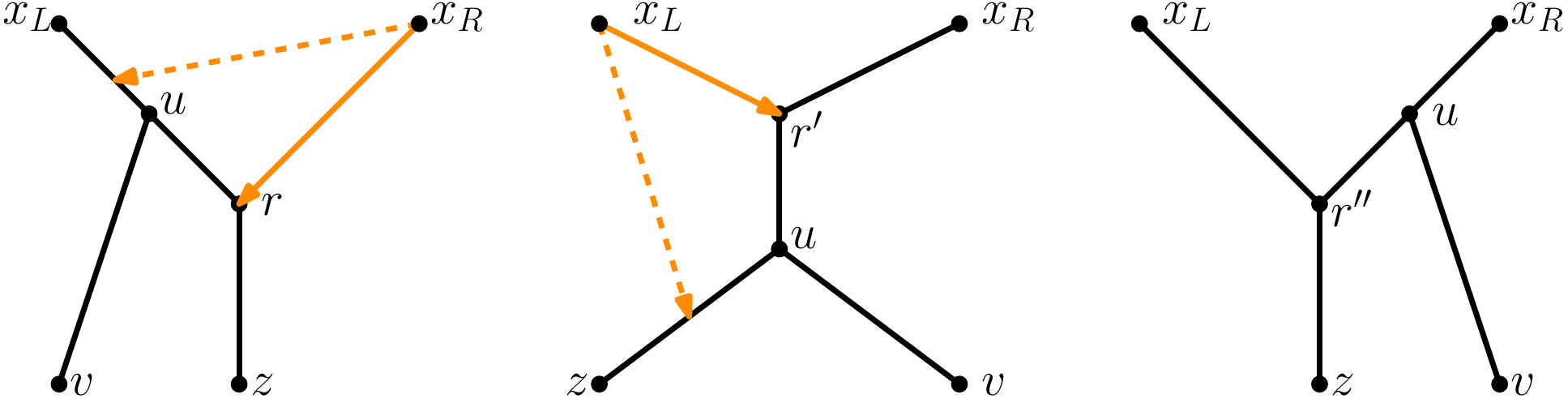}
\end{center}
\caption{The two moves used to simulate a tail move in Case~\ref{case:SimpleLocalTailSimulation} of Lemma~\ref{lem:HorizontalShortTailMoveSimulated}.}
\label{fig:LocalTailToHead1}
\end{figure}

\item{$\bm{z  =  v}$.}\label{case:ExtendedLocalTailToHead} The proposed moves of the previous case are not valid here, because they lead to parallel edges in the intermediate network. To prevent these, we reduce to the previous case by moving $(u,z)$ to any edge $e$ not above $z$ and $e\neq (z,c(z))$ (hence neither above $x_L$ nor above $x_R$) and moving it back afterwards. Note that if there is such an edge $e$, then the head move $(u,z)$ to $e$ is allowed. 
Afterwards, the tail move $(u,z')$ to $(x_R,r)$ is `still' allowed 
and can therefore be simulated by 2 head moves as in the previous case, the last head move, moving $(u,z')$ back is allowed because it creates the DAG $N'$ which is a network. Such a sequence of moves uses $4$ head moves (Figure~\ref{fig:LocalTailToHead2a}).

\begin{figure}[h!]
\begin{center}
\includegraphics[scale=0.6]{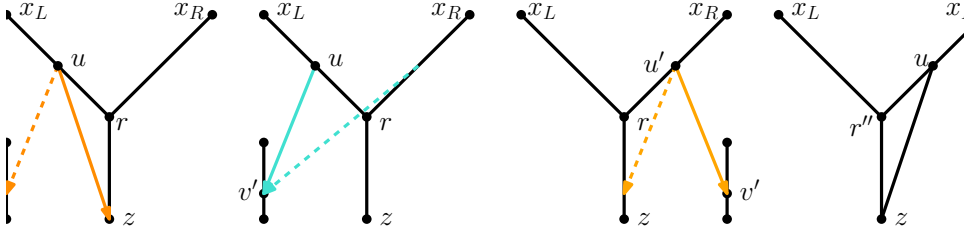}
\end{center}
\caption{The four moves used in Case~\ref{case:ExtendedLocalTailToHead} of Lemma~\ref{lem:HorizontalShortTailMoveSimulated}. The middle depicted move is the tail move of Case~\ref{case:SimpleLocalTailSimulation}, which can be replaced by two head moves.}
\label{fig:LocalTailToHead2a}
\end{figure}

It rests to prove that there is such a location (not above $z$ and excluding $(z,c(z))$) to move $(u,z)$ to. Recall that we assume any network has at least two leaves. Let $l$ be a leaf not equal to $c(z)$, then its incoming edge $(p(l),l)$ is not above $c(z)$ and not equal to $(z,c(z))$. Hence this edge $e=(l,p(l))$ suffices as a location for the first head move.

\end{enumerate}

We conclude that any tail move of the form $(u,v)$ from $(x_L,r)$ to $(x_R,r)$ can be simulated by 4 head moves.
\end{proof}

\begin{lemma}\label{lem:TailToHeadReticNotAboveaR}
Let $(u,v)$ from $(x_L,a_L)$ to $(x_R,a_R)$ be a valid tail move in a network $N$ resulting in a network $N'$. Suppose $a_L\neq a_R$, $a_L$ is not above $a_R$, and there exists a movable reticulation edge $(t,r)$ not below $a_R$. Then there exists a sequence of head moves from $N$ to $N'$ of length at most 7.
\end{lemma}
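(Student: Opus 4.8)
The plan is to reduce a general horizontal tail move $(u,v)$ from $(x_L,a_L)$ to $(x_R,a_R)$ to the two already-solved special cases: the split case handled in Lemma~\ref{lem:HorizontalTailSplitToHead} (both endpoints share the same tail $x$) and the short reticulation case of Lemma~\ref{lem:HorizontalShortTailMoveSimulated} (moving between two edges with the same head $r$). The key conceptual point is that a general tail move can be decomposed as: first slide the tail $u$ up along a path until it reaches a position where the reattachment edge becomes accessible, then drop it down to $(x_R,a_R)$. Concretely, since $a_L$ is not above $a_R$, the node $v$ is also not above $a_R$, so an up-path argument plus Corollary~\ref{cor:MoveUpDown} lets us move things around freely above $u$. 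The given movable reticulation edge $(t,r)$ with $t$ not below $a_R$ is the ``spare'' reticulation we use to simulate the tail motion with head moves, exactly as in the preceding lemmas.

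First I would break the single tail move $(u,v)$ from $(x_L,a_L)$ to $(x_R,a_R)$ into an intermediate step: a tail move of $(u,v)$ to some edge $(x,a_L)$ where $x$ is chosen conveniently (for instance $x = t$ or $x$ a neighbour that makes the configuration match a previously treated case), followed by a tail move $(u,v)$ from $(x,a_L)$ to $(x_R,a_R)$. Each of these sub-moves should either be a split-type move (same tail) handled by Lemma~\ref{lem:HorizontalTailSplitToHead} at cost $\le 6$, or a short same-head move handled by Lemma~\ref{lem:HorizontalShortTailMoveSimulated} at cost $\le 4$, or be trivial/covered directly. One then checks that in every case the combined count stays $\le 7$; I expect the natural decomposition to cost $6 + $ (one extra head move to reposition) $= 7$, or $4 + 3$, depending on the branch. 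Throughout, validity of each intermediate tail move is verified via Lemma~\ref{lem:validHead} and its tail analogue: the hypotheses $a_L \neq a_R$, $a_L$ not above $a_R$, and the existence of $(t,r)$ not below $a_R$ are exactly what is needed to guarantee the ``no cycle / no parallel edge'' conditions survive the decomposition.

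The main obstacle I anticipate is the bookkeeping of degenerate identifications: the nodes $u, v, x_L, a_L, x_R, a_R, t, r$ need not all be distinct, and some coincidences (e.g. $v = a_R$, or $a_L$ being the side of a triangle, or $t$ lying below $v$, or $x_L = x_R$) break the ``generic'' sequence and force a separate sub-argument, just as in Lemma~\ref{lem:HorizontalTailSplitToHead}. The cleanest route is probably to first dispose of $x_L = x_R$ (then the move is already of split type, apply Lemma~\ref{lem:HorizontalTailSplitToHead} directly, cost $\le 6 \le 7$), then assume $x_L \neq x_R$ and use the spare edge $(t,r)$ — which by hypothesis is not below $a_R$, hence stays out of the way of the reattachment at $(x_R,a_R)$ — to carry out a ``detach, slide, reattach'' simulation analogous to the normal sequence in Lemma~\ref{lem:HorizontalTailSplitToHead}, adding at most one more head move than in that lemma because here the two tail positions differ. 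Care must also be taken that after any auxiliary repositioning of $(t,r)$ the final move back to its original slot is valid — which it is, since it produces $N'$, a genuine network.
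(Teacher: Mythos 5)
There is a genuine gap here, and it lies in the generic case rather than in the degenerate identifications you rightly anticipate. Your plan is to factor the tail move $(u,v)$: $(x_L,a_L)\to(x_R,a_R)$ through an intermediate edge $(x,a_L)$ and then invoke Lemma~\ref{lem:HorizontalTailSplitToHead} and Lemma~\ref{lem:HorizontalShortTailMoveSimulated} on the two factors. But those two lemmas only cover extremely \emph{local} tail moves: Lemma~\ref{lem:HorizontalTailSplitToHead} requires the source and target edges to share their tail (the tail of $(u,v)$ slides between the two child edges of a single split node $x$), and Lemma~\ref{lem:HorizontalShortTailMoveSimulated} requires them to share their head (which must be a reticulation $r$). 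For a general move, an intermediate edge $(x,a_L)$ with head $a_L$ and $x\neq x_L$ typically does not even exist (it would force $a_L$ to be a reticulation), and for the second factor to be of the ``same tail'' type you would need $(x_R,a_L)$ to be an edge of the network, which there is no reason to expect. Since $(x_L,a_L)$ and $(x_R,a_R)$ may be arbitrarily far apart, no bounded chain of these local moves can bridge them; ``sliding the tail along a path'' costs a number of moves proportional to the path length, not a constant. Even granting the decomposition, your cost accounting ($6+4=10$, hoped down to $7$) is not substantiated.

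The missing idea is the actual $O(1)$ mechanism the paper uses: head moves have no distance restriction, so the spare reticulation edge $(t,r)$ can be used as a shuttle. In the main case one moves the head of $(t,r)$ onto the target edge $(x_R,a_R)$, then onto $(x_L,u)$, then \emph{down} onto $(u,a_L)$ --- at which point the tail $u$ has effectively been detached from between $x_L$ and $a_L$ --- then onto $(t,a_R)$, and finally restores the reticulation to its original position, for five head moves in total. The hypotheses you list ($a_L$ not above $a_R$, $(t,r)$ movable and not below $a_R$) are consumed move-by-move in verifying validity of this threading sequence via Lemma~\ref{lem:validHead}; they are not generic ``no cycle'' insurance for a two-step tail decomposition. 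The paper then handles the degeneracies ($r=x_R$, $(t,a_R)\in N$, $z=a_R$, $a_R=r$, $x_R\in\{t,s\}$, etc.) by tailored variants of this shuttle, and it is only in a few of those subcases that it falls back on the earlier local lemmas. Without the shuttle construction, your argument does not get off the ground in the generic case.
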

\begin{proof}
Note that $v$ cannot be above either of $x_L$ and $x_R$. The only possible identifications within the nodes $a_L, a_R, x_L, x_R, u, v$ are $a_L=a_R$, $x_L=x_R$ and $a_R=x_L$ (but not simultaneously), all other identifications lead to parallel edges, cycles in either $N$ or $N'$, a contradiction with the condition ``$a_L$ is not above $a_R$'', or a trivial situation where the tail move leads to an isomorphic network. The first of these two identifications have been treated in the previous two lemmas, so we may assume $\bm{a_L\neq a_R}$ and $\bm{x_L\neq x_R}$.
We now distinguish several cases to prove the tail move can be simulated by a constant number of head moves in all cases.

\begin{enumerate}
 \item{\bf $\bm{(t,a_R)\not\in N}$.} 
 \begin{enumerate}
  \item {$\bm{r=x_R}$.} 
   As $(t,r)$ is movable and not below $a_L$ or $v$, we can move the head of this edge to $(x_L,u)$. The head move $(x_L,r')$ down to $(u,a_L)$ is then allowed. Let $s$ be the parent of $r$ in $N$ that is not $t$. Since $u\neq s$ (otherwise the original tail move was not allowed), the head move $(u,r'')$ to $(s,a_R)$ is allowed, where $s$ is the other parent of $r$ in $N$ (i.e., not $t$). Lastly $(s,r''')$ to $(t,u)$ gives the desired network $N'$. 
  \item {$\bm{r\neq x_R}$.}\label{case:Lemma13CaseFigure}
   In this case, we can move $(t,r)$ to $(x_R,a_R)$ in $N$ (if $t=x_R$ then $(t,a_R)\in N$, contradicting the assumptions of this case). Because neither $a_L$ nor $v$ can be above $x_R$ and $x_L\neq x_R$, we can now move $(x_R,r')$ to $(x_L,u)$. Then we move down the head $(x_L,r'')$ to $(u,a_L)$, followed by $(u,r''')$ to $(t,a_R)$. If $u=t$ and $r=v$, the last move is not allowed, and if $u=t$ and $r=a_L$ these last two moves are not allowed. In these cases, we simply skip these move. Lastly, we move $(t,r'''')$ to $(s,z)$ to arrive at $N'$, where $s$ and $z$ are the other parent and the child of $r$ in $N$. Hence the tail move of this situation can be simulated by $5$ head moves.
\end{enumerate}

\begin{figure}[h!]
\begin{center}
\includegraphics[scale=0.7]{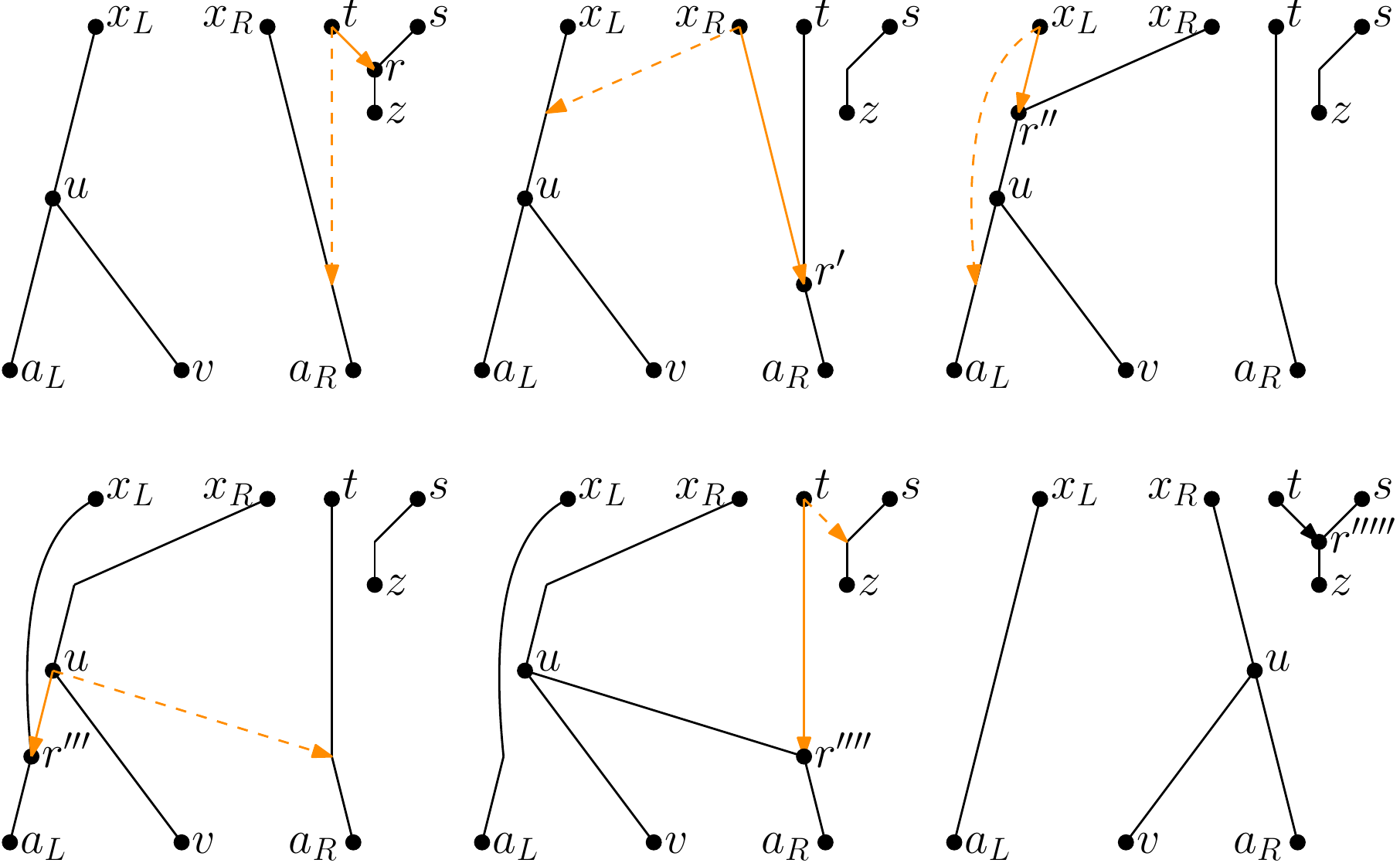}
\end{center}
\caption{The five moves used to simulate a tail move in Case~\ref{case:Lemma13CaseFigure} of Lemma~\ref{lem:TailToHeadReticNotAboveaR}.}
\label{fig:Lemma13Sequence}
\end{figure}

\item{$\bm{(t,a_R)\in N}$.}\label{case:TARInN}
Again $(t,r)$ is the head movable edge. Let $z$ be the child of $r$ and $s$ the other parent of $r$.
\begin{enumerate}
\item{$\bm{z=a_R}$.}\label{Case:ZEqualsAR}
 Note first that in this case, we must have either $x_R=t$ or $x_R=r$, otherwise one of the edges $(t,a_R)$ and $(r,z)$ is not in $N$. 
 \begin{enumerate}
  \item{\bf$\bm{x_R=t}$} \label{Case:ZEqualsARandTequalsXR}
   This case is quite easy, and can be solved with 3 head moves. Because $r$ and $t=x_R$ are distinct, $a_R=z$ is a reticulation node with movable edge $(t=x_R,z=a_R)$. The sequence of moves is: $(t,z)$ to $(x_L,u)$, then $(x_L,z')$ to $(u,a_L)$, then $(u,z'')$ to $(r,c(z))$.
  \item{\bf$\bm{x_R=r}$}
   Note that the \emph{tail} move $(u,v)$ to $(t,a_R)$ is also allowed in this case because $(u,v)$ is tail movable, $v\neq a_R$ and $v$ not above $t$ (otherwise the tail move to $(r,z)$ is not allowed either). This tail move is of the type of the previous case, and takes at most 3 head moves. Now the move $(u',v)$ to $(r,z)$ is of the type of Lemma~\ref{lem:HorizontalShortTailMoveSimulated}, which takes at most 4 head moves to simulate. We conclude any tail move of this case can be simulated with $7$ head moves.
 \end{enumerate}
\item{$\bm{z\neq a_R}$.} 
\begin{enumerate}
\item{$\bm{a_R\neq r}$.}\label{case:TailToHeadFull2bi}
\begin{enumerate}
\item{$\bm{x_R  =  t}$ and $\bm{v\neq r}$.}
We can move the \emph{tail} to $(x_R,r)$ with three head moves like in Case~\ref{Case:ZEqualsARandTequalsXR}. The resulting DAG is a network because $v$ is not above $x_R$ and $v\neq r$. Call the resulting new location of the tail $u'$. We can get to $N'$ with two head moves (Lemma~\ref{lem:HorizontalTailSplitToHead}~Case~\ref{case:UaLequalsTR}): $(u',r)$ to $(t,a_R)$, then $(t,r')$ to $(s,z)$. This case therefore takes at most $5$ head moves.
\item{$\bm{x_R  =  t}$ and $\bm{v  =  r}$.}
The following sequence of four head moves suffices: $(t=x_R,r=v)$ to $(x_L,u)$, then $(x_L,r')$ to $(u,a_L)$, then $(u,r'')$ to $(t,a_R)$ and finally $(t,r''')$ to $(u,z)$. Hence this case takes at most $4$ head moves.
\item{$\bm{x_R\neq t}$ and $\bm{a_R\neq s}$.}
Because $a_R\neq s$, the edge $(x_R,a_R)$ is movable. Also, because $x_R\neq x_L$ and $u$ not above $x_R$, $(x_R,a_R)$ can be moved to $(x_L,u)$. Now $(x_L,a_R')$ is movable, and it can be moved down to $(u,a_L)$. Finally, the head move $(u,a_R'')$ to $(t,c(a_R))$ results in $N'$. Hence in this case we need at most $3$ head moves. 
\item{$\bm{x_R\neq t}$ and $\bm{a_R  =  s}$.}
The following sequence of five head moves suffices: $(t,r)$ to $(u,a_L)$, then $(x_R,s)$ to $(x_L,u)$, then $(x_L,s')$ to $(u,r')$, then $(u,s'')$ to $(t,c(r))$, and finally $(t,r')$ to $(s''',c(r))$. Hence this case takes at most $5$ head moves.
\end{enumerate}


\item{$\bm{a_R = r}$.} 
In this case either $x_R=t$ or $x_R=s$. 
\begin{enumerate}
\item{$\bm{x_R=t}$.} This case is easily solved with 3 head moves: $(x_R,a_R)$ to $(x_L,u)$, then $(x_L,a_R')$ to $(u,a_L)$, then $(u,a_R'')$ to $(s,z)$. 
\item{$\bm{x_R=s}$.} If $(s,r)$ is movable (i.e. there is no edge $t,z$), then we can relabel $t \leftrightarrow s$ and treat like the previous case. Otherwise, there is an edge $(t,z)$ and we use the following sequence of moves: $(t,a_R)$ to $(u,a_L)$, then $(x_R=s,z)$ to $(x_L,u)$, then $(x_L,z')$ to $(u,a_R')$, then $(u,z'')$ to $(t,c(z))$, then $(t,a_R')$ to $(z''',c(z))$. The tail move of this situation can therefore be replaced by 5 head moves.
\end{enumerate}
\end{enumerate}
\end{enumerate}

\end{enumerate}
\end{proof}

\begin{lemma}
Let $(u,v)$ from $(x_L,a_L)$ to $(x_R,a_R)$ be a valid tail move in a network $N$ resulting in a network $N'$. Suppose $a_L\neq a_R$, $a_L$ is not above $a_R$, and all movable reticulation edges are below $a_R$. Then there exists a sequence of head moves from $N$ to $N'$ of length at most 15.
\end{lemma}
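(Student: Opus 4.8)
The plan is to reduce this case to Lemma~\ref{lem:TailToHeadReticNotAboveaR}. That lemma requires a movable reticulation edge not below $a_R$, and here, by hypothesis, every movable reticulation edge lies below $a_R$; so I would first spend a few head moves to \emph{create} a movable reticulation edge that is not below $a_R$, then apply Lemma~\ref{lem:TailToHeadReticNotAboveaR}, and finally undo the preliminary moves on the $N'$ side (the preliminary relocation and the tail move $(u,v)$ commute up to coincidences). If the preliminary sequence has length at most $4$, its reverse also has length at most $4$, so the total is at most $4+7+4=15$, matching the claimed bound.

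For the structural set-up I would first note that we may assume $k>0$ (otherwise there are no head moves and the statement is read as in the rest of the section). Then every reticulation has a head-movable incoming edge, so there is at least one movable reticulation edge; by hypothesis it lies below $a_R$, so $a_R$ has a descendant, and $a_R$ cannot itself be a reticulation (a reticulation $a_R$ would have a movable incoming edge with tail $x_R$ or its other parent, neither below $a_R$), so $a_R$ is a split node. Fix a highest reticulation $r$ strictly below $a_R$. Any head-movable incoming edge $(t,r)$ of $r$ must have $t$ \emph{strictly} below $a_R$, for otherwise $(t,r)$ would be a movable reticulation edge not below $a_R$. A key consequence is that \emph{no single head move can remove a reticulation from below $a_R$}, since the reticulation it produces keeps one of its former parents, which is below $a_R$; this is why at least two moves are needed. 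The preliminary sequence is then: move the head of $(t,r)$ onto an edge whose tail lies strictly outside the set of descendants of $a_R$ and whose head is not an ancestor of $t$ (for instance the incoming edge of a leaf not below $a_R$; validity is Lemma~\ref{lem:validHead}), and then move the \emph{other} incoming edge of the resulting reticulation onto a second such edge. The new reticulation then has both parents outside the descendants of $a_R$, hence is not below $a_R$, and still has a movable incoming edge, so Lemma~\ref{lem:TailToHeadReticNotAboveaR} applies; one checks as in the earlier lemmas that the prescribed tail move stays valid.

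The main obstacle is the situation where $a_R$ is a common ancestor of all the leaves — in the extreme, the child of the root — because then there are no (or almost no) edges lying outside the descendants of $a_R$ that are usable as targets, and the relocation above breaks down. In that case the target edge $(x_R,a_R)$ of the tail move is, or is very close to, the root edge, and the move amounts to attaching the subtree below $v$ as a new pendant subtree near the top. I would handle this separately with the device used for the root-edge case in the proof of Lemma~\ref{lem:GeneralChangeBaseTree}: use the reticulation $r$ below $a_R$ to build a triangle near the top, shuffle the moving subtree across the two sides of the top split (via Lemma~\ref{lem:MoveTriangle} and the short tail-move simulations of Lemmas~\ref{lem:HorizontalTailSplitToHead} and~\ref{lem:HorizontalShortTailMoveSimulated}), and then restore the triangle, so that $v$ ends up directly below the new top node. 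The head-move budget beyond the $7$ coming from Lemma~\ref{lem:TailToHeadReticNotAboveaR} is consumed by this exceptional case and by the routine bookkeeping of the identifications that may occur among $u,v,x_L,x_R,a_L$ and the neighbours of the relocated reticulation, each disposed of exactly as in the preceding lemmas; keeping every such sub-case within the overall bound $15$ is where the real work lies.
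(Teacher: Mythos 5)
Your high-level strategy---relocate a reticulation edge so that Lemma~\ref{lem:TailToHeadReticNotAboveaR} applies, then undo the relocation on the $N'$ side---is the paper's strategy for the generic case, but there are two genuine gaps. First, the exceptional case is not actually handled. When no usable target edge outside the descendants of $a_R$ exists, you fall back on the triangle-shuffling machinery of Lemmas~\ref{lem:MoveTriangle} and~\ref{lem:GeneralChangeBaseTree}; but those lemmas produce sequences of \emph{unbounded} length (moving a triangle through the network costs a number of head moves proportional to how far it must travel), so they cannot yield the constant bound $15$. The paper isolates the only genuinely bad case as $x_R=\rho$ (then $a_R$ is the child of the root and every edge except the root edge is below $a_R$) and resolves it with a bounded two-stage simulation: first simulate the tail move of $(u,v)$ onto a child edge $(a_R,z)$ of $a_R$ (at most $9$ head moves, by the generic case), then simulate the remaining horizontal tail move $(u',z)$ to $(a_R,y)$ via Lemma~\ref{lem:HorizontalTailSplitToHead} (at most $6$ head moves), for a total of $15$. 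Nothing in your sketch replaces this. Second, your budget does not close: the generic case already consumes $4+7+4=15$, leaving nothing for the coincidence sub-cases you defer (and explicitly admit are ``where the real work lies''), and the exceptional case carries no count at all.

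A smaller but consequential point: you are solving a harder relocation problem than necessary. Lemma~\ref{lem:TailToHeadReticNotAboveaR} asks for a movable reticulation \emph{edge} not below $a_R$, and by the paper's convention an edge is below $a_R$ precisely when its \emph{tail} is; the reticulation node itself may remain below $a_R$. Your observation that no single head move can pull the whole reticulation out from under $a_R$ is therefore true but irrelevant, and one relocation move suffices. The paper uses the sibling edge $(x_R,b)$ of the target edge (which exists exactly when $x_R\neq\rho$, and whose head $b$ is not above $t$ because $r$ was chosen as a highest reticulation) as the single relocation target in both $N$ and $N'$; this produces networks $M$ and $M'$ one tail move apart of a type already bounded by $5$ head moves inside Lemma~\ref{lem:TailToHeadReticNotAboveaR}, giving $5+2=7$ in the cleanest subcase and $9$ when a triangle at $r$ must first be broken via a leaf edge. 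Reworking your argument along these lines would both recover the budget and remove the dependence on finding two distinct leaves outside the $a_R$-subtree.
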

\begin{proof}
Like in the proof of last lemma, we assume that $a_L$ is not above $a_R$. Because the network has at least one reticulation, we can pick a highest reticulation $r$ in the network, let $(t,r)$ be its movable edge. As each movable reticulation edge is below $a_R$, so is $(t,r)$. Let us denote the root of $N$ with $\rho$, and distinguish two subcases:
\begin{enumerate}
\item{$\bm{x_R\neq \rho}$.}
Because $x_R$ is above $a_R$, it must be a split node, it has another child edge $(x_R,b)$ with $b\neq a_R$ not above $t$: if $b$ were above $t$, there has to be a reticulation above $r$, contradicting our choice of $r$.
\begin{enumerate}
\item{$\bm{r\neq b}$.}
In this case, we can move $(t,r)$ to $(x_R,b)$ in both $N$ and $N'$, producing networks $M$ and $M'$.
Now $(x_R,r')$ is movable in $M$, and by relabelling $t'=x_R$ we can see that there is one tail move between $M$ and $M'$ of the same type as Case~\ref{case:TailToHeadFull2bi} of Lemma~\ref{lem:TailToHeadReticNotAboveaR}. To see this, take $r'$ as the relevant reticulation with movable edge $(t',r')$ and consider the tail move $(u,v)$ to $(x_R,a_R)$ producing $M'$. This case can therefore be solved with at most $5+2=7$ head moves.
\item{{\bf $\bm{r  =  b}$ and $\bm{(t,c(r))\not\in N}$}.}
In this case, $(x_R,r)$ is movable, and not below $a_R$, contradicting our assumptions. 
\item{{\bf $\bm{r  =  b}$ and $\bm{(t,c(r)))    \in N}$}.}
Because $N$ has at least two leaves, there must either be at least 2 leaves below $r$, or there is a leaf not below $r$. Let $l$ be an arbitrary leaf below $r$ in the first case, or a leaf not below $r$ in the second case. Note that the head move $(t,c(r))$ to the incoming edge of $l$ is allowed, and makes $(x_R,r)$ movable. Now the tail move $(u,v)$ to $(x_R,a_R)$ is still allowed, because $v\neq a_R$, $v$ is not above $x_R$ and $(u,v)$ is tail movable. 
For this tail move we are in a case of Lemma~\ref{lem:TailToHeadReticNotAboveaR} because $(x_R,r)$ is not below $a_R$, hence this tail move takes at most 7 moves. After this move, we can do one head move to put $(t,c(r))$ back. Hence this case takes at most $9$ moves.
\end{enumerate}
\item{$\bm{x_R  =  \rho}$.}
Let $y,z$ be the children of $a_R$. Now first do the tail move of $(u,v)$ to one of the child edges $(a_R,z)$ of $a_R$. This is allowed because $a_R$ is the top split. The sequence of head moves used to do this tail move is as in the previous case. Note that $N'$ is now one tail move away: $(u',z)$ to $(a_R,y)$. This is a horizontal tail move along a split node as in Lemma~\ref{lem:HorizontalTailSplitToHead}, which takes at most $6$ head moves. As the previous case took at most $9$ head moves, this case takes at most $15$ head moves in total.

\end{enumerate}
\end{proof}

\begin{lemma}\label{lem:tailtoheadaLnotaboveaR}
Let $(u,v)$ from $(x_L,a_L)$ to $(x_R,a_R)$ be a valid tail move in a network $N$ resulting in a network $N'$. Suppose $a_L\neq a_R$ and $a_L$ is not above $a_R$, then there exists a sequence of head moves from $N$ to $N'$ of length at most 15.
\end{lemma}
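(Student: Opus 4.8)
The plan is to obtain Lemma~\ref{lem:tailtoheadaLnotaboveaR} as an immediate consequence of the two preceding lemmas by a single dichotomy, with essentially no new work. The hypotheses $a_L\neq a_R$ and ``$a_L$ is not above $a_R$'' are exactly the hypotheses shared by Lemma~\ref{lem:TailToHeadReticNotAboveaR} and the lemma immediately before it, so the reduction requires no preparation. As in the previous lemma, I would work in a network with at least one reticulation, since otherwise there are no head moves available (and the whole section is implicitly about tier $k>0$).

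First I would record the standard fact, used repeatedly in Section~\ref{sec:connectivity}, that at least one of the two incoming edges of any reticulation is head movable: for a reticulation $r$ with parents $p_1,p_2$ and child $c$, if neither $(p_1,r)$ nor $(p_2,r)$ were head movable then both $(p_1,c)$ and $(p_2,c)$ would be edges, forcing $c$ to have indegree at least three, a contradiction. Hence $N$ contains at least one movable reticulation edge, so the split into the following two cases is exhaustive.

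In the first case, some movable reticulation edge $(t,r)$ is not below $a_R$; then Lemma~\ref{lem:TailToHeadReticNotAboveaR} applies verbatim and yields a sequence of at most $7$ head moves from $N$ to $N'$. In the second case, every movable reticulation edge is below $a_R$; then the lemma immediately preceding this one applies and yields a sequence of at most $15$ head moves. Taking the larger of the two bounds gives the claimed $15$, which finishes the proof.

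I do not expect any genuine obstacle here: all the substantive case analysis — horizontal tail moves along a split node (Lemma~\ref{lem:HorizontalTailSplitToHead}), short tail moves along a reticulation (Lemma~\ref{lem:HorizontalShortTailMoveSimulated}), manoeuvring a helper reticulation edge into position, and the special treatment of $x_R=\rho$ — has already been done in the earlier lemmas of this section. What remains is only the bookkeeping observation that those lemmas jointly cover every configuration in which $a_L$ is not above $a_R$, which is exactly the statement obtained by the case split above.
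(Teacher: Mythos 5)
Your proposal is correct and matches the paper's own proof, which simply states that the lemma is a direct consequence of the previous two lemmas; the case split on whether some movable reticulation edge lies not below $a_R$ (at most $7$ moves) or all such edges lie below $a_R$ (at most $15$ moves) is exactly the intended dichotomy. Your added observation that every reticulation has at least one head movable incoming edge, which makes the dichotomy exhaustive for $k>0$, is a small but welcome explicit justification that the paper leaves implicit.
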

\begin{proof}
This is a direct consequence of the previous two lemmas.
\end{proof}

\begin{lemma}
Let $(u,v)$ from $(x_L,a_L)$ to $(x_R,a_R)$ be a valid tail move in a network $N$ resulting in a network $N'$. Suppose $a_L\neq a_R$ and $a_L$ is above $a_R$, then there exists a sequence of head moves from $N$ to $N'$ of length at most 15.
\end{lemma}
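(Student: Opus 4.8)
The plan is to reduce this last case to Lemma~\ref{lem:tailtoheadaLnotaboveaR} by \emph{running the tail move backwards}. Both tail moves and head moves are reversible, and reversing the move exchanges the roles of $a_L$ and $a_R$, so the inconvenient hypothesis ``$a_L$ is above $a_R$'' becomes the hypothesis ``not above'' required by that lemma.

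First I would set up the reverse tail move. Let $u'$ be the node subdividing $(x_R,a_R)$ in $N'$, so that $N'$ contains the edges $(x_R,u')$, $(u',a_R)$ and $(u',v)$, as well as the edge $(x_L,a_L)$ obtained from suppressing $u$. Then the tail move of $(u',v)$ from $(x_R,a_R)$ to $(x_L,a_L)$, performed in $N'$, deletes $(u',v)$, suppresses $u'$ (recreating $(x_R,a_R)$), subdivides $(x_L,a_L)$ by a node, and re-adds $(u,v)$; that is, it produces exactly $N$. Since $N$ is a network, this is a valid tail move. Reading Lemma~\ref{lem:tailtoheadaLnotaboveaR} for this move, the roles of $a_L$ and $a_R$ are swapped, so I must verify that $a_R\neq a_L$ (immediate) and that $a_R$ is not above $a_L$ in $N'$. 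For the latter I would show the stronger statement that $a_L$ is still above $a_R$ in $N'$; acyclicity of $N'$ then rules out $a_R$ being above $a_L$. A directed path $P$ from $a_L$ to $a_R$ in $N$ cannot visit $u$, since $u$ is a parent of $a_L$ and visiting $u$ would close the cycle $u\to a_L\rightsquigarrow u$; hence $P$ uses no edge incident to $u$, and the only edge of $P$ that the tail move could destroy is $(x_R,a_R)$. In $N'$ that edge is replaced by the two-edge path $(x_R,u'),(u',a_R)$, so $P$ yields a directed path from $a_L$ to $a_R$ in $N'$ as well.

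Now Lemma~\ref{lem:tailtoheadaLnotaboveaR} gives a sequence of at most $15$ head moves from $N'$ to $N$. Finally I would reverse this sequence: the reverse of a head move is again a head move (swap the two incoming edges of the reticulation created by the move), and reversing a valid sequence of moves yields a valid sequence, so we obtain at most $15$ head moves from $N$ to $N'$, as required.

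The only non-routine point is the short acyclicity argument showing that the ancestor relation between $a_L$ and $a_R$ survives the tail move; degenerate coincidences among $a_L,a_R,x_L,x_R,u,v$ need no separate treatment, because Lemma~\ref{lem:tailtoheadaLnotaboveaR} already covers every valid tail move with $a_L\neq a_R$ and $a_L$ not above $a_R$, and hence absorbs all of them.
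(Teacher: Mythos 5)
Your proposal is correct and takes essentially the same route as the paper: the paper also reduces this case to Lemma~\ref{lem:tailtoheadaLnotaboveaR} by considering the reverse tail move from $N'$ to $N$ with the labels $x_L\leftrightarrow x_R$, $a_L\leftrightarrow a_R$ swapped. You merely spell out two details the paper leaves implicit, namely why $a_R$ is not above $a_L$ in $N'$ and why a head move sequence can be reversed, and both of your justifications are sound.
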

\begin{proof}
Note that in this case $a_R$ is not above $a_L$ in $N'$.
Reversing the labels $x_L\leftrightarrow x_R$ and $a_L\leftrightarrow a_R$ we are in the situation of Lemma~\ref{lem:tailtoheadaLnotaboveaR} for the reverse tail move $N'$ to $N$. This implies the tail move can be replaced by a sequence of at most 15 head moves.
\end{proof}

\begin{theorem}
Any tail move can be replaced by a sequence of $15$ head moves.
\end{theorem}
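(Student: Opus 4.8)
The plan is to recognize that this theorem is the bookkeeping step that packages together the lemmas of this subsection, via one more case analysis. First I would fix a valid tail move $(u,v)$ from $(x_L,a_L)$ to $(x_R,a_R)$ in a network $N$, with result $N'$; recall the notation means that $a_L$ is the child of $u$ distinct from $v$ (so that suppressing $u$ after deleting $(u,v)$ creates the edge $(x_L,a_L)$), and $(x_R,a_R)$ is the edge subdivided by the move. The natural case split is on how $a_L$ and $a_R$ relate: either $a_L=a_R$; or $a_L\neq a_R$ with $a_L$ above $a_R$; or $a_L\neq a_R$ with $a_L$ not above $a_R$. These three cases are exhaustive, so the final bound will be the maximum of the bounds obtained in them.

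For $a_L=a_R$ I would distinguish two sub-possibilities. If $a_L=a_R$ is a split node or a leaf it has indegree $1$, so its two prospective parents $u$ and $x_R$ must coincide; but then the move merely relabels $u$ as the subdivision vertex and $N'$ is isomorphic to $N$, costing no head moves. Otherwise $a_L=a_R$ is a reticulation $r$, and the move has exactly the shape $(u,v)$ from $(x_L,r)$ to $(x_R,r)$ treated in Lemma~\ref{lem:HorizontalShortTailMoveSimulated}, hence costs at most $4$ head moves. For $a_L\neq a_R$ with $a_L$ above $a_R$, and for $a_L\neq a_R$ with $a_L$ not above $a_R$, I would simply invoke the lemma immediately preceding the theorem (the ``$a_L$ above $a_R$'' case) and Lemma~\ref{lem:tailtoheadaLnotaboveaR} respectively, each of which gives a bound of $15$. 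Taking the maximum, $\max\{0,4,15,15\}=15$, completes the proof.

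The only thing requiring genuine attention — since all the real work is in the earlier lemmas — is confirming that the trichotomy is exhaustive once the degenerate identifications among $\{u,v,x_L,a_L,x_R,a_R\}$ are accounted for: identifications such as $v=a_R$, or $v$ above $x_R$, are ruled out by validity of the tail move, while $x_L=x_R$ (the ``horizontal move along a split node'' of Lemma~\ref{lem:HorizontalTailSplitToHead}) and $a_R=x_L$ are permitted but already fall inside the three cases and are handled by the cited lemmas. I do not anticipate a real obstacle here; the only risk is an off-by-one in the worst-case count, so I would double-check that $15$ is actually attained — it arises from the $x_R=\rho$ branch of the ``all movable reticulation edges below $a_R$'' lemma as $9+6$ — and that the ``$a_L$ above $a_R$'' case is also bounded by $15$ because it reduces, by reversing the tail move, to the ``$a_L$ not above $a_R$'' case.
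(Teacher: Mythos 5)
Your proof is correct and takes essentially the same route as the paper, whose own proof of this theorem is simply ``Follows from the previous lemmas''; you have just made explicit the exhaustive case split ($a_L=a_R$ trivial or Lemma~\ref{lem:HorizontalShortTailMoveSimulated}; $x_L=x_R$ via Lemma~\ref{lem:HorizontalTailSplitToHead}; otherwise the two $15$-move lemmas depending on whether $a_L$ is above $a_R$) that the paper leaves implicit. The worst-case count of $15$ and its source in the $x_R=\rho$ branch are also identified correctly.
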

\begin{proof}
Follows from the previous lemmas.
\end{proof}

\subsection{Head move replaced by tail moves}
In this section each move is a tail move unless stated otherwise.

We first recall a result from \citep{janssen2017exploring}: any distance one head move can be replaced by a constant number of tail moves, so the following result holds.

\begin{lemma}\label{lem:D1HeadMove}
Let $(u,v)$ from $(x_1,y_1)$ to $(x_2,y_2)$ be a valid distance one head move in a network $N$ resulting in a network $N'$. Then there is a sequence of at most $4$ tail moves between $N$ and $N'$, except if $N$ and $N'$ are different networks with two leaves and one reticulation.
\end{lemma}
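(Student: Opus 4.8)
The plan is to prove this by a bounded case analysis on the local structure around the moving head, following \citep{janssen2017exploring}. Write $w$ for the parent of $v$ other than $u$, and $c$ for the child of $v$. Since a valid head move requires $(u,v)$ to be head movable (the analogue, for heads, of the first hypothesis of Lemma~\ref{lem:validHead}), the edge $(w,c)$ is not already present in $N$. The hypothesis that the move has distance $1$ forces the subdivision point $v'$ to be at undirected distance at most $2$ from $v$ after step~2; since step~1 has already removed $(u,v)$, the only neighbours of $v$ at that stage are $w$ and $c$, so the target edge $f$ must be incident with $w$ or with $c$ (or with the root, when $w$ is the child of the root). Up to these possibilities, $f$ is one of: a child edge of $c$; the other incoming edge of $c$ when $c$ is a reticulation; the parent edge of $w$; or the other child edge of $w$ when $w$ is a split node. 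Taking into account the identifications that can occur ($u$ adjacent to $w$, the other child of $w$ being a leaf, $c$ being a reticulation, a triangle with side $w$ or side $c$, $w$ being the child of the root), this leaves only a bounded number of genuinely distinct configurations.

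For each configuration the plan is to exhibit an explicit sequence of at most $4$ tail moves effecting the same change on the network. The building blocks are: a tail move of a tail-movable edge \emph{up} a tree path, which is always allowed by Corollary~\ref{cor:MoveUpDown}; reversing the direction of a triangle, which costs one tail move (the triangle lemma, read in the tail-move direction); and temporarily ``parking'' an edge on a pendant leaf edge when an intermediate network would otherwise contain parallel edges — the same detour used in Case~\ref{case:ExtendedLocalTailToHead} of Lemma~\ref{lem:HorizontalShortTailMoveSimulated}. Concretely, in the cases where $f$ is a child edge of $c$ one works on the $N'$ side: the edge created there by the head move is $(w,c)$ suitably placed, and moving its tail down past $c$ (possibly reversing an induced triangle, possibly after a leaf-parking step) reaches $N$; the cases where $f$ touches $w$ are symmetric, done from the other side. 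The configurations that actually need all four moves are exactly those in which an unavoidable intermediate network has parallel edges, so that the leaf-parking detour (two extra moves) is required.

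The exceptional clause is handled by direct inspection of the finite two-leaf, one-reticulation tier. In that tier there are only two split nodes, one of them the child of the root; this makes the network so rigid that there is no spare leaf edge on which to park an edge and no room to reverse triangles freely, so the simulations above break down — and in fact one checks by hand that no tail-move sequence at all joins the two distinct such networks in question, so the exception is genuinely necessary and not an artefact of the bound $4$.

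The main obstacle I expect is the validity bookkeeping across the case list: every proposed tail move must be verified to yield a DAG with the correct node degrees, and the dangerous subcases are precisely those with a triangle at $w$ or at $c$ and those where $w$ is the child of the root, each of which must be split off and treated separately. The other delicate point is pinning down exactly which tiny configurations constitute the stated exception and confirming that, up to isomorphism, there is exactly the one troublesome pair.
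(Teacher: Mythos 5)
You cannot be faulted for not matching ``the paper's own proof'' here, because the paper does not give one: Lemma~\ref{lem:D1HeadMove} is explicitly \emph{recalled} from \citet{janssen2017exploring}, and only the special case in Lemma~\ref{lem:D1HeadMoveDown} is reproved in the text. Your overall strategy --- enumerate the finitely many local configurations of $w$, $v$, $c$ and the target edge, then exhibit an explicit tail-move sequence for each --- is indeed the kind of argument that establishes such a statement, and your identification of the exceptional pair (the two non-isomorphic two-leaf, one-reticulation networks, which are not connected by tail moves at all) is correct.

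There are, however, two genuine gaps. First, the proposal is a plan rather than a proof: for a lemma whose entire mathematical content is the list of explicit sequences of at most four valid tail moves, one per configuration, deferring ``the validity bookkeeping across the case list'' defers essentially everything. Nothing in the write-up certifies that four moves suffice in the worst configuration, and the bound $4$ is precisely what must be extracted from the cases you have not worked. Second, one of your declared building blocks is false: you claim that reversing the direction of a triangle ``costs one tail move.'' It does not. In a triangle $t,s,r$ with $s\to z$ and $r\to w$ the side edge $(s,z)$ is not tail movable (suppressing $s$ would create an edge parallel to $(t,r)$), the edge $(r,w)$ has a reticulation as its tail so it cannot be tail-moved at all, and moving the tail of $(t,s)$ or $(t,r)$ onto the other triangle edges creates a cycle or a parallel edge. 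Triangle reversal is exactly the distance-$1$ head move of Figure~\ref{fig:TriangleDirection}, i.e.\ one of the moves this lemma is supposed to simulate, so taking it as a one-tail-move primitive assumes a piece of the conclusion with the wrong cost. Until the triangle-reversal configuration (and the other degenerate configurations you flag) are each given a concrete, validity-checked sequence of at most four tail moves, the lemma is not established.
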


And there is the following special case, for which we repeat the proof here.

\begin{lemma}\label{lem:D1HeadMoveDown}
Let $(u,v)$ from $(x_1,y_1)$ to $(x_2,y_2)$ be a valid head move in a network $N$ resulting in a network $N'$. Suppose that $y_1=x_2$ and $x_2$ is a \emph{split node}, then there is a sequence of at most $1$ tail moves between $N$ and $N'$. 
\end{lemma}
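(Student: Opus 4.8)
The plan is to realise the head move by one explicit tail move and then verify validity and correctness. First I would fix the local picture. Since $(u,v)$ is head movable, $v$ is a reticulation of $N$ with parents $u$ and $x_1$ and child $y_1$ (matching the convention in which $(x_1,y_1)$ denotes the edge that appears after the move). The hypothesis $y_1=x_2$ together with $x_2$ being a split node says that $y_1$ has exactly two out‑edges: the edge $(x_2,y_2)=(y_1,y_2)$ that the move subdivides, and one further edge $(y_1,c)$. So around the moving edge, $N$ contains exactly $(u,v),(x_1,v),(v,y_1),(y_1,y_2),(y_1,c)$, and unwinding the four steps of Definition~\ref{def:HeadMove} shows that $N'$ contains instead $(u,v'),(x_1,y_1),(y_1,v'),(v',y_2),(y_1,c)$, where $v'$ is the node subdividing $f$ (the new reticulation, with parents $y_1$ and $u$ and child $y_2$), and $y_1$ is now a split node with parent $x_1$ and children $v'$ and $c$.

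Next I claim the tail move to use is: move the tail of $(y_1,c)$ to the edge $(x_1,v)$. The edge $(y_1,c)$ is tail movable, because $y_1$ is a split node (by hypothesis) that is not the side of a triangle: its parent $v$ is a reticulation, whose only out‑edge is $(v,y_1)\neq(v,y_2)$. Since $(x_1,v)$ lies on a directed path above $y_1$ (indeed $x_1\to v\to y_1$), this is a tail move up, so it is valid by Corollary~\ref{cor:MoveUpDown} (equivalently, one checks directly $c\neq v$ and $c$ not above $x_1$, both forced by acyclicity of $N$). Carrying out Definition~\ref{def:TailMove} on this move — delete $(y_1,c)$, subdivide $(x_1,v)$ by a new node $w$, suppress $y_1$ (now an indegree‑$1$ outdegree‑$1$ node between $v$ and $y_2$), add $(w,c)$ — replaces the five edges above by $(u,v),(x_1,w),(w,v),(v,y_2),(w,c)$, which is exactly the local picture of $N'$ under the relabelling $w\mapsto y_1$, $v\mapsto v'$. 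Hence this single tail move turns $N$ into $N'$, giving $d_{\text{tail}}(N,N')\le 1$.

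The only point requiring care — and the step I expect to be the main (minor) obstacle — is ruling out degenerate coincidences among the six nodes $u,v,x_1,y_1,y_2,c$, since any such identification could change the local picture or create parallel edges and would have to be handled separately. But each possible identification produces a directed cycle in $N$ (for example $x_1=y_2$ yields $y_1\to y_2=x_1\to v\to y_1$, and $u=c$ yields $u\to v\to y_1\to c=u$), so all six nodes are pairwise distinct and the computation of the previous paragraph is exact, completing the proof.
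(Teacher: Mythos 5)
Your proof is correct and uses exactly the move the paper uses: the paper's entire proof is ``let $c(x_2)$ be the other child of $x_2$, then the tail move $(x_2,c(x_2))$ to $(x_1,v)$ suffices,'' which in your notation is precisely the tail move of $(y_1,c)$ to $(x_1,v)$. Your additional verification of movability, validity via Corollary~\ref{cor:MoveUpDown}, and the distinctness of the six local nodes (noting only that the pairs $u=x_1$ and $y_2=c$ are excluded by the absence of parallel edges rather than by acyclicity) fills in details the paper leaves implicit.
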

\begin{proof}
Let $c(x_2)$ be the other child of $x_2$ (not $y_2$), then the tail move $(x_2,c(x_2))$ to $(x_1,v)$ suffices.
\end{proof}

Now we prove that there is such a sequence of constant length if $y_1$ is above $x_2$ and $x_2$ is a split node. The proof uses the previous lemma by creating a similar situation in a constant number of tail moves.

\begin{lemma}\label{lem:HeadMoveDown}
Let $(u,v)$ from $(x_1,y_1)$ to $(x_2,y_2)$ be a valid head move in a network $N$ resulting in a network $N'$. Suppose that $y_1$ is above $x_2$, $y_1\neq x_2$, and $x_2$ is a split node, then there is a sequence of at most $4$ tail moves between $N$ and $N'$.
\end{lemma}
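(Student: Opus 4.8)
The plan is to reduce to Lemma~\ref{lem:D1HeadMoveDown} by using tail moves to drag the node $x_2$ upwards until it becomes the child of the reticulation $v$. Throughout, write $x_1$ for the parent of $v$ in $N$ other than $u$ and $y_1$ for the child of $v$ (so the head move really is the move ``$(u,v)$ from $(x_1,y_1)$ to $(x_2,y_2)$''); let $q$ be the parent of $x_2$ in $N$ and $c_2$ the child of $x_2$ other than $y_2$. Since $y_1$ is above $x_2$ with $y_1\neq x_2$, the edge $(v,y_1)$ lies above $x_2$, and one checks $y_1\neq y_2$, $v\neq x_2$, $x_1\neq x_2$, $c_2\neq v$ (any such coincidence would force a cycle in $N$ or $N'$).

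\emph{Main case: $(x_2,y_2)$ is tail movable in $N$.} Here I would perform three tail moves.
\begin{enumerate}
\item Move the tail of $(x_2,y_2)$ to $(v,y_1)$; this is allowed by Corollary~\ref{cor:MoveUpDown} since $(x_2,y_2)$ is tail movable and $(v,y_1)$ is above $x_2$. Call the new subdivision node $x_2'$: now $v\to x_2'$, the split node $x_2'$ has children $y_1$ and $y_2$, and the former parent $q$ of $x_2$ is joined directly to $c_2$.
\item In the resulting network the head move $(u,v)$ to $(x_2',y_2)$ is valid by Lemma~\ref{lem:validHead} ($(u,v)$ is head movable, $u\neq x_2'$, and $y_2$ is below $u$ as a descendant of $v$), and since the child $x_2'$ of $v$ is the tail of the target edge and is a split node, it falls under Lemma~\ref{lem:D1HeadMoveDown}; so one tail move realises it. Afterwards $v$ subdivides the former edge $(x_2',y_2)$, and the split node produced by that tail move — call it $x_2''$ — sits directly above $v$, with $x_1\to x_2''$ and $x_2''\to y_1$ as well.
\item Finally move the tail of $(x_2'',v)$ down onto $(q,c_2)$. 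Its other child is $y_1$, and because $(u,v)$ was head movable $N$ (hence the current network) has no edge $(x_1,y_1)$, so $(x_2'',v)$ is tail movable; together with $v$ not being above $q$ this makes the move valid, and it produces exactly $N'$, the new subdivision node playing the role of $x_2$.
\end{enumerate}
Thus the main case costs three tail moves.

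\emph{Degenerate case: $(x_2,y_2)$ is not tail movable.} Since $x_2$ is a split node this forces $q\to c_2$ to be an edge, i.e.\ $x_2$ is the side of the triangle $(q,x_2,c_2)$. A short argument shows $(q,c_2)$ is then tail movable (otherwise $q$ would be the side of a triangle through $c_2$, which with $q\to x_2\to c_2$ and $q\to c_2$ forces a $2$-cycle). One preliminary tail move of $(q,c_2)$ onto the incoming edge of a leaf chosen not below $c_2$ — available by the leaf trick used in Lemma~\ref{lem:HorizontalShortTailMoveSimulated} — breaks the triangle, turning $x_2$ into an ordinary split node still (weakly) below $y_1$ with $(x_2,y_2)$ tail movable; then we finish by Lemma~\ref{lem:D1HeadMoveDown} if $y_1$ has become $x_2$, and otherwise by the main case, for at most $1+3=4$ tail moves.

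The part I expect to be the main obstacle is the bookkeeping in the third tail move of the main case — checking both its validity and that it returns exactly $N'$ — together with making the degenerate triangle subcase airtight; the first two moves are essentially immediate from Corollary~\ref{cor:MoveUpDown} and Lemma~\ref{lem:D1HeadMoveDown}.
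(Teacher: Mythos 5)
Your main case, where $(x_2,y_2)$ is tail movable, is exactly the paper's argument: tail the receiving edge up to $(v,y_1)$, realise the now-local head move with one tail move via Lemma~\ref{lem:D1HeadMoveDown}, and tail the resulting split node back down onto $(q,c_2)$; three moves, justified the same way. The degenerate case, however, has two genuine gaps. First, the leaf you need --- one \emph{not below} $c_2$ --- need not exist. The leaf trick of Lemma~\ref{lem:HorizontalShortTailMoveSimulated} only guarantees a leaf edge that is not \emph{above} a prescribed node, which holds for essentially every leaf edge automatically; ``not below'' is a far stronger demand. One can build instances of your degenerate case in which every leaf lies below $c_2$ (route the second child of each split node above $v$, as well as $y_2$, through reticulations sitting below the child of $c_2$); there the tail move of $(q,c_2)$ to any leaf edge creates a cycle and is invalid.

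Second, even when such a leaf exists, the count is wrong. The triangle $q\to x_2\to c_2$, $q\to c_2$ is untouched by the head move and is therefore still present in $N'$. Your preliminary move parks the edge $(q,c_2)$ on a leaf edge and is never undone, so after $1+3$ moves you reach not $N'$ but $N'$ with that edge relocated; restoring it costs a fifth tail move, breaking the bound of $4$. The paper sidesteps both issues by moving the \emph{other} triangle edge $(q,x_2)$, and moving it \emph{up} to $(v,y_1)$, which is always valid by Corollary~\ref{cor:MoveUpDown}; it then performs the head move in two stages (down to $(q',x_2)$ via Lemma~\ref{lem:D1HeadMoveDown}, tail $(q',v')$ back to the original position of $q$, then one final distance-one head move simulated by one tail move), for exactly four moves. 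A small further point: your parenthetical reason that $(q,c_2)$ is tail movable is garbled --- the correct reason is simply that non-movability would give $x_2$ a second parent, contradicting that $x_2$ is a split node --- though the claim itself is true.
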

\begin{proof}
We split this proof in two cases: $(x_2,y_2)$ is movable, or it is not. We prove in both cases there exists a constant length sequence of tail moves between $N$ and $N'$.
\begin{enumerate}
\item{\textbf{$\bm{(x_2,y_2)}$ is tail-movable.}} Tail move $(x_2,y_2)$ up to $(v,y_1)$, this is allowed because any tail move up is allowed if the moving edge is tail-movable (Corollary~\ref{cor:MoveUpDown}). Now $(u,v)$ is still head-movable, hence we can move it down to $(x_2',y_2)$. As this is exactly the situation of Lemma~\ref{lem:D1HeadMoveDown}, we can replace this head move by one tail move. Now tail-moving $(x_2',v')$ back down results in $N'$, so this move is allowed, too. Hence there is a sequence of $3$ tail moves between $N$ and $N'$.
\item{\textbf{$\bm{(x_2,y_2)}$ is not tail-movable.}}\label{case:headmovedownbelowasplit} Because $x_2$ is a split node and $(x_2,y_2)$ is not movable, there has to be a triangle with $x_2$ at the side, formed by the parent $p$ of $x_2$ and the other child $c$ of $x_2$. Note that $(p,x_2)$ is tail-movable, and that it can be moved up to $(v,y_1)$. After this move, Lemma~\ref{lem:D1HeadMoveDown} tells us we can head-move $(u,v)$ to $(p',x_2)$ using one tail move. The next step is to tail move $(p',v')$ back down to the original position of $p$. The resulting network is allowed because it is one valid distance one head move away from $N'$ (as $c$ is not above $u$). Lastly, we do this distance one head move, which again can be simulated by one tail move by Lemma~\ref{lem:D1HeadMoveDown}. Note that this sequence is also valid if $p=y_1$. Hence there is a sequence of at most $4$ tail moves between $N$ and $N'$.
\end{enumerate}
\end{proof}

\begin{figure}[h!]
\begin{center}
\includegraphics[scale=0.7]{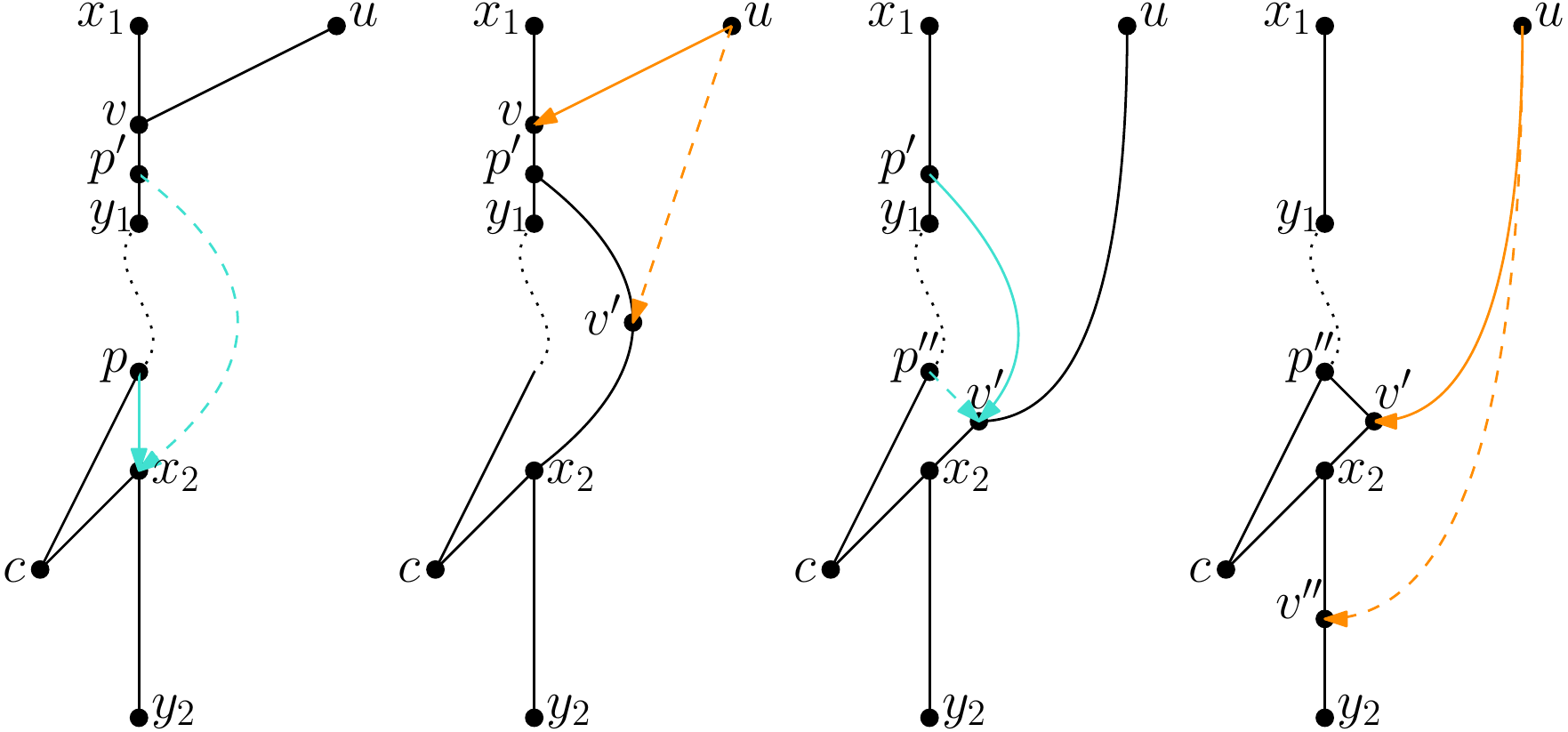}
\end{center}
\caption{The four moves used in Case~\ref{case:headmovedownbelowasplit} of Lemma~\ref{lem:HeadMoveDown}. The two same coloured edges of which one is dashed in each part are the location of an edge before and after a move. The edges with the other colour correspond to moving edge of the previous move.}
\label{fig:HeadMoveDownBelowASplit}
\end{figure}

\begin{lemma}\label{lem:makeMovable}
Let $(u,v)$ from $(x_1,y_1)$ to $(x_2,y_2)$ be a valid head move in a network $N$ resulting in a network $N'$. Suppose that $y_1$ is (strictly) above $x_2$ and $x_2$ is a reticulation, then there are networks $M$ and $M'$ such that the following hold:
\begin{enumerate}
\item turning $N$ into $M$ takes at most one tail move;
\item turning $N'$ into $M'$ takes at most one tail move;
\item there is a head move between $M$ and $M'$, moving the head \emph{down} to an edge whose top node is a reticulation; 
\item there is a tail movable edge $(s,t)$ in $M$ with $t$ not above $x_2$.
\end{enumerate}
\end{lemma}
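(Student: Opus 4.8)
The plan is as follows. Conditions (1)--(3) can be met essentially for free: put $M:=N$, $M':=N'$ and let the head move $M\to M'$ be the given head move $(u,v)$ from $(x_1,y_1)$ to $(x_2,y_2)$. Since $v$ is the parent of $y_1$ and $y_1$ is strictly above $x_2$, the node $v$ is strictly above $x_2$, whereas after the move the head of $(u,v)$ subdivides $(x_2,y_2)$ and so lies strictly below $x_2$; hence this is a head move \emph{downward}, and its target edge $(x_2,y_2)$ has reticulation top $x_2$. Thus the whole content of the lemma is condition (4), and we may keep $M=N$, $M'=N'$ as they are whenever $N$ already has a tail-movable edge $(s,t)$ with $t$ not above $x_2$.

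So suppose every tail-movable edge of $N$ has its head strictly above $x_2$. I would first record two structural facts forced by this. The child $c$ of the root is a split node (its in-degree is $1$, and since $|X|\ge 2$ it is not a leaf) and is never the side of a triangle (the root has out-degree $1$), so both out-edges of $c$ are tail-movable; hence both children of $c$ are strictly above $x_2$. Secondly, a leaf whose parent is a split node not at the side of a triangle would give a tail-movable edge with head not above $x_2$, a contradiction; so in $N$ every leaf sits directly below a reticulation or directly below the side of a triangle. Using these, the plan is to do a single tail move on $N$ producing a network $M$ in which some leaf $\ell$ has a \emph{new} split parent $s$ that is not the side of a triangle: then $(s,\ell)$ is a tail-movable edge of $M$ whose head $\ell$ is a leaf, hence not above $x_2$, which is (4). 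Concretely, take a tail-movable edge $(s_0,t_0)$ of $N$ (for instance an out-edge $(c,c_1)$ of the root-child) and a leaf $\ell$ with $p(\ell)$ distinct from and not below $t_0$, and move the tail of $(s_0,t_0)$ onto the edge into $\ell$ (when $p(\ell)$ is a reticulation) or onto the edge from the top to the side of the triangle above $\ell$ (when $p(\ell)$ is a triangle side); in both cases the new split node is not the side of a triangle. Arranging in addition that $\ell$ and the edges touched by this tail move avoid the nodes $u,x_1,y_1,x_2,y_2$, the \emph{same} tail move turns $N'$ into a network $M'$, and the head move $(u,v)$ from $(x_1,y_1)$ to $(x_2,y_2)$ still transforms $M$ into $M'$ and is still downward onto the reticulation-top edge $(x_2,y_2)$; so (1)--(3) persist.

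The hard part is verifying the validity of that single tail move. By Corollary~\ref{cor:MoveUpDown} a tail can always be moved \emph{up}, but here it must be dragged \emph{down} toward a leaf, and validity then requires that the head $t_0$ of the moving edge be neither equal to, nor above, the top node of the target edge; since in this case every available $t_0$ is strictly above $x_2$, this is a genuine constraint. Establishing that one can always choose $(s_0,t_0)$ and $\ell$ so that $p(\ell)$ — and, in the triangle case, the triangle's top — is neither $t_0$ nor below $t_0$, and then disposing of the coincidences (whether $\ell$ lies below $x_2$; whether its parent is a reticulation or a triangle side; whether the target edge meets any of $u,x_1,y_1,x_2,y_2$), is where the real work lies; I expect each point to follow from the validity criterion for tail moves together with the two structural facts above, at the cost of a handful of small sub-cases.
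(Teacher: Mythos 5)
Your opening is exactly right and matches the paper: with $M:=N$, $M':=N'$ the given head move already satisfies (1)--(3), so the entire content of the lemma is producing the tail-movable edge $(s,t)$ with $t$ not above $x_2$, possibly after one tail move performed in parallel in $N$ and $N'$. Your two structural observations (both out-edges of the root's child are tail movable, so both its children would have to be above $x_2$; every leaf must hang below a reticulation or below the side of a triangle) are also correct consequences of the assumption that no such edge exists yet.

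However, there is a genuine gap: everything after that is a plan rather than a proof, and the part you defer is precisely where all the difficulty of this lemma lives. You need to exhibit a tail-movable edge $(s_0,t_0)$ and a target edge near a leaf $\ell$ such that $t_0$ is neither equal to nor above the tail of the target edge, such that the move is also performable in $N'$, and such that it does not destroy the head-movability of $(u,v)$ (a tail move can create a triangle with $v$ on its side by suppressing a node lying on a four-cycle through $v$ --- this is the analysis in Figure~\ref{fig:createTriangle} of the paper, and ``avoiding the nodes $u,x_1,y_1,x_2,y_2$'' is a condition you assert can be arranged but do not establish). Your concrete suggestion --- take $(s_0,t_0)$ to be an out-edge $(c,c_1)$ of the root's child and drag it down to a leaf --- can fail outright: every leaf may be below $c_1$ \emph{and} below $c_2$, in which case neither out-edge of $c$ admits a valid downward move to any leaf, and you offer no fallback. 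The paper avoids this trap differently: it takes the LCA $j$ of a leaf below $y_2$ and a leaf not below $y_2$, which \emph{guarantees} an out-edge $(j,m)$ with $m$ not above $x_2$ (no leaf needed as the head), and when $(j,m)$ is blocked by a triangle it breaks that triangle either by moving a tail \emph{up} to the root edge (always valid by Corollary~\ref{cor:MoveUpDown}, sidestepping your validity problem entirely) or, in the one case where that is impossible, by moving a carefully chosen tail \emph{onto} the triangle with an explicit argument that $(u,v)$ stays head movable. Until you supply an existence argument for your $(s_0,t_0)$ and $\ell$ together with the validity and preservation checks, the proof is incomplete.
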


\begin{proof}
Note that we have to find a sequence of a tail move followed by a head move and finally a tail move again, between $N$ and $N'$ such that the head move is of the desired type and the network after the first tail move has a movable edge not above the top node $x_2$ of the receiving edge of the head move.

Note that if there is a tail movable edge $(s,t)$ in $N$ with $t$ not above $x_2$, we are done by the previous lemmas: take $M:=N$ and $M':=N'$. Hence we may assume that there is no such edge in $N$. Suppose all leaves (of which there are at least $2$) are below $y_2$, then there must also be a split node below $y_2$. And as one of its child edges is movable, there is a tail movable edge below $y_2$ (and hence not above $x_2$). So if all leaves are below $y_2$, we can again choose $M:=N$ and $M':=N$. 

Because our networks have at least $2$ leaves, the remaining part is to show the lemma assuming that there is a leaf $l_1$ not below $y_2$. Note that there also exists a leaf $l_2$ below $y_2$. Now consider an LCA $j$ of $l_1$ and $l_2$. We note that $j$ is a split node of which at least one outgoing edge \textbf{$\bm{(j,m)}$ is not above $\bm{x_2}$}. If $(j,m)$ is tail movable, then $M:=N$ and $M':=N'$ suffices, so assume \textbf{$\bm{(j,m)}$ is not tail movable}. Let $i$ be the parent of $j$, and $k$ be the other child of $j$; because $j$ is a split node and $(j,m)$ is not movable, \textbf{$\bm{i}$, $\bm{j}$ and $\bm{k}$ form a triangle} (Figure~\ref{fig:HeadToTailBetterSituation}). 

\begin{figure}[h!]
\begin{center}
\includegraphics[scale=0.7]{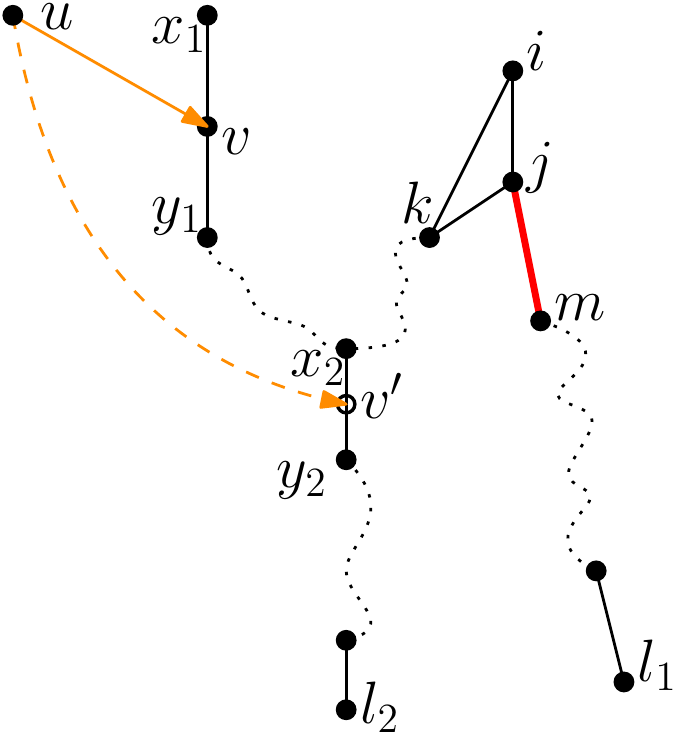}
\end{center}
\caption{The situation of Lemma~\ref{lem:makeMovable} in which we want to make the red edge $(j,m)$ movable, in the network before (with orange solid line) and after (with orange dashed line) the head move. Dotted black lines indicate ancestral relations, but are not necessarily edges in the network.}
\label{fig:HeadToTailBetterSituation}
\end{figure}

The idea is to `break' this triangle with one tail move in $N$ and $N'$ simultaneously, meaning we either move one of the edge of the triangle, or we move a tail to an edge of the triangle. If we can break the triangle in both networks keeping $(u,v)$ movable, creating new networks $M$ and $M'$, then choosing $(s,t):=(j,m)$ in $M$ will work. The last part of this proof shows how we do this. We have to split in two cases:
\begin{itemize}
\item{\textbf{$\bm{i}$ is the child of the root.}} In this case we break the triangle by moving a tail to the triangle. As $v$ is a reticulation and there is no path from any node below $m$ to $v$ (if so, there is a path from $m$ to $x_2$), there must be a split node $p$ below $k$ and (not necessarily strictly) above both parents of $v$. At least one of the outgoing edges $(p,q)$ is movable in $N$. If $v$ is a child of $p$ and $(p,v)$ is movable, then we choose $q=v$, otherwise any choice of $(p,q)$ will suffice.

Because $(p,q)$ is movable (by choice of $(p,q)$) and $k$ is above $p$, the tail move $(p,q)$ to $(j,k)$ is valid. Now the head move $(u,v)$ (or $(u',v)$ if $p=u$ in $N$) to $(x_2,y_2)$ is valid, because $x_2$ is below $v$, and $(u,v)$ is movable because $(u,v)$ was movable in $N$, and the only ways to create a triangle with $v$ on the side with one tail move are:
\begin{itemize}
\item suppressing one node of a four-cycle that includes $v$ to create a triangle by moving the outgoing edge of that node that is not included in the four-cycle. As this node is $p$, and $p$ is above both parents of $v$, the suppressed node must be on the incoming edge of $v$ in the four-cycle (Figure~\ref{fig:createTriangle}~top). However, in that case $v$ is a child of $p$ and $(v,p)$ is tail movable, so we choose to move $(v,p)$ up for the first move, which keeps $(u,v)$ head movable.
\item moving the other incoming edge of $v$ (not $(u,v)$) to the other incoming edge of the child $c(v)$ of $v$ (so not $(v,c(v))$). But as the tail move moves $(p,q)$ to $(j,k)$, we see that $k=c(v)$ which contradicts the fact that $v$ is strictly below $k$ in $N$. Hence this cannot result in a triangle with $v$ on the side (Figure~\ref{fig:createTriangle}~bottom~left).
\item moving the other incoming edge of the child $c(v)$ of $v$ (so not $(v,c(v))$) to the incoming edge of $v$ that is not $(u,v)$. As we move $(p,q)$ to $(j,k)$, we see that $v=k$ and $u=i$. But then $c(v)=q$ must be below the other child $m$ of $j$, and as $x_2$ is below $q$, this contradicts the fact that $(j,m)$ is not above $x_2$. Hence this cannot result in a triangle with $v$ on the side (Figure~\ref{fig:createTriangle}~bottom~right).
\end{itemize}

\begin{figure}[h!]
\begin{center}
\includegraphics[scale=0.7]{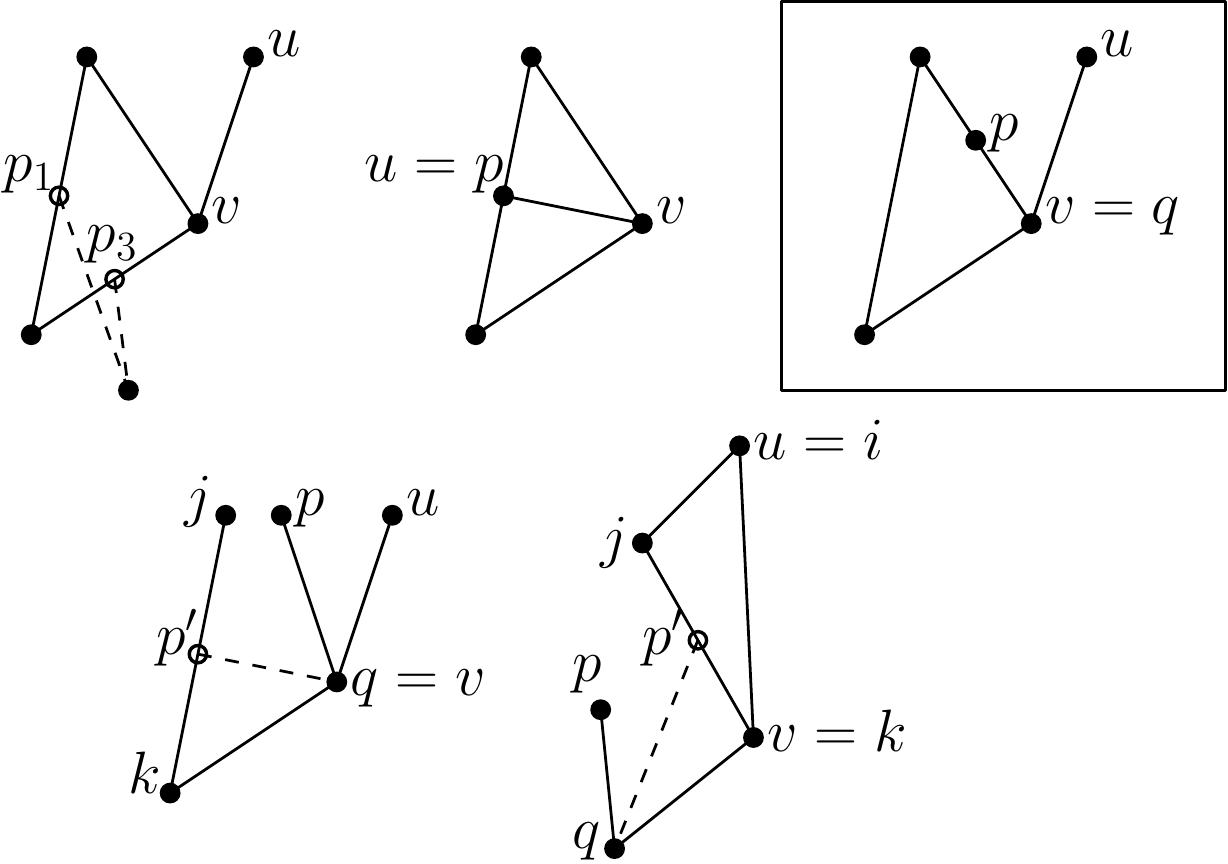}
\end{center}
\caption{The ways of making $(u,v)$ not head movable in Lemma~\ref{lem:makeMovable}. Top: creating a triangle by suppressing a node in a four cycle. The first two of these are invalid because $p$ is not above both parents of $v$. The right one does not give any contradictions, but forces us to choose to move $(p,v)$, so that no triangle is produced. Bottom: creating a triangle by moving an edge to become part of the triangle. Both these options contradict our assumptions.}
\label{fig:createTriangle}
\end{figure}

The preceding shows that $(u,v)$ is still head movable after the first tail move. Because $p$ is above $x_2$ through two paths, $y_1$ is still above $x_2$ after the tail move $(p,q)$ to $(i,j)$. Also we did not change $x_2$, so it still is a reticulation. This means that the head move $(u,v)$ to $(x_2,y_2)$ is still valid and of the right type. Furthermore $(j,m)$ is a tail movable edge with $m$ not above $x_2$. Now note that after the head head move $(u,v)$ to $(x_2,y_2)$, we can move $(p',q)$ back to its original position to obtain $N'$.
 
Hence we produce $M$ by tail-moving $(p,q)$ to $(i,j)$ and $M'$ by moving the corresponding edge to $(i,j)$ in $N'$. We can do this because $(i,j)$ is still an edge in $N'$: indeed it is not subdivided by the head move, and $i$ and $j$ are both split nodes, so they do not disappear either. So this case is proven.

\item{\textbf{$\bm{i}$ is not the child of the root.}} In this case we can move the tail of $(i,k)$ (possibly equal to $(u,v)$) up to the root in $N$. Now note that s $j$ is a split node, the tail move cannot create any triangles with a reticulation on the side. This means that $(u,v)$ is still movable after the tail move. Furthermore, after the tail move $x_2$ is still a reticulation node below $y_1$, and $(j,m)$ is movable and not above $x_2$. Hence the head move $(u,v)$ to $(x_2,y_2)$ is allowed and of the appropriate type. Now moving the tail of $(i',k)$ back to the incoming edge of $j$, we get $N'$.

Hence this case works with $M$ being the network obtained by moving $(i,k)$ up to the root edge in $N$, and $M'$ the network obtained by moving $(i,j)$ up to the root edge in $N'$.

\end{itemize}

\end{proof}

\begin{lemma}
Let $(u,v)$ from $(x_1,y_1)$ to $(x_2,y_2)$ be a valid head move in a network $N$ resulting in a network $N'$. Suppose that $y_1$ is above $x_2$ and $x_2$ is a reticulation, then there is a sequence of at most $8$ tail moves between $N$ and $N'$. 
\end{lemma}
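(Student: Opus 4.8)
The plan is to split the budget of $8$ tail moves as $1+1$ (passing from $N$ to an auxiliary network $M$ and from $N'$ to an auxiliary network $M'$ via Lemma~\ref{lem:makeMovable}), plus $1$ (an extra tail move that manufactures a split node on the receiving edge), plus at most $4$ (carrying out, via Lemma~\ref{lem:HeadMoveDown}, a head move whose receiving edge now has a \emph{split} top node), plus $1$ (undoing the extra tail move).

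First I would dispose of the degenerate case $y_1=x_2$. If the child $y_1$ of the moving head equals the reticulation $x_2$, then after subdividing the receiving edge $(x_2,y_2)$ the new head node is reached from $v$ along $v\to x_2\to\bullet$, so the head move is a distance-$1$ head move; Lemma~\ref{lem:D1HeadMove} then already replaces it by at most $4$ tail moves, and the only point to check is that we are not in the excluded ``two leaves, one reticulation'' situation, which is settled by direct inspection ($N$ and $N'$ being either isomorphic or a single explicit move apart).

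So assume $y_1$ is strictly above $x_2$ and $x_2$ is a reticulation, and apply Lemma~\ref{lem:makeMovable}. It yields networks $M$ and $M'$ such that $N\to M$ and $N'\to M'$ each take at most one tail move, there is a head move $M\to M'$ that still moves the head \emph{down} onto an edge $(x_2,y_2)$ with $x_2$ a reticulation, and there is a tail-movable edge $(s,t)$ in $M$ with $t$ not above $x_2$. The core idea is to use $(s,t)$ to create a split node $w$ sitting on the receiving edge: tail-move $(s,t)$ onto $(x_2,y_2)$ --- valid because $(s,t)$ is tail-movable, $t$ is not above $x_2$, and (after a small case distinction ruling out $t=y_2$) $t\neq y_2$ --- which splits $(x_2,y_2)$ into $(x_2,w)$ and $(w,y_2)$, with $w$ a new split node whose children are $y_2$ and $t$. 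Since $w$ lies below $x_2$ and the child of the moving head is still above $x_2$, hence above $w$, the head move of the moving edge onto $(w,y_2)$ has a split top node with the child of the moving head above it, so Lemma~\ref{lem:HeadMoveDown} realises it with at most $4$ tail moves. After that the moving head lies between $w$ and $y_2$; tail-moving $(w,t)$ back to the former location of $s$ suppresses $w$, reconnects $x_2$ directly to the moving head, and restores $(s,t)$, so this last tail move is valid because it produces exactly $M'$. Concatenating these moves and then prepending and appending the two tail moves of Lemma~\ref{lem:makeMovable} gives a sequence of at most $1+1+1+4+1=8$ tail moves from $N$ to $N'$.

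The main obstacle is the validity bookkeeping around the auxiliary tail move and the clean-up: one must rule out (or separately handle) the coincidence $t=y_2$, check acyclicity and the absence of parallel edges in the intermediate digraphs --- in particular that $w$ is not on the side of a triangle after the auxiliary move, so that $(w,t)$ is tail-movable for the clean-up --- and confirm that the application of Lemma~\ref{lem:HeadMoveDown} really produces the network ``$M'$ with $(s,t)$ relocated onto $(x_2,\text{moving head})$'', so that undoing the auxiliary move lands precisely on $M'$. These checks should be routine but tedious, in the same spirit as the earlier lemmas of this section.
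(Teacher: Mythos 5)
Your proposal follows exactly the paper's argument: spend two tail moves via Lemma~\ref{lem:makeMovable} to obtain a tail-movable edge $(s,t)$ with $t$ not above $x_2$, tail-move it onto $(x_2,y_2)$ so the receiving edge acquires a split top node, invoke Lemma~\ref{lem:HeadMoveDown} for at most four tail moves, and undo the auxiliary move, for $2+1+4+1=8$ in total. The extra bookkeeping you flag (the degenerate case $y_1=x_2$ and the coincidence $t=y_2$) is not spelled out in the paper either, but it does not change the approach.
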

\begin{proof}
By Lemma~\ref{lem:makeMovable}, with cost of 2 tail moves, we can assume there is a tail movable edge $(s,t)$ that can be moved to $(x_2,y_2)$. Make this the first move of the sequence. Because the head move $(u,v)$ to $(s',y_2)$ goes down, and $(u,v)$ is head-movable, this head move is allowed. By Lemma~\ref{lem:HeadMoveDown}, there is a sequence of at most 4 tail moves simulating this head move. Now we need one more tail move to arrive at $N'$: the move putting $(s,t)$ back to its original position. This all takes at most $8$ moves.
\end{proof}

All previous lemmas together give us the following result. 

\begin{proposition}\label{prop:MoveHeadDown}
Let $(u,v)$ from $(x_1,y_1)$ to $(x_2,y_2)$ be a valid head move in a network $N$ resulting in a network $N'$. Suppose that $y_1$ is above $x_2$ or $y_2$ is above $x_1$, then there is a sequence of at most $8$ tail moves between $N$ and $N'$. 
\end{proposition}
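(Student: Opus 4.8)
The plan is to obtain this proposition by assembling the lemmas just proved in this subsection, after first collapsing its two hypotheses into one by a reversal argument. I would start from the observation that head moves and tail moves are reversible: if $N'$ arises from $N$ by a head move then $N$ arises from $N'$ by the head move that puts the head back, and a sequence of tail moves from $N$ to $N'$ reverses to a sequence of tail moves of the same length from $N'$ to $N$. Reversing the head move $(u,v)$ from $(x_1,y_1)$ to $(x_2,y_2)$ interchanges the two edges involved: the edge $(x_2,y_2)$ that was subdivided becomes the ``healed'' edge of the reverse move, while the healed edge $(x_1,y_1)$ becomes the edge that the reverse move subdivides. So the hypothesis ``$y_2$ is above $x_1$'' for the move $N\to N'$ is exactly the hypothesis ``$y_1$ is above $x_2$'' (read in its own notation) for the reverse move $N'\to N$. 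Hence it suffices to prove the statement under the single assumption that $y_1$ is above $x_2$, with a bound of $8$ tail moves; the remaining case then follows by applying that result to the reverse move and reversing the obtained sequence.

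Under the assumption that $y_1$ is above $x_2$, I would split on the type of $x_2$. Since $y_1$ is the bottom of the healed edge --- equivalently, the child of $v$ in $N$ --- it has positive indegree and therefore is not the root, so $x_2$ must be either a split node or a reticulation. If $x_2$ is a split node and $y_1=x_2$, this is exactly the situation of Lemma~\ref{lem:D1HeadMoveDown}, giving a single tail move. If $x_2$ is a split node and $y_1\neq x_2$, this is the situation of Lemma~\ref{lem:HeadMoveDown}, giving at most $4$ tail moves. If $x_2$ is a reticulation, the lemma immediately preceding this proposition gives at most $8$ tail moves. In every case the head move is simulated by at most $8$ tail moves, which together with the reversal step establishes the proposition.

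Since the substance is already in the earlier lemmas, there is no genuinely hard step; what needs care is the bookkeeping around the reversal. I would check that the reverse of a valid head move is again a valid head move --- immediate, as it produces the phylogenetic network $N$ --- and that under reversal the ordering hypothesis ``$y_2$ above $x_1$'' really does become the hypothesis handled by the lemmas. I would also confirm that the dichotomy ``$x_2$ is a split node or a reticulation'' is exhaustive under the standing assumption (in particular that $x_2$ cannot be the root), and re-examine the boundary sub-case $y_1=x_2$ with $x_2$ a reticulation to make sure it is genuinely covered, so that no configuration slips through the case split.
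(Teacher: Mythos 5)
Your proposal is correct and matches the paper's intent exactly: the paper gives no explicit argument beyond ``All previous lemmas together give us the following result,'' and your assembly (reduce ``$y_2$ above $x_1$'' to ``$y_1$ above $x_2$'' by reversing the move, then split on whether $x_2$ is a split node with $y_1=x_2$, a split node with $y_1\neq x_2$, or a reticulation, invoking Lemma~\ref{lem:D1HeadMoveDown}, Lemma~\ref{lem:HeadMoveDown}, and the preceding $8$-move lemma respectively) is precisely the intended one. Your flagged checks, in particular the boundary sub-case $y_1=x_2$ with $x_2$ a reticulation, are sensible bookkeeping that the paper itself leaves implicit.
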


Now we continue with head moves where the original position of the head and the location it moves to are incomparable.

\begin{proposition}\label{prop:MoveHeadNOTDown}
Let $(u,v)$ from $(x_1,y_1)$ to $(x_2,y_2)$ be a valid head move in a network $N$ resulting in a network $N'$, where $N$ and $N'$ are not networks with two leaves and one reticulation. Suppose that $y_1$ is not above $x_2$ and $y_2$ is not above $x_1$, then there is a sequence of at most $16$ tail moves between $N$ and $N'$. 
\end{proposition}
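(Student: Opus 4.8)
The plan is to realise this head move as a composition of two head moves, each of the ``downward'' type covered by Proposition~\ref{prop:MoveHeadDown}, at a cost of at most $8$ tail moves each, for a total of at most $16$. Concretely, I would pick an auxiliary edge $(x_3,y_3)$ of $N$ and route the head of the moving edge first from $(x_1,y_1)$ to $(x_3,y_3)$ and then from $(x_3,y_3)$ to $(x_2,y_2)$; since only the head of one fixed edge is ever touched, the composition has the same net effect as the original move and turns $N$ into $N'$. There are two good choices for $(x_3,y_3)$. If $y_1$ and $y_2$ have a common descendant, take $x_3$ to be such a node (and $y_3$ one of its children): then $y_1$ is above $x_3$, so the first move satisfies the hypothesis ``$y_1$ above $x_2$'' of Proposition~\ref{prop:MoveHeadDown}, and $y_2$ is above $x_3$, so the second move satisfies its hypothesis ``$y_2$ above $x_1$''. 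Otherwise, take $y_3$ to be a lowest common ancestor of $x_1$ and $x_2$ and $x_3$ a parent of $y_3$: then $y_3$ is above $x_1$ (so the first move satisfies ``$y_2$ above $x_1$'') and above $x_2$ (so the second move satisfies ``$y_1$ above $x_2$'').

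The bulk of the work is the validity bookkeeping. The second move $(x_3,y_3)\to(x_2,y_2)$ is head movable because $(x_3,y_3)$ has just been subdivided, and the remaining conditions of Lemma~\ref{lem:validHead}, namely $u\neq x_2$ and $y_2$ not above $u$, are inherited from the validity of the original move; here one uses that $y_2$ is not above $x_1$, so that the edge $(x_1,y_1)$ created by the first move (when $v$ is suppressed) does not produce a new directed path from $y_2$ to $u$. For the first move $(x_1,y_1)\to(x_3,y_3)$, head movability is inherited from $N$, so the only conditions to be arranged by the choice of $(x_3,y_3)$ are $x_3\neq u$ and $y_3$ not above $u$. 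In the common-descendant case these hold automatically, since $y_3$ then lies below $y_1$, hence below $v$ and $u$. In the common-ancestor case the key observation is that $u$ cannot itself be a lowest common ancestor of $x_1$ and $x_2$: the only child of $v$ is $y_1$, and $y_1$ is not above $x_2$ (nor, by acyclicity, above $x_1$), so every directed path from $u$ to $x_1$ or $x_2$ must leave $u$ through its other child, which is therefore already a common ancestor of $x_1$ and $x_2$ strictly below $u$. Hence if $u$ is above both $x_1$ and $x_2$ the lowest common ancestor sits strictly below $u$ (so it is not above $u$), and if $u$ is not above both of them one can select a lowest common ancestor that is not above $u$ directly.

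The hard part I expect is the borderline configuration of the common-ancestor case: when $y_1$ and $y_2$ have no common descendant and the only available choice of $y_3$ is the child $c(u)$ of $u$, so that $x_3$ is forced to equal $u$ and the edge $(u,c(u))$ is not an admissible target. If $c(u)$ is a reticulation this is resolved by using its other incoming edge; if $c(u)$ is a split node one must first perform one preliminary tail move that produces an alternative edge ending in a common ancestor of $x_1$ and $x_2$ whose parent is not $u$ (for instance by pulling a pendant subtree or an unrelated reticulation edge into position), and undo it at the end — and this is exactly the point where the hypothesis that $N$ and $N'$ are not the two-leaf, one-reticulation networks is used, since it guarantees there is an extra leaf or an extra movable reticulation edge giving the necessary room while keeping $(u,v)$ head movable throughout. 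A handful of degenerate identifications ($x_1=x_2$, $x_2$ or $u$ equal to the child of the root, $y_3=y_1$) are dispatched by short direct arguments. Once a valid intermediate edge is secured in every case, applying Proposition~\ref{prop:MoveHeadDown} to each of the two downward head moves yields the bound of $16$ tail moves.
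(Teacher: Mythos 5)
Your overall plan—factor the move through an intermediate edge so that each half satisfies the hypotheses of Proposition~\ref{prop:MoveHeadDown}, giving $8+8=16$—is a reasonable strategy, and the common-descendant branch of your case analysis is essentially sound (modulo the degenerate case where the only common descendant is a leaf, so that $x_3$ has no child and you must take $y_3$ to be that leaf, possibly forcing $y_1=x_3$). The fatal problem is in the common-ancestor branch. Your key claim there, that ``if $u$ is not above both of $x_1$ and $x_2$ one can select a lowest common ancestor that is not above $u$,'' is false. Take a network with root $\rho$, child $c$ of $\rho$ with children $a$ and $b$; put $u$, $x_1$, $v$ and $y_1$ below $a$ (say $a$ has children $u$ and $x_1$, both parents of the reticulation $v$, with $v$'s child $y_1$ a leaf) and put $x_2,y_2$ below $b$. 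The head move $(u,v)$ from $(x_1,y_1)$ to $(x_2,y_2)$ is valid and $y_1,y_2$ have no common descendant, so you are in the common-ancestor case; but the unique LCA of $x_1$ and $x_2$ is $c$, which is above $u$. The first head move of your decomposition, $(u,v)$ to $(\rho,c)$, then creates the directed cycle $u\to v'\to c\to a\to u$, so it is not a valid head move and Proposition~\ref{prop:MoveHeadDown} cannot be applied to it. This is not a borderline configuration that can be patched by one auxiliary tail move: whenever $x_1$ and $x_2$ lie in different ``halves'' of the network, every common ancestor of them is above $u$, so no choice of $(x_3,y_3)$ of the kind you propose exists. Your discussion of the ``hard part'' ($y_3=c(u)$, $x_3=u$) addresses a different and much milder obstruction.

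The underlying reason the approach cannot work as stated is that a head can only be parked on an edge whose lower endpoint is \emph{not} above $u$, so routing the head upward through a common ancestor of $x_1$ and $x_2$ is generically impossible. The paper's proof goes the other way around: it keeps the head low and instead uses \emph{tail} moves (which may always go up, Corollary~\ref{cor:MoveUpDown}) to drag an outgoing edge of the LCA $s$ down first to $(x_1,v)$ and then to $(x_2,y_2)$, reducing the original move to a distance-one head move between the two child edges of the transported split node; that short move costs at most $4$ tail moves by Lemma~\ref{lem:D1HeadMove}, and the transporting tail moves are then undone. To repair your argument you would need to replace the common-ancestor branch by something of this flavour rather than by a second application of Proposition~\ref{prop:MoveHeadDown}.
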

\begin{proof}
Find an LCA $s$ of $x_1$ and $x_2$. We split into different cases for the rest of the proof:
\begin{enumerate}
\item{\textbf{$\bm{s\neq x_1,x_2}$.}}\label{Case:sNeqX1X2} One of the outgoing edges $(s,t)$ of $s$ is tail-movable and it is not above one of $x_1$ and $x_2$. Suppose $t$ is not above $x_1$, then we can do the following (Figure~\ref{fig:headToTailNonComparable}):
\begin{itemize}
\item Tail move $(s,t)$ to $(x_1,v)$;\\ allowed because $t\neq v$, $(s,t)$ movable, and $t$ not above $x_1$.
\item Tail move $(s',v)$ to $(x_2,y_2)$;\\allowed because $(x_1,t)\not\in N$: otherwise $x_1$ was the only LCA of $x_1$ and $x_2$; $y_1$ and hence $v$ is not above $x_2$; $(x_2,y_2)\neq(u,v)$.
\item Distance one head move $(u,v)$ to $(s'',y_2)$;\\No parallel edges by removal: if so, they are between $s''$ and $y_1=y_2$, but then the move actually resolves this; no parallel edges by placing: $u\neq s''$; no cycles: $y_2$ not above $u$, otherwise cycle in $N'$
\item Move $(s'',y_1)$ back up to $(x_1,t)$;\\Moving a tail up is allowed if the tail is movable.
\item Move $(s''',t)$ back up to its original position.\\Moving a tail up is allowed if the tail is movable.
\end{itemize}
As the head move used in this sequence is a distance-1 move, it can be simulated with at most 4 tail moves. Hence the sequence for this case takes at most 8 tail moves.

\begin{figure}[h!]
\begin{center}
\includegraphics[scale=0.7]{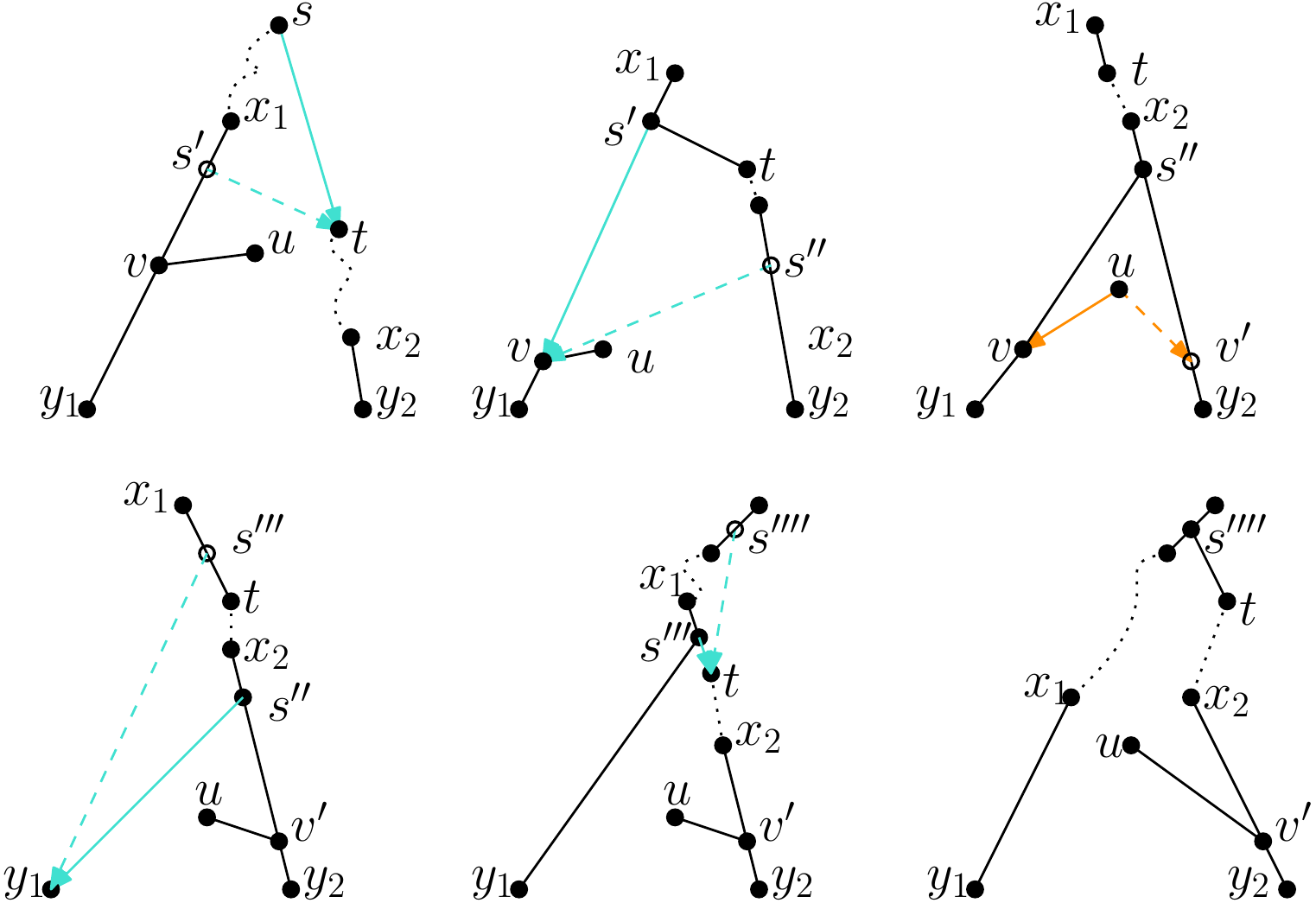}
\end{center}
\caption{The sequence of moves used in Case~\ref{Case:sNeqX1X2} of Proposition~\ref{prop:MoveHeadNOTDown}.}
\label{fig:headToTailNonComparable}
\end{figure}

 \item{\textbf{$\bm{s=x_1}$.}} 
  \begin{enumerate}
   \item{\textbf{$\bm{u}$ is not below $\bm{x_1}$.}}\label{case:HeadAlmostDown}
    Head move $(u,v)$ to the other child edge of $x_1$, this takes at most $4$ tail moves by Lemma~\ref{lem:D1HeadMove}. Now we have to move the head of $(u,v')$ down to create $N'$, this takes at most $8$ tail moves by Proposition~\ref{prop:MoveHeadDown}. Hence for this case we need at most $12$ tail moves.
   \item{\textbf{$\bm{u}$ is below $\bm{x_1}$.}} 
   In this case the previous approach is not directly applicable, as moving the head of $(u,v)$ to the other child edge of $x_1$ creates a cycle. Hence we need to take a different approach, where we distinguish the following cases:
   \begin{enumerate}
    \item{\textbf{$\bm{(x_1,v)}$ is tail movable.}}\label{case:HeadAlmostDownX1VMovable} Tail move $(x_1,v)$ down to $(x_2,y_2)$, this is allowed because $y_1$ is not above $x_2$. Then do the sideways distance one head move $(u,v)$ to $(x_1',y_2)$, this takes at most 4 tail moves. Then move $(x_1',y_1)$ back up to create $N'$. This takes at most $6$ moves
    \item{\textbf{$\bm{(u,v)}$ is tail movable.}} Move $(u,v)$ up to the incoming edge $(t,s)$ of $s$. The head move $(u',v)$ to $(x_2,y_2)$ is still allowed, except if $s,u,v$ form a triangle with the child of $u$ as well as of $v$ being $y_1$ in $N$, but in that case $x_1$ was not the LCA of $x_1$ and $x_2$. Hence we can simulate the head move with at most $12$ tail moves by Case~\ref{case:HeadAlmostDown} of this analysis. As afterwards we can move the tail of $(u',v')$ back to its original position, this case takes at most $16$ moves.
    \item{\textbf{Neither $\bm{(x_1,v)}$ nor $\bm{(u,v)}$ is tail movable.}}\label{case:HardHeadToTailLast}
    We create the situation of Case~\ref{Case:sNeqX1X2} by reversing the direction of the triangle at $x_1$, this takes at most $4$ tail moves because it is a distance one head move. Only if the bottom node of the triangle is $x_2$, we do not get this situation, but then the head move is composed of two distance one head moves, so it can be simulated with $8$ tail moves. If we are actually in the situation of Case~\ref{Case:sNeqX1X2}, simulate the head move with at most $8$ moves as done in that case. This is allowed because it produces $N'$ with the direction of a triangle reversed, which is a valid network. Then reverse the direction of the triangle again using at most $4$ tail moves. This way we obtain $N'$ with at most $16$ tail moves (Figure~\ref{fig:headToTailNonComparableLast}).

   \end{enumerate}
  \end{enumerate}

\item{\textbf{$\bm{s=x_2}$.}} This can be achieved with the reverse sequence for the previous case.

\end{enumerate}
\end{proof}

\begin{figure}[h!]
\begin{center}
\includegraphics[scale=0.7]{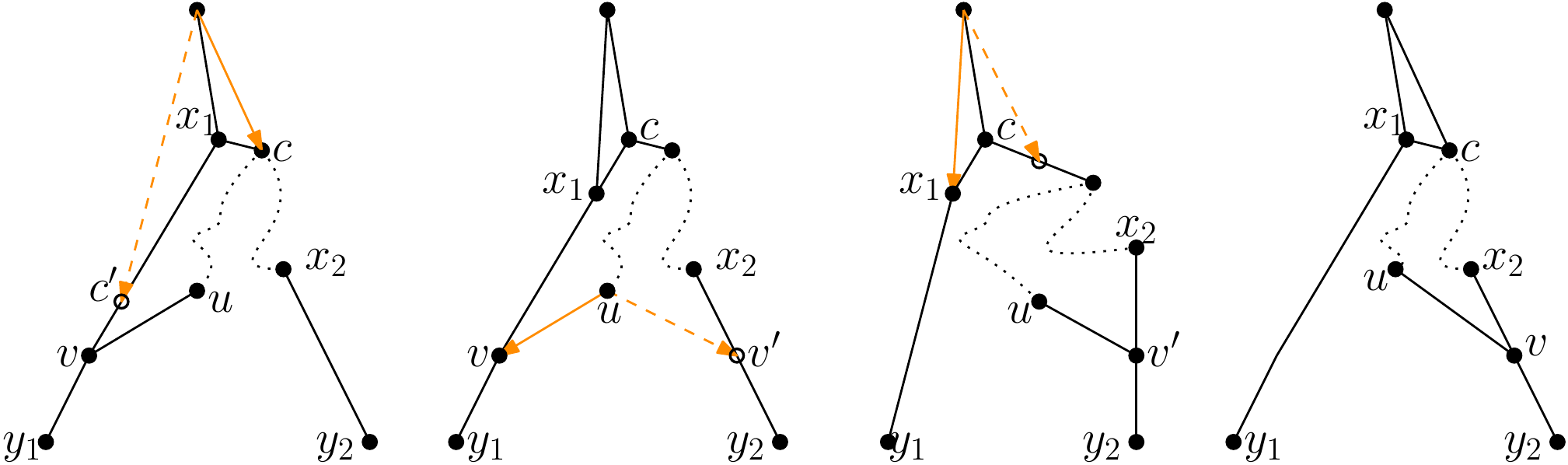}
\end{center}
\caption{The sequence of moves used in Case~\ref{case:HardHeadToTailLast} of Proposition~\ref{prop:MoveHeadNOTDown}.}
\label{fig:headToTailNonComparableLast}
\end{figure}

Proposition~\ref{prop:MoveHeadNOTDown} and Proposition~\ref{prop:MoveHeadDown} directly imply the following theorem
\begin{theorem}
Suppose there is a head move turning $N$ into $N'$, and $N$ and $N'$ are not non-isomorphic tier 1 networks with 2 leaves. Then there is a sequence of at most $16$ tail moves between $N$ and $N'$.
\end{theorem}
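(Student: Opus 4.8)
The plan is to observe that Propositions~\ref{prop:MoveHeadDown} and~\ref{prop:MoveHeadNOTDown} between them already cover \emph{every} head move, and that the worse of their two bounds is exactly $16$. So the "proof" is really just an exhaustive case split together with a small amount of bookkeeping on the exceptional cases. Concretely, given a head move of $(u,v)$ from $(x_1,y_1)$ to $(x_2,y_2)$, exactly one of the following holds: either $y_1$ is above $x_2$ or $y_2$ is above $x_1$; or neither $y_1$ is above $x_2$ nor $y_2$ is above $x_1$. These two alternatives are literally the hypothesis of Proposition~\ref{prop:MoveHeadDown} and the hypothesis of Proposition~\ref{prop:MoveHeadNOTDown} respectively, so no head move escapes the analysis.

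In the first case Proposition~\ref{prop:MoveHeadDown} supplies a sequence of at most $8$ tail moves from $N$ to $N'$, and $8\le 16$. In the second case Proposition~\ref{prop:MoveHeadNOTDown} supplies a sequence of at most $16$ tail moves, under the proviso that $N$ and $N'$ are not \emph{both} networks with two leaves and one reticulation. The only thing that then needs checking is that this proviso is met under the theorem's hypothesis: if $N$ and $N'$ are not both tier-1 two-leaf networks we are immediately fine; and if they are, then either they are isomorphic — in which case the empty sequence of tail moves already transforms $N$ into $N'$ — or they are non-isomorphic, which is precisely the configuration excluded in the theorem statement. Hence in every permitted situation we obtain a sequence of at most $\max(8,16)=16$ tail moves between $N$ and $N'$.

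There is essentially no obstacle here, since the substantive work was carried out in the two propositions; the one point requiring a moment's care is the bookkeeping of the exceptional cases, namely confirming that the "two leaves, one reticulation" exception of Proposition~\ref{prop:MoveHeadNOTDown} (which ultimately descends from the exception in Lemma~\ref{lem:D1HeadMove}) is \emph{subsumed by}, rather than larger than, the exception stated in the theorem. The observation that two isomorphic networks are at tail-distance $0$ is exactly what closes that gap.
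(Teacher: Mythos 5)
Your proposal is correct and takes exactly the route the paper does: the paper states the theorem as a direct consequence of Propositions~\ref{prop:MoveHeadDown} and~\ref{prop:MoveHeadNOTDown}, whose hypotheses together exhaust all head moves, with the worse bound being $16$. Your additional bookkeeping on the two-leaf, one-reticulation exception (isomorphic networks being at tail-distance $0$) is a correct and welcome elaboration of what the paper leaves implicit.
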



\section{Head move diameter and neighbourhoods}\label{sec:HeadDistance}
\subsection{Diameter bounds}
There are some obvious results concerning the diameter of head move space found using results from Section~\ref{sec:HeadToTailAndBack} and existing bounds on the rSPR diameter. Each rSPR sequence of length $l$ can be replaced by a sequence of head moves of length at most $15l$, hence we get upper bounds $\Delta^k_{\mathrm{Head}}\leq 15\Delta^k_{\mathrm{rSPR}}$ on head move diameters. Furthermore, each sequence of head moves is also an rSPR sequence, hence the rSPR diameter gives lower bounds $\Delta^k_{\mathrm{rSPR}}\leq\Delta^k_{\mathrm{Head}}$. Similarly the rSPR bounds give bounds on the tail move diameters. 

These bounds for tail move diameters are inferior to the bounds in \cite{janssen2017exploring}. The tail move diameter bounds from that paper are obtained using a technique where an isomorphism is built incrementally. For head moves we can employ a similar technique. For any pair of networks, we build an isomorphism between growing subnetworks where in each step we only have to use a small number of moves to grow the isomorphism. For tail and rSPR moves it was convenient to build this isomorphism bottom-up; because head moves are essentially upside down tail moves, here we build an isomorphism top-down.

In this section, each move is a head move, unless stated otherwise. As we need to explicitly work with the vertices and arcs of different networks, we denote a network with nodes $V$ and arcs $A$ as $N=(V,A)$. We first define a few structures that we use extensively: upward closed sets, isomorphisms, and induced graphs.

\begin{definition}
Let $N=(V,A)$ be a network with $Y\subseteq V$ a subset of the vertices. We say that $Y$ is \emph{upward closed} if for each $u\in Y$ the parents of $u$ are also in $Y$. 
\end{definition}

\begin{definition}
Let $N=(V,A)$ and $N'=(V',A')$ be two directed acyclic graphs, then a map $\phi:V(N)\to V(N')$ is an \emph{(unlabelled) isomorphism} if $\phi$ is bijective and $(u,v)\in A$ if and only if $(\phi(u),\phi(v))\in A(N')$. If such an isomorphism exists, we say that $N$ and $N'$ are \emph{(unlabelled) isomorphic}. If additionally, there are labellings $l_N:X\to V(N)$ and $l_{N'}:X\to V(N')$ of the vertices and $\phi(l_N(x))=l_{N'}(x)$ for all $x\in X$ then $N$ and $N'$ are \emph{labelled isomorphic}.
\end{definition}

\begin{definition}
Let $N=(V,A)$ be a network and $Y\subseteq V$ a subset of the vertices, then $N[Y]$ denotes the directed subgraph of $N$ induced by $Y$: 
\[N[Y]:=(Y,A\cap (Y\times Y))\]
\end{definition}

\begin{lemma}\label{lem:DiameterProof}
Let $N_1$ and $N_2$ be tier $k>0$ networks with label set $X$ of size $n$, then there exists a pair of head move sequences $S_1$ on $N_1$ and $S_2$ on $N_2$ such that the resulting networks are unlabelled isomorphic and the total length is $|S_1|+|S_2|\leq 4n+6k-4$.
\end{lemma}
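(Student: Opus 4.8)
The strategy is to build an unlabelled isomorphism top-down, mirroring the bottom-up construction used for tail moves in \citep{janssen2017exploring} but exploiting the fact that head moves are "upside-down" tail moves. First I would put both networks into a normal form: by Lemma~\ref{lem:ReticsToTop} (at a cost counted towards $S_1$ and $S_2$) move all $k$ reticulations to the top of $N_1$ and $N_2$, so that each network now looks like a "cap" of $k$ reticulations sitting on top of an embedded tree. The key accounting point is that Lemma~\ref{lem:ReticsToTop} needs only a small number of head moves \emph{per reticulation} — roughly a constant plus the number of split nodes it has to climb past — and by being careful these can be amortized so the whole reorganisation plus the tree-matching stays within the $4n+6k-4$ budget.

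\textbf{Growing the isomorphism.} Once both networks have all reticulations at the top, what remains below the top structure is essentially two pendant subtrees, and matching those is the same problem as matching rooted trees under (the head-move simulation of) SPR moves. I would proceed level by level from the root downward, maintaining an upward-closed set $Y_i\subseteq V(N_1)$ and a partial isomorphism $\phi_i:N_1[Y_i]\to N_2[\phi_i(Y_i)]$ onto an upward-closed subset of $V(N_2)$. At each step the isomorphism is extended to cover one more vertex (or one more leaf label). If the children of a matched node in $N_1$ do not correspond to the children of its image in $N_2$, I use Lemma~\ref{lem:GeneralMovingTriangle} / Lemma~\ref{lem:ChangeBaseTree}-style moves: create a triangle from the lowest top reticulation $(a_k,b_k)$, move it into position, perform the requisite head move to swap the pendant subtree into place, and return the triangle to the top. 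Each such local correction costs $O(1)$ head moves, and there are $O(n)$ of them since the embedded tree has $O(n)$ internal nodes; the reticulation-cap corrections contribute the $O(k)$ term. The constants $4$ and $6$ come from bounding the worst-case number of head moves per tree node and per reticulation respectively, and from the fact that we split the work across \emph{both} sequences $S_1$ and $S_2$ (each network moves "halfway" to a common normal form), which roughly halves the naive bound.

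\textbf{Bookkeeping.} The final step is the arithmetic: a rooted binary tree on $n$ leaves has $n-1$ internal nodes and $n-2$ internal edges that might need an SPR-type correction, the top reticulations need at most one direction change each (Lemma~\ref{lem:NeatlyOnTop}), and each reticulation needs to be transported to the top once. Adding up the per-item costs with the halving between $S_1$ and $S_2$ should land exactly on $4n+6k-4$; I would choose the moving strategy (e.g.\ always moving whichever network's subtree is "wrong") so that no node is corrected more than once and the constants do not blow up.

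\textbf{Main obstacle.} The hard part will be the amortization: naively, transporting a reticulation to the top (Lemma~\ref{lem:ReticsToTop}) and moving triangles around (Lemma~\ref{lem:MoveTriangle}) each cost a number of head moves proportional to how far things travel, which is $O(n)$ per item and would give $O(n^2)$ rather than $O(n)$. The trick — and the part that needs genuine care — is to interleave the tree-restructuring with the reticulation placement so that each head move does "double duty," and to argue that a single top-down sweep suffices, never revisiting an already-matched node. Getting the triangle-direction bookkeeping (the $a_i \leftrightarrow b_i$ relabelling from Lemma~\ref{lem:NeatlyOnTop}) to not interfere with the already-built part of $\phi_i$ is the other delicate point, but Lemma~\ref{lem:NeatlyOnTop} explicitly guarantees the part of the network below $a_k,b_k$ is untouched, which is exactly what makes the top-down induction go through.
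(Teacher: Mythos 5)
Your high-level skeleton (grow an upward-closed set $Y_i$ from the roots downward and extend an unlabelled isomorphism one node at a time) is the right one and matches the paper's strategy. But the concrete mechanism you propose for each extension step does not work within the stated budget, and you say so yourself in your ``main obstacle'' paragraph without resolving it. Normalizing both networks via Lemma~\ref{lem:ReticsToTop} and then performing SPR-style corrections by transporting a triangle from the top costs a number of head moves proportional to the distance travelled, i.e.\ $\Theta(n)$ per reticulation and per tree correction, giving $\Theta(n^2+nk)$ overall. The appeal to ``amortization'' and moves doing ``double duty'' is not an argument --- no interleaving scheme is specified, and there is no reason a single top-down sweep would let long-distance triangle transport be charged to $O(1)$ per node. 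Likewise, the claimed ``halving between $S_1$ and $S_2$'' plays no role in the real accounting: the bound $4n+6k-4$ is simply (at most $4$ moves per split node)$\times(n+k-1$ split nodes$)$ plus (at most $2$ moves per reticulation)$\times k$, with the moves landing in whichever network needs repair.

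The idea you are missing is that no global normal form is needed at all. The paper never moves reticulations to the top in this proof. Instead, at each step it looks at a highest unmatched node $x_1$ of $N_1$ (say a split node) and its already-matched parent $p_1$, takes the image $p_2=\phi(p_1)$ in $N_2$, and if the corresponding child $x_2$ of $p_2$ has the wrong type (reticulation or leaf instead of split), it fixes this with at most $3$--$4$ \emph{local} head moves: e.g.\ it borrows a movable reticulation edge, temporarily parks a head on the incoming edge of a far-away leaf (which is always a legal destination since a leaf is above nothing), shuffles a split node into position as the child of $p_2$, and restores the borrowed edge. Every move touches only the immediate neighbourhood of the frontier plus one parking edge; nothing is ever transported across the network. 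A symmetric case analysis handles unmatched reticulations (at most $2$ moves each), and leaves are matched last for free since only an unlabelled isomorphism is required at this stage. That locality is precisely what makes the per-node cost constant and the total linear; your proposal as written would need to be restructured around it rather than patched.
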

\begin{proof}
We incrementally build upward closed sets $Y_1\subseteq V(N_1)$ and $Y_2\subseteq V(N_2)$ such that $N_1[Y_1]$ and $N_2[Y_2]$ are unlabelled isomorphic with isomorphism $\phi$. Starting with $Y_1=\{\rho_1\}$ and $Y_2=\{\rho_2\}$ the roots only, we set the isomorphism $\rho_1\mapsto \rho_2$. Next we increase the size of $Y_1$ by changing the networks slightly with a constant number of head moves, and then adding a node to $Y_1$ and $Y_2$ and extending the isomorphism. We will add all the leaves to the isomorphism last.
\begin{enumerate}
 \item\label{case:treeNodeInN} \textbf{There is a highest node $\bm{x_1}$ of $\bm{N_1}$ not in $\bm{Y_1}$ such that $\bm{x_1}$ is a split node.} Because $x_1$ is a highest node not in $Y_1$, the parent $p_1$ of $x_1$ is in $Y_1$ and there is a corresponding node $p_2:=\phi(p_1)$ in $Y_2$. This node must have at least one child $x_2$ that is not in $Y_2$, as otherwise the degrees of $p_1$ and $p_2$ in $N_1[Y_1]$ and $N_2[Y_2]$ do not coincide.
  \begin{enumerate}
    \item \textbf{The node $\bm{x_2}$ is a split node.} In this case we can add $x_1$ and $x_2$ to $Y_1$ and $Y_2$ and set $\phi:x_1\mapsto x_2$ to get an extended isomorphism. We do not have to use any head moves to do this extension. 
    \item \textbf{The node $\bm{x_2}$ is a reticulation.} We make sure $p_2$ has a split node $y_2$ as a child not in $Y_2$, using at most 3 head moves. We can then add $x_1$ to $Y_1$ and $y_2$ to $Y_2$ and extend the isomorphism with $x_1\mapsto y_2$. To create this split node, we use a split node $c_2\in N_2\setminus Y_2$, which exists because there is a split node in $N_1\setminus Y_1$.
    \begin{enumerate}
      \item \textbf{The edge $\bm{(p_2,x_2)}$ is movable.}\label{case:DiameterTreeRetic} Move $(p_2,x_2)$ to the incoming edge $(t_2,c_2)$ of the split node $c_2$. This move is valid because $c_2$ cannot be above $p_2$ (otherwise $c_2\in Y_2$, a contradiction), and $t_2\neq p_2$ as otherwise $p_2$ would have a split node child not in $Y_2$. Now the edge $(t_2,x_2')$ is movable to any of the outgoing edges of $c_2$. Now $p_2$ has child node $c_2$, which is a split node, so we can extend the isomorphism with a split node $\phi:x_1\mapsto c_2$ using at most $2$ head moves.
      
      \begin{figure}[h!]
		\begin{center}
		\includegraphics[scale=0.5]{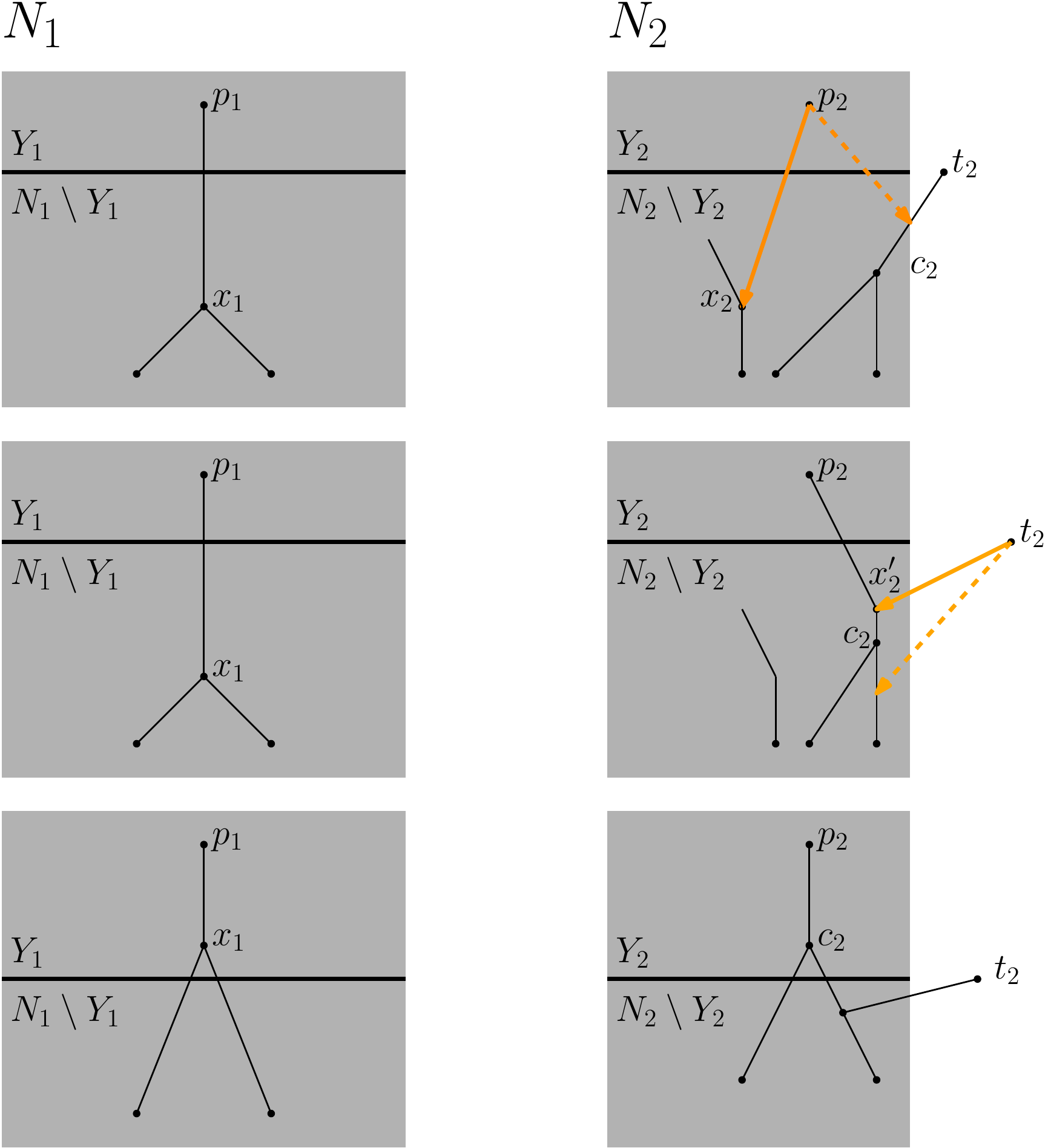}
		\end{center}
	  \caption{The moves and incremented isomorphism for Lemma~\ref{lem:DiameterProof}~Case~\ref{case:DiameterTreeRetic}. For nodes outside of the shaded region, it is not known whether they are in $Y_2$.}
	  \label{fig:DiameterTreeRetic}
	  \end{figure}
      
      \item \textbf{The edge $\bm{(p_2,x_2)}$ is not movable.} This means that $x_2$ is on the side of a triangle. Denote by $d_2$ the child of $x_2$ and the other parent of $x_2$ with $z_2$. Now note that $(z_2,d_2)$ is movable, and can be moved to an edge $(u_2,v_2)$ with $v_2$ not in $Y_2$ and $(u_2,v_2)$ distinct from both $(x_2,d_2)$ and from the outgoing edge of $d_2$. Such an edge exists: pick a leaf $l$ not equal to the child of $d_2$ (if that node is a leaf); as we add all leaves to the isomorphism last, the leaf is not in $Y_2$, furthermore, $l$ is not above $z_2$, and the incoming edge of $l$ is not equal to $(x_2,d_2)$ nor to the outgoing edge of $d_2$. 
      Doing the head move $(z_2,d_2)$ to the incoming edge of $l$ creates the situation of the previous case (Case~\ref{case:DiameterTreeRetic}), and we can use 2 more head moves to create a network with a split node $c_2$ below $p_2$ which maintains the isomorphism of the upper part $Y_2$. Hence we can extend the isomorphism with a split node $\phi: x_1\mapsto c_2$ using at most $3$ head moves.
    \end{enumerate}
    \item \textbf{The node $\bm{x_2}$ is a leaf.} Again, note there is a split node $c_2$ in $N_2\setminus Y_2$, and let its parent be $t_2$. Note also that $N_2$ has a reticulation node $r_2$ with incoming edge $(s_2,r_2)$ which is movable to $(p_2,x_2)$ (if $p_2=s_2$, then the other incoming edge $(s_2',r_2)$ is also movable, and can instead be moved to $(p_2,x_2)$). 
     \begin{enumerate}
      \item \textbf{The nodes $\bm{s_2}$ and $\bm{t_2}$ are different nodes.}\label{case:DiameterTreeLeaf}
        First move $(s_2,r_2)$ to $(p_2,x_2)$. Now the edge $(p_2,r_2')$ is movable, and can be moved to $(t_2,c_2)$, because $c_2$ is not above $p_2$ and $p_2\neq t_2$ (otherwise $p_2$ has a split node as child). This makes $(t_2,r_2'')$ movable, and we can move it to $(s_2,x_2)$ because $s_2\neq t_2$ and $x_2$ is a leaf, so it is not above $t_2$. Now lastly, we restore the reticulation by moving $(s_2,r_2''')$ back to its original position. Hence in this situation, $4$ head moves suffice to make $p_2$ the parent of a split node $c_2$, so that we can extend the isomorphism by $\phi:x_1\mapsto c_2$ with a split node.
        
       \begin{figure}[h!]
		\begin{center}
		\includegraphics[scale=0.5]{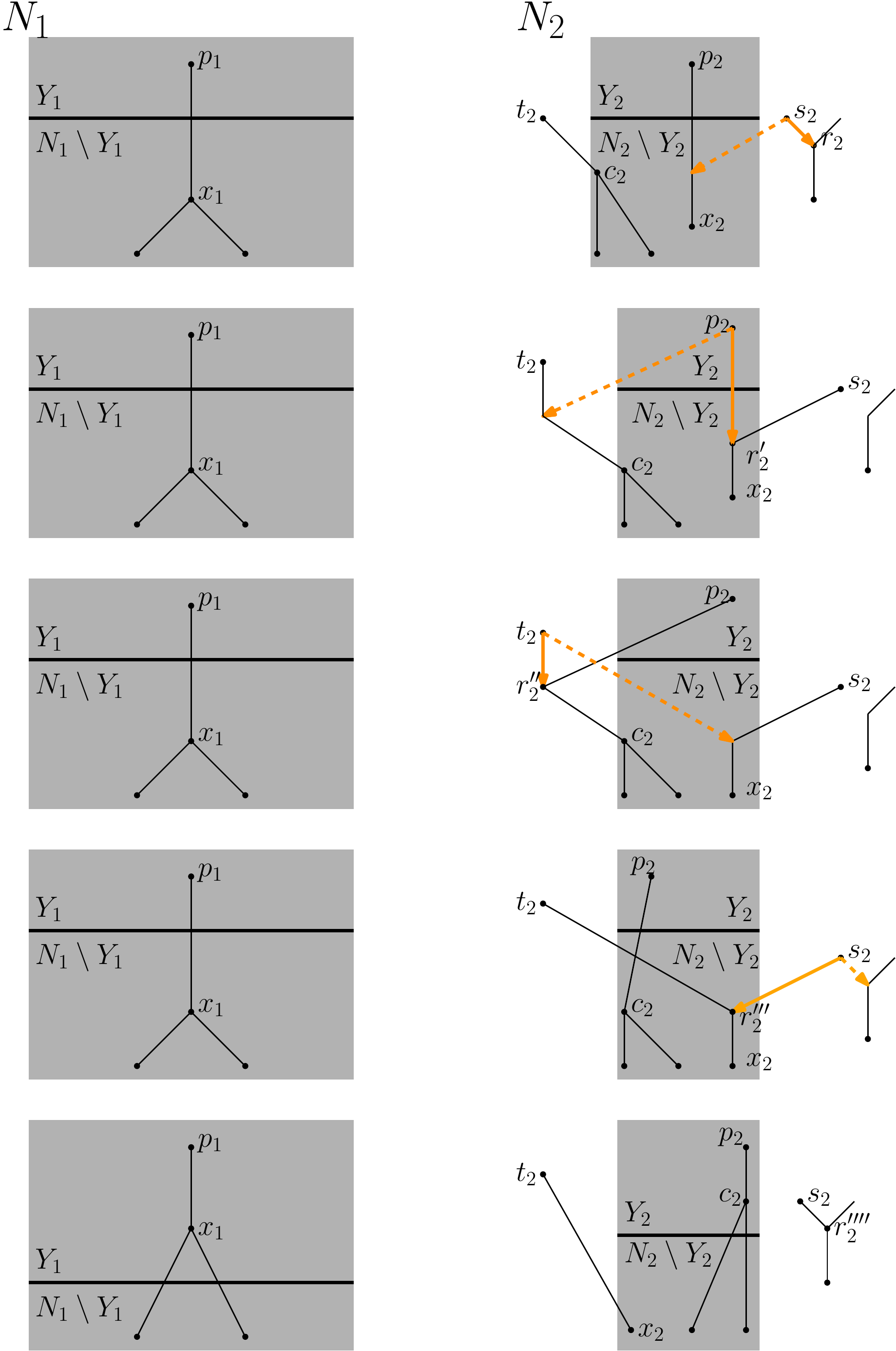}
		\end{center}
	  \caption{The moves and incremented isomorphism for Lemma~\ref{lem:DiameterProof}~Case~\ref{case:DiameterTreeLeaf}. For nodes outside of the shaded region, it is not known whether they are in $Y_2$.}
	  \label{fig:DiameterTreeLeaf}
	  \end{figure}
        
      \item \textbf{The nodes $\bm{s_2}$ and $\bm{t_2}$ are the same.} Note that a child of $t_2$ is a split node and a child of $s_2$ is a reticulation. This means that $s_2=t_2$ is a split node, as it has two distinct children. 
      The edge $(s_2,r_2)$ can be moved to the pendant edge $(p_2,x_2)$. Now the new edge $(p_2,r_2')$ can be moved to $(s_2,c_2)$, because $p_2\neq s_2$ and $c_2$ is not above $p_2$ (otherwise $c_2$ has to be in $Y_2$, contradicting our assumption). Now we can move $(s_2,r_2'')$ back to its original position. This all takes three head moves, and makes sure that a child $c_2$ of $p_2$ is a split node. This means we can extend the isomorphism by setting $\phi:x_1\mapsto c_2$ (and if $r_2$ was in $Y_2$, changing $\phi:\phi^{-1}(r_2)\mapsto r_2$ to $\phi:\phi^{-1}(r_2)\mapsto r_2'''$) using at most $3$ head moves to add a split node
     \end{enumerate}
  \end{enumerate}
 \item \textbf{There is a highest node $\bm{x_2}$ of $\bm{N_2}$ not in $\bm{Y_2}$ such that $\bm{x_2}$ is a split node.} Do the same as in the previous case (Case~\ref{case:treeNodeInN}) switching the roles of $N_1$ and $N_2$.
 \item \textbf{Each highest node $\bm{x_1}$ of $\bm{N_1}$ not in $\bm{Y_1}$ and $\bm{x_2}$ of $\bm{N_2}$ not in $\bm{Y_2}$ is a reticulation node or a leaf.} 
  \begin{enumerate}
   \item \textbf{There exists a highest node $\bm{x_1}$ of $\bm{N_1}$ not in $\bm{Y_1}$ which is a reticulation node.} This means the two parents $p_1$ and $q_1$ of $x_1$ are in $Y_1$, and consequently have corresponding nodes $p_2$ and $q_2$ in $Y_2$. Both these nodes also have at least one child not in $Y_2$, say $c^p_2$ and $c^q_2$.
   \begin{enumerate}
    \item \textbf{The children of $p_2$ and $q_2$ are equal (i.e., $\bm{c^p_2=c^q_2}$).} In this case, we can immediately extend the isomorphism with $\phi:x_1\mapsto c^p_2$.
    \item \textbf{Both nodes $\bm{c^p_2}$ and $\bm{c^q_2}$ are reticulations.}\label{case:ReticReticsBoth} Assume without loss of generality that $\bm{c^p_2}$ is not below $\bm{c^q_2}$.    
     \begin{enumerate}
      \item \textbf{The edge $\bm{(p_2,c^p_2)}$ is movable.} Move this edge to $(q_2,c^q_2)$, which is allowed because $c^q_2$ is not above $p_2$, and $p_2\neq q_2$. Now $p_2$ and $q_2$ have a common child $x_2:=c'^p_2$, so we can add one reticulation to $Y_1$ and $Y_2$ and extend the isomorphism by $\phi:x_1\mapsto x_2$ using $1$ head move.
      \item \textbf{The edge $\bm{(p_2,c^p_2)}$ is not movable.}\label{case:DiameterReticRetics} Because $(p_2,c^p_2)$ is not movable, $c^p_2$ must be the side node of a triangle, and therefore its outgoing edge $(c^p_2,z)$ is movable. By our assumption, $c^q_2$ is not above $c^p_2$, so we can move $(c^p_2,z)$ to $(q_2,c^q_2)$. Now the other incoming edge $(t,c^p_2)$ of $c^p_2$ becomes movable, and we can move it down to $(z',c^q_2)$. Now $p_2$ and $q_2$ have a common child $x_2:=z'$, and the isomorphism can be extended with one reticulation by setting $\phi:x_1\mapsto x_2$ using at most 2 head moves. 
      
      \begin{figure}[h!]
		\begin{center}
		\includegraphics[scale=0.5]{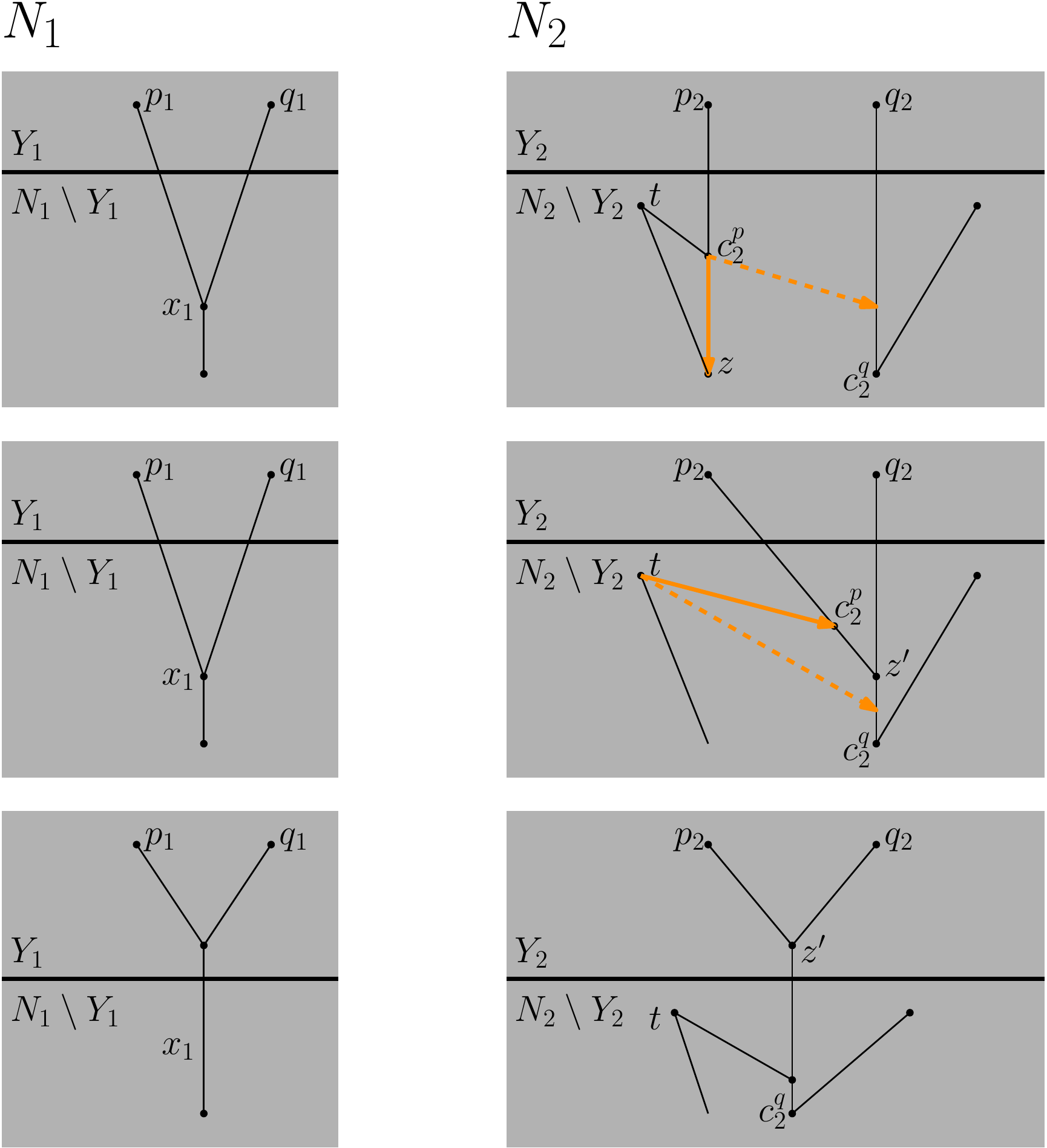}
		\end{center}
	  \caption{The moves and incremented isomorphism for Lemma~\ref{lem:DiameterProof}~Case~\ref{case:DiameterReticRetics}.}
	  \label{fig:DiameterReticRetics}
	  \end{figure}

     \end{enumerate}
    \item \textbf{The node $\bm{c^p_2}$ is a reticulation, and $\bm{c^q_2}$ is a leaf.} The subcases here work exactly like the previous subcases in Case~\ref{case:ReticReticsBoth}.
      \begin{enumerate}
        \item \textbf{The edge $\bm{(p_2,c^p_2)}$ is movable.} Move this edge to $(q_2,c^q_2)$, which is allowed because $c^q_2$ is not above $p_2$, and $p_2\neq q_2$. Now $p_2$ and $q_2$ have a common child $x_2:=c'^p_2$, so we can add one reticulation to $Y_1$ and $Y_2$ and extend the isomorphism by $\phi:x_1\mapsto x_2$ using one head move.
        \item \textbf{The edge $\bm{(p_2,c^p_2)}$ is not movable.} Because $(p_2,c^p_2)$ is not movable, $c^p_2$ must be the side node of a triangle, and therefore its outgoing edge $(c^p_2,z)$ is movable. Because $c^q_2$ is a leaf, it is not above $c^p_2$, so we can move $(c^p_2,z)$ to $(q_2,c^q_2)$. Now the other incoming edge $(t,c^p_2)$ of $c^p_2$ becomes movable, and we can move it down to $(z',c^q_2)$. Now $p_2$ and $q_2$ have a common child $x_2:=z'$, and the isomorphism can be extended with one reticulation by setting $\phi:x_1\mapsto x_2$ using at most 2 head moves. 
     \end{enumerate}
    \item \textbf{The node $\bm{c^q_2}$ is a reticulation, and $\bm{c^p_2}$ is a leaf.} Switch the roles of $p_2$ and $q_2$ and do as in the previous case.
    \item \textbf{Both nodes $\bm{c^p_2}$ and $\bm{c^q_2}$ are leaves.} Note that because $x_1$ is a reticulation node not in $Y_1$, there must also be a reticulation node $r_2\in N_2$ not in $Y_2$. Let its movable incoming edge be $(s_2,r_2)$. As $p_2\neq q_2$ we know that $s_2$ can be equal to at most one of $p_2$ and $q_2$, hence we can assume without loss of generality that $s_2\neq p_2$. Then the head move $(s_2,r_2)$ to $(p_2,c^p_2)$ is allowed, because the leaf $c^p_2$ cannot be above $s_2$. Now $(p_2,r_2')$ is movable because the child of $r_2'$ is a leaf, and it can be moved to $(q_2,c^q_2)$ because $p_2\neq q_2$ and $c^q_2$ is a leaf, and hence not above $p_2$. After this head move, $p_2$ and $q_2$ have a common child $x_2:=r_2''$, and the isomorphism can be extended with one reticulation by setting $\phi:x_1\mapsto x_2$ using at most 2 head moves. 
    \end{enumerate}
   \item \textbf{There exists a highest node $\bm{x_2}$ of $\bm{N_2}$ not in $\bm{Y_2}$ which is a reticulation node.} Do the same as in the previous case, switching the roles of $N_1$ and $N_2$.
   \item \textbf{All highest nodes of $N_1$ not in $Y_1$ and of $N_2$ not in $Y_2$ are leaves.}
    In this case, the networks are already unlabelled isomorphic: $N_1[Y_1]$ and $N_2[Y_2]$ are isomorphic, and the only nodes not part of the isomorphism are leaves, hence there is only one way (ignoring symmetries of cherries) to complete the isomorphism.
  \end{enumerate}
 \end{enumerate}

Note that this procedure first adds all split nodes and reticulations to the isomorphism, using four moves per split node and two moves per reticulation node at most. Then finally it adds all the leaves, without changing the networks any more. Noting that the number of split nodes is $n+k-1$, we see that we need to do at most $4(n+k-1)+ 2k= 4n+6k-4$ moves in $N_1$ and $N_2$ to get $N_1'$ and $N_2'$ which are unlabelled isomorphic. 
\end{proof}

\begin{lemma}
Let $N$ and $N'$ be tier $k>0$ networks with label set $X$ of size $n$, which are unlabelled isomorphic. Then there is a head move sequence from $N$ to $N'$ of length at most $2n$.
\end{lemma}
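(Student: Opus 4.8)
The plan is to build a labelled isomorphism between $N$ and $N'$ top-down, one node at a time, exploiting the unlabelled isomorphism $\psi\colon V(N)\to V(N')$ that we are given. The key observation is that $N$ and $N'$ have the same underlying shape, so the only discrepancy is which leaves sit where; by moving leaves into place along the shared tree-like structure we can repair the labelling. Concretely, I would process the nodes in a topological order compatible with the ancestry relation, maintaining an upward-closed set $Z\subseteq V(N)$ on which the partial map to $V(N')$ is already a \emph{labelled} isomorphism onto the corresponding upward-closed subset of $N'$. At each step we take a highest node of $N$ not yet handled; if it is a split node or a reticulation we can match it to the unique structurally-corresponding node of $N'$ with no moves at all (the shapes agree and the parents are already matched), so these contribute nothing to the move count. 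The only nontrivial case is a leaf.

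For a leaf $\ell$ of $N$: it occupies some position $p$ in the shared shape, and in $N'$ the leaf with that label $\ell$ currently sits somewhere else, while position $p$ in $N'$ is occupied by some other leaf $\ell'$. The idea is to fix this with a bounded number of head moves in $N$ that \emph{permute two leaves} while leaving everything already matched untouched. Since a leaf is a reticulation's non-concern and not directly movable, I would instead use a head-movable reticulation edge somewhere in the lower (as-yet-unmatched) part of the network as a ``handle'': move its head down onto the incoming edge of one of the two leaves to be swapped, shuffle, and move it back, in the same spirit as the leaf-handling cases (e.g.\ Case~\ref{case:DiameterTreeLeaf}) in the proof of Lemma~\ref{lem:DiameterProof}. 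Because $k>0$ there is always such a reticulation, and because the matched region is upward closed the handle lives strictly below it and its moves never disturb the isomorphism above. Each leaf should be fixable in a small constant number (at most two) of head moves — and crucially, once a leaf is placed it never needs to move again, so the leaves handled so far can be regarded as ``frozen'' just like the split nodes and reticulations.

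Summing up: there are $n$ leaves, each costing at most two head moves to put in place, and all other $n+2k-1$ non-leaf nodes cost zero, giving a total of at most $2n$ head moves from $N$ to $N'$. I would present this as: first note the non-leaf nodes match for free (shape equality), reducing to a problem of sorting leaves into position; then show a single leaf can be moved to its target with at most two head moves using a reticulation handle below the frozen region, without disturbing any previously-placed leaf; then iterate over all $n$ leaves.

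The main obstacle I anticipate is the bookkeeping that guarantees the two-move leaf-swap genuinely leaves the already-matched part intact \emph{and} does not mess up a previously-placed leaf — in particular handling the boundary cases where the reticulation handle, or one of the two leaves, happens to be adjacent to the frozen region, or where the two leaves to be swapped form a cherry. This is exactly the kind of case analysis done for the leaf cases in Lemma~\ref{lem:DiameterProof} (the split between $s_2\neq t_2$ and $s_2=t_2$, and the choice of an auxiliary leaf distinct from the relevant children), and I expect the argument here to mirror that, with the extra simplification that the shapes of $N$ and $N'$ already agree so no structural surgery is needed — only relabelling.
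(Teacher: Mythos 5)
Your high-level idea is the same as the paper's: since $N$ and $N'$ are unlabelled isomorphic, the problem reduces to realizing a permutation of the leaves, and since leaf edges are not head movable you borrow a head-movable reticulation edge $(t,r)$ as a handle, park its head above a leaf, shuffle, and return it. However, your accounting has a genuine gap. You realize the permutation as a product of transpositions (``fix this with head moves that permute two leaves''), and a single transposition via the reticulation handle costs four head moves (one to bring the handle onto a leaf edge, two to exchange the two leaves, one to return the handle); this is two moves per leaf \emph{only when both leaves of the swap land in their final positions}, i.e.\ only for $2$-cycles. For a cycle of length $\Pi\geq 3$ you need $\Pi-1$ transpositions, the displaced leaf in each swap is \emph{not} placed and must move again (contradicting your ``frozen'' claim), and the total is $4(\Pi-1)$ moves for $\Pi$ leaves. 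For a single $n$-cycle this is $4n-4>2n$, so the stated bound fails under your scheme.

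The paper avoids this by processing each cycle of the permutation in one pass rather than as a product of swaps: bring the handle onto the incoming edge of the last leaf of the cycle, then walk the resulting reticulation around the cycle one leaf edge at a time (each such head move deposits one leaf in its final position), and finally return the handle to its original location. A cycle of length $\Pi_j$ then costs $\Pi_j+2$ head moves, and since every nontrivial cycle has length at least $2$, summing gives at most $n+2(n/2)=2n$, with the worst case being a permutation consisting entirely of $2$-cycles. Your anticipated boundary issue (two consecutive leaves of a cycle sharing a parent, i.e.\ a cherry) is real and is handled in the paper by choosing the cyclic order so that this does not occur. If you replace the transposition decomposition by this cycle-walk, the rest of your argument goes through; as written, the per-leaf budget of two moves is not met.
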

\begin{proof}
Note that the only difference between $N$ and $N'$ is a permutation of the leaves, say $\pi=(l_1^1,\ldots,l_{\Pi_1}^1)(l_{1}^2,\ldots,l_{\Pi_2}^2)\cdots(l_{1}^P,\ldots,l_{\Pi_q})$ to get from $N$ to $N'$ (where all $l_i^j$ are distinct). Note also that there is a reticulation in $N$ with a head movable edge $(t,r)$, which is movable to the incoming edge of any leaf. A sequence of moves from $N$ to $N'$ consists of the moves
\begin{itemize}
\item $(t,r)$ to $(p(l_{\Pi_j}^j),l_{\Pi_j}^j)$;
\item $(p(l_{\Pi_j}^j),r^{(1)'})$ to $(p(l_{\Pi_{j}-1}^j),l_{\Pi_{j}-1}^j)$;
\item $(p(l_{\Pi_{j}-1}^j),r^{(2)'})$ to $(p(l_{\Pi_{j}-2}^j),l_{\Pi_{j}-2}^j)$;
\item $\ldots$
\item $(p(l_{2}^j),r^{(\Pi_j-1)'})$ to $(p(l_{1}^j),l_{1}^j)$;
\item $(p(l_1^j),r^{(\Pi_j)'})$ to $(t,l_{\Pi_j}^j)$;
\item $(t,r^{(\Pi_j+1)'})$ to $(s,c)$,
\end{itemize}
for each cycle ($1\leq j \leq q$) of $\pi$, where $c$ is the child of $r$ in $N$ and $s$ is the other parent of $r$ in $N$. This permutes the leaves in $N$ by $\pi$ so that the resulting network is $N'$. The sequence is allowed provided no two subsequent leaves in a cycle have a common parent (e.g., $p(l_{i}^j)=p(l_{i-1}^j)$). There is always a permutation in which this does not happen, as if this happens, the two leaves are in a cherry. The worst case is attained when there are a maximal number of cycles in the permutation, which happens when $\pi$ consists of only 2-cycles. In such a case there will be $n/2$ cycles of length $2$. Each such a cycle takes four moves. An upper bound to the length of the sequence is therefore $4(n/2)=2n$. 
\end{proof}

A direct corollary of the previous two lemmas is the following theorem, giving an upper bound on the diameter of head move space. To see this, note that any head move is reversible, and hence we can concatenate sequences in different directions.

\begin{theorem}
Let $N$ and $N'$ be tier $k>0$ networks with label set $X$ of size $n$, then there is a head move sequence of length at most $6n+6k-4$ between $N$ and $N'$.
\end{theorem}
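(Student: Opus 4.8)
The plan is to concatenate the sequences furnished by the two preceding lemmas, using the reversibility of head moves to run one of them backwards. First I would apply Lemma~\ref{lem:DiameterProof} to the pair $N,N'$, obtaining a head move sequence $S_1$ transforming $N$ into a network $M$ and a head move sequence $S_2$ transforming $N'$ into a network $M'$, where $M$ and $M'$ are unlabelled isomorphic and $|S_1|+|S_2|\le 4n+6k-4$. Since $N$ and $N'$ are tier-$k$ networks with $k>0$ on the label set $X$ of size $n$ (and $n\ge 2$ by the standing assumption), the hypotheses of that lemma are met.

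Next I would invoke the immediately preceding lemma on the unlabelled isomorphic pair $M,M'$, which are again tier-$k$ networks with $k>0$ and $n$ leaves, to get a head move sequence $S_3$ from $M$ to $M'$ with $|S_3|\le 2n$. The remaining ingredient is that every head move is reversible: undoing a head move of an edge $e$ to $f$ is itself a head move (of $e$ from its new position back to its old one), and it is automatically valid because the digraph it produces — namely the original network — is a phylogenetic network. Hence reading $S_2$ in reverse, with each move replaced by its reverse, yields a valid head move sequence $S_2^{-1}$ from $M'$ to $N'$ of the same length $|S_2|$.

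Concatenating $S_1$, then $S_3$, then $S_2^{-1}$ gives a head move sequence from $N$ to $N'$ of length
\[
|S_1|+|S_3|+|S_2|\le (4n+6k-4)+2n = 6n+6k-4,
\]
which is the claimed bound. There is essentially no obstacle here beyond recording reversibility explicitly; the substantive content lives in Lemma~\ref{lem:DiameterProof} and the lemma following it, and the only point requiring a moment's care is checking that the tier and leaf-count hypotheses of both lemmas coincide with those of the theorem, which they do.
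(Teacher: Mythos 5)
Your proposal is correct and matches the paper's argument exactly: the paper likewise obtains the bound by combining Lemma~\ref{lem:DiameterProof} with the $2n$-move lemma for unlabelled isomorphic networks, using the reversibility of head moves to run one of the two sequences backwards and concatenating. No gaps.
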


\subsection{Neighbourhood size} 
For local search strategies in phylogenetic network space, it is relevant to know the size of the neighbourhood of a network. This means we should consider the size of head move neighbourhoods. Head moves can only move reticulation arcs of which there are $2k$ in a tier-$k$ network. Furthermore, there are $2n+3k-1$ edges in a tier-$k$ network with $n$ leaves. Hence an upper bound on the head move neighbourhood size in a tier-$k$ network with $n$ leaves is $4kn+6k^2-2k$, i.e. of order $O(kn+k^2)$. We will now compare this with known bounds for neighbourhood sizes for other rearrangement moves.

For a lower bound for the tail move neighbourhood size we turn to Proposition~4.1 from a paper by Klawitter (\cite{klawitter2017snpr}) about SNPR neighbourhoods. Because SNPR moves are tail moves together with vertical moves, the sizes of SNPR neighbourhoods and tail move neighbourhoods can easily be compared. 

\begin{proposition}[\cite{klawitter2017snpr}~Proposition~4.1]
Let $n\geq 2$, then
\[
\begin{aligned}
n-1&\leq \min_{N\in\mathcal{TC}_n}\{|U^{\mathcal{TC}_n}_{SNPR}(N)|\}&\leq \frac{3}{2}n^2-\frac{7}{2}n+2,& \text{ and}\\
8n^2-O(n\log_2 n)&\leq \max_{N\in\mathcal{TC}_n}\{|U^{\mathcal{TC}_n}_{SNPR}(N)|\}&\leq 16n^2-38n+26.
\end{aligned}
\]
\end{proposition}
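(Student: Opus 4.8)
The plan is to work with the partition of the SNPR neighbourhood of a tree‑child network that is forced by the definition of SNPR move: $\mathcal{TC}_n$ is the space of tree‑child networks on $n$ leaves, and for $N\in\mathcal{TC}_n$ with $k$ reticulations the neighbourhood $U^{\mathcal{TC}_n}_{\mathrm{SNPR}}(N)$ splits into three disjoint pieces according to the type of move: the horizontal piece from tail moves (these keep the tier $k$), the piece from SNPR$+$ moves (tier $k+1$), and the piece from SNPR$-$ moves (tier $k-1$). Each move is encoded by a small amount of combinatorial data: a tail move by an ordered pair of distinct edges $(e,f)$ (the moving edge and its destination), an SNPR$+$ move by an ordered pair of distinct edges $(e,f)$ (the two edges subdivided to carry the new reticulation arc), and an SNPR$-$ move by which of the $2k$ reticulation arcs is deleted. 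Writing $m=2n+3k-1$ for the number of edges of a tier‑$k$ network, this immediately gives crude bounds $O(m^2)$ for each of the first two pieces and $2k$ for the third; since the tree‑child property forces $k\le n-1$, every piece is a polynomial in $n$, and the coarse conclusion that each neighbourhood has size $\Theta(n^2)$ is already in hand. All the real content is in (i) subtracting off the \emph{invalid} moves to get the exact leading behaviour, and (ii) identifying the extremal networks.

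For the upper bound $16n^2-38n+26$ on the maximum I would, for each of the three pieces, enumerate exactly which ordered pairs $(e,f)$ fail: pairs with $e=f$, pairs where $f$ is at or above $e$ (these create a parallel edge or a directed cycle), and — the genuinely tree‑child‑specific part — pairs whose regrafted endpoint leaves some split node with two non‑tree children, plus, for SNPR$+$, pairs that would route the new reticulation arc into an already‑existing reticulation. Subtracting these yields each piece as an explicit quadratic in $n$ and $k$, and one then maximises the sum over $0\le k\le n-1$. The point making this small is that the naive $\Theta(n^2)$ bounds are wildly loose when $k$ is large: a tree‑child network with $k$ close to $n-1$ is so rigid that almost all candidate moves are invalid and the SNPR$+$ piece is empty once $k=n-1$, so the maximum is attained for small $k$. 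For the upper bound $\tfrac{3}{2}n^2-\tfrac{7}{2}n+2$ on the minimum one instead plugs this same decomposition into a single highly constrained network — a natural candidate is a maximally reticulated ``ladder'' tree‑child network with $k=n-1$, where the SNPR$+$ piece is empty and the other two pieces are short enough to enumerate directly — and checks the count.

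For the lower bound $8n^2-O(n\log_2 n)$ on the maximum I would exhibit an explicit extremal family and bound its neighbourhood from below. The correction term $O(n\log_2 n)$ points directly at the required shape: take $N$ as balanced as possible (a balanced binary tree, or a balanced network with one reticulation), because then the number of comparable edge pairs one must exclude from the SNPR$+$ and tail counts is only $O(n\log_2 n)$ — each edge has at most $O(\log_2 n)$ ancestral edges — rather than the $\Theta(n^2)$ forced by a caterpillar. Counting the surviving $\Theta(n^2)$ valid SNPR$+$ moves together with the $\Theta(n^2)$ valid tail (rSPR) moves then gives $8n^2-O(n\log_2 n)$. For the lower bound $n-1$ on the minimum I would argue that \emph{every} tree‑child network admits at least $n-1$ SNPR moves: when $k=0$ the SNPR$+$ moves alone far exceed this, and when $k\ge 1$ one combines at least one valid SNPR$-$ move with pendant‑arc relocations of the leaves, of which at most one can be obstructed (the obstruction — target above the leaf, or a parallel edge created — is chargeable to a single specific leaf).

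The main obstacle I expect is precisely step (i): pinning down the constants $16,-38,26$ and $\tfrac{3}{2},-\tfrac{7}{2},2$ requires a careful case analysis of the SNPR$+$ validity conditions (acyclicity, absence of parallel edges, and preservation of the tree‑child property at \emph{both} subdivided edges and at their neighbours), which interact differently depending on the local structure around $e$ and $f$, together with understanding how the counts depend jointly on $n$ and $k$. The $O(n\log_2 n)$ term is the other delicate point, since it dictates that the extremal network be balanced rather than path‑like. The coarse $\Theta(n^2)$ picture, by contrast, drops straight out of the three‑part decomposition with essentially no work.
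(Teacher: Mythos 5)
You should know at the outset that this statement is not proved in the paper at all: it is Proposition~4.1 of \cite{klawitter2017snpr}, quoted with attribution so that the author can compare SNPR (hence tail-move) neighbourhood sizes with the $O(kn+k^2)$ head-move neighbourhood bound of Section~\ref{sec:HeadDistance}. There is therefore no in-paper proof to measure your attempt against; I can only judge your sketch on its own terms and against the few remarks the paper does make about Klawitter's argument. On that basis, your three-way decomposition (tail / SNPR$+$ / SNPR$-$) is the standard and surely correct starting point, and your extremal candidates are consistent with what the paper reports: the witness for the upper bound on the minimum is said to have exactly $n-1$ vertical neighbours and $\tfrac{3}{2}n^2-\tfrac{9}{2}n+3$ tail neighbours (which sum to $\tfrac{3}{2}n^2-\tfrac{7}{2}n+2$), fitting your maximally reticulated $k=n-1$ candidate, and a balanced tree is the natural witness for $8n^2-O(n\log_2 n)$.

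There are, however, two genuine gaps. First, $|U^{\mathcal{TC}_n}_{SNPR}(N)|$ counts distinct resulting networks, not moves: for both \emph{lower} bounds you must show that the moves you exhibit yield pairwise non-isomorphic networks (distinct moves frequently coincide in output --- e.g.\ the two SNPR$-$ deletions at a single reticulation, or symmetric regraftings around a cherry), and your sketch never addresses this; counting ordered pairs of edges is only legitimate for the upper bounds. Second, your route to the $n-1$ lower bound on the minimum --- one SNPR$-$ move plus pendant-arc relocations, ``of which at most one can be obstructed'' --- is both unsupported (nothing rules out several leaves being simultaneously obstructed in a highly reticulated tree-child network, and nothing shows the surviving relocations give distinct networks) and at odds with the paper's explicit remark that Klawitter's lower bound ``only considers SNPR$+$ and SNPR$-$ moves''; the intended argument is purely vertical. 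Finally, as you concede, the specific constants $16,-38,26$ and $\tfrac{3}{2},-\tfrac{7}{2},2$ are exactly where the content lies, so as written the proposal establishes only the coarse $\Theta(n^2)$ picture rather than the stated bounds.
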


Note that the SNPR neighbourhood also includes SNPR+ and SNPR- neighbours: networks obtained by removing or adding a reticulation edge. 
The lower bound for the minimal neighbourhood size only considers SNPR+ and SNPR- moves, and is therefore not useful to us. The network they consider (the network with 2 leaves and one reticulation) has 1 tail move neighbour, itself. So technically, this could be seen as a lower bound, but it is irrelevant when we want to consider lower bounds for networks with $n$ leaves and $r$ reticulations: the minimal number of neighbours might grow very quickly.

As this is the only work on neighbourhood sizes to date, we only have a useful upper bound on the minimal neighbourhood size. 
The example network that proves this upper bound has exactly $n-1$ SNPR+/- neighbours, which means it has $(3/2)n^2-(9/2)n+3$ tail neighbours. The author states that this is a network with very little SNPR moves, so it seems that the number of tail move neighbours of a network is quadratic in the number of leaves.

Contrast the preceding with our upper bound on the number of head move neighbours. Note that the head move neighbourhood is considerably smaller if the number of reticulations is small. This is surprising considering that the diameter of the spaces defined by head and tail moves is of the same order of magnitude.


\section{Hardness of computing head move distance}\label{sec:NP-Hard}
In this section, we prove that the problem {\sc Head Distance} of computing the head move distance between two networks is NP-hard. The proof uses a reduction from {\sc rSPR Distance}, which is the problem of finding the rSPR distance between two rooted trees. The rough idea is to convert rSPR moves into head moves. 

Because rSPR moves change the location of the tail and not the head of an arc, we have to use a trick: we turn the tree upside down, which turns each tail into a head, and hence a tail move into a head move. Just reversing the direction of the arrows of the tree is not sufficient, as this gives a graph with multiple roots and one leaf. Hence we connect all these roots and add a second leaf to create a phylogenetic network. This construction is formalized in the following definitions. 

After these definitions, we will show that the minimal number of head moves between two upside down trees is equal to the number of rSPR moves between the two original trees. This proof uses the concepts of agreement forests.

\begin{definition}
Let $X=\{x_1,\ldots,x_n\}$ be an ordered set of labels. The caterpillar $C(X)$ is the tree defined by the Newick string
\[(\cdots(x_1,x_2),x_3)\cdots,x_n);\]
\end{definition}

\begin{definition}
Let $T$ be a phylogenetic tree with labels $X=\{x_1,\ldots,x_n\}$, the \emph{upside down version} of $T$ is a network $\ud{T}$ with $2n^2+2$ leaves ($e_{x,i}$ for $x\in X$ and $i\in[2n]$, $y$, and $\rho$) constructed by:
\begin{enumerate}
\item Creating the labelled digraph $S$, which is $T$ with all the edges reversed;
\item Creating the tree $D$ by taking $C(X\cup\{y\})$ and adding $2n$ pendant edges with leaves labelled $e_{x,1},\ldots,e_{x,2n}$ to each pendant edge $e=(\cdot,x)$ of $C(X)$;
\item Taking the disjoint union of $D$ and $S$;
\item identifying the node labelled $x_i$ in $D$ with the node labelled $x_i$ in $S$ and subsequently suppressing this node for all $i$.
\end{enumerate}
The \emph{bottom part} of $\ud{T}$ is the subgraph of $\ud{T}$ below (and including) the parents of the $e_{x,1}$.
\end{definition}

\begin{figure}[h!]
\begin{center}
\includegraphics[scale=0.6]{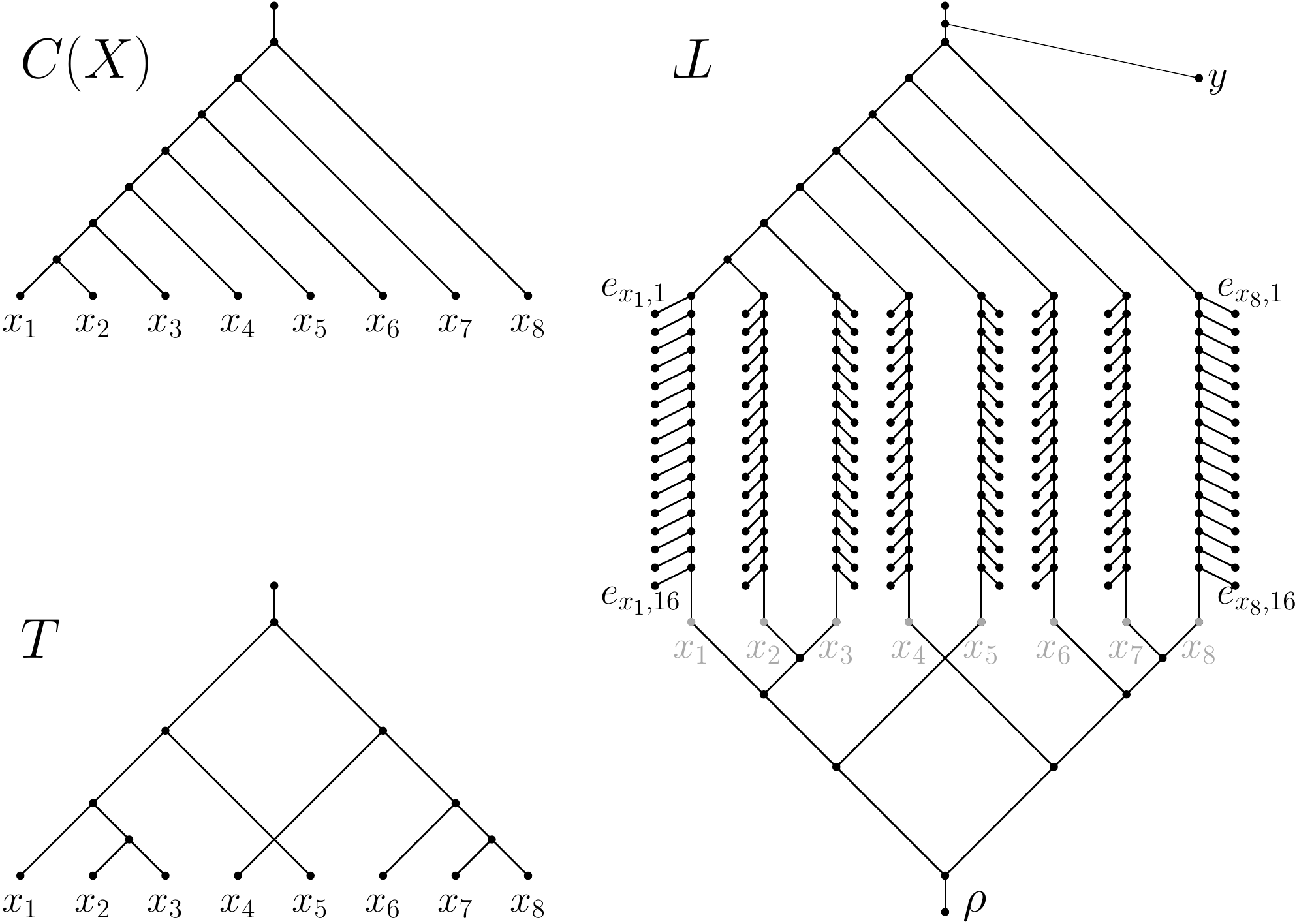}
\end{center}
\caption{Left the caterpillar $C(X)$ and the tree $T$, right, the upside down version of $T$. In the upside down version of $T$, the original leaves $x_i$ (grey) are suppressed.}
\label{fig:udTree}
\end{figure}

The rSPR distance between two trees can be characterized alternatively as the size of an agreement forest \citep{bordewich2005computational}. Here we use this alternative description as part of the reduction. To define agreement forests, we need the following definitions, which we have generalized slightly to work well for networks.

\begin{definition}
Let $T$ be a tree (for digraphs: the underlying undirected graph is a tree) with its degree-1 nodes labelled bijectively with $X$ and let $Y\subseteq X$ be a subset of the labels. Then $T|_Y$ is the \emph{subtree of $T$ induced by $Y$}; that is, it is the union of all shortest (undirected) paths between nodes of $Y$.
\end{definition}

\begin{definition}
Let $G$ and $G'$ be labelled digraphs. Suppose $G$ and $G'$ are labelled isomorphic after suppression of all their \emph{redundant nodes} (indegree-1 outdegree-1 nodes), then we write $G\equiv G'$, or say $G$ is \emph{s-isomorphic} to $G'$ (for suppressed isomorphic). 

An \emph{embedding} of a graph $H$ in $G$ is an \emph{s-isomorphism} $H\equiv S$ of $H$ with a subgraph $S$ of $G$. We say that $H$ \emph{can be embedded} in $G$ if an embedding of $H$ in $G$ exists. Note that any subgraph $H$ of $G$ can be embedded in $G$ as $H\equiv H$.
\end{definition}

Now we look at an important property of embeddings relating to subgraphs, which implies that being embeddable is transitive.

\begin{lemma}\label{lem:subgraphEmbedding}
Let $A$, $B$ and $H$ be digraphs with all degree-1 nodes labelled. Suppose $A\equiv B$ and $H$ is a subgraph of $A$, then $H$ can be embedded in $B$.
\end{lemma}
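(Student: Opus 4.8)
The plan is to route everything through the fully suppressed digraphs. Write $\hat A$, $\hat B$ for the digraphs obtained from $A$, $B$ by suppressing \emph{all} their redundant nodes, and more generally $\hat G$ for the full suppression of a digraph $G$ with labelled degree-1 nodes; thus $G\equiv G'$ means precisely that $\hat G$ and $\hat G'$ are labelled isomorphic, and by hypothesis there is a labelled isomorphism $\psi:\hat A\to\hat B$. The structural observation I will lean on is that suppressing a redundant node, i.e.\ replacing $u\to w\to z$ by $u\to z$, is the inverse of subdividing the edge $(u,z)$ and changes the in- and out-degree of no node other than $w$; hence the redundant nodes of $A$ are exactly the nodes that vanish on the way to $\hat A$, and $A$ is a subdivision of $\hat A$: each edge $e$ of $\hat A$ inflates to a directed path $P_e$ in $A$, these paths partition $E(A)$, and two of them meet only in endpoints, which are exactly the non-redundant nodes of $A$. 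The same holds for $B$ and $\hat B$. With this in hand the proof splits into three steps: (i) embed $H$ into $\hat A$; (ii) transport the resulting subgraph across $\psi$ into $\hat B$; (iii) lift it from $\hat B$ back into $B$.

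Step (i) is the heart of the matter. I claim that for every edge $e$ of $\hat A$ the path $P_e$ is either entirely contained in $H$ or meets $H$ in no edge at all. Indeed, the interior nodes of $P_e$ are redundant in $A$, hence have degree $2$ in $A$ and carry no label; since every degree-1 node of $H$ is labelled, an interior node of $P_e$ lying in $H$ cannot have degree $1$ there, so it has degree $2$ and $H$ contains both of its incident edges, and propagating this along $P_e$ from any edge of $H\cap P_e$ forces $P_e\subseteq H$. Now let $S_1$ be the subgraph of $\hat A$ whose edge set is $\{\, e\in E(\hat A): P_e\subseteq H\,\}$, taken with their endpoints (and with any isolated vertices of $H$ that survive into $\hat A$). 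Replacing each edge $e$ of $S_1$ by the path $P_e$ recovers $H$ up to isolated vertices, so $H$ is a subdivision of $S_1$ and therefore $\hat H=\hat{S_1}$ as labelled digraphs. Hence $S_1\subseteq\hat A$ and $S_1\equiv H$.

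For step (ii), set $S_2:=\psi(S_1)$, a subgraph of $\hat B$ carried isomorphically to $S_1$ by the label-preserving map $\psi$; since suppression commutes with isomorphism, $\hat{S_2}$ is labelled isomorphic to $\hat{S_1}=\hat H$, i.e.\ $S_2\equiv H$. For step (iii), use that $B$ is a subdivision of $\hat B$: replacing each edge $e$ of $S_2\subseteq\hat B$ by its subdivision path $P_e^{B}$ in $B$ yields a subgraph $S$ of $B$ that is a subdivision of $S_2$, whence $\hat S=\hat{S_2}$ and $S\equiv H$. This exhibits a subgraph $S$ of $B$ with $H\equiv S$, which is exactly an embedding of $H$ in $B$.

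The only genuinely combinatorial point is the dichotomy in step (i), and that is precisely where the hypothesis ``all degree-1 nodes of $H$ are labelled'' is used: without it $H$ could truncate a subdivision path of $\hat A$, and its suppression would fail to match any subgraph of $\hat A$. Everything else is bookkeeping about subdivisions and transport along an isomorphism. If one prefers to avoid the global subdivision picture, the same argument can be repackaged as an induction on the number of redundant nodes of $A$ and of $B$, suppressing one such node $w$ at a time and using the very same observation — that when $H$ (or the current subgraph of $B$) meets an edge at $w$ it must contain both edges at $w$ — to pass between the two sides of $\equiv$. I would also remark in passing that in the tree-like setting of this paper suppression never creates a multi-edge, so all the subgraphs above are honest; alternatively one may run the whole argument on the underlying undirected (tree) graphs and re-attach orientations at the end.
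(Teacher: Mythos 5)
Your proof is correct and follows essentially the same route as the paper's: the crux in both is that the hypothesis on labelled degree-1 nodes forces $H$ to use each subdivision path of $A$ either entirely or not at all (equivalently, that non-redundant nodes of $H$ are non-redundant in $A$), after which the edge-to-path correspondence of the s-isomorphism transports $H$ into $B$. Your write-up is just a more explicit, three-step bookkeeping of the same argument.
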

\begin{proof}
The s-isomorphism $A\equiv B$ is an isomorphism of graphs (topological minors) without redundant nodes. This isomorphism is a bijection between the non-redundant nodes of $A$ to the non-redundant nodes of $B$. The map of the edges is a map of paths of $A$ to paths of $B$, where the internal nodes of these paths may only be redundant nodes. Now consider the subgraph $H$ of $A$, and note that the non-redundant nodes of $H$ are non-redundant nodes of $A$ as well. Indeed, the only way to create new non-redundant leaves by taking a subgraph, is to create a leaf from a redundant node, but $L(H)\subseteq L(A)=L(B)$, so each degree-1 node of $H$ corresponds to a degree-1 node of $A$ and of $B$.
This means each non-redundant node of $H$ corresponds to a non-redudant node of $B$, and each edge of $H$ to a path between such nodes in $B$, and there is an s-isomorphism of $H$ with the subgraph of $B$ formed by these nodes and edges.
\end{proof}

Now we turn to the definition of an agreement forest, which, as mentioned earlier, characterizes the rSPR distance. Following the definition of the agreement forest, we define a tool similar to an agreement forest tailored to upside down versions of trees. This upside down agreement forest (udAF) can be turned into an agreement forest of the two original trees.

\begin{definition}
Let $T_1$ and $T_2$ be phylogenetic trees with labels $X$ and root $\rho$. Then a partition $\mathcal{P}=\{P_i\}$ of $X\cup\{\rho\}$ is an \emph{agreement forest (AF)} for $T_1$ and $T_2$ if the following hold:
\begin{itemize}
\item $T_1|_{P_i}\equiv T_2|_{P_i}$ for all $i$;
\item $T_t|_{P_i}$ and $T_t|_{P_j}$ are node-disjoint for all pairs $i,j$ with $i\neq j$ and fixed $t\in\{1,2\}$.
\end{itemize}
\end{definition}

\begin{definition}
Let $\ud{T}$ be the upside down version of the phylogenetic tree $T$ with label set $X$. Then an \emph{upside down agreement forest (udAF)} for $\ud{T}$ is a directed graph $F$ such that:
\begin{itemize}
\item $F$ is an undirected forest;
\item $F$ is a leaf-labelled graph with label set $\{e_{x,i}:x\in X, i\in[2n]\}\cup\{\rho\}$, where each label appears at most once;
\item $F\equiv S$ for some subgraph $S$ of the bottom part of $\ud{T}$. 
\end{itemize}
Note that the third requirement implies the first.
\end{definition}

\begin{lemma}\label{lem:udAFtoAF}
Let $T$ and $T'$ be phylogenetic trees with label set $X$. If $F$ is an udAF for $\ud{T}$ and for $\ud{T}'$, then there exists an AF of $T$ and $T'$ of size at most $|F|$, where $|F|$ denotes the number of components of $F$.
\end{lemma}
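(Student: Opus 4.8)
The plan is to extract a partition of $X\cup\{\rho\}$ directly from the two embeddings of $F$ and to verify that it is an agreement forest.

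First I would set up a structural dictionary between subtrees of the bottom part of $\ud T$ and induced subtrees $T|_Y$. Fix a subgraph $G$ of the bottom part of $\ud T$ with $F\equiv G$ (one exists by definition of an udAF), and likewise a subgraph $G'$ of the bottom part of $\ud{T'}$ with $F\equiv G'$. Unwinding the construction of $\ud T$, the bottom part of $\ud T$ is, up to suppression of redundant nodes, the tree $T$ with all edges reversed, in which the old root has become the leaf $\rho$ and each old leaf $x$ has been replaced by a pendant path carrying the leaves $e_{x,1},\dots,e_{x,2n}$; in particular there is a projection $\pi$ that collapses each such pendant path to the corresponding leaf $x$ and sends the $\rho$-leaf to the root, and that identifies $\pi$ of the bottom part with $T$. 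Since $F$ is a forest whose leaves carry each label of $\{e_{x,i}:x\in X,i\in[2n]\}\cup\{\rho\}$ exactly once, its components correspond to pairwise node-disjoint subtrees of $G$; for a component $C$ of $F$ let $G_C\subseteq G$ be the corresponding subtree, let $Y_C\subseteq X$ be the set of $x$ with some $e_{x,i}\in L(C)$, and set $Y_C^{+}:=Y_C\cup\{\rho\}$ if $\rho\in L(C)$ and $Y_C^{+}:=Y_C$ otherwise. The key claim is $\pi(G_C)=T|_{Y_C^{+}}$: indeed $G_C$ is connected, contains every $e_{x,i}$ with $x\in Y_C$ (and $\rho$ iff $\rho\in L(C)$), and has no unlabelled leaves (else $F$ would), so its leaves are exactly those, whence $\pi(G_C)$ is precisely the minimal subtree of $T$ spanning $Y_C^{+}$. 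The same computation in $\ud{T'}$ gives $\pi'(G'_C)=T'|_{Y_C^{+}}$, and because the s-isomorphisms $G_C\equiv C\equiv G'_C$ are label-preserving and match, for each $x$, the bunch of $e_{x,\cdot}$-leaves of $G_C$ with that of $G'_C$, they descend after collapsing these bunches to a label-preserving s-isomorphism $T|_{Y_C^{+}}\equiv T'|_{Y_C^{+}}$.

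Next I would build the partition. Enumerate the components of $F$ as $C_1,\dots,C_m$, so $m=|F|$, and for $z\in X\cup\{\rho\}$ let $\sigma(z)$ be the unique index $j$ with $e_{z,1}\in L(C_j)$ when $z=x\in X$, and with $\rho\in L(C_j)$ when $z=\rho$ ($\sigma$ is well defined since each label appears once). Put $P_j:=\sigma^{-1}(j)$; the nonempty $P_j$ form a partition of $X\cup\{\rho\}$ into at most $m=|F|$ blocks, and $P_j\subseteq Y_{C_j}^{+}$. Restricting the label-preserving s-isomorphism $T|_{Y_{C_j}^{+}}\equiv T'|_{Y_{C_j}^{+}}$ to the leaves lying in $P_j$ yields $T|_{P_j}\equiv T'|_{P_j}$, the first agreement-forest axiom. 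For node-disjointness, the subtrees $G_{C_j}$ are pairwise node-disjoint in the bottom part of $\ud T$ and $\pi$ is injective off the pendant paths, so the subtrees $T|_{Y_{C_j}^{+}}=\pi(G_{C_j})$ can share only leaves (or the root) of $T$; since the blocks $P_j$ are pairwise disjoint and a leaf or the root of $T$ is never an internal node of an induced subtree, the subtrees $T|_{P_j}\subseteq T|_{Y_{C_j}^{+}}$ are pairwise node-disjoint in $T$, and the identical argument with the $G'_{C_j}$ gives the same in $T'$. Hence $\{P_j:P_j\neq\emptyset\}$ is an agreement forest for $T$ and $T'$ of size at most $|F|$.

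I expect the main obstacle to be the first step: making the identification of the bottom part with ``reversed $T$ with pendant bunches'' precise, and cleanly transferring node-disjointness through $\pi$ — the bookkeeping of suppressed degree-two nodes, of the pendant bunches (a single component may grab several $e_{x,\cdot}$ for the same $x$), and the uniform treatment of the root $\rho$ alongside the leaves all require care. Once this dictionary is in place, constructing the partition and checking the two axioms is routine; in particular the precise value $2n$ of the multiplicity plays no role in this lemma.
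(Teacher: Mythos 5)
Your proof is correct and follows the same overall architecture as the paper's: components of $F$ become parts of the agreement forest, and the AF axioms are verified by projecting the two embeddings of $F$ down to $T$ and $T'$, collapsing each pendant bunch $e_{x,1},\dots,e_{x,2n}$ to the leaf $x$. The one genuine difference is the assignment rule. The paper places $x$ in the part of a component $K$ only when \emph{all} $2n$ leaves $e_{x,i}$ lie in $K$, turns every remaining $x$ into a singleton part, and then needs a separate charging argument to show those singletons can be paid for by components of $F$; you instead send $x$ to the component containing $e_{x,1}$, which makes the bound $|AF|\leq|F|$ immediate and produces possibly larger (but still valid) parts, at the price of having to restrict the collapsed s-isomorphism $T|_{Y_{C}^{+}}\equiv T'|_{Y_{C}^{+}}$ from the full leaf-support $Y_C$ of a component down to the block $P_j$ — a restriction you justify correctly. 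Your node-disjointness argument (internal nodes of $T$ cannot be shared because the collapse is injective off the pendant bunches and the embedded components are node-disjoint; leaves and $\rho$ cannot be shared because they occur in $T|_{P_j}$ only as elements of $P_j$) is sound. Two caveats: the definition of a udAF only requires each label to appear \emph{at most} once, so $\sigma$ as written is undefined if $e_{x,1}\notin L(F)$; you should assign $x$ to the component containing the lowest-index $e_{x,i}$ present (or note, as the paper implicitly does, that in the lemma's only application all labels appear). And, as you anticipate, the precise identification of the bottom part with reversed-$T$-plus-bunches and the compatibility of the collapse with the s-isomorphism is where the real work sits; your sketch is right but would need to be expanded to the level of detail the paper devotes to its $E_x$-contraction step.
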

\begin{proof}
Let $\mathcal{K}$ be the set of components of $F$. For each $K\in \mathcal{K}$ we define the following part of the agreement forest:
\[K_{AF}:=\left\{x\in X | e_{x,i}\in K~\forall i\in[1,2n] \right\}\cup (K\cap\{\rho\}),\]
where $e_{x,i}$ indicates the $i$-th leaf of $\ud{T}$ corresponding to $x$. The agreement forest consists of these parts (ignoring the empty ones, resulting from components that have no complete sets of leaves), together with one part for each leaf that is in none of these parts, i.e.
\[AF:=\big\{Y\subseteq X\cup\{\rho\}| \exists K\in \mathcal{K}~\mathrm{ s.t. }~Y=K_{AF}\big\}\cup\big\{\{x\}\subset X\cup\{\rho\}| \forall K\in \mathcal{K}: x\not\in K_{AF}\big\}\setminus\big\{\emptyset\big\}.\]
Note that each component $Y$ of $AF$ corresponds either uniquely to a component $K$ of $F$ which has all $e_{x,i}$ for some leaf $x$, or it corresponds to a leaf $x$ for which not all $e_{x,i}$ are contained in one component of $F$. In the last case, there is a component of $F$ consisting of one leaf $e_{x,i}$ for some $i$. Note that this correspondence $AF\to \mathcal{K}$ must therefore be injective, and $AF$ has size at most $|F|$. What rests to prove is that $AF$ is indeed an agreement forest for $T$ and $T'$.\\
\\
Let $F'$ be the subgraph of $F$ where each component $K$ is restricted to the subgraph consisting of all paths between the leaves in $K_{AF}$. As (per definition of an udAF) $F$ can be embedded in the bottom part of $\ud{T}$, $F'$ can also be embedded in the bottom part of $\ud{T}$ as it is a subgraph of $F$ (Lemma~\ref{lem:subgraphEmbedding}). This embedding must be unique, because it is of a labelled forest into a labelled tree.

Let $E_x$ be the subgraph of $\ud{T}$ induced by the leaves $e_{x,i}$ and their parents for all $i$ and a fixed $x$. Now replace each subgraph $E_x$ with one leaf $x$ in both $F'$ and in the bottom part of $\ud{T}$. Let the resulting graphs be $F^s$ and $B^s$. Subsequently reverse the direction of each arc in both $B^s$ and in $F^s$ with resulting graphs $B^r$ and $F^r$. Note that the resulting graphs are $B^r=T$ and the union $F^r=\cup_{K\in\mathcal{K}}T|_{K_{AF}}$, and all the restricted trees $T|_{K_{AF}}$ are node disjoint.

We repeat this argument for $\ud{T}'$, and note that the modifications from $F$ to $F^r$ are independent of $\ud{T}$, so we have the equality $F^r=\cup_{K\in\mathcal{K}}T'|_{K_{AF}}$, where the parts $T'|_{K_{AF}}$ are again node disjoint. This means $T|_{K_{AF}}\equiv T'|_{K_{AF}}$ for each $K\in AF$ corresponding to a non-trivial component of $F$, and $T|_{P_i}$ and $T|_{P_j}$ are node disjoint for all nontrivial parts $P_i$ and $P_j$ of $AF$ (similarly for $T'$). Hence so far the elements of $AF$ corresponding to non-trivial components of $F$, meet all the requirements of an AF. 

The only other elements of $AF$ contain only one label, each of which is not in any of the non-trivial components of $AF$. Hence, for any such label $x$, the restriction $T|_{\{x\}}$ consists of only the node labelled $x$, which is not contained in any other component by definition (and similarly for $T'$). Furthermore, the s-isomorphism $T|_{\{x\}}\equiv T'|_{\{x\}}$ is trivial. Hence, $AF$ is indeed an agreement forest.
\end{proof}

The preceding lemma shows that an udAF for two upside down trees gives an AF for the original trees of the same size. Now we still need a connection between the number of head moves and an udAF. The following lemma shows that appropriate head move sequences correspond to udAFs of size related to the number of head moves. 

\begin{lemma}\label{lem:RemovingGivesAF}
Let $T$ and $T'$ be trees with label set $X$, and $|X|=n$. Suppose $S$ is a sequence of head moves $\ud{T}=N_0,\ldots,N_{|S|}=\ud{T}'$ of length $|S|< 2n$. Then there is an udAF $F$ of $\ud{T}$ and $\ud{T}'$ with at most $|S|+1$ components.
\end{lemma}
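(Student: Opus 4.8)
The plan is to build the udAF incrementally along the given head move sequence, cutting at most one edge of a forest per move, in the spirit of the classical argument that an rSPR sequence of length $d$ yields an agreement forest with at most $d+1$ components.

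Write the sequence as $\ud{T}=N_0,N_1,\dots,N_d=\ud{T}'$ with $d=|S|$, and for each $i$ let $B_i$ be the bottom part of $N_i$; this is well defined for every $N_i$, since head moves never delete the leaves $e_{x,1}$. I would construct directed forests $F_0,F_1,\dots,F_d$ maintaining the invariant that $F_i$ is an undirected forest whose leaves carry pairwise distinct labels from $\{e_{x,j}:x\in X,\ j\in[2n]\}\cup\{\rho\}$, that $F_i$ has at most $i+1$ components, and that $F_i$ embeds (is s-isomorphic to a subgraph) in $B_j$ for every $0\le j\le i$. Then $F:=F_d$ embeds both in $B_0$, the bottom part of $\ud{T}$, and in $B_d$, the bottom part of $\ud{T}'$, so it is an udAF for $\ud{T}$ and $\ud{T}'$ with at most $d+1=|S|+1$ components, which is exactly the claim.

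For the base case I would take $F_0$ to be a spanning tree of $B_0$ with its unlabelled leaves repeatedly pruned away and redundant nodes suppressed. The graph $B_0$ is connected (each pendant chain $E_x$ of $e_{x,\cdot}$ leaves runs into the reversed tree $S$, which is connected, and $\rho$ is the sink of $S$), so $F_0$ has one component, and it retains every $e_{x,j}$ and $\rho$ since those are already leaves of $B_0$. For the inductive step, assume $F_{i-1}$ satisfies the invariant and the move $N_{i-1}\to N_i$ relocates the reticulation edge $e=(u,v)$ onto a target edge $f=(s,t)$; passing from $B_{i-1}$ to $B_i$ then only edits the graph locally (delete $(u,v)$, suppress $v$ merging its remaining in- and out-edge, subdivide $f$ by a new reticulation $v'$, add $(u,v')$, and shift the anchors — the parents of the $e_{x,1}$ — by at most one node). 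I would reinterpret the embedding of $F_{i-1}$ into $B_{i-1}$ inside $B_i$: every image path avoiding the deleted edge $(u,v)$ survives, possibly rerouted through the merged edge or through the two halves of the subdivided target; if exactly one image path uses $(u,v)$, I cut $F_{i-1}$ at the corresponding edge and prune, obtaining $F_i$ with one more component that now embeds in $B_i$. Since $F_i$ is s-isomorphic to a subgraph of $F_{i-1}$, Lemma~\ref{lem:subgraphEmbedding} and transitivity of embeddability give that $F_i$ still embeds in every $B_j$ with $j\le i-1$, so the invariant is restored. The hypothesis $|S|<2n$ enters here: each head move disturbs at most one of the $2n$ copies $e_{x,1},\dots,e_{x,2n}$ inside a given chain $E_x$, so for every $x$ at least one copy stays put throughout the whole sequence, which keeps every $B_j$ genuinely equipped with a clean pendant chain for each $x$ and keeps the embeddings and the single cut well defined.

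I expect the main obstacle to be precisely this inductive step: describing simultaneously how the bottom part and the embedding of $F_{i-1}$ change when the moving reticulation edge, or the target edge, lies inside a pendant chain $E_x$ or meets an anchor of the bottom part, and verifying in each configuration that a single cut of $F_{i-1}$ (or none) suffices to restore an embedding into $B_i$ without leaving unlabelled leaves that cannot be pruned. The purpose of the $2n$-fold duplication of each original leaf is exactly to make this bookkeeping valid for sequences of fewer than $2n$ head moves; combined with Lemma~\ref{lem:udAFtoAF}, the resulting udAF then yields an agreement forest of $T$ and $T'$ with at most $|S|+1$ components.
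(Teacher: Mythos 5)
Your overall strategy (induct along the sequence, cut at most one forest edge per head move, end with at most $|S|+1$ components) is the same as the paper's, but your invariant is strictly stronger than the one the paper maintains, and it is exactly the strengthening that does not go through. The paper only keeps two facts: $F_i$ is a \emph{subgraph} of $N_i$, and $F_i$ embeds in the bottom part of the fixed initial network $\ud{T}$. You additionally require $F_i$ to embed in the ``bottom part'' $B_j$ of \emph{every} intermediate network $N_j$. This is problematic for two reasons. First, the bottom part is only defined for upside-down trees; for an intermediate $N_j$ the set of nodes below the parents of the $e_{x,1}$ is not stable under head moves (deleting the moving edge can remove the only path witnessing that a node is below such a parent, and the moving or target edge can cross between the top and bottom regions), so $B_j$ is not a well-behaved object and the claim that a single cut restores an embedding of $F_i$ into $B_i$ is exactly the step you flag as ``the main obstacle'' and never establish. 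The paper avoids this entirely by never looking at bottom parts of intermediate networks.

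Second, and more importantly, your use of the hypothesis $|S|<2n$ is not the right mechanism. The statement ``each head move disturbs at most one copy $e_{x,i}$, so one copy stays put'' does not yield what you need. In the paper, $|S|<2n$ enters only in the final step, which your proposal is missing: after the induction one has a forest $F=F_{|S|}$ that is a subgraph of $\ud{T}'$ and embeds in the bottom part of $\ud{T}$, and one must show $F$ lies in the \emph{bottom part} of $\ud{T}'$. The argument is structural: if some node of $F$ sat at a split node in the top part of $\ud{T}'$, then either its two children would both have descendants $e_{x,i}$, $e_{x,j}$ with the same $x$ (impossible, since in $\ud{T}'$ all $2n$ copies for a given $x$ hang below a single pendant chain in the bottom part), or a component of $F$ would contain a path passing through all the parents of the $e_{x,i}$ for some fixed $x$, forcing each of those $2n$ leaves to be a singleton component of $F$ and hence $|F|\geq 2n+1$, contradicting $|S|<2n$. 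Without this (or an equivalent argument carried through your intermediate-embedding invariant, which you have not supplied), the object you produce is not shown to be an udAF for $\ud{T}'$, so the proof is incomplete.
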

\begin{proof}
Let $B$ be the bottom part of $\ud{T}$. We prove this result using induction on the number of moves to prove that there exist subgraphs $F_i$ of $N_{i}$ which can be embedded in the bottom part of $\ud{T}$ and have $|F_i|\leq i$ components. Finally we prove the subgraph $F_{|S|}$ of $N_{|S|}=\ud{T}'$ must actually be a subgraph of the bottom part of $\ud{T}'$.

As a base of the induction, set $F_0=B$, which is connected and can clearly be embedded in itself and is a subgraph of $\ud{T}$.  

Now suppose we have subgraphs $F_i$ of $N_i$ with embeddings of $F_i$ in $B$ and $|F_i|\leq i$ for all $i<j\leq|S|$. We prove that there also exists a subgraph $F_j$ of $N_j$ with at most $j$ components that can be embedded in $B$. 

Note that $F_{j-1}$ is a subgraph of $N_{j-1}$ and therefore the moving edge $e_j=(u,v)$ can be either an edge of $F_{j-1}$, or it is in the complement $N_{j-1}\setminus F_{j-1}$. In the last case $e_j$ can have only its endpoints in $F_{j-1}$. Now construct $F_j$ as follows:
\begin{itemize}
\item remove edge $e_j=(u,v)$ from $F_{j-1}$ if it was contained in it;
\item clean up the resulting graph by removing all edges not contained in any undirected path between two leaves, and suppressing $v$ if it is a degree $2$ vertex after removal of $(u,v)$.
\item add the new endpoint if necessary. That is: let the target edge of the move be $t$, if $t$ is contained in the graph after cleaning up, subdivide $t$.
\end{itemize}
Note that $F_{j}$ can be embedded in $F_{j-1}$ because the only operations were: restriction to a subset of labels, subdivision, and suppression (Lemma~\ref{lem:subgraphEmbedding}). 

Because $F_{j-1}$ embeds in $B$, there is also an embedding of $F_j$ into $B$. Furthermore, $F_j$ is a subgraph of $N_j$ by construction: the three steps correspond exactly to the three steps of a head move in $N_{j-1}$. Lastly, $F_j$ has at most one more component than $F_{j-1}$, because the only operation that can increase the number of components is the removal of the edge in the first step, and because that is an edge removal in a graph, it creates at most one extra component.

We conclude that the desired subgraphs $F_{i}$ of $N_i$ exist for all $i\in [|S|]$.\\
\\
Note that we have not yet proven that $F:=F_{|S|}$ is an udAF for $\ud{T}'$, as $F$ might not embed in the bottom part of $\ud{T}'$. We now prove that $F$ is in fact a subgraph of the bottom part of $\ud{T}'$.

By construction, $F$ is a directed subgraph of $\ud{T}'$. Suppose (for a contradiction) that $F$ is not a subgraph of the bottom part of $\ud{T}'$, i.e. some part of $F$ lies partly in the top part of $\ud{T}'$. This means that there is a node $t$ of $F$ that corresponds to a split node (which we also call $t$) in the upper part of $\ud{T}'$. A split node of $F$ necessarily has two children $c_1$ and $c_2$, as $F$ embeds in the bottom part of $\ud{T}$. One of these children (w.l.o.g. $c_1$) must have a unique leaf descendant $e_{x_,}$. The other child ($c_2$) either has a leaf descendant $e_{x,j}$ (same $x$), or the next non-redundant descendant is a reticulation node.

If $c_2$ has a leaf descendant $e_{x,j}$, we note the following: $t$ is mapped to a split node in the top part of $\ud{T}'$. Hence the leaves below the one child of $t$ and the leaves below the other child of $t$ can never correspond to the same $x\in X$: indeed if $e_{y,i}$ is below $c_1$, then $e_{y,j}$ is also below $c_1$, and similarly for $c_2$; furthermore, as the only reticulations of $\ud{T}'$ are in the lower part of the network after the $e_{\cdot,\cdot}$ split off, the leaves below $c_1$ and $c_2$ are disjoint (except for the leaf corresponding to the root of $T'$). Hence, as $c_1$ has a descendant $e_{x,i}$, and $c_2$ has a descendant $e_{x,j}$, we have a contradiction.

Now if $c_2$'s first non-redundant descendant $d$ is a reticulation node, then this node maps to a reticulation in the bottom part of $\ud{T}'$. This means the path then the edge $(t,d)$ maps to a path from the top part of $\ud{T}'$ to a reticulation in the bottom part of $\ud{T}'$. Such a path must necessarily contain the all parents of the leaves $e_{x,i}$ for some $x\in X$. As the embeddings of all components in $F$ are node disjoint, and each leaf $e_{\cdot,\cdot}$ is a node of $F$, each leaf $e_{x,i}$ (fixed~$x$, for all $i\in[2n]$) is its own component in $F$. Hence $F$ has at least $2n+1$ components, implying that $|S|\geq 2n$, which gives us a contradiction with the assumptions of the lemma.\\
\\
Hence there does exist an embedding of $F$ in the bottom part of $\ud{T}$, and $F$ is an udAF for $\ud{T}'$ with at most $|S|+1$ components, as $F$ has at most $|S|+1$ components.
\end{proof}

Finally we put everything together in the following lemma and theorem: each candidate head move sequence defines an udAF, which in turn gives an AF for the original trees, which bounds the rSPR distance between these trees.

\begin{lemma}
Let $T_1$ and $T_2$ be trees with a common 
label set, then 
\[d_{\text{rSPR}}(T_1,T_2)=d_{\text{Head}}(\ud{T}_1,\ud{T}_2).\]
\end{lemma}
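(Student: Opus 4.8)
The plan is to prove the equality as two inequalities: $d_{\text{Head}}(\ud{T}_1,\ud{T}_2)\le d_{\text{rSPR}}(T_1,T_2)$ and $d_{\text{rSPR}}(T_1,T_2)\le d_{\text{Head}}(\ud{T}_1,\ud{T}_2)$.

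For the first inequality I would argue that one rSPR move on a tree is simulated by exactly one head move on its upside down version. Reversing all arcs of a binary rooted tree turns every internal node (indegree-$1$, outdegree-$2$) into a reticulation and the root into a leaf, and under this reversal, relocating the \emph{tail} of the pruning arc of an rSPR move becomes relocating the \emph{head} of the corresponding reticulation arc. Since the caterpillar part $D$ of the construction depends only on $X$, the graph $\ud{R}$ is defined for every tree $R$ on $X$, so a shortest rSPR sequence $T_1=R_0,\dots,R_m=T_2$ gives DAGs $\ud{R}_0,\dots,\ud{R}_m$ with $\ud{R}_{i-1}$ and $\ud{R}_i$ one head move apart. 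Validity of each head move is checked with Lemma~\ref{lem:validHead}: head-movability of the reticulation arc and acyclicity of the result are both inherited from validity of the rSPR move, and the boundary configurations (pruning a subtree incident to the root of $R_{i-1}$, or regrafting onto its root arc) are absorbed by the gadget, since the reversed root of the tree is the leaf $\rho$ of $\ud{R}_i$ and the reversed leaves are glued into the fixed part $D$. This yields $d_{\text{Head}}(\ud{T}_1,\ud{T}_2)\le d_{\text{rSPR}}(T_1,T_2)$; in particular, since the rSPR distance of two rooted trees on $n$ leaves is less than $2n$ (the partition of $X\cup\{\rho\}$ into singletons is already an agreement forest), we get $d_{\text{Head}}(\ud{T}_1,\ud{T}_2)<2n$.

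For the second inequality I would take a \emph{shortest} head move sequence $S$ between $\ud{T}_1$ and $\ud{T}_2$, so $|S|=d_{\text{Head}}(\ud{T}_1,\ud{T}_2)<2n$ by the previous paragraph. Lemma~\ref{lem:RemovingGivesAF} then gives an udAF $F$ for $\ud{T}_1$ and $\ud{T}_2$ with $|F|\le|S|+1$ components, and Lemma~\ref{lem:udAFtoAF} converts $F$ into an agreement forest of $T_1$ and $T_2$ of size at most $|S|+1$. Invoking the characterisation of the rSPR distance via agreement forests \citep{bordewich2005computational}, this forest certifies $d_{\text{rSPR}}(T_1,T_2)\le(|S|+1)-1=|S|=d_{\text{Head}}(\ud{T}_1,\ud{T}_2)$. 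Together with the first inequality this proves $d_{\text{rSPR}}(T_1,T_2)=d_{\text{Head}}(\ud{T}_1,\ud{T}_2)$.

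The part I expect to need the most care is the second inequality. Turning ``there is an agreement forest of size $|S|+1$'' into the \emph{upper} bound $d_{\text{rSPR}}(T_1,T_2)\le|S|$ uses the nontrivial direction of the agreement-forest characterisation, so one must make sure the forest produced by Lemma~\ref{lem:udAFtoAF} satisfies whatever extra condition (acyclicity) that characterisation requires for rooted trees, rather than only yielding a lower bound on $d_{\text{rSPR}}$. The first inequality is conceptually easy but requires careful bookkeeping: verifying that the arc-reversal sends \emph{every} valid rSPR move to a valid head move, with particular attention to moves involving the root of the trees and the seam where the reversed tree is glued to the caterpillar gadget, and to checking that no spurious triangle (which would obstruct head-movability) is created along the way.
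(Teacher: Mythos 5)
Your proposal is correct and follows essentially the same route as the paper: translate rSPR moves directly into head moves for one inequality, and for the other combine Lemma~\ref{lem:RemovingGivesAF} with Lemma~\ref{lem:udAFtoAF} and the agreement-forest characterisation of rSPR distance, using $d_{\text{rSPR}}(T_1,T_2)<2n$ to justify applying Lemma~\ref{lem:RemovingGivesAF} to a shortest head move sequence. Your worry about an acyclicity condition is unfounded here, since the Bordewich--Semple characterisation of rooted SPR distance uses plain (maximum) agreement forests; acyclicity is only needed for the hybridization number.
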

\begin{proof}
The inequality $d_{\text{rSPR}}(T_1,T_2)\geq d_{\text{Head}}(\ud{T}_1,\ud{T}_2)$ is obvious, as the rSPR sequence for the trees directly translates into a head move sequence for the upside down trees.

We now prove the other inequality. As $2n>d_{\text{rSPR}}(T_1,T_2)\geq d_{\text{Head}}(\ud{T}_1,\ud{T}_2)$ we only have to consider head move sequences of length at most $2n$. Suppose we have a sequence of head moves $S$ between $\ud{T}_1$ and $\ud{T}_2$ of length at most $2n$, then there exists a udAF of size at most $|S|+1$ for $\ud{T}_1$ and $\ud{T}_2$ (Lemma~\ref{lem:RemovingGivesAF}). Now Lemma~\ref{lem:udAFtoAF} tells us that there is an AF for $T$ and $T'$ of size at most $|S|+1$. Using the fact that the size of the MAF of $T$ and $T'$ minus one is equal to the rSPR distance between $T$ and $T'$ \citep{bordewich2005computational}, we get the following inequalities:
\[|S| \geq        |AF|-1        \geq         d_{\text{rSPR}}(T_1,T_2).\]
We conclude that $d_{\text{rSPR}}(T_1,T_2)=d_{\text{Head}}(\ud{T}_1,\ud{T}_2)$.
\end{proof}

\begin{theorem}
Computing the head move distance between two networks is NP-hard.
\end{theorem}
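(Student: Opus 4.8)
The plan is to notice that the lemma immediately preceding this theorem already does essentially all the work: it exhibits a \emph{parsimony-preserving} transformation from pairs of rooted binary trees to pairs of phylogenetic networks. So the only thing left is to package it as a polynomial-time many-one reduction from a problem that is already known to be NP-hard. I would take that problem to be {\sc rSPR Distance} on rooted binary phylogenetic trees, which is NP-hard \citep{bordewich2005computational}; in its decision form the input is a pair of rooted trees $T_1,T_2$ on a common label set together with an integer $\ell$, and the question is whether $d_{\text{rSPR}}(T_1,T_2)\le\ell$.

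Given an instance $(T_1,T_2,\ell)$ with common label set $X$, $|X|=n$, the reduction would output the instance $(\ud{T}_1,\ud{T}_2,\ell)$ of {\sc Head Distance}. Two things need to be checked, both routine. First, the construction runs in time polynomial in $n$: each $\ud{T}_i$ has $2n^2+2$ leaves and hence $O(n^2)$ vertices, and the four steps defining the upside down version (reversing every arc of $T_i$, building the enlarged caterpillar $D$ on $X\cup\{y\}$ with its $2n$ pendant leaves per label, taking a disjoint union with the reversed tree, and then identifying-and-suppressing the $X$-labelled vertices) are all elementary graph manipulations. Second, $\ud{T}_1$ and $\ud{T}_2$ are genuine binary phylogenetic networks in the sense of Definition~\ref{def:networkTree}, on the same leaf set, and — since the reticulation number of $\ud{T}_i$ depends only on $n$ and not on the shape of $T_i$ — in the same tier, so that $d_{\text{Head}}(\ud{T}_1,\ud{T}_2)$ is well defined.

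Correctness of the reduction is exactly the equality $d_{\text{rSPR}}(T_1,T_2)=d_{\text{Head}}(\ud{T}_1,\ud{T}_2)$ established above. Consequently $(T_1,T_2,\ell)$ is a yes-instance of {\sc rSPR Distance} if and only if $(\ud{T}_1,\ud{T}_2,\ell)$ is a yes-instance of {\sc Head Distance}, and a polynomial-time algorithm for {\sc Head Distance} would yield one for {\sc rSPR Distance}, which is impossible unless $\mathrm{P}=\mathrm{NP}$. Hence {\sc Head Distance} is NP-hard.

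I do not expect a genuine obstacle in this final step, precisely because the delicate part — turning a candidate head-move sequence into an udAF (Lemma~\ref{lem:RemovingGivesAF}), turning that udAF into an agreement forest of the original trees (Lemma~\ref{lem:udAFtoAF}), and invoking the maximum-agreement-forest characterisation of rSPR distance — has already been carried out. The only points that warrant a sentence of care in the write-up are that the quadratic blow-up in the number of leaves still leaves the reduction polynomial, and that the output digraphs genuinely satisfy the network axioms. I would also echo the remark that, since $\ud{T}_i$ has reticulation number growing linearly in $n$, this argument is silent about whether {\sc Head Distance} restricted to a fixed tier is hard, which remains open.
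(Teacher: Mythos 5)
Your proposal is correct and follows exactly the paper's route: the paper likewise deduces the theorem directly from the preceding lemma $d_{\text{rSPR}}(T_1,T_2)=d_{\text{Head}}(\ud{T}_1,\ud{T}_2)$ together with the NP-hardness of {\sc rSPR Distance} due to Bordewich and Semple. The extra details you supply (polynomial size of $\ud{T}_i$, validity of the output networks, same tier) are routine checks the paper leaves implicit, so there is no substantive difference.
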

\begin{proof}
Direct consequence of the previous lemma, as computing the rSPR distance between two trees is NP-hard \citep{bordewich2005computational}.
\end{proof}

Note that the theorem above does not tell us whether it is hard to find the distance between networks of a fixed tier: increasing the size of the input for our construction corresponds to increasing the reticulation number.

\section{Discussion}
When generalizing rSPR moves on rooted trees to rooted networks, it is natural to consider tail moves, because each rSPR move in a tree is a tail move. However, when taking the view that an rSPR move is a move that changes one of the endpoints of an arc, head moves also belong to the generalization of rSPR moves \citep{gambette2017rearrangement}. In this view, it is equally natural to only consider head moves, as to only consider tail moves.\\
\\
We have showed that head moves are sufficient to connect all tiers of phylogenetic network space except tier-0. This might be surprising because head moves are relatively limited compared to tail moves: the head move neighbourhood is relatively small when compared to the tail move neighbourhood. On the other hand, when one reverses all the arcs of a network, each tail move becomes a head move. This makes the difference between connectivity results for these types of moves just a mathematical difference in numbers of roots, reticulations, and leaves, instead of a fundamental difference in biological interpretation. 

To unify these connectivity results, one could consider head or tail moves in a broader class of networks, which may have multiple roots and at least one leaf (instead of at least two) (Figure~\ref{fig:MultiRooted}). For such multi-rooted networks, connectivity results for head moves and for tail moves could easily be related. This reason for studying multi-rooted networks is mathematically inspired. 

\begin{figure}[h!]
\begin{center}
\includegraphics[scale=0.5]{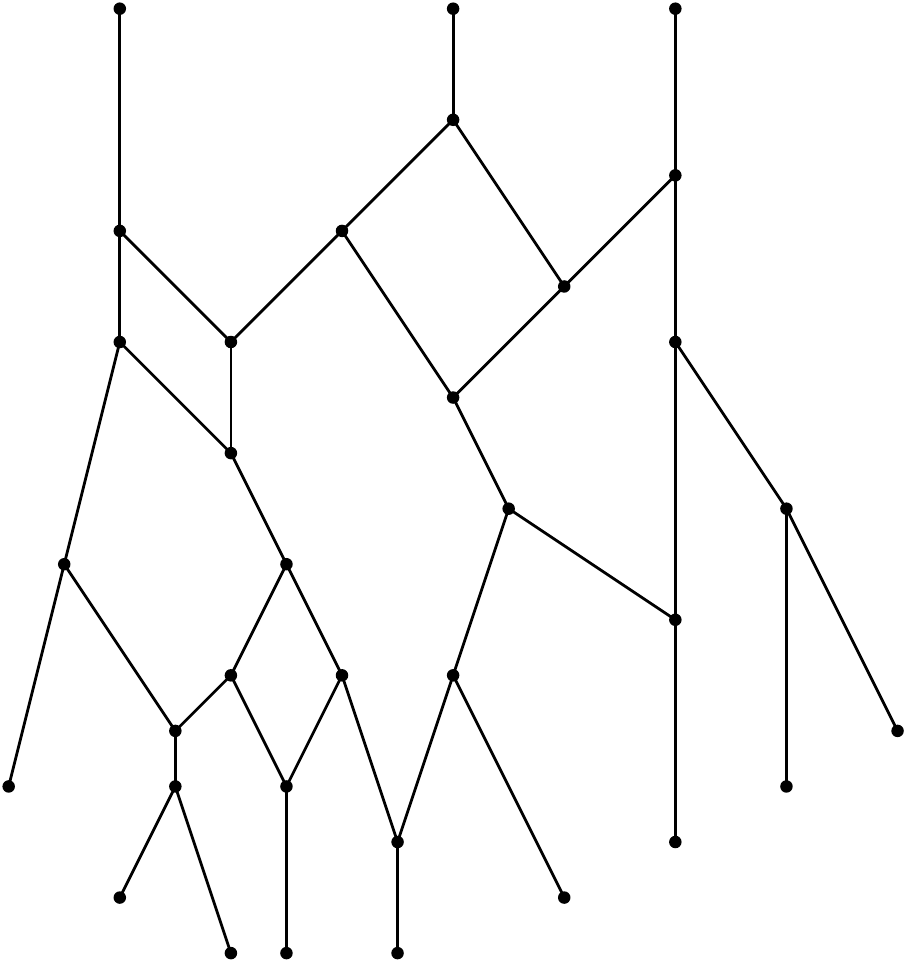}
\end{center}
\caption{A multi-rooted network, labels of leaves have been omitted.}
\label{fig:MultiRooted}
\end{figure}

Another reason to study multi-rooted networks is more inspired by biology: these networks could be interesting on their own as subnetworks of ordinary phylogenetic networks. The advantage is that one does not have to make assumptions about how these roots are connected higher up, that is, about the evolutionary history before the existence of these root genes/species \citep{haggerty2013pluralistic}. Additionally, a famous but slightly dated view of the evolutionary history is the net of life by \citet{doolittle1999phylogenetic}, which features multiple roots. A third reason becomes apparent when we take a broader view of phylogenetic networks that includes pedigrees: these often start with multiple individuals that may coalesce quite long ago.\\
\\
While we focussed mostly on head and tail moves of any distance, we have proven the connectivity of tiers of phylogenetic network space by distance-2 head moves. Distance-1 head moves are not sufficient in general because heads cannot move past their own tails. It would be interesting to see which networks are actually connected by distance-1 head moves.

\begin{figure}[h!]
\begin{center}
\includegraphics[scale=0.9]{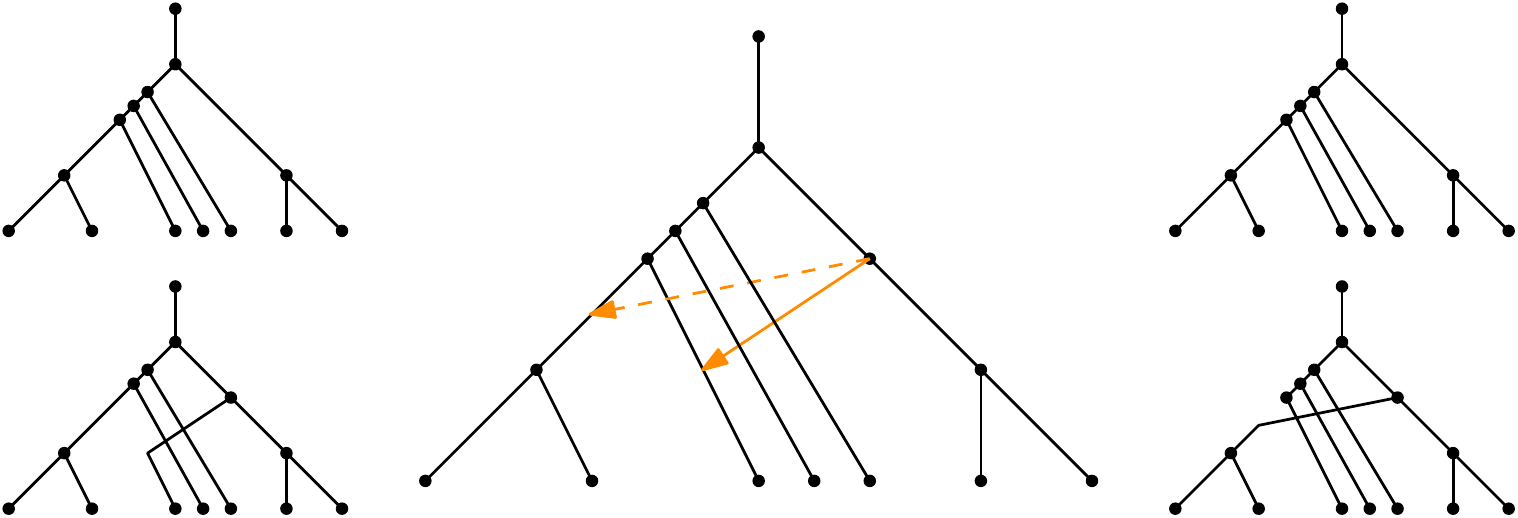}
\end{center}
\caption{A distance-2 head move in a network and the displayed trees before (left) and after (right) the head move. The top displayed tree is the same before and after the head move. The bottom tree before disappears, and is replaced by the tree bottom-right, which is a distance-3 tail move away from the top tree.}
\label{fig:ShortToLongDistance}
\end{figure}

It is unclear if this connectivity result for distance-2 moves is useful, especially in the context of embedded gene trees their relevance may be disputed. After all, short distance head moves do not generally correspond to short distance (tail) moves in the displayed trees (Figure~\ref{fig:ShortToLongDistance}). Problems studying the relation between displayed trees, which are interpreted as gene trees, and phylogenetic networks are often quite hard \citep{bordewich2007computing,van2012quadratic,van2016hybridization}. Hence strategies for solving these problems could benefit from local search heuristics.\\
\\
As mentioned, an important motivation for studying rearrangement moves is their possible use in local search strategies for phylogenetic networks. As such, it is important to understand the topological/geometric properties of the tiers of phylogenetic networks space. In this paper we started this study by giving bounds on the head move diameters, and finding additional connections between head move distance, tail move distance, and rSPR distance. 

Although the bounds for the head move diameter we found are already quite good -- both upper bound, and lower bound are linear in the number of leaves and the reticulation number, just like for tail moves and for rSPR moves \citep{janssen2017exploring} -- they could possibly be improved. 

As future research, one could try to discover the exact diameters. Another direction would be to try to apply our techniques for bounding the diameter to other types of moves, such as SNPR and PR moves \citep{bordewich2017lost,klawitter2018agreement}. Because these classes of moves also include vertical moves, this might be quite challenging.\\
\\
The other property of phylogenetic network space defined by head moves we touched on was the neighbourhood size. The head move neighbourhood is relatively small. Nevertheless, it is still possible to reach any network quite quickly, as the diameter still grows linearly. This means head moves might be very well suited for local search heuristics. 

Of course there are other factors to consider. Maybe head moves do not have the proper relation to the studied phylogenetic objectives. For example they might have unnecessarily irregular optimization landscapes, as an example in phylogenetic tree space: NNI moves give local optima (not globally optimal) for maximum parsimony, whereas SPR moves only give a global optimum for perfect sequence data \citep{urheim2016characterizing}. It would be interesting to analyse such relations for networks, too. For example by studying the occurrence of local optima for different kinds of parsimony \citep{fischer2015computing,van2017improved} using the existing types of rearrangement moves.

Another possible complicating factor in the relation between head moves and the optimization objective could be that head moves might be too restrictive for some types of networks. Indeed, we have not studied head moves for subclasses of networks. It might be useful to see if head moves also connect tiers of tree-child networks for example. Such questions have been answered for other moves \citep{bordewich2017lost}\\
\\
Lastly, in this paper we have studied the problem of computing the head move distance between two networks. For tail moves, rSPR moves \citep{janssen2017exploring} and for SNPR moves \citep{klawitter2018distance}, it was already known that computing the distance between two networks is NP-hard. For the first two of these, we additionally know that computation of distances is hard for each tier. Here we have shown that computing head move distance is also NP-hard, although we have not shown this for each tier separately. A first step in proving hardness in each tier might be to study head move distance computation in tier-1, which seems intriguingly simple. 

It could also be interesting to find an efficient algorithm for the task of finding a shortest head move sequence, or to characterize the exact distance between two phylogenetic networks in a more abstract way. No efficient (FPT) algorithm for this task is known, nor are there any exact characterizations of distances between networks given by rearrangement moves. A first attempt was recently made using a generalization of agreement forests, this approach currently only yields exact distances between trees and networks, and no exact distances between two networks \citep{klawitter2018agreement,klawitter2018distance}.

\noindent\small{{\bf Acknowledgements} Thanks to Leo van Iersel, Yukihiro Murakami, and Mark Jones for their valuable input and especially Leo van Iersel and Yukihiro Murakami for kindly reading my manuscript.}

\DeclareRobustCommand{\VAN}[3]{#3}
\bibliographystyle{chicago}
\bibliography{bibliography}

\begin{thebibliography}{}

\bibitem[\protect\citeauthoryear{Atas, Tuncbag, and Do{\u{g}}an}{Atas
  et~al.}{2018}]{atas2018phylogenetic}
Atas, H., N.~Tuncbag, and T.~Do{\u{g}}an (2018).
\newblock Phylogenetic and other conservation-based approaches to predict
  protein functional sites.
\newblock In {\em Computational Drug Discovery and Design}, pp.\  51--69.
  Springer.

\bibitem[\protect\citeauthoryear{Bordewich, Linz, and Semple}{Bordewich
  et~al.}{2017}]{bordewich2017lost}
Bordewich, M., S.~Linz, and C.~Semple (2017).
\newblock Lost in space? {Generalising} subtree prune and regraft to spaces of
  phylogenetic networks.
\newblock {\em Journal of theoretical biology\/}~{\em 423}, 1--12.

\bibitem[\protect\citeauthoryear{Bordewich and Semple}{Bordewich and
  Semple}{2005}]{bordewich2005computational}
Bordewich, M. and C.~Semple (2005).
\newblock On the computational complexity of the rooted subtree prune and
  regraft distance.
\newblock {\em Annals of combinatorics\/}~{\em 8\/}(4), 409--423.

\bibitem[\protect\citeauthoryear{Bordewich and Semple}{Bordewich and
  Semple}{2007}]{bordewich2007computing}
Bordewich, M. and C.~Semple (2007).
\newblock Computing the minimum number of hybridization events for a consistent
  evolutionary history.
\newblock {\em Discrete Applied Mathematics\/}~{\em 155\/}(8), 914--928.

\bibitem[\protect\citeauthoryear{Doolittle}{Doolittle}{1999}]{doolittle1999phylogenetic}
Doolittle, W.~F. (1999).
\newblock Phylogenetic classification and the universal tree.
\newblock {\em Science\/}~{\em 284\/}(5423), 2124--2128.

\bibitem[\protect\citeauthoryear{Felsenstein}{Felsenstein}{2004}]{felsenstein2004inferring}
Felsenstein, J. (2004).
\newblock {\em Inferring Phylogenies}, Volume~2.
\newblock Sinauer associates Sunderland, MA.

\bibitem[\protect\citeauthoryear{Fischer, van Iersel, Kelk, and
  Scornavacca}{Fischer et~al.}{2015}]{fischer2015computing}
Fischer, M., L.~van Iersel, S.~Kelk, and C.~Scornavacca (2015).
\newblock On computing the maximum parsimony score of a phylogenetic network.
\newblock {\em SIAM Journal on Discrete Mathematics\/}~{\em 29\/}(1), 559--585.

\bibitem[\protect\citeauthoryear{Francis, Huber, Moulton, and Wu}{Francis
  et~al.}{2018}]{francis2017bounds}
Francis, A., K.~T. Huber, V.~Moulton, and T.~Wu (2018).
\newblock Bounds for phylogenetic network space metrics.
\newblock {\em Journal of mathematical biology\/}~{\em 76\/}(5), 1229--1248.

\bibitem[\protect\citeauthoryear{Gambette, van Iersel, Jones, Lafond, Pardi,
  and Scornavacca}{Gambette et~al.}{2017}]{gambette2017rearrangement}
Gambette, P., L.~van Iersel, M.~Jones, M.~Lafond, F.~Pardi, and C.~Scornavacca
  (2017).
\newblock Rearrangement moves on rooted phylogenetic networks.
\newblock {\em PLoS computational biology\/}~{\em 13\/}(8), e1005611.

\bibitem[\protect\citeauthoryear{Gao, Bailes, Robertson, Chen, Rodenburg,
  Michael, Cummins, Arthur, Peeters, Shaw, et~al.}{Gao
  et~al.}{1999}]{gao1999origin}
Gao, F., E.~Bailes, D.~L. Robertson, Y.~Chen, C.~M. Rodenburg, S.~F. Michael,
  L.~B. Cummins, L.~O. Arthur, M.~Peeters, G.~M. Shaw, et~al. (1999).
\newblock Origin of {HIV-1} in the chimpanzee pan troglodytes troglodytes.
\newblock {\em Nature\/}~{\em 397\/}(6718), 436.

\bibitem[\protect\citeauthoryear{Guyeux, Al-Nuaimi, AlKindy, Couchot, and
  Salomon}{Guyeux et~al.}{2017}]{guyeux2017ability}
Guyeux, C., B.~Al-Nuaimi, B.~AlKindy, J.-F. Couchot, and M.~Salomon (2017).
\newblock On the ability to reconstruct ancestral genomes from mycobacterium
  genus.
\newblock In {\em International Conference on Bioinformatics and Biomedical
  Engineering}, pp.\  642--658. Springer.

\bibitem[\protect\citeauthoryear{Haggerty, Jachiet, Hanage, Fitzpatrick, Lopez,
  O'Connell, Pisani, Wilkinson, Bapteste, and McInerney}{Haggerty
  et~al.}{2013}]{haggerty2013pluralistic}
Haggerty, L.~S., P.-A. Jachiet, W.~P. Hanage, D.~A. Fitzpatrick, P.~Lopez,
  M.~J. O'Connell, D.~Pisani, M.~Wilkinson, E.~Bapteste, and J.~O. McInerney
  (2013).
\newblock A pluralistic account of homology: adapting the models to the data.
\newblock {\em Molecular biology and evolution\/}~{\em 31\/}(3), 501--516.

\bibitem[\protect\citeauthoryear{Huber, Moulton, and Wu}{Huber
  et~al.}{2016}]{huber2016transforming}
Huber, K.~T., V.~Moulton, and T.~Wu (2016).
\newblock Transforming phylogenetic networks: Moving beyond tree space.
\newblock {\em Journal of theoretical biology\/}~{\em 404}, 30--39.

\bibitem[\protect\citeauthoryear{{\VAN{Iersel}{Van}{van}}~Iersel, Janssen,
  Jones, Murakami, and Zeh}{{\VAN{Iersel}{Van}{van}}~Iersel
  et~al.}{2018}]{van2018polynomial}
{\VAN{Iersel}{Van}{van}}~Iersel, L., R.~Janssen, M.~Jones, Y.~Murakami, and
  N.~Zeh (2018).
\newblock Polynomial-time algorithms for phylogenetic inference problems.
\newblock In {\em International Conference on Algorithms for Computational
  Biology}, pp.\  LNBI 10849: 37--49. Springer.

\bibitem[\protect\citeauthoryear{{\VAN{Iersel}{Van}{van}}~Iersel, Jones, and
  Scornavacca}{{\VAN{Iersel}{Van}{van}}~Iersel et~al.}{2017}]{van2017improved}
{\VAN{Iersel}{Van}{van}}~Iersel, L., M.~Jones, and C.~Scornavacca (2017).
\newblock Improved maximum parsimony models for phylogenetic networks.
\newblock {\em Systematic biology\/}~{\em 67\/}(3), 518--542.

\bibitem[\protect\citeauthoryear{{\VAN{Iersel}{Van}{van}}~Iersel, Kelk, Lekic,
  Whidden, and Zeh}{{\VAN{Iersel}{Van}{van}}~Iersel
  et~al.}{2016}]{van2016hybridization}
{\VAN{Iersel}{Van}{van}}~Iersel, L., S.~Kelk, N.~Lekic, C.~Whidden, and N.~Zeh
  (2016).
\newblock Hybridization number on three rooted binary trees is {EPT}.
\newblock {\em SIAM Journal on Discrete Mathematics\/}~{\em 30\/}(3),
  1607--1631.

\bibitem[\protect\citeauthoryear{{\VAN{Iersel}{Van}{van}}~Iersel and
  Linz}{{\VAN{Iersel}{Van}{van}}~Iersel and Linz}{2013}]{van2012quadratic}
{\VAN{Iersel}{Van}{van}}~Iersel, L. and S.~Linz (2013).
\newblock A quadratic kernel for computing the hybridization number of multiple
  trees.
\newblock {\em Information Processing Letters\/}~{\em 113\/}(9), 318--323.

\bibitem[\protect\citeauthoryear{Janssen, Jones, Erd{\H{o}}s, van Iersel, and
  Scornavacca}{Janssen et~al.}{2018}]{janssen2017exploring}
Janssen, R., M.~Jones, P.~L. Erd{\H{o}}s, L.~van Iersel, and C.~Scornavacca
  (2018).
\newblock Exploring the tiers of rooted phylogenetic network space using tail
  moves.
\newblock {\em Bulletin of mathematical biology\/}~{\em 80\/}(8), 2177--2208.

\bibitem[\protect\citeauthoryear{Joy, Liang, McCloskey, Nguyen, and Poon}{Joy
  et~al.}{2016}]{joy2016ancestral}
Joy, J.~B., R.~H. Liang, R.~M. McCloskey, T.~Nguyen, and A.~F. Poon (2016).
\newblock Ancestral reconstruction.
\newblock {\em PLoS computational biology\/}~{\em 12\/}(7), e1004763.

\bibitem[\protect\citeauthoryear{Klawitter}{Klawitter}{2017}]{klawitter2017snpr}
Klawitter, J. (2017).
\newblock The {SNPR} neighbourhood of tree-child networks.
\newblock {\em ArXiv e-prints\/}~(1707.09579).

\bibitem[\protect\citeauthoryear{{Klawitter}}{{Klawitter}}{2018}]{klawitter2018agreement}
{Klawitter}, J. (2018, June).
\newblock {The agreement distance of rooted phylogenetic networks}.
\newblock {\em ArXiv e-prints\/}~(1806.05800).

\bibitem[\protect\citeauthoryear{{Klawitter} and {Linz}}{{Klawitter} and
  {Linz}}{2018}]{klawitter2018distance}
{Klawitter}, J. and S.~{Linz} (2018, May).
\newblock {On the Subnet Prune and Regraft Distance}.
\newblock {\em ArXiv e-prints\/}~(1805.07839).

\bibitem[\protect\citeauthoryear{Lakner, Van Der~Mark, Huelsenbeck, Larget, and
  Ronquist}{Lakner et~al.}{2008}]{lakner2008efficiency}
Lakner, C., P.~Van Der~Mark, J.~P. Huelsenbeck, B.~Larget, and F.~Ronquist
  (2008).
\newblock Efficiency of markov chain monte carlo tree proposals in bayesian
  phylogenetics.
\newblock {\em Systematic biology\/}~{\em 57\/}(1), 86--103.

\bibitem[\protect\citeauthoryear{Lessler, Chaisson, Kucirka, Bi, Grantz, Salje,
  Carcelen, Ott, Sheffield, Ferguson, et~al.}{Lessler
  et~al.}{2016}]{lessler2016assessing}
Lessler, J., L.~H. Chaisson, L.~M. Kucirka, Q.~Bi, K.~Grantz, H.~Salje, A.~C.
  Carcelen, C.~T. Ott, J.~S. Sheffield, N.~M. Ferguson, et~al. (2016).
\newblock Assessing the global threat from zika virus.
\newblock {\em Science\/}~{\em 353\/}(6300), aaf8160.

\bibitem[\protect\citeauthoryear{Nguyen, Schmidt, von Haeseler, and
  Minh}{Nguyen et~al.}{2014}]{nguyen2014iq}
Nguyen, L.-T., H.~A. Schmidt, A.~von Haeseler, and B.~Q. Minh (2014).
\newblock Iq-tree: a fast and effective stochastic algorithm for estimating
  maximum-likelihood phylogenies.
\newblock {\em Molecular biology and evolution\/}~{\em 32\/}(1), 268--274.

\bibitem[\protect\citeauthoryear{Semple and Steel}{Semple and
  Steel}{2003}]{semple2003phylogenetics}
Semple, C. and M.~A. Steel (2003).
\newblock {\em Phylogenetics}, Volume~24.
\newblock Oxford University Press on Demand.

\bibitem[\protect\citeauthoryear{Shindyalov, Kolchanov, and Sander}{Shindyalov
  et~al.}{1994}]{shindyalov1994can}
Shindyalov, I., N.~Kolchanov, and C.~Sander (1994).
\newblock Can three-dimensional contacts in protein structures be predicted by
  analysis of correlated mutations?
\newblock {\em Protein Engineering, Design and Selection\/}~{\em 7\/}(3),
  349--358.

\bibitem[\protect\citeauthoryear{Urheim, Ford, and John}{Urheim
  et~al.}{2016}]{urheim2016characterizing}
Urheim, E., E.~Ford, and K.~S. John (2016).
\newblock Characterizing local optima for maximum parsimony.
\newblock {\em Bulletin of mathematical biology\/}~{\em 78\/}(5), 1058--1075.

\bibitem[\protect\citeauthoryear{Yu, Harris, Blair, and He}{Yu
  et~al.}{2015}]{yu2015rasp}
Yu, Y., A.~J. Harris, C.~Blair, and X.~He (2015).
\newblock Rasp (reconstruct ancestral state in phylogenies): a tool for
  historical biogeography.
\newblock {\em Molecular phylogenetics and evolution\/}~{\em 87}, 46--49.

\end{thebibliography}


\end{document}